\newtheorem{thm}{Theorem}[section]
\newtheorem{lem}[thm]{Lemma}
\newtheorem{cor}[thm]{Corollary}
\newtheorem{prop}[thm]{Proposition}
\theoremstyle{remark}
\newtheorem{defn}[thm]{Definition}
\newtheorem{rem}[thm]{Remark}
\renewcommand{\eqref}[1]{\textnormal{(\ref{#1})}}
\numberwithin{equation}{section}
\newcommand{\rmi}{\mathrm{i}}
\newcommand{\rme}{\mathrm{e}}
\newcommand{\rmd}{\mathrm{d}}
\title[Mosco convergence for the Maxwell system]{Mosco convergence for $H(\mathrm{curl})$ spaces, higher integrability for Maxwell's equations, 
and 
stability in direct and inverse EM scattering problems}
\author{Hongyu Liu}
\address{Department of Mathematics, Hong Kong Baptist University, Kowloon, Hong Kong SAR.}
\email{hongyuliu@hkbu.edu.hk}
\author{Luca Rondi}
\address{Dipartimento di Matematica e Geoscienze, Universit\`a degli Studi di Trieste, Trieste, Italy.}
\email{rondi@units.it}
\author{Jingni Xiao}
\address{Department of Mathematics, Hong Kong Baptist University, Kowloon, Hong Kong SAR. }
\email{xiaojn@live.com}
\begin{document}

\begin{abstract}
This paper is concerned with the scattering problem for time-harmonic electromagnetic waves, due to the presence of scatterers and of inhomogeneities in the medium.

We prove a sharp stability result for the solutions to the direct electromagnetic scattering problem, with respect to variations of the scatterer and of the inhomogeneity, under minimal regularity assumptions for both of them.
The stability result leads to bounds on solutions to the scattering problems
which are uniform for an extremely general class of admissible scatterers and inhomogeneities.

These uniform bounds are a key step to tackle the challenging stability issue for the corresponding inverse electromagnetic scattering problem. In this paper we establish two optimal stability results of logarithmic type for the determination of
polyhedral scatterers by a minimal number of electromagnetic scattering measurements.

In order to prove the stability result for the direct electromagnetic scattering problem, we  study two fundamental issues in the theory of Maxwell equations:
Mosco convergence for $H(\mathrm{curl})$ spaces and higher integrability properties of solutions to Maxwell equations in nonsmooth domains.

\medskip

\noindent{\bf Keywords} Maxwell equations, Mosco convergence, higher integrability, scattering,
inverse scattering, polyhedral scatterers, stability.

\medskip

\noindent{\bf Mathematics Subject Classification (2010)}:  78A45 35Q61 (primary);
49J45 35P25 78A46 (secondary).
\end{abstract}




\maketitle

\section{Introduction}\label{sec:Intro}

We are concerned with the electromagnetic scattering problem, in the time-harmonic case, which is governed by the Maxwell system as follows. Given an incident time-harmonic electromagnetic wave, characterised by the incident electric and magnetic fields $(\mathbf{E}^i,\mathbf{H}^i)$, we seek a pair of functions $(\mathbf{E},\mathbf{H})$ solving the 
following exterior boundary value problem
\begin{equation}\label{directscattering00}
\left\{\begin{array}{ll}
\nabla\wedge \mathbf{E}-\rmi k\mu \mathbf{H}=0
&\text{in }G=\mathbb{R}^3\backslash \Sigma\\
\nabla\wedge \mathbf{H}+\rmi k \epsilon \mathbf{E}=0
&\text{in }G=\mathbb{R}^3\backslash \Sigma\\
(\mathbf{E},\mathbf{H})=(\mathbf{E}^i,\mathbf{H}^i)+(\mathbf{E}^s,\mathbf{H}^s)
&\text{in }G=\mathbb{R}^3\backslash \Sigma\\
\nu\wedge \mathbf{E}=0&\text{on }\partial G=\partial\Sigma\\
\lim_{r\to+\infty}r\left(\frac{\mathbf{x}}{\|\mathbf{x}\|}\wedge \mathbf{H}^s(\mathbf{x})+
\mathbf{E}^s(\mathbf{x})\right)=0 & r=\|\mathbf{x}\|.
\end{array}
\right.
\end{equation}
Here $k>0$ is the wavenumber, $\Sigma$ is a scatterer, that is, a compact subset with connected complement, present in the medium, $\epsilon$ and $\mu$ are the electric permittivity and magnetic permeability of the medium, respectively. The first two equations satisfied by $(\mathbf{E},\mathbf{H})$ outside $\Sigma$ are the time-harmonic Maxwell equations.
The total electric and magnetic fields $(\mathbf{E},\mathbf{H})$ are the juxtaposition of the incident ones and the reflected, or scattered, ones $(\mathbf{E}^s,\mathbf{H}^s)$.
The incident fields $\mathbf{E}^i$ and $\mathbf{H}^i$ are an entire solution of the Maxwell equations with $\epsilon=\mu=I_3$. Here, and also in what follows, $I_3$ denotes the $3\times 3$ identity matrix. For instance, one can take $(\mathbf{E}^i,\mathbf{H}^i)$ to be a normalised electromagnetic plane wave with given polarisation vector and direction of propagation. The presence of the scatterer $\Sigma$, and of an inhomogeneity in $(\epsilon,\mu)$ around it, perturbs the incident wave through the creation of the scattered wave, characterised by the fact that it is radiating, that is, its corresponding fields satisfy the so-called Silver-M\"uller radiation condition, which is the last limit in \eqref{directscattering00}. The boundary condition on the boundary of $\Sigma$ depends on the physical properties of the scatterer. In this case, $\Sigma$ is a perfectly electric conducting scatterer, however, in a completely equivalent way, we can also treat the case of perfectly magnetic conducting scatterers.

We assume minimal regularity assumptions on the scatterer $\Sigma$ and the inhomogeneity surrounding it, that is, on the coefficients $\epsilon$ and $\mu$. Indeed, we assume that $\epsilon$ and $\mu$ are measurable functions in $G$ with values in the set of positive definite symmetric matrices, and that they coincide with the identity matrix outside a large ball. We also have uniform positive bounds, from below and above, respectively, on the lowest and highest eigenvalues of $\epsilon$ and $\mu$, all over $G$.

The existence and uniqueness of a solution to \eqref{directscattering00} are classical results when $\Sigma$ and the coefficients $\epsilon$ and $\mu$ are smooth; for example when the medium is homogeneous and isotropic, see for instance \cite{CK,Ned}. Also well-understood is the case of Lipschitz scatterer, see for instance \cite{Mon}. However, there has been a lot of effort to obtain existence and uniqueness results under minimal assumptions on $\Sigma$ and the coefficients $\epsilon$ and $\mu$, that allow, for instance, $\Sigma$ to be formed by screens and not obstacles only, and $\epsilon$ and $\mu$ to represent an inhomogeneous and anisotropic medium. We refer to \cite{PWW} for a very general result in this direction and to its references for the development of this topic.
The result in \cite{PWW} allows to prove existence and uniqueness for \eqref{directscattering00} provided $\Sigma$ satisfies the so-called Rellich and Maxwell compactness properties (see Definition~\ref{regularityscatterer}) and the Maxwell equations satisfy the unique continuation property. A sufficient condition for the  
Rellich and Maxwell compactness properties to hold is given in \cite{PWW} and a slightly different one, suited to our purposes, can be found in Proposition~\ref{suffMCP}. For the unique continuation property, Lipschitz regularity of the coefficients $\epsilon$ and $\mu$ is enough, see \cite{Ngu-Wan}; actually, in the 
scale of $C^{0,\alpha}$ regular functions, Lipschitz regularity is optimal as unique continuation may fail for H\"older continuous functions, see \cite{Dem}.
It might be possible that, in the scale of Sobolev regular functions, unique continuation may hold, as for the scalar elliptic equations in divergence form, with less regular coefficients provided some extra structure is assumed, for example isotropy. In fact, we conjecture that in the isotropic case $W^{1,3}$ regularity might be enough, but up to our knowledge such a result is still unavailable in the literature. For this reason, in this paper we consider only the Lipschitz regularity assumption, and we are sure that the interested reader would be able to easily adapt our arguments to other cases as long as unique continuation holds true. Our aim here, instead, is to allow the coefficients to be piecewise smooth, particularly piecewise Lipschitz, and still keep the unique continuation property. Using
\cite{Ngu-Wan} as the basic tool,
this has been shown to be true in \cite{Bal-et-al} and a different, and slightly more general, formulation may be found in Proposition~\ref{piecewiseLipschitzprop}.

The aim of the present paper is to establish continuity properties, namely stability, of solutions to \eqref{directscattering00} with respect to variations of the scatterer $\Sigma$ and the coefficients of the inhomogeneity at the same time. We keep the assumptions on $\Sigma$ and the coefficients $\epsilon$ and $\mu$ to a minimum. Indeed, 
we use regularity assumptions not stronger than the ones required to establish the existence and uniqueness of  solutions to the scattering problem
\eqref{directscattering00}. The main difference is that we need to consider a quantitative version of these conditions. It is emphasised that we allow an extremely general class of scatterer in our study, that includes obstacles, screens and also quite complicated combinations of obstacles and screens at the same time.
The main stability result is stated in Theorem~\ref{continuitycor}.

We wish to point that, for each of the results in this paper, we aim to the greatest generality. In general we assume no regularity on the coefficients of the Maxwell system, unless where we need unique continuation properties, where we use a regularity of piecewise Lipschitz type, since Lipschitz continuity is optimal in this context, at least in the scale of $C^{0,\alpha}$ regular functions. The surfaces where discontinuities are allowed are also of extremely general kind. Also concerning the domains where the Maxwell system is defined, as pointed out above with respect to scatterers, these are always as general as possible, with boundaries which may have a rather complex structure. The classes of domains we use, described in 
Subsection~\ref{classes-subsec}, were introduced in \cite{MR} and further developed in \cite{LPRX}. We believe that, even if improvements may be obtained in this direction, their generality and versatility put them at the top of the art in this moment.

The stability of the direct scattering problem is of significant importance in the theory of electromagnetism. For example, it is an indispensable ingredient in showing convergence of numerical solutions, when $\Sigma$ and the coefficients $\mu$ and $\epsilon$ are approximated by discrete variables. It could also be of use in proving existence of solutions for optimisation problems, when the scatterer or the medium need to satisfy suitable optimality conditions. 

The major motivation of the present work comes from the field of inverse problems, in particular from the inverse electromagnetic scattering problem. In such an inverse problem,
the scatterer or the surrounding inhomogeneity are unknown, or only partially known,
and the aim is to recover information on them
by sending one or more suitable time-harmonic
incident electromagnetic waves, and by measuring the perturbations produced by the presence of the scatterer and the inhomogeneity. Such measurements are usually performed far-away from the region where the scatterer and the inhomogeneity are contained, and in this case we speak of far-field or scattering measurements. That is, one usually measures the so-called far-field pattern either of the electric or of the magnetic scattered field, for each of the incident waves.
These kinds of inverse problems are of central importance to many areas of science and technology, including radar and sonar, geophysical exploration, medical imaging, nondestructive testing and remote sensing; see for instance \cite{CK, Isa2} and the references therein.

In order to tackle the stability issue of such an inverse scattering problem, a fundamental step is to obtain uniform bounds on the solution of the direct problem, that is, bounds which are independent of
the scatterer $\Sigma$ and the surrounding medium. In fact, in the inverse problem, the scatterer and the inhomogeneous medium are, at least in part, unknown, and, in general, we only know some a priori regularity assumptions on them. This is the reason why, for the applications, it is crucial to obtain such uniform bounds for the largest possible class of admissible scatterers and surrounding media, which is precisely one of the main aims of the present paper. In fact, our general stability result for the direct problem immediately leads to uniform bounds for solutions for an extremely wide class of scatterers and surrounding media;
Theorem~\ref{mainstabthm}.

In order to obtain the stability result for the direct electromagnetic scattering problem, two fundamental ingredients are needed. The first one is a suitable notion of Mosco convergence, and the second one is a higher integrability property of the solutions to Maxwell equations, independent on the domain of definition and the coefficients $\epsilon$ and $\mu$, provided they satisfy suitable, although minimal, assumptions.
These two results are the main technical achievements of the present paper and we comment them in the following.

Mosco convergence, introduced in \cite{Mos}, is a crucial tool for studying stability of solutions of elliptic problems with homogeneous Neumann conditions under perturbations of the domain. In fact, it can be shown that Mosco convergence of $H^1$ spaces associated to the domains of definition of the elliptic equations is essentially equivalent to convergence of the corresponding solutions with Neumann boundary conditions. Therefore, the Mosco convergence for $H^1$ spaces has attracted a lot of attention and several results appeared in the literature. 
In dimension $2$, the breakthrough was a sufficient condition proved in \cite{ChD}, which is still useful since it is easy to be verified. The problem was
finally solved in \cite{Buc-Var,Buc-Var2} where a sufficient and necessary condition is proved. In dimension $2$, the key is the use of duality arguments and complex analytic techniques. These methods can not be used in dimension $3$ and higher. In this case the first result is in \cite{Gia}, where suitable Lipschitz conditions are employed. A slightly different condition, still of Lipschitz type but more general in several respects, is considered in \cite{MR}.

In \cite{MR} it was shown that Mosco convergence of $H^1$ spaces is still the key point to obtain convergence results for solutions of Neumann problems for the Helmholtz equation. Moreover, for the Helmholtz equation, a quantitative version of the Rellich compactness property turns out to be essential. Such a quantitative version is given by higher integrability properties of $H^1$ functions of the domain of the equation,
which need to be uniform with respect to the considered domains.
We recall that the Rellich compactness property is crucial to obtain existence and uniqueness for the solution of the direct acoustic scattering problem with sound-hard scatterers.
Mosco convergence and such a higher integrability property for $H^1$ spaces, in fact, allowed to prove, 
still in \cite{MR},
convergence results, and correspondingly uniform bounds, for the direct acoustic scattering problem with respect to variations of the sound-hard scatterer. The corresponding result for sound-soft scatterers, that is, in the Dirichlet case, was considered earlier in \cite{Ron1}.

One of the main aims of the current paper is to extend the analysis for the acoustic case, \cite{MR}, to the technically much more challenging electromagnetic case. In the electromagnetic case, the suitable spaces, in which the solutions are to be sought, are the $H(\mathrm{curl})$ spaces. We have therefore developed the corresponding study for the Mosco convergence of $H(\mathrm{curl})$, which is stated as Theorem~\ref{Moscoconv}. We notice that we are able to prove the Mosco convergence for $H(\mathrm{curl})$ spaces under the same assumptions on the domains used for the $H^1$ spaces in \cite{MR}, which are quite general. We recall that we are restricted here to the  three-dimensional case, so duality arguments may not be used.

As in the acoustic case, higher integrability properties of solutions are also required. Therefore, a quantitative version of the Maxwell compactness property, stated in Definition~\ref{highintdefin},
need to be established. For Lipschitz domains, such a higher integrability result was proved in \cite{Dru}. Since no regularity condition is imposed on the coefficients, we believe that this result is essentially optimal and we recall it as Proposition~\ref{Druprop}. Our aim is to use this result as an essential tool to extend the higher integrability result to domains whose boundaries have  much weaker regularity and may have a quite complex structure, including screen-type portions, for instance. The result is given in
Proposition~\ref{highintprop}, and we observe that here no regularity is required on the coefficients.

Theorem~\ref{Moscoconv} and Proposition~\ref{highintprop} provide the right conditions on the scatterers that guarantee the stability of the direct electromagnetic scattering problem. For what concerns the coefficients $\epsilon$ and $\mu$ of the inhomogeneity, we need to find a class of piecewise Lipschitz coefficients, that in particular guarantee unique continuation, which is compact with respect to a suitable, possibly rather weak, convergence. Such a class is defined in Definition~\ref{coeffdefn} and its main compactness property is stated in Lemma~\ref{coeffcompactlemma}.
With these ingredients at our disposal, the stability result, Theorem~\ref{continuitycor}, 
for the solutions to the direct electromagnetic scattering problem can be finally obtained.
In turn, we prove corresponding uniform bounds, with respect to a wide class of 
scatterers and inhomogeneities, Theorem~\ref{mainstabthm}.

As important consequence and application of our uniform estimates on solutions to the direct electromagnetic scattering problems, we establish optimal stability estimates for the following inverse electromagnetic scattering problem. We assume that there is no inhomogeneity, that is $\epsilon=\mu=I_3$ everywhere, but there is some unknown scatterer $\Sigma$ to be determined. This is attempted by performing scattering measurements, that is by sending one or more time-harmonic
incident electromagnetic waves and by measuring the far-fields patterns of the corresponding scattered waves produced by the presence of the scatterer. We say that we have a single scattering measurement if we perform this experiment just once, that is, we send just one incident wave. We say that we have two or more scattering measurements if we send two or more, suitably chosen, incident waves and measure their corresponding scattered waves.

The inverse scattering problem described above is nonlinear, and it is also known to be ill-posed in the sense of Hadamard. Moreover, it is readily seen that the inverse problem is formally-posed
if we perform a single measurement; that is, physically speaking, one expects to recover $\Sigma$ by sending a single incident plane wave with fixed wavenumber, polarisation vector and incident direction, and then collecting the far-field data in every possible observation direction.
Though there is a widespread belief about unique determination of scatterers, or at least obstacles, by a single measurement, such uniqueness result still remains a largely open problem with very limited progress in the literature, also in the, relatively simpler, acoustic case. However, scatterers with a particular structure may be actually determined by one or a few scattering measurements.
This is true, for example, for scatterers of polyhedral type, that is, scatterers whose boundaries are made of a collection of suitable portions of hyperplanes.
The breakthrough in this direction was done for the acoustic case in \cite{C-Y}, where sound-soft polyhedral obstacles, that is, polyhedra, with an additional nontrapping condition, were considered.
For sound-soft polyhedral scatterers the problem was completely solved in \cite{AR}, where it is shown that a single measurement is enough to uniquely determine any sound-soft polyhedral scatterer, without any further condition and in any dimension. We notice that such polyhedral scatterers include obstacles, that is, polyhedra, and screen-like scatterers, at the same time. For the acoustic sound-hard case in dimension $N$, $N$ scattering measurements are enough for determining a general sound-hard polyhedral scatterer, and the number of measurements may not be reduced if screen-type scatterers are allowed, \cite{Liu-Zou, Liu-Zou3}. However, for polyhedral obstacles, that is, polyhedra, a single measurement is enough, \cite{Els-Yam1, Els-Yam2}
We notice that the acoustic sound-hard case is the closer one to the electromagnetic case.

About the electromagnetic case, in \cite{Liu3} and \cite{Liu}, uniqueness results for the determination of conducting scatterers of general polyhedral type by scattering measurements were established. As in the acoustic case, there might be finitely many (with an unknown number) solid polyhedra and screen-type scatterers simultaneously. Two scattering measurements are needed, \cite{Liu3} for general polyhedral scatterers, whereas a single scattering measurement is enough for polyhedral obstacles, \cite{Liu}. The number of scattering measurements has also been shown to be optimal.

In all of these cases, uniqueness is achieved by exploiting a reflection principle,
along with the use of a certain path argument,
for the Helmholtz equation in the acoustic case, and for the Maxwell equations in the electromagnetic one.

The corresponding stability issue has been treated first in dimension $3$ for general sound-soft polyhedral scatterers, \cite{Ron2}, with a single scattering measurement and under minimal regularity assumptions. This result has been extended to any dimension and for a more general and versatile class of polyhedral scatterers in \cite{LPRX}. More importantly, in \cite{LPRX} the stability result has been extended to the sound-hard case, for the same class of polyhedral scatterers as in the sound-soft case, and
with the minimal amount of
scattering measurements, that is, $N$ measurements for general polyhedral scatterers and a single measurement for polyhedra, at least for $N=2,3$ in this last case.

In all these stability results, a crucial preliminary result is to establish bounds for the solutions of the direct scattering problems which are uniform for the widest possible class of admissible scatterers. For the sound-soft case this was done in \cite{Ron2}, whereas for the sound-hard case such a result follows from \cite{MR}.

The analysis developed for the acoustic sound-hard case can be adapted
to extending these stability results to the electromagnetic case, by following the spirit of the uniqueness arguments of \cite{Liu3} and \cite{Liu}, and with some necessary modifications. However, the required uniform bounds for solutions of the corresponding direct electromagnetic scattering problem
turns out to be a highly nontrivial modification of known results in the acoustic case. 
Using the stability results established earlier for the direct electromagnetic scattering problem, we are able to prove the following essentially optimal stability results for the inverse problem. In Theorem~\ref{mainteoN}, we show a stability result for the determination of general polyhedral scatterers, including screen-type ones, by two electromagnetic scattering measurements. In Theorem~\ref{mainteo1} we present a stability result for the determination of polyhedra, that is, of polyhedral obstacles, by a single electromagnetic scattering measurement. We notice that the assumptions on the unknown scatterers are the same as those used in the acoustic case in \cite{LPRX}, and also the stability estimates obtained are exactly of the same logarithmic type.

The plan of the paper is as follows. In Section~\ref{sect:1} we consider the direct electromagnetic scattering problem. In Subsection~\ref{subsect:1.0}, we set most of the standard notations needed in the sequel.
Moreover, in Proposition~\ref{Linftydensity}, we prove that the subspace $L^{\infty}\cap H(\mathrm{curl})$ is dense in the space $H(\mathrm{curl})$ for an arbitrary open set; a result that, rather surprisingly, we could not find in this generality in the literature. 
In Subsection~\ref{subsect:1.1} we formulate the problem and we treat existence and uniqueness of its solution in Subsection~\ref{subsect:ex+un}. In particular, in Subsection~\ref{subsection1:3}, we discuss the Rellich and Maxwell compactness properties, as well as the unique continuation property for the Maxwell equations.

Section~\ref{properties-section} is devoted to some preliminary results. 
In Subsection~\ref{prop-subsec}, properties of the solutions to the Maxwell equations are considered. In particular, changes of variables and reflection principles for solutions to the Maxwell equations are studied in detail. In Subsection~\ref{classes-subsec}, we discuss suitable classes of admissible scatterers. These kinds of classes were introduced in \cite{MR} and further developed in \cite{LPRX}.

The main technical section of the paper is Section~\ref{Moscosec}. We prove the Mosco convergence result for $H(\mathrm{curl})$ spaces, Theorem~\ref{Moscoconv},
and the higher integrability properties for solutions to the Maxwell equations in nonsmooth domains, Proposition~\ref{highintprop}

The main section of the paper is Section~\ref{stabilitysec}. We consider suitable classes of admissible coefficients for the inhomogeneities, Definition~\ref{coeffdefn}, and discuss their properties. Then we state and prove the main results of the paper, the stability result, Theorem~\ref{continuitycor}, and the uniform bounds, Theorem~\ref{mainstabthm}, for the solutions to the direct electromagnetic scattering problem.

In Section~\ref{polystabsec}, we deal with the inverse electromagnetic scattering problem, for polyhedral scatterers. The stability results, Theorem~\ref{mainteoN} for general polyhedral scatterers with two measurements and 
Theorem~\ref{mainteo1} for polyhedra with one measurement, are presented in Subsection~\ref{finalsub0}. A few details and comments about the proofs of these stability results are in the final Subsection~\ref{finalsub}.

Finally, in the Appendix, there are the proofs of Propositions~\ref{Linftydensity}
and \ref{suffMCP}.

\section{The electromagnetic scattering problem}\label{sect:1}

\subsection{Notations and conditions}\label{subsect:1.0}
We shall use the following notations.
The integer $N\geq 2$ always denotes the space dimension. Apart from a few instances, $N$ will be equal to $3$.
In what follows, we always omit the dependence of constants on the space dimension $N$.

For any $\mathbf{x}\in\mathbb{R}^N$, $N\geq 2$,  we denote $\mathbf{x}=(\mathbf{x}',x_N)\in\mathbb{R}^{N-1}\times \mathbb{R}$ and $\mathbf{x}=(\mathbf{x}'',x_{N-1},x_N)\in\mathbb{R}^{N-2}\times\mathbb{R}\times\mathbb{R}$.
For any $s>0$ and any $\mathbf{x}\in\mathbb{R}^N$, 
$B_s(\mathbf{x})$ denotes the ball contained in $\mathbb{R}^N$ with radius $s$ and center $\mathbf{x}$, and $B'_s(\mathbf{x}')$ denotes the ball contained in $\mathbb{R}^{N-1}$ with radius $s$ and center $\mathbf{x}'$.
We also use $B_s=B_s(0)$ and $B'_s=B'_s(0)$.
For any $E\subset \mathbb{R}^N$, we denote $B_s(E)=\bigcup_{\mathbf{x}\in E}B_s(\mathbf{x})$.
For any ball $B$ centered at zero we denote $B^{\pm}=B\cap\{\mathbf{y}\in\mathbb{R}^N:\ y_N\gtrless 0\}$.
Analogously, for any hyperplane $\Pi$ in $\mathbb{R}^N$, we use the following notations. If, with respect to a suitable Cartesian coordinate system, one has $\Pi=\{\mathbf{y}\in\mathbb{R}^N:\ y_N=0\}$; then for any $\mathbf{x}\in \Pi$ and any $r>0$ we denote $B^{\pm}_r(\mathbf{x})=
B_r(\mathbf{x})\cap\{y_N\gtrless 0\}$. Furthermore, we denote by $T_{\Pi}$ the reflection with respect to $\Pi$, namely in this case for any $\mathbf{y}=(y_1,\ldots,y_{N-1},y_N)\in\mathbb{R}^N$ we have
$T_{\Pi}(\mathbf{y})=(y_1,\ldots,y_{N-1},-y_N)$.


The $s$-dimensional Hausdorff measure in $\mathbb{R}^N$, $0\leq s\leq N$, will be denoted by $\mathcal{H}^s$. We recall that $\mathcal{H}^N$ coincides with the Lebesgue measure. For any Borel set $E\subset \mathbb{R}^N$ we denote with $|E|$ its Lebesgue measure.

For any $\mathbf{x}\in \mathbb{R}^3$ we call
$r=\|\mathbf{x}\|$ and, provided $\mathbf{x}\neq 0$,  $\hat{\mathbf{x}}=\mathbf{x}/\|\mathbf{x}\|\in \mathbb{S}^2=\{\mathbf{x}\in \mathbb{R}^3:\ \|\mathbf{x}\|=1\}$. We finally define $\rho=(1+r^2)^{1/2}$.

We consider the following direct scattering problem for the electromagnetic (EM) case, in the presence of a scatterer. We say that $\Sigma\subset \mathbb{R}^3$ is a \emph{scatterer} if $\Sigma$ is a compact set (possibly empty) such that $G=\mathbb{R}^3\backslash \Sigma$ is connected.
On the other hand, an open connected set $G$ whose complement is compact will be referred to as an \emph{exterior domain}. Notice that $\mathbb{R}^3$ itself is an exterior domain.

Let us assume that outside $\Sigma$ we have a medium characterised by the electric permittivity $\epsilon_1$ and the magnetic permeability $\mu_1$. For the time being, we do not assume any regularity or isotropy conditions, that is, they are tensors in $G$ satisfying uniform ellipticity conditions. We recall that $M^{3\times 3}_{sym}(\mathbb{R})$ is the space of real-valued $3\times 3$ symmetric matrices and that, for any open set $D\subset\mathbb{R}^3$, we say that $a$ is a \emph{tensor} in $D$ satisfying \emph{uniform ellipticity conditions} if $a\in L^{\infty}(D,M^{3\times 3}_{sym}(\mathbb{R}))$ and it satisfies
\begin{equation}\label{ellipticity}
a_0\|\xi\|^2 \leq a(\mathbf{x})\xi\cdot\xi\leq a_1\|\xi\|^2\quad\text{for a.e. }\mathbf{x}\in D\text{ and every }\xi\in \mathbb{R}^3,
\end{equation}
where $0<a_0<a_1$ are the so-called \emph{ellipticity constants}. In short the above condition may be written as
$$a_0I_3\leq a(\mathbf{x})\leq a_1I_3\quad\text{for a.e. }\mathbf{x}\in D.$$

We also assume that, outside a bounded set, the space is homogeneous and isotropic, that is there exists $R_0>0$ and two positive constants $\epsilon_{\infty}$, $\mu_{\infty}$ such that
\begin{equation}\label{boundedness}
\Sigma=\mathbb{R}^3\backslash G\subset \overline{B_{R_0}}
\end{equation}
and
\begin{equation}\label{coefficientatinfty}
\epsilon_1(\mathbf{x})=\epsilon_{\infty}I_3\text{ and }
\mu_1(\mathbf{x})=\mu_{\infty}I_3\quad\text{for every }\mathbf{x}\text{ such that }\|\mathbf{x}\|>R_0.
\end{equation}

In the sequel, we fix a \emph{frequency} $\omega>0$, and we call
$k=\omega \sqrt{\epsilon_{\infty}\mu_{\infty}}>0$ the corresponding \emph{wavenumber} and $\epsilon=\epsilon_{\infty}^{-1}\epsilon_1$ and $\mu=\mu_{\infty}^{-1}\mu_1$. 
We shall take $k>0$ and assume that $\epsilon$ and $\mu$ belong to $L^{\infty}(G,M^{3\times 3}_{sym}(\mathbb{R}))$ and, for some positive constants $R_0$ and $0<\lambda_0<1<\lambda_1$, satisfy
\begin{equation}\label{ellipticity2}
\lambda_0 I_3\leq \epsilon(\mathbf{x}),\, \mu(\mathbf{x})\leq \lambda_1 I_3\quad\text{for a.e. }\mathbf{x}\in G
\end{equation}
and
\begin{equation}\label{infinity}
\epsilon(\mathbf{x})=\mu(\mathbf{x})=I_3\quad\text{for a.e. }\mathbf{x}\in G\text{ such that }\|\mathbf{x}\|>R_0.
\end{equation}

We say that $D\subset \mathbb{R}^3$ is a domain if it is an open connected set.
We say that a domain $D$ is \emph{Lipschitz} if for any $\mathbf{x}\in \partial D$ there exist $r>0$ and a Lipschitz function $\varphi:\mathbb{R}^2\to\mathbb{R}$, with Lipschitz constant $L$, such that, up to a rigid change of coordinates, we have
$$D\cap B_r(\mathbf{x})=\{\mathbf{y}=(y_1,y_2,y_3)\in B_r(\mathbf{x}):\ 
y_3<\varphi(y_1,y_2)\}.$$
Clearly, $r$, $L$, and the change of coordinates depend on the point $\mathbf{x}$. If we can use the same constant $r$ and $L$ for all $\mathbf{x}\in\partial D$, then we say that $D$ is a Lipschitz domain with constants $r$ and $L$. We remark that any Lipschitz domain $D$ with compact boundary is Lipschitz with constants $r$ and $L$ for suitable positive constants $r$ and $L$ depending on $D$.

Let $D$ 
and $D'$ be two open sets and $T:D\to D'$ be a function. We say that $T$ is locally $W^{1,\infty}$ if
$T\in W^{1,\infty}_{loc}(D)$ and we notice that this is equivalent to requiring that $T$ is locally Lipschitz. As usual we say that $T$ is Lipschitz (with Lipschitz constant $L$) if
$\|T(\mathbf{x})-T(\mathbf{y})\|\leq L\|\mathbf{x}-\mathbf{y}\|$ for any $\mathbf{x}$, $\mathbf{y}\in D$. 
We say that $T$ is a \emph{bi}-\emph{Lipschitz mapping} with constant $L$ if 
$T$ is bijective and
$T$ and $T^{-1}$ are Lipschitz with Lipschitz constant $L$, on $D$ and $D'$ respectively.
We say that $T$ is a \emph{bi}-$W^{1,\infty}$ \emph{mapping} with constant $L$ if $T$ is bijective and
$\|JT\|_{L^{\infty}(D)}$ and $\|J(T^{-1})\|_{L^{\infty}(D')}$ are both bounded by $L$.
Here, and in what follows, $JT$ denotes the Jacobian matrix of $T$.

The following remark will be of interest and a proof may be found, for instance, in \cite[Chapter~6]{Res}.

\begin{rem}\label{detsign}
Let $D$ and $D'$ be open sets and $T:D\to D'$ be a bi-$W^{1,\infty}$ mapping.
If $D$ is connected, then either $\det JT(\mathbf{x})>0$ for almost every $\mathbf{x}\in D$ or 
$\det JT(\mathbf{x})<0$ for almost every $\mathbf{x}\in D$.  
\end{rem}

We shall also need the following spaces.
Let $D\subset \mathbb{R}^3$ be a domain and let $a\in L^{\infty}(D,M^{3\times 3}_{sym}(\mathbb{R}))$ satisfying \eqref{ellipticity} with constants $0<a_0<a_1$. 

We define, essentially following \cite{PWW},
$$H(\mathrm{curl},D)=\{u\in L^2(D,\mathbb{C}^3):\ \nabla\wedge u\in L^2(D,\mathbb{C}^3)\}$$
and
$$H(\mathrm{div},D)=\{u\in L^2(D,\mathbb{C}^3):\ \nabla\cdot u\in L^2(D)\}.$$
We notice that $\nabla\wedge u$ and $\nabla\cdot u$ are always meant in the sense of distributions. Moreover, $L^2(D)=L^2(D,\mathbb{C})$. We notice that these are Hilbert spaces endowed with the usual graph norm.
We also need
$$H_{loc}(\mathrm{curl},D)=\{u\in L^2_{loc}(D,\mathbb{C}^3):\ \nabla\wedge u\in L^2_{loc}(D,\mathbb{C}^3)\}.$$
and the following two spaces
\begin{multline*}
H_0(\mathrm{curl},D)=\{u\in H(\mathrm{curl},D):\ 
\langle\nabla\wedge\ u,\phi\rangle-\langle u,\nabla \wedge\phi \rangle=0\\ \text{for any }\phi\in H(\mathrm{curl},D)\text{ with bounded support}\},
\end{multline*}
and 
\begin{multline*}
H_0(\mathrm{div},D)=\{u\in H(\mathrm{div},D):\ 
\langle\nabla\cdot u,\varphi \rangle+\langle u,\nabla\varphi \rangle=0\\ \text{for any }\varphi\in H^1(D)\text{ with bounded support}\}.
\end{multline*}
Here $\langle u,v\rangle=\int_{D}\overline{u}\cdot v$ is the usual $L^2$ scalar product,
either in $L^2(D)$ or in $L^2(D,\mathbb{C}^3)$.
We call $H(\mathrm{div}_a,D)$ and $H_0(\mathrm{div}_a,D)$ the set of $u\in L^2(D,\mathbb{C}^3)$ such 
that $au\in H(\mathrm{div},D)$ or $au\in H_0(\mathrm{div},D)$, respectively.

We notice that $u\in H_0(\mathrm{curl},D)$ satisfies, in a weak sense, the boundary condition
$$\nu\wedge u=0\quad\text{on }\partial D$$
whereas $u\in H_0(\mathrm{div}_a,D)$ satisfies, again in a weak sense, the boundary condition
$$\nu\cdot (au)=0\quad\text{on }\partial D.$$
As usual, $\nu$ is the exterior unit normal to $D$ on $\partial D$.

\begin{rem}
An important remark related to solutions to the Maxwell equations is the following.
If $\mathbf{E}\in H_0(\mathrm{curl},D)$ and $\nabla\wedge \mathbf{E}=c a \mathbf{H}$ in $D$, for some $c\in \mathbb{C}\backslash\{0\}$, then $\mathbf{H}\in H_0(\mathrm{div}_a,D)$. In fact, first of all we notice that
$\nabla\cdot (a\mathbf{H})=0$ in $D$ in the sense of distribution, and therefore
$\mathbf{H}\in H(\mathrm{div}_a,D)$. We need to show that for any $\varphi\in H^1(D)$  with bounded support we have
$\langle a\mathbf{H},\nabla\varphi \rangle=0$. Since $\nabla\varphi\in H(\mathrm{curl},D)$, with $\nabla\wedge (\nabla\varphi)=0$, for any $\varphi\in H^1(D)$,
$$\langle a\mathbf{H},\nabla\varphi \rangle=(1/\overline{c})\langle \nabla\wedge \mathbf{E},\nabla\varphi \rangle=(1/\overline{c})\left(\langle \nabla\wedge \mathbf{E},\nabla\varphi \rangle-
\langle  \mathbf{E},\nabla\wedge(\nabla\varphi) \rangle
\right)=0.$$
\end{rem}

Let us notice that, under certain smoothness assumptions on $D$, we can better characterise these spaces. The smoothness assumption is of Lipschitz type and the result is the following, see for instance \cite[Chapter~3]{Mon}.

\begin{prop}\label{Lipdomain}
Let $D$ be a bounded Lipschitz domain. 

Then $H(\mathrm{div},D)=\overline{C^{\infty}(\overline{D})}$ and $H_0(\mathrm{div},D)=\overline{C_0^{\infty}(D)}$ with respect to the $H(\mathrm{div})$ norm.

For any $u\in H(\mathrm{div},D)$, we can define $\gamma_{\nu}(u)=\nu\cdot u|_{\partial D}\in H^{-1/2}(\partial D)$ such that
$$\langle\nabla\cdot u,\varphi \rangle+\langle u,\nabla\varphi \rangle=
\langle\gamma_{\nu}(u),\varphi\rangle_{\partial D}
\quad\text{for any }\varphi\in H^1(D).$$
Here and in the sequel, $\langle\cdot,\cdot\rangle_{\partial D}$ is the $H^{-1/2}$-$H^{1/2}$ duality on $\partial D$.
Therefore, $H_0(\mathrm{div},D)=\{u\in H(\mathrm{div},D):\ \gamma_{\nu}(u)=0\text{ in }H^{-1/2}(\partial D)\}$.

Correspondingly,
$H(\mathrm{curl},D)=\overline{C^{\infty}(\overline{D})}$ and $H_0(\mathrm{curl},D)=\overline{C_0^{\infty}(D)}$ with respect to the $H(\mathrm{curl})$ norm.

For any $u\in H(\mathrm{curl},D)$, we can define $\gamma_{\tau}(u)=\nu\wedge u|_{\partial D}\in H^{-1/2}(\partial D,\mathbb{C}^3)$ such that
$$\langle \nabla\wedge u,\phi \rangle-\langle u,\nabla\wedge\phi \rangle=
\langle\gamma_{\tau}(u),\phi\rangle_{\partial D}
\quad\text{for any }\phi\in H^1(D,\mathbb{C}^3).$$
Therefore, $H_0(\mathrm{curl},D)=\{u\in H(\mathrm{curl},D):\ \gamma_{\tau}(u)=0\text{ in }H^{-1/2}(\partial D,\mathbb{C}^3)\}$.
\end{prop}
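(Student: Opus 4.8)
The plan is to follow the classical scheme for Lipschitz domains (see \cite[Chapter~3]{Mon}): I would establish first the density statements, then use them to construct the two traces together with their Green's formulae, and finally deduce the kernel characterisations of $H_0(\mathrm{div},D)$ and $H_0(\mathrm{curl},D)$. Throughout, the decisive mechanism is a localise--translate--mollify argument, which exploits two elementary facts: the operators $\nabla\cdot$ and $\nabla\wedge$ commute with translations, so that if $u\in H(\mathrm{div},D)$ and $u_\tau(\mathbf{y})=u(\mathbf{y}-\tau\mathbf{e}_3)$ then $u_\tau$ lies in the same space with $\nabla\cdot u_\tau=(\nabla\cdot u)_\tau$ (and likewise $\nabla\wedge u_\tau=(\nabla\wedge u)_\tau$ for curl); and mollification converges in the graph norm on any compactly contained subdomain. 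The $\mathrm{div}$ and $\mathrm{curl}$ cases run in complete parallel, so I describe the $\mathrm{div}$ case and indicate the obvious modifications for $\mathrm{curl}$.

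For the density $H(\mathrm{div},D)=\overline{C^\infty(\overline D)}$, first I cover $\partial D$ by finitely many balls $B_{r}(\mathbf{x}_j)$ in which, after a rigid change of coordinates, $D$ is the subgraph $\{y_3<\varphi_j(y_1,y_2)\}$ of a Lipschitz function, and add one interior patch; a subordinate partition of unity reduces matters to a field $u$ supported in a single ball. In the interior patch ordinary mollification suffices. In a boundary ball I translate $u$ slightly in the outward direction (increasing $y_3$), so that the shifted field is defined on a full neighbourhood of $\overline{D\cap B_r(\mathbf{x}_j)}$; since $\varphi_j$ is Lipschitz this neighbourhood has a uniform collar, and mollifying at a scale $\ll\tau$ then yields functions in $C^\infty(\overline D)$ converging to $u$ in $H(\mathrm{div})$. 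The identical argument applied to vectorial data gives $H(\mathrm{curl},D)=\overline{C^\infty(\overline D)}$.

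Next I construct the normal trace. For $u\in C^\infty(\overline D)$ the classical divergence theorem gives $\langle\nabla\cdot u,\varphi\rangle+\langle u,\nabla\varphi\rangle=\int_{\partial D}(\nu\cdot u)\,\overline\varphi\,\mathrm{d}\mathcal{H}^2$ for every $\varphi\in H^1(D)$, and the left-hand side is bounded by $\|u\|_{H(\mathrm{div},D)}\|\varphi\|_{H^1(D)}$. Because the scalar trace $H^1(D)\to H^{1/2}(\partial D)$ is bounded and surjective with a bounded right inverse on a Lipschitz domain, the boundary pairing defines $\gamma_\nu(u)\in H^{-1/2}(\partial D)$ with $\|\gamma_\nu(u)\|_{H^{-1/2}}\lesssim\|u\|_{H(\mathrm{div},D)}$; by the density just proved, $\gamma_\nu$ extends continuously to all of $H(\mathrm{div},D)$ and the Green's formula persists in the limit. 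The identical computation with the pairing $\langle\nabla\wedge u,\phi\rangle-\langle u,\nabla\wedge\phi\rangle$ produces the tangential trace $\gamma_\tau(u)\in H^{-1/2}(\partial D,\mathbb{C}^3)$ and its Green's formula. From these formulae, combined with surjectivity of the trace, the weak condition defining $H_0(\mathrm{div},D)$ (which, $D$ being bounded, must hold against every $\varphi\in H^1(D)$) is equivalent to $\gamma_\nu(u)=0$; hence $H_0(\mathrm{div},D)=\{\gamma_\nu=0\}$, and similarly for curl. Moreover $\overline{C_0^\infty(D)}\subseteq\{\gamma_\nu=0\}$ because test functions have vanishing trace.

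The main obstacle is the remaining inclusion $\{\gamma_\nu=0\}\subseteq\overline{C_0^\infty(D)}$, i.e.\ that a vanishing normal trace actually forces approximability by compactly supported smooth fields. Here I argue that if $\gamma_\nu(u)=0$ then the extension $\bar u$ of $u$ by zero to $\mathbb{R}^3$ lies in $H(\mathrm{div},\mathbb{R}^3)$ with $\nabla\cdot\bar u$ equal to the zero-extension of $\nabla\cdot u$: testing $\bar u$ against $\psi\in C_0^\infty(\mathbb{R}^3)$ and integrating by parts produces precisely the boundary term $\langle\gamma_\nu(u),\psi|_{\partial D}\rangle_{\partial D}$, which vanishes, so no singular distribution is created across $\partial D$. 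Localising as before and translating $\bar u$ inward by a distance $\tau>0$ pushes its support a uniform distance inside $D$; cutting off and mollifying at a scale smaller than $\tau$ then yields functions in $C_0^\infty(D)$ converging to $u$ in $H(\mathrm{div})$. This extension-by-zero step, and the verification that the inward translation preserves graph-norm convergence near the Lipschitz boundary, is where the Lipschitz regularity is genuinely used and constitutes the technical heart of the proof. The $\mathrm{curl}$ case is entirely parallel, with $\gamma_\tau$ in place of $\gamma_\nu$, which completes both chains of equalities.
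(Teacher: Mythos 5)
Your argument is correct and is the classical localise--translate--mollify proof of these trace and density theorems for Lipschitz domains. The paper does not prove this proposition at all --- it quotes it as a known result with a pointer to \cite[Chapter~3]{Mon} --- and your proof is essentially the one found there (and in Girault--Raviart), including the genuinely delicate step, namely the extension by zero plus inward translation showing that $\gamma_{\nu}(u)=0$ (resp.\ $\gamma_{\tau}(u)=0$) implies $u\in\overline{C_0^{\infty}(D)}$.
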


Also the following density result will be of use. This density is trivial in the case of Lipschitz open sets but for a general open set we could not find a reference for it.

\begin{prop}\label{Linftydensity}
Let $D$ be any open set contained in $\mathbb{R}^3$. Then
$H(\mathrm{curl},D)\cap L^{\infty}(D,\mathbb{C}^3)$ is dense in $H(\mathrm{curl},D)$ with respect to the $H(\mathrm{curl})$ norm.
\end{prop}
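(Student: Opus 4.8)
The plan is to confine the unbounded behaviour of $u$ to a pure gradient part, which can be replaced by a bounded gradient without changing the curl, while the complementary part is regular enough to be truncated in magnitude. First I would reduce to the case where $u$ has bounded support: picking $\chi_n\in C_c^\infty(\mathbb{R}^3)$ with $0\le\chi_n\le1$ and $\chi_n=1$ on $B_n$, one has $\chi_n u\in H(\mathrm{curl},D)$ because $\nabla\wedge(\chi_n u)=\chi_n\nabla\wedge u+\nabla\chi_n\wedge u\in L^2(D,\mathbb{C}^3)$, and $\chi_n u\to u$ in $H(\mathrm{curl},D)$ by dominated convergence. This cut-off is carried out in the ambient space and never touches $\partial D$, so it costs nothing.

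The engine of the proof is that magnitude truncation is harmless for the curl exactly on $H^1$ fields. Let $g(t)=\min(1,M/t)$ and let $G_M(\zeta)=g(|\zeta|)\zeta$ be the nearest point projection of $\mathbb{C}^3$ onto $\overline{B_M}$, which is $1$-Lipschitz with $G_M(0)=0$. For $z\in H^1(D,\mathbb{C}^3)$ the chain rule for Lipschitz compositions gives $z_M:=G_M(z)\in H^1(D,\mathbb{C}^3)\cap L^\infty$ with $|\nabla z_M|\le|\nabla z|$ a.e. and $\nabla z_M=\nabla z$ on $\{|z|\le M\}$; hence $\|z_M-z\|_{H^1}\to0$ as $M\to\infty$ by dominated convergence, so $z_M\to z$ in $H(\mathrm{curl},D)$ while each $z_M$ is \emph{globally} bounded by $M$. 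This is the only place where boundedness is produced, and it is produced by a single truncation level. The same operation applied to a general $u\in H(\mathrm{curl},D)$ fails: a computation for smooth $u$ yields $\nabla\wedge(G_M(u))=g(|u|)\,\nabla\wedge u+R(u,\nabla\wedge u)+\tfrac{g'(|u|)}{|u|}\,[(u\cdot\nabla)u]\wedge u$, with $R$ an $L^2$ expression in $u$ and $\nabla\wedge u$, but the last term involves the full gradient $(u\cdot\nabla)u$, which is not controlled by the $H(\mathrm{curl})$ norm. Routing around this term is the whole point.

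To do so I would use a regular (Helmholtz--type) decomposition: approximate $u$ in $H(\mathrm{curl},D)$ by fields $z+\nabla p$ with $z\in H^1(D,\mathbb{C}^3)$ and $p\in H^1(D)$, thereby isolating the non-$H^1$ behaviour into the curl-free summand $\nabla p$. Granting this, $z$ is handled by the truncation above, and the gradient part reduces to the scalar density of $W^{1,\infty}$ in $H^1(D)$: approximating $p$ in $H^1(D)$ by $\tilde p$ with $\nabla\tilde p\in L^\infty(D)$, which is the more elementary ingredient (Lipschitz truncation, whose $H^1$-cost at level $L$ is controlled by $\|\nabla p\|_{L^2(\{|\nabla p|>L\})}\to0$). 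Since $\nabla\wedge\nabla\tilde p=0$, the field $v:=z_M+\nabla\tilde p$ then lies in $L^\infty(D,\mathbb{C}^3)\cap H(\mathrm{curl},D)$ and is $H(\mathrm{curl})$-close to $u$.

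The main obstacle is the regular decomposition on an \emph{arbitrary} open set. For Lipschitz $D$ it is classical, which is exactly why the statement is trivial there; but when $\partial D$ is wild there is no bounded extension operator, and the Biot--Savart construction applied to the zero-extension of $\nabla\wedge u$ generates a spurious boundary distribution supported on $\partial D$ that prevents the potential from being $H^1$. I would therefore build the decomposition by localisation: on balls $B\Subset D$ it is standard, and each localised piece $\psi u$ with $\psi\in C_c^\infty(D)$ extends by zero to $H(\mathrm{curl},\mathbb{R}^3)$ (its zero-extension has $L^2$ curl because $\nabla\psi$ vanishes near $\mathrm{supp}(\psi u)$), hence is individually approximable by bounded mollifications. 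The delicate point, and the heart of the argument, is to reassemble the pieces into a single field that is at once $H(\mathrm{curl})$-close to $u$ and uniformly bounded up to the irregular boundary: a naive partition-of-unity sum loses this boundedness because the required truncation levels blow up near $\partial D$, whereas a global $H^1$ gauge supplies one truncation level for the entire field. Establishing this global gauge for a general open set is the crux.
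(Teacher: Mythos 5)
Your proposal is not a complete proof: its load\nobreakdash-bearing step --- a regular decomposition $u\approx z+\nabla p$ with $z\in H^1(D,\mathbb{C}^3)$ and $p\in H^1(D)$, valid on an \emph{arbitrary} open set and with norm control --- is never established, and your own closing sentences concede that establishing it is ``the crux''. This is not a technicality. Regular decompositions of $H(\mathrm{curl})$ are obtained from extension operators or from solving auxiliary div--curl systems, both of which are Lipschitz-boundary technology; for the irregular boundaries this proposition is meant to cover, no such construction is available, and your localisation-plus-reassembly sketch founders on exactly the obstruction you yourself name (the truncation levels of the local pieces degenerate near $\partial D$, so the reassembled field need not be bounded). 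There is a secondary gap of the same nature: the ``more elementary ingredient'', density in $H^1(D)$ of functions with \emph{bounded gradient}, is a Lipschitz-truncation statement whose standard proofs (maximal function plus Whitney extension) again require an extension domain, so on an arbitrary open set it is not obviously available either. The proposal thus reduces the proposition to two unproved statements, each at least as delicate as the original.

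For comparison, the paper's proof is decomposition-free: it truncates $u$ directly by the radial retraction $F_M(\zeta)=\min(1,M/\|\zeta\|)\,\zeta$ and argues that $F_{t_m}\circ u\in H(\mathrm{curl},D)$ with $\nabla\wedge(F_{t_m}\circ u)=(\nabla\wedge u)\,\chi_{\{\|u\|<t_m\}}$, by covering $D$ with balls compactly contained in $D$, approximating $u$ there by smooth fields converging a.e.\ with an $L^2$ dominating function, invoking the Ambrosio--Dal Maso chain rule in the form of \eqref{curltrunc}, choosing the levels $t_m$ outside the countable set of values whose level sets have positive measure, and passing to the limit by dominated convergence before patching with a partition of unity. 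You should be aware, however, that your pointwise computation of $\nabla\wedge(G_M(u))$ for smooth $u$ --- which exhibits the term $\frac{g'(\|u\|)}{\|u\|}\,[(u\cdot\nabla)u]\wedge u$ on $\{\|u\|>M\}$ and is correct --- is in direct tension with \eqref{curltrunc}, which asserts that the gradient of $F_M\circ v$ vanishes on $\{\|v\|>M\}$; since $F_M$ restricted to that region is the non-constant radial projection onto the sphere of radius $M$, with Jacobian $\frac{M}{\|\zeta\|}\bigl(I-\hat{\zeta}\otimes\hat{\zeta}\bigr)\neq 0$, the objection you raise against direct truncation applies verbatim to the argument in the Appendix. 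In short: your diagnosis of where the difficulty lies is sound, but the proposal supplies no route around it.
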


We postpone the proof of this result to the Appendix.

Finally, let us assume that $D$ is an exterior domain, that is, it contains the exterior of a ball. In order to control the behaviour at infinity, we shall use the following notation.
For any $t\in\mathbb{R}$ and any $u\in L^2_{loc}(D)$, either complex-valued or $\mathbb{C}^3$-valued, we set
$$\|u\|_{0,t}=\left(\int_D \rho^{2t}\|u\|^2\right)^{1/2},$$
where we recall that $\rho=(1+r^2)^{1/2}$, $r=\|\mathbf{x}\|$. We finally define
$$H(\mathrm{curl},D,t)=\{u\in H_{loc}(\mathrm{curl},D):\ \|u\|_{0,t}+
\|\nabla\wedge u \|_{0,t}<+\infty\}$$
and
$$H(\mathrm{div},D,t)=\{u\in H_{loc}(\mathrm{div},D):\ \|u\|_{0,t}+
\|\nabla\cdot u \|_{0,t}<+\infty\}.$$

\subsection{Mathematical formulation}\label{subsect:1.1}

The electromagnetic wave is described by the electric field $\mathscr{E}(\mathbf{x}, t)$ and the the magnetic field $\mathscr{H}(\mathbf{x},t)$ for $(\mathbf{x},t)\in G\times\mathbb{R}_+$. The electromagnetic wave propagation is governed by the Maxwell equations
\begin{equation}\label{eq:Maxwell1}
\nabla\wedge \mathscr{E}(\mathbf{x},t)+\mu_1\frac{\partial \mathscr{H}}{\partial t}(\mathbf{x},t)=0,\quad \nabla\wedge \mathscr{H}(\mathbf{x},t)-\epsilon_1\frac{\partial \mathscr{E}}{\partial t}(\mathbf{x},t)=0.
\end{equation}
For time-harmonic electromagnetic waves of the form
\[
\mathscr{E}(\mathbf{x},t)=\Re(\epsilon_{\infty}^{-1/2}\mathbf{E}(\mathbf{x})e^{-\rmi\omega t}),\quad \mathscr{H}(\mathbf{x},t)=\Re(\mu_{\infty}^{-1/2}\mathbf{H}(\mathbf{x})e^{-\rmi\omega t})
\]
where $(\mathbf{E}, \mathbf{H})(\mathbf{x})\in\mathbb{C}^3\times\mathbb{C}^3$ and $\omega>0$ is the frequency, it is directly verified that one has the reduced Maxwell equations
\begin{equation}\label{eq:Maxwell2}
\nabla\wedge \mathbf{E}(\mathbf{x})-\rmi k\mu \mathbf{H}(\mathbf{x})=0,\quad \nabla\wedge \mathbf{H}(\mathbf{x})+\rmi k \epsilon \mathbf{E}(\mathbf{x})=0,
\end{equation}
with the wavenumber $k>0$ and the coefficients $\epsilon$ and $\mu$ defined as in Subsection~\ref{subsect:1.0}.
Notice that, when the medium is homogenous and isotropic, that is $\epsilon_1$ and $\mu_1$ are simply positive constants coinciding with $\epsilon_{\infty}$ and $\mu_{\infty}$ everywhere, then \eqref{eq:Maxwell2}
reduces to
\begin{equation}\label{eq:Maxwell2bis}
\nabla\wedge \mathbf{E}(\mathbf{x})-\rmi k \mathbf{H}(\mathbf{x})=0,\quad \nabla\wedge \mathbf{H}(\mathbf{x})+\rmi k \mathbf{E}(\mathbf{x})=0.
\end{equation}

In the sequel we shall consider the system \eqref{eq:Maxwell2} with $k>0$ and $\epsilon$ and $\mu$ satisfying \eqref{ellipticity2} and \eqref{infinity}

The equation \eqref{eq:Maxwell2} is to be understood in the sense of distribution, and hence it is well-formulated, for instance, for $(\mathbf{E},\mathbf{H})\in (H_{loc}(\mathrm{curl},G))^2$.

We say that a solution $(\mathbf{E},\mathbf{H})$ to \eqref{eq:Maxwell2} is \emph{outgoing} or \emph{radiating} if it satisfies the following \emph{Silver-M\"uller radiation condition}
\begin{equation}\label{eq:radiation}
\lim_{r\to+\infty}r\left(\frac{\mathbf{x}}{\|\mathbf{x}\|}\wedge \mathbf{H}(\mathbf{x})+
\mathbf{E}(\mathbf{x})\right)=0,\quad r=\|\mathbf{x}\|
\end{equation}
which holds uniformly in all directions $\hat{\mathbf{x}}:=\mathbf{x}/\|\mathbf{x}\|\in \mathbb{S}^2$. The Silver-M\"uller radiation condition characterises the radiating nature of solutions to the Maxwell equations. We recall that this is equivalent to requiring that
\begin{equation}\label{eq:radiation2}
\lim_{r\to+\infty}r\left(\frac{\mathbf{x}}{\|\mathbf{x}\|}\wedge \mathbf{E}(\mathbf{x})-
\mathbf{H}(\mathbf{x})\right)=0,\quad r=\|\mathbf{x}\|,
\end{equation}
see for instance \cite{CK}.

In the case of a homogeneous and isotropic medium, the following connection between the Maxwell equations and the vectorial Helmholtz equation is known, see again \cite{CK}.

\begin{lem}\label{lem:MaxwHelm}
If $(\mathbf{E},\mathbf{H})$ is a solution to the Maxwell equations \eqref{eq:Maxwell2bis}, that is
$$
\nabla\wedge \mathbf{E} -\rmi k\mathbf{H}=0, \quad \nabla \wedge \mathbf{H}+\rmi k\mathbf{E}=0,
$$
then $\mathbf{E}$ and $\mathbf{H}$ satisfy the following vectorial Helmholtz equations,
\begin{equation}\label{eq:Helm1}
\Delta \mathbf{E} + k^2 \mathbf{E}=0,\quad \nabla\cdot \mathbf{E}=0,
\end{equation}
\begin{equation}\label{eq:Helm2}
\Delta \mathbf{H} + k^2 \mathbf{H}=0,\quad \nabla\cdot \mathbf{H}=0.
\end{equation}

Conversely, if $\mathbf{E}$ satisfies \eqref{eq:Helm1} \textnormal{(}or $\mathbf{H}$ satisfies \eqref{eq:Helm2}\textnormal{)}, then $\mathbf{E}$ and $\mathbf{H} =(\nabla\wedge \mathbf{E})/(\rmi k)$ \textnormal{(}or $\mathbf{H}$ and $\mathbf{E} =- (\nabla\wedge \mathbf{H})/(\rmi k)$, respectively\textnormal{)} satisfy the Maxwell equations \eqref{eq:Maxwell2bis}.

Moreover, for $(\mathbf{E},\mathbf{H})$, a solution to the Maxwell equations \eqref{eq:Maxwell2bis}, the Silver-M\"uller radiation condition
\eqref{eq:radiation}
is equivalent to the Sommerfeld radiation condition for all components of $\mathbf{E}$ and $\mathbf{H}$; that is
\begin{equation}\label{eq:radiationSommer}
\lim_{r\to+\infty}r\left(\frac{\partial u}{\partial r} -\rmi ku \right ) =0,\quad r=\|\mathbf{x}\|,
\end{equation}
where the limit holds uniformly in all directions $\hat{\mathbf{x}}=\mathbf{x}/\|\mathbf{x}\|\in\mathbb{S}^2$, and $u$ denotes any of the Cartesian components of 
$\mathbf{E}$ or $\mathbf{H}$.
\end{lem}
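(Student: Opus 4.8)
The plan is to separate the two assertions: the pointwise (PDE) equivalence, which is routine constant-coefficient vector calculus, and the equivalence of the radiation conditions, which is the genuinely delicate part. Throughout I use that in a homogeneous isotropic medium the coefficients are constant, so every distributional solution of \eqref{eq:Maxwell2bis} is in fact smooth by interior elliptic regularity for the Helmholtz operator; hence the classical identity $\nabla\wedge(\nabla\wedge\mathbf{E})=\nabla(\nabla\cdot\mathbf{E})-\Delta\mathbf{E}$ is available.

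For the forward PDE implication I would take the curl of $\nabla\wedge\mathbf{E}=\rmi k\mathbf{H}$ and substitute $\nabla\wedge\mathbf{H}=-\rmi k\mathbf{E}$, obtaining $\nabla(\nabla\cdot\mathbf{E})-\Delta\mathbf{E}=k^2\mathbf{E}$. Taking the divergence of $\nabla\wedge\mathbf{H}=-\rmi k\mathbf{E}$ and using $\nabla\cdot(\nabla\wedge\mathbf{H})=0$ gives $\nabla\cdot\mathbf{E}=0$, and plugging this back yields $\Delta\mathbf{E}+k^2\mathbf{E}=0$, i.e.\ \eqref{eq:Helm1}; the identical computation with $\mathbf{E}$ and $\mathbf{H}$ interchanged gives \eqref{eq:Helm2}. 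For the converse, given $\mathbf{E}$ solving \eqref{eq:Helm1}, set $\mathbf{H}=(\nabla\wedge\mathbf{E})/(\rmi k)$; the first Maxwell equation then holds by definition, while $\nabla\wedge\mathbf{H}=(\nabla(\nabla\cdot\mathbf{E})-\Delta\mathbf{E})/(\rmi k)=-\rmi k\mathbf{E}$, where $\nabla\cdot\mathbf{E}=0$ kills the first term and $\Delta\mathbf{E}=-k^2\mathbf{E}$ produces the second. The symmetric construction starting from $\mathbf{H}$ is the same.

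The main obstacle is the equivalence of the radiation conditions. By the first part, every Cartesian component of $\mathbf{E}$ and $\mathbf{H}$ solves the scalar Helmholtz equation, so I would invoke the standard far-field asymptotics for radiating solutions of $\Delta u+k^2u=0$: if $u$ satisfies the Sommerfeld condition \eqref{eq:radiationSommer}, then uniformly in $\hat{\mathbf{x}}$,
\[
u(\mathbf{x})=\frac{\rme^{\rmi k r}}{r}\,u_\infty(\hat{\mathbf{x}})+O(r^{-2}),\qquad
\nabla u(\mathbf{x})=\rmi k\,\hat{\mathbf{x}}\,\frac{\rme^{\rmi k r}}{r}\,u_\infty(\hat{\mathbf{x}})+O(r^{-2}).
\]
Applying this componentwise yields $\mathbf{E}=r^{-1}\rme^{\rmi k r}\mathbf{E}_\infty+O(r^{-2})$ and $\nabla\wedge\mathbf{E}=\rmi k\,r^{-1}\rme^{\rmi k r}\,\hat{\mathbf{x}}\wedge\mathbf{E}_\infty+O(r^{-2})$, hence $\mathbf{H}=r^{-1}\rme^{\rmi k r}\,\hat{\mathbf{x}}\wedge\mathbf{E}_\infty+O(r^{-2})$. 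Using $\hat{\mathbf{x}}\wedge(\hat{\mathbf{x}}\wedge\mathbf{E}_\infty)=\hat{\mathbf{x}}(\hat{\mathbf{x}}\cdot\mathbf{E}_\infty)-\mathbf{E}_\infty$ I would then compute
\[
\hat{\mathbf{x}}\wedge\mathbf{H}+\mathbf{E}=\frac{\rme^{\rmi k r}}{r}\,\hat{\mathbf{x}}\,(\hat{\mathbf{x}}\cdot\mathbf{E}_\infty)+O(r^{-2}).
\]
The divergence condition is decisive: since the leading far-field term of $\nabla\cdot\mathbf{E}$ is $\rmi k\,r^{-1}\rme^{\rmi k r}(\hat{\mathbf{x}}\cdot\mathbf{E}_\infty)$, the relation $\nabla\cdot\mathbf{E}=0$ forces $\hat{\mathbf{x}}\cdot\mathbf{E}_\infty=0$, whence $r(\hat{\mathbf{x}}\wedge\mathbf{H}+\mathbf{E})=O(r^{-1})\to0$, i.e.\ \eqref{eq:radiation}.

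For the reverse implication, that Silver--M\"uller \eqref{eq:radiation} implies the componentwise Sommerfeld condition, the same expansions give the conclusion \emph{once} the far-field asymptotics are known; the genuine difficulty is that from \eqref{eq:radiation} alone one does not have the a priori component decay needed to justify them. Here I would appeal to the Stratton--Chu representation formula, exactly as in \cite{CK}, which shows that a Silver--M\"uller radiating field admits such an expansion with $\hat{\mathbf{x}}\cdot\mathbf{E}_\infty=0$; reading off the leading terms then shows that each component satisfies \eqref{eq:radiationSommer}. This representation step is the substantive technical ingredient, the rest being the elementary vector identities above.
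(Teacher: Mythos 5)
Your proposal is correct, and it reproduces the classical argument from Colton--Kress that the paper itself relies on: Lemma~\ref{lem:MaxwHelm} is stated in the paper without proof, with a citation to \cite{CK}, and your two steps (the constant-coefficient identity $\nabla\wedge(\nabla\wedge\cdot)=\nabla(\nabla\cdot\,\cdot)-\Delta$ for the PDE equivalence, and the far-field expansion plus Stratton--Chu for the equivalence of the Silver--M\"uller and Sommerfeld conditions) are exactly the ingredients of that standard proof. The only cosmetic remark is that smoothness of $(\mathbf{E},\mathbf{H})$ should be obtained \emph{after} deriving the vector Helmholtz equation distributionally, not before, but since the curl-curl identity is valid for distributions this changes nothing.
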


Using the asymptotic behaviour of outgoing solutions to the Helmholtz equation,
we can deduce the following asymptotic behaviour of outgoing solutions of the Maxwell equations. That is, as $r=\|\mathbf{x}\|\to+\infty$,
\begin{equation}\label{eq:farfield}
\begin{array}{l}
\displaystyle{\mathbf{E}(\mathbf{x})=\frac{\rme^{\rmi k\|\mathbf{x}\|}}{\|\mathbf{x}\|} {\mathbf{E}_\infty}(\hat{\mathbf{x}})+\mathcal{O}\left(\frac{1}{\|\mathbf{x}\|^2}\right),}\\
\displaystyle{\mathbf{H}(\mathbf{x})=\frac{\rme^{\rmi k\|\mathbf{x}\|}}{\|\mathbf{x}\|} \mathbf{H}_\infty(\hat{\mathbf{x}})+\mathcal{O}\left(\frac{1}{\|\mathbf{x}\|^2}\right),}
\end{array}
\end{equation}
which hold uniformly in $\hat{\mathbf{x}}=\mathbf{x}/\|\mathbf{x}\|\in\mathbb{S}^2$. Here $\mathbf{E}_\infty$ and $\mathbf{H}_\infty$ are complex-valued functions defined on $\mathbb{S}^2$ and
denote the \emph{electric and magnetic far-field patterns}, respectively, and they satisfy
for any $\hat{\mathbf{x}}\in\mathbb{S}^2$,
\begin{equation}\label{eq:relation}
\mathbf{H}_\infty(\hat{\mathbf{x}})=\hat{\mathbf{x}}\wedge \mathbf{E}_\infty(\hat{\mathbf{x}})\quad \mbox{and}\quad \hat{\mathbf{x}}\cdot \mathbf{E}_\infty(\hat{\mathbf{x}})=\hat{\mathbf{x}}\cdot \mathbf{H}_\infty(\hat{\mathbf{x}})=0.
\end{equation}
It is also known that $\mathbf{E}_\infty$ and $\mathbf{H}_\infty$ are real-analytic  and hence if they are given on any open patch of $\mathbb{S}^2$, then they are known on the whole unit sphere by analytic continuation.

For the scatterer $\Sigma$ defined earlier, the EM wave cannot penetrate inside the scatterer and the Maxwell system \eqref{eq:Maxwell2} is defined only in $G=\mathbb{R}^3\backslash \Sigma$. Depending on the physical property of the scatterer, one would have the following boundary conditions on $\partial \Sigma$,
\begin{equation}\label{eq:PEC}
{\nu}({\mathbf{x}})\wedge \mathbf{E}(\mathbf{x})=0,\quad \mathbf{x}\in\partial \Sigma,
\end{equation}
corresponding to a \emph{perfectly electric conducting} (PEC) scatterer $\Sigma$; or
\begin{equation}\label{eq:PMC}
{\nu}(\mathbf{x})\wedge \mathbf{H}(\mathbf{x})=0,\quad \mathbf{x}\in\partial \Sigma,
\end{equation}
corresponding to a \emph{perfectly magnetic conducting} (PMC) scatterer $\Sigma$. Here, $\nu\in\mathbb{S}^2$
denotes the exterior unit normal vector to $G$ (or interior unit normal to $\Sigma$) on $\partial G=\partial\Sigma$.

\begin{rem}\label{PEC-PMC}
Let us remark here once for all that if $(\mathbf{E},\mathbf{H})$ solves \eqref{eq:Maxwell2}, then $(\mathbf{H},-\mathbf{E})$ solve the same equation provided we swap $\epsilon$ with $\mu$. Moreover, if $(\mathbf{E},\mathbf{H})$ is an outgoing solution to \eqref{eq:Maxwell2}, then $(\mathbf{H},-\mathbf{E})$ is also outgoing.
Therefore, by this kind of change of variables, we can always turn a perfectly magnetic conducting scatterer $\Sigma$ into a perfectly electric conducting scatterer. Hence, in what follows, we shall only consider the perfectly electric conducting case but it is clear that all the results of this paper hold for the perfectly magnetic conducting case as well.
\end{rem}

Summarising our discussion above, the EM scattering problem we are interested in is the following. We fix $k>0$ and consider a pair of \emph{incident electric and magnetic fields} $(\mathbf{E}^i,\mathbf{H}^i)$ given by an entire solution to \eqref{eq:Maxwell2bis}. For example,
we can choose
\begin{equation}\label{eq:planewave}
\mathbf{E}^i(\mathbf{x})=\frac{\rmi}{k}\nabla\wedge\left(\nabla\wedge \mathbf{p}\rme^{\rmi k\mathbf{x}\cdot \mathbf{d}}\right),\quad \mathbf{H}^i(\mathbf{x})=\nabla\wedge \mathbf{p}\rme^{\rmi k\mathbf{x}\cdot {\mathbf{d}}},\quad \mathbf{x}\in\mathbb{R}^3.
\end{equation}
In this case $(\mathbf{E}^i, \mathbf{H}^i)= 
(\mathbf{E}^i, \mathbf{H}^i)(\mathbf{p},\mathbf{d})$ is known as the \emph{normalised electromagnetic plane wave} with the \emph{polarisation vector} $\mathbf{p}\in\mathbb{R}^3$, $\mathbf{p}\neq 0$, and the \emph{incident direction} ${\mathbf{d}}\in\mathbb{S}^2$.
Given a perfectly electric conducting scatterer $\Sigma$, and $\epsilon$ and $\mu$ satisfying the above hypotheses, 
we look for a solution $(\mathbf{E},\mathbf{H})$ to the following exterior boundary value problem
\begin{equation}\label{directscattering}
\left\{\begin{array}{ll}
\nabla\wedge \mathbf{E}-\rmi k\mu \mathbf{H}=0
&\text{in }G=\mathbb{R}^3\backslash \Sigma\\
\nabla\wedge \mathbf{H}+\rmi k \epsilon \mathbf{E}=0
&\text{in }G\\
(\mathbf{E},\mathbf{H})=(\mathbf{E}^i,\mathbf{H}^i)+(\mathbf{E}^s,\mathbf{H}^s)
&\text{in }G\\
\nu\wedge \mathbf{E}=0&\text{on }\partial G\\
\lim_{r\to+\infty}r\left(\frac{\mathbf{x}}{\|\mathbf{x}\|}\wedge \mathbf{H}^s(\mathbf{x})+
\mathbf{E}^s(\mathbf{x})\right)=0 & r=\|\mathbf{x}\|.
\end{array}
\right.
\end{equation}
Here $\mathbf{E}^s$ and $\mathbf{H}^s$ in \eqref{directscattering} are called the \emph{scattered electric and magnetic fields}, respectively, and by the last line they satisfy the Silver-M\"uller radiation condition. We call $\mathbf{E}$ and $\mathbf{H}$ the \emph{total electric and magnetic fields}, respectively.

The Maxwell system \eqref{directscattering} describes the following electromagnetic wave scattering. In the homogeneous and isotropic space, the EM incident wave $(\mathbf{E}^i, \mathbf{H}^i)$ would propagate indefinitely, since it is an entire solution to the Maxwell system \eqref{eq:Maxwell2bis}. The presence of the scatterer $\Sigma$, and of possible inhomogeneity around it,
perturbs the propagation of the incident wave. Such a perturbation is given by the scattered wave field $(\mathbf{E}^s,\mathbf{H}^s)$, which, outside a large enough ball,  is an outgoing, that is, radiating, solution to the Maxwell system \eqref{eq:Maxwell2bis}.

The existence and uniqueness of a solution to \eqref{directscattering} is classical in the case of a homogeneous and isotropic medium and of smooth scatterers, see for instance \cite{CK,Ned}. More effort is needed for the case of Lipschitz scatterers, and this case is also rather well understood, see for instance \cite{Mon}.

In order to deal with the most general scenario, that is, with minimal assumptions on the scatterer $\Sigma$ and the coefficients $\epsilon$ and $\mu$, we shall make use of a quite general result proved in \cite{PWW}. The uniqueness and existence result will be treated in the next subsection.

\subsection{Existence and uniqueness}\label{subsect:ex+un}
Given an exterior domain $G$, we call $G_R=G\cap B_R(0)$ for any $R>0$.
We assume that \eqref{boundedness}, \eqref{ellipticity2} and \eqref{infinity} hold for some constants $R_0>0$ and $0<\lambda_0<\lambda_1$.
We say that $(\mathbf{E},\mathbf{H})$ is a (weak) solution to the direct scattering problem \eqref{directscattering} for a given incident field $(\mathbf{E}^i,\mathbf{H}^i)$ if  
$(\mathbf{E},\mathbf{H})\in H(\mathrm{curl},G_R)^2$ for any $R>0$, the Maxwell system is satisfied in $G$ in the sense of distributions, $\mathbf{E}\in H_0(\mathrm{curl},G)$, and 
$(\mathbf{E}^s,\mathbf{H}^s)=(\mathbf{E},\mathbf{H})-(\mathbf{E}^i,\mathbf{H}^i)$ is an outgoing solution to the Maxwell system \eqref{eq:Maxwell2bis}
 in $\mathbb{R}^3\backslash \overline{B_{R_0}}$.
 
 Fixed an auxiliary function $\chi\in C^{\infty}_0(\mathbb{R}^3)$ such that
 $\chi\equiv 1$ in a neighbourhood of $\overline{B_{R_0}}$, we have that
 $(\mathbf{E},\mathbf{H})$ solves \eqref{directscattering} if and only if
 $(\mathbf{E}_1,\mathbf{H}_1)=(\mathbf{E},\mathbf{H})-(1-\chi)(\mathbf{E}^i,\mathbf{H}^i)$ solves, in an analogous weak sense,
\begin{equation}\label{directscatteringbis}
\left\{\begin{array}{ll}
\nabla\wedge \mathbf{E}_1-\rmi k\mu \mathbf{H}_1=\mathbf{F}
&\text{in }G\\
\nabla\wedge \mathbf{H}_1+\rmi k \epsilon \mathbf{E}_1=\mathbf{G}
&\text{in }G\\
\nu\wedge \mathbf{E}_1=0&\text{on }\partial G\\
\lim_{r\to+\infty}r\left(\frac{\mathbf{x}}{\|\mathbf{x}\|}\wedge \mathbf{H}_1(\mathbf{x})+
\mathbf{E}_1(\mathbf{x})\right)=0 & r=\|\mathbf{x}\|
\end{array}
\right.
\end{equation}
where
$$\mathbf{F}=\nabla\wedge ((\chi-1)\mathbf{E}^i)-\rmi k(\chi-1)\mu \mathbf{H}^i,\quad \mathbf{G}=\nabla\wedge ((\chi-1)\mathbf{H}^i)+\rmi k (\chi-1)\epsilon \mathbf{E}^i.$$
Notice that $\mathbf{F}$ and $\mathbf{G}$ belong to $L^2(D,\mathbb{C}^3)$ and have bounded support.

In order to connect to the existence and uniqueness result in \cite{PWW} we need the following.

\begin{lem}\label{connection}
Let us fix $R>0$.
Let $(\mathbf{E},\mathbf{H})\in H_{loc}(\mathrm{curl},\mathbb{R}^3\backslash\overline{B_R})^2$ solve \eqref{eq:Maxwell2bis} in $\mathbb{R}^3\backslash\overline{B_R}$.

Then the following two facts hold true.

\begin{enumerate}[A\textnormal{)}]

\item\label{partA} $(\mathbf{E},\mathbf{H})$ satisfies the Silver-M\"uller radiation condition if and only if
\begin{equation}\label{conda}
(\mathbf{E},\mathbf{H})\in \bigcap_{t<-1/2}H(\mathrm{curl},\mathbb{R}^3\backslash\overline{B_{R+1}},t)^2
\end{equation}
and
\begin{equation}\label{condb}
\frac{\mathbf{x}}{\|\mathbf{x}\|}\wedge \mathbf{H}(\mathbf{x})+
\mathbf{E}(\mathbf{x}),\ \frac{\mathbf{x}}{\|\mathbf{x}\|}\wedge \mathbf{E}(\mathbf{x})-
\mathbf{H}(\mathbf{x})\in \bigcup_{t>-1/2}H(\mathrm{curl},\mathbb{R}^3\backslash\overline{B_{R+1}},t).
\end{equation}

\item\label{partB}
 If $(\mathbf{E},\mathbf{H})$ satisfies the Silver-M\"uller radiation condition and
$$(\mathbf{E},\mathbf{H})\in \bigcap_{t\in\mathbb{R}}H(\mathrm{curl},\mathbb{R}^3\backslash\overline{B_{R+1}},t)^2,$$
then $\mathbf{E}=\mathbf{H}=0$ in $\mathbb{R}^3\backslash\overline{B_R}$.

\end{enumerate}

\end{lem}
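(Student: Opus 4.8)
The plan is to reduce the whole statement to the scalar Helmholtz equation and then to read off everything from the spherical (multipole) expansion of exterior solutions. By Lemma~\ref{lem:MaxwHelm}, since $(\mathbf{E},\mathbf{H})$ solves \eqref{eq:Maxwell2bis} in $\mathbb{R}^3\backslash\overline{B_R}$, every Cartesian component of $\mathbf{E}$ and $\mathbf{H}$ solves $\Delta u+k^2u=0$ there and is therefore real-analytic by elliptic regularity; moreover the Silver-M\"uller condition is equivalent to the Sommerfeld condition for each component. Consequently, for $r>R$, the pair $(\mathbf{E},\mathbf{H})$ admits a convergent expansion into spherical vector wave functions whose radial factors are the outgoing and incoming spherical Hankel functions $h_n^{(1)}(kr)\sim (kr)^{-1}(-\rmi)^{n+1}\rme^{\rmi kr}$ and $h_n^{(2)}(kr)\sim (kr)^{-1}(\rmi)^{n+1}\rme^{-\rmi kr}$ (see \cite{CK,Ned}); the radiating fields are exactly those in which only the $h_n^{(1)}$ terms survive. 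Throughout I use the elementary weight bookkeeping $\int_{\{r>R+1\}}\rho^{2t}r^{-2p}\,\rmd V<+\infty$ if and only if $t<p-1/2$, which puts the $1/r$ threshold at $t=-1/2$ and the $1/r^2$ threshold at $t=1/2$.

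For the forward implication in part \eqref{partA}, assume $(\mathbf{E},\mathbf{H})$ is radiating. The classical asymptotic expansion of outgoing solutions, which may be differentiated term by term, gives $\mathbf{E},\mathbf{H}=\mathcal{O}(1/r)$, while $\nabla\wedge\mathbf{E}=\rmi k\mathbf{H}$ and $\nabla\wedge\mathbf{H}=-\rmi k\mathbf{E}$ are of the same size; the weight bookkeeping then yields \eqref{conda} for every $t<-1/2$. For \eqref{condb} I invoke \eqref{eq:farfield}--\eqref{eq:relation}: because $\mathbf{H}_\infty=\hat{\mathbf{x}}\wedge\mathbf{E}_\infty$ and $\hat{\mathbf{x}}\cdot\mathbf{E}_\infty=0$, the leading $1/r$ terms of $\hat{\mathbf{x}}\wedge\mathbf{H}+\mathbf{E}$ and of $\hat{\mathbf{x}}\wedge\mathbf{E}-\mathbf{H}$ cancel, leaving a quantity of size $\mathcal{O}(1/r^2)$. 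Its curl is $\mathcal{O}(1/r^2)$ as well, since differentiating $\rme^{\rmi kr}$ only multiplies by $\rmi k$ and does not lower the radial order; hence both combinations lie in $H(\mathrm{curl},\mathbb{R}^3\backslash\overline{B_{R+1}},t)$ for any $-1/2<t<1/2$.

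For the converse in \eqref{partA}, assume \eqref{conda} and \eqref{condb} and split $(\mathbf{E},\mathbf{H})$ into its outgoing and incoming parts via the expansion above, condition \eqref{conda} guaranteeing that the series and its asymptotics may be handled term by term. The decisive contrast is that for a purely outgoing multipole the Silver-M\"uller combination $\hat{\mathbf{x}}\wedge\mathbf{H}+\mathbf{E}$ is $\mathcal{O}(1/r^2)$, exactly as above, whereas for a purely incoming multipole the incoming far-field relation carries the opposite sign, so that $\hat{\mathbf{x}}\wedge\mathbf{H}_\infty+\mathbf{E}_\infty=2\mathbf{E}_\infty\neq 0$ and $\hat{\mathbf{x}}\wedge\mathbf{H}+\mathbf{E}$ retains a nonvanishing leading $1/r$ term. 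By the bookkeeping such a term cannot belong to $L^2_t$ for any $t\geq-1/2$, so testing \eqref{condb} against the relevant vector spherical harmonics forces every incoming coefficient to vanish. Thus $(\mathbf{E},\mathbf{H})$ is purely outgoing, i.e.\ radiating. This extraction of the incoming coefficients from \eqref{condb}, together with the justification of the interchange of summation and asymptotics afforded by \eqref{conda}, is the main obstacle and is precisely where the sharp threshold $t=-1/2$ is exploited; alternatively this characterisation may be quoted from the weighted-space framework of \cite{PWW}.

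Finally, for part \eqref{partB}, the hypothesis that $(\mathbf{E},\mathbf{H})$ lies in $H(\mathrm{curl},\mathbb{R}^3\backslash\overline{B_{R+1}},t)^2$ for \emph{every} $t\in\mathbb{R}$ includes $t=0$, i.e.\ plain $L^2$ over the exterior, which by the same bookkeeping is incompatible with a nonzero leading $1/r$ term; in view of \eqref{eq:farfield} this forces $\mathbf{E}_\infty=\mathbf{H}_\infty=0$. Applying Rellich's lemma to the scalar components, each an outgoing Helmholtz solution with vanishing far field, yields $\mathbf{E}=\mathbf{H}=0$ in $\mathbb{R}^3\backslash\overline{B_{R+1}}$, and unique continuation for the Helmholtz equation across the connected annulus extends this to all of $\mathbb{R}^3\backslash\overline{B_R}$.
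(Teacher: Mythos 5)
Your forward implication in part~\emph{\ref{partA}}) and your part~\emph{\ref{partB}}) are sound and essentially coincide with the paper's argument (pointwise $\mathcal{O}(1/r)$ and $\mathcal{O}(1/r^2)$ bounds from \eqref{eq:farfield}--\eqref{eq:relation} plus the weight bookkeeping at the threshold $t=-1/2$, then Rellich's lemma). The problem is the converse implication in part~\emph{\ref{partA}}), which is the heart of the lemma, and there your argument has a genuine gap that you yourself flag but do not close. Two things are missing. First, the step ``testing \eqref{condb} against the relevant vector spherical harmonics forces every incoming coefficient to vanish'' presupposes that the Silver--M\"uller combination of the \emph{outgoing part of the series} is $\mathcal{O}(1/r^2)$, i.e.\ that the outgoing part already satisfies the radiation condition with uniform far-field asymptotics --- which is precisely what you are trying to prove; working mode-by-mode via orthogonality on spheres would repair this, but you do not set that up. Second, even granting that all incoming coefficients vanish, concluding that a purely outgoing multipole series satisfies the Silver--M\"uller condition \emph{uniformly in direction} is a nontrivial theorem (the vector analogue of the scalar result that a convergent expansion in $h_n^{(1)}$ is radiating), whose standard proof itself passes through an integral representation; you neither prove it nor cite it. Declaring the interchange of summation and asymptotics ``the main obstacle'' and deferring to \cite{PWW} leaves the decisive step unproved.

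For contrast, the paper's proof of the converse avoids expansions entirely: from \eqref{condb} it deduces that $a(r)=\int_{\partial B_r}\|\hat{\mathbf{x}}\wedge\mathbf{H}+\mathbf{E}\|^2\,\rmd\sigma$ satisfies $\int_{R+1}^{+\infty}r^{2t}a(r)\,\rmd r<+\infty$ for some $t>-1/2$, hence $\liminf_{r\to+\infty}a(r)=0$, extracts a sequence $r_n\to+\infty$ along which $a(r_n)\to 0$, and then reruns the proof of the Stratton--Chu formulas (Theorem~6.6 in \cite{CK}) with the limit taken along that sequence; the resulting integral representation is automatically radiating. If you want to keep your multipole route you must either prove the two missing facts above or replace them by an argument of this Stratton--Chu type.
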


\begin{proof}
We begin with part~\textit{\ref{partA}}) and we assume that $(\mathbf{E},\mathbf{H})$ satisfies the Silver-M\"uller radiation condition. Let $u$ be any Cartesian component of $\mathbf{E}$ or $\mathbf{H}$. Then, by the Sommerfeld radiation condition, we have
$$u(\mathbf{x})=\frac{\rme^{\rmi k\|\mathbf{x}\|}}{\|\mathbf{x}\|} {\mathbf{u}_\infty}(\hat{\mathbf{x}})+\frac{a(r\hat{\mathbf{x}})}{\|\mathbf{x}\|^2},\quad r=\|\mathbf{x}\|\geq R+1.$$
Here, for some constant $C_0$,
$$|a(r\hat{\mathbf{x}})|\leq C_0\quad \text{for any }r\geq R+1,\ \hat{\mathbf{x}}\in\mathbb{S}^2.$$
Since $r\leq\rho\leq C(R)r$ for any $r\geq R>0$, with $C(R)$ depending on $R$ only, we deduce that, for any $t\in\mathbb{R}$,
\begin{multline*}
\int_{\mathbb{R}^3\backslash \overline{B_{R+1}}}\rho^{2t}\|u\|^2\\\leq C_1\int_{R+1}^{+\infty}r^{2t}\left(\int_{\mathbb{S}^2}\|{\mathbf{u}_\infty}(\hat{\mathbf{x}})+a(r\hat{\mathbf{x}})/r\|^2\rmd\sigma(\hat{\mathbf{x}})\right)\rmd r\leq
C_2\int_{R+1}^{+\infty}r^{2t}\rmd r,
\end{multline*}
with $C_1$ and $C_2$ depending on $R$ and $t$. Since the right hand side
is finite for any $t<-1/2$, we conclude that \eqref{conda} holds.

We also notice that, whenever $\mathbf{u}_\infty\not\equiv 0$, then for any $t\in\mathbb{R}$ there exist $\tilde{R}\geq R+1$ and $\tilde{C}_0>0$ such that
$$\int_{\mathbb{R}^3\backslash \overline{B_{R+1}}}\rho^{2t}\|u\|^2\geq \tilde{C}_0
\int_{\tilde{R}}^{+\infty} r^{2t}\rmd r.$$
The right hand side is infinite for any $t\geq -1/2.$
Therefore, if $\|u\|_{0,t}$ on $\mathbb{R}^3\backslash \overline{B_{R+1}}$ is finite for every $t\in\mathbb{R}$, then we obtain that $\mathbf{u}_\infty\equiv 0$, and, in turn by the Rellich lemma, that $u\equiv 0$ in $\mathbb{R}^3\backslash \overline{B_R}$. Therefore, part~\textit{\ref{partB}}) is also proved.

Now we prove that \eqref{condb} holds. This follows immediately by the arguments developed in \cite[Chapter~6]{CK}, since the Silver-M\"uller radiation condition implies that, for any $\|\mathbf{x}\|=r\geq R+1$,
$$\left\|\frac{\mathbf{x}}{\|\mathbf{x}\|}\wedge \mathbf{H}(\mathbf{x})+
\mathbf{E}(\mathbf{x})\right\|,\ \left\|\frac{\mathbf{x}}{\|\mathbf{x}\|}\wedge \mathbf{E}(\mathbf{x})-
\mathbf{H}(\mathbf{x})\right\|\leq C/r^2 $$
for some constant $C$.
We readily conclude that \eqref{condb} holds.

We conclude the proof by showing that \eqref{conda} and \eqref{condb} imply the Silver-M\"uller radiation condition.
We call
$$a(r)=\int_{\partial B_r}\left\|\frac{\mathbf{x}}{\|\mathbf{x}\|}\wedge \mathbf{H}(\mathbf{x})+
\mathbf{E}(\mathbf{x})\right\|^2\rmd\sigma(\mathbf{x}).$$
By \eqref{condb}, we deduce that
\begin{equation}\label{viol}
\int_{R+1}^{+\infty}r^{2t}a(r)\rmd r<+\infty\quad\text{for some }t>-1/2.
\end{equation}
If $\liminf_{r\to+\infty}a(r)=\overline{a}>0$, then \eqref{viol} is violated. Therefore, we can find $\{r_n\}_{n\in\mathbb{N}}$ such that $R+1\leq r_n<r_{n+1}$ for any $n\in\mathbb{N}$, and
satisfying
$\lim_{n\to\infty} r_n=+\infty$, and
$$a(r_n)=\int_{\partial B_{r_n}}\left\|\frac{\mathbf{x}}{\|\mathbf{x}\|}\wedge \mathbf{H}(\mathbf{x})+
\mathbf{E}(\mathbf{x})\right\|^2\rmd\sigma(\mathbf{x})\to 0\quad\text{as }n\to\infty.$$

Following the proof of Theorem~6.6 in \cite{CK}, we can  deduce that the Stratton-Chu formulas hold. In turn, they imply the Silver-M\"uller radiation condition, thus the proof is concluded.\end{proof}

Concerning the assumptions on $G$ (or equivalently $\Sigma$) we need the following definition, again from \cite{PWW}.

\begin{defn}\label{regularityscatterer} Let $D$ be a bounded domain. We say that $D$ satisfies the \emph{Rellich compactness property} (in short RCP) if the natural immersion of $H^1(D)$ into $L^2(D)$ is compact.

We say that $D$ satisfies the \emph{Maxwell compactness property} (in short MCP) if the natural immersions of $H_0(\mathrm{curl},D)\cap H(\mathrm{div},D)$ and of $H(\mathrm{curl},D)\cap H_0(\mathrm{div},D)$ into $L^2(D,\mathbb{C}^3)$ are compact.
\end{defn}

Sufficient conditions that guarantee the RCP may be found in many standard reference books on Sobolev spaces, for instance in \cite{Ada,Maz}.
A detailed description of sufficient conditions for MCP to hold may be found in \cite[Theorem~3.6]{PWW}. In the next subsection we provide 
another sufficient condition, that is useful for our purposes, see Proposition~\ref{suffMCP}.

Now we state the following crucial theorem that is just a rephrasing of the main result of \cite{PWW}.

\begin{thm}[{Theorem~2.10 in \cite{PWW}}]\label{mainex+unthm}
Fix positive constants $k$, $R_0$, and $\lambda_0<\lambda_1$.

Let $G$ be an exterior domain satisfying \eqref{boundedness} and such that, for some $R>R_0$, $G_R$ satisfies the RCP and the MCP.

Let $\epsilon$, $\mu\in L^{\infty}(G,M^{3\times 3}_{sym}(\mathbb{R}))$ satisfy \eqref{ellipticity2} and
\eqref{infinity}.
Let $\mathbf{F}$, $\mathbf{G}\in L^2(G,\mathbb{C}^3)$ with bounded support.

Then \eqref{directscatteringbis} admits a unique solution $(\mathbf{E}_1,\mathbf{H}_1)$ if and only if the corresponding problem with $\mathbf{F}=\mathbf{G}=0$ in $G$ admits only the trivial solution $\mathbf{E}_1=\mathbf{H}_1=0$ in $G$.

Moreover, if $(\mathbf{E}_1,\mathbf{H}_1)$ solves \eqref{directscatteringbis} with $\mathbf{F}=\mathbf{G}=0$ in $G$, then
$(\mathbf{E}_1,\mathbf{H}_1)\in \bigcap_{t\in\mathbb{R}}H(\mathrm{curl},\mathbb{R}^3\backslash\overline{B_{R+1}},t)^2$.
\end{thm}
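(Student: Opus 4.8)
Since the statement is precisely \cite[Theorem~2.10]{PWW} phrased in our notation, the plan is to verify that problem \eqref{directscatteringbis} fits the framework of \cite{PWW} and to recall the structure of the Fredholm argument, supplying the decay statement through the material already established above. The only point requiring attention in the translation is the radiation condition: by Lemma~\ref{connection}, part~\textit{\ref{partA}}), the Silver--M\"uller condition built into our notion of weak solution is equivalent, on $\mathbb{R}^3\setminus\overline{B_{R+1}}$, to the weighted-space conditions \eqref{conda}--\eqref{condb}, which is exactly the notion of outgoing solution used in \cite{PWW}. With this identification, the hypotheses on $G$, on $\epsilon,\mu$ (namely \eqref{ellipticity2}--\eqref{infinity}) and on the data $\mathbf{F},\mathbf{G}$ coincide with theirs, the RCP and MCP for $G_R$ (Definition~\ref{regularityscatterer}) being precisely the compactness assumptions they require.

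The first concrete step I would take is to reduce the exterior problem to the bounded domain $G_R$. Outside $\overline{B_{R_0}}$ the medium is homogeneous and isotropic, so there the fields solve the free system \eqref{eq:Maxwell2bis}; expanding a radiating solution in vector spherical wave functions on the smooth sphere $\partial B_R$ yields an exterior capacity (Calder\'on) operator mapping the tangential electric trace to the tangential magnetic trace, which encodes the radiation condition as a nonlocal boundary term on $\partial B_R$. Eliminating $\mathbf{H}_1$ through the first equation of \eqref{directscatteringbis}, one is left with a curl--curl variational problem for $\mathbf{E}_1\in H_0(\mathrm{curl},G_R)$, the perfectly conducting condition $\nu\wedge\mathbf{E}_1=0$ on $\partial\Sigma$ being encoded in the space itself.

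The heart of the matter, and the step I expect to be the main obstacle, is to show that the associated sesquilinear form is a compact perturbation of a coercive one, so that Fredholm theory of index zero applies. The principal part $\int_{G_R}\mu^{-1}(\nabla\wedge\mathbf{E}_1)\cdot\overline{\nabla\wedge\phi}$ is not coercive on $H(\mathrm{curl},G_R)$ because of its large gradient kernel; I would therefore split $H_0(\mathrm{curl},G_R)$ by a Helmholtz--Weyl decomposition into gradients and a divergence-constrained complement, treating the gradient part together with the lower-order term $-k^2\epsilon\mathbf{E}_1$ as compact contributions. Here the Maxwell compactness property is indispensable: it is exactly the compact embedding of $H_0(\mathrm{curl},G_R)\cap H(\mathrm{div},G_R)$ into $L^2(G_R,\mathbb{C}^3)$ that renders the relevant embeddings compact, and this must be extracted purely from the abstract MCP/RCP hypotheses, since no boundary regularity of $\partial\Sigma$ is available. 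Once the operator is Fredholm of index zero, the claimed equivalence between solvability for every $(\mathbf{F},\mathbf{G})$ and triviality of the kernel is immediate.

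It remains to establish the decay of a homogeneous solution $(\mathbf{E}_1,\mathbf{H}_1)$. On $\mathbb{R}^3\setminus\overline{B_{R_0}}$ it is a radiating solution of \eqref{eq:Maxwell2bis}, so by Lemma~\ref{connection}, part~\textit{\ref{partA}}), it already belongs to $\bigcap_{t<-1/2}H(\mathrm{curl},\mathbb{R}^3\setminus\overline{B_{R+1}},t)^2$. To upgrade this to all $t\in\mathbb{R}$ I would run a Rellich-type energy identity: integrating by parts on $G_R$, using $\nu\wedge\mathbf{E}_1=0$ on $\partial\Sigma$ and the fact that $\epsilon,\mu$ are real and symmetric, one finds that $\int_{\partial B_R}(\nu\wedge\mathbf{E}_1)\cdot\overline{\mathbf{H}_1}\,\rmd\sigma$ is purely imaginary for every $R>R_0$. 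Letting $R\to+\infty$ and inserting the far-field asymptotics \eqref{eq:farfield} together with the relations \eqref{eq:relation}, this boundary integral converges to $\int_{\mathbb{S}^2}\|\mathbf{E}_{1,\infty}\|^2\,\rmd\sigma$, a nonnegative real number; since its real part vanishes for all $R$, the far-field pattern must be identically zero. By Rellich's lemma $\mathbf{E}_1=\mathbf{H}_1=0$ on $\mathbb{R}^3\setminus\overline{B_R}$, so the membership in $\bigcap_{t\in\mathbb{R}}H(\mathrm{curl},\mathbb{R}^3\setminus\overline{B_{R+1}},t)^2$ is trivial; equivalently, one may invoke Lemma~\ref{connection}, part~\textit{\ref{partB}}).
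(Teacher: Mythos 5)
The paper does not actually prove this statement: it is quoted, up to notation, from \cite{PWW}, and the only genuine work the paper does is the translation of the radiation condition, which is exactly the content of Lemma~\ref{connection}, part~\textit{\ref{partA}}). You identify and handle that translation step the same way, which is the essential point. Beyond that, your reconstruction of the proof takes a genuinely different route from \cite{PWW}: they work throughout in the polynomially weighted spaces $H(\mathrm{curl},\cdot,t)$ and obtain the Fredholm alternative by a limiting-absorption argument (which is also where the all-orders polynomial decay of homogeneous solutions comes from naturally), whereas you propose the classical reduction to the bounded domain $G_R$ via an exterior Calder\'on operator on $\partial B_R$, a Helmholtz--Weyl splitting, and Fredholm theory of index zero. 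Both are viable; your route is more familiar but requires one adjustment you gloss over: the relevant function space on $G_R$ carries the electric boundary condition only on $\partial\Sigma$ and a natural condition on $\partial B_R$, so the compactness you need is a mixed-boundary variant of the MCP of Definition~\ref{regularityscatterer}, not literally the compact embedding of $H_0(\mathrm{curl},G_R)\cap H(\mathrm{div},G_R)$; since $\partial B_R$ is smooth this is obtainable by a cutoff/gluing argument, but it must be said. For the decay assertion you in fact prove more than is claimed: the Rellich energy identity (justified weakly by testing against $\chi\mathbf{H}_1$ with a radial cutoff, which is where $\mathbf{E}_1\in H_0(\mathrm{curl},G)$ and the real symmetry of $\epsilon,\mu$ enter) kills the far field, and Rellich's lemma gives vanishing outside a ball, so membership in every weighted space is trivial. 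That is a legitimate shortcut; note that it makes the final clause of the theorem redundant for the way the paper uses it, since the paper derives precisely this vanishing from the decay statement combined with Lemma~\ref{connection}, part~\textit{\ref{partB}}), in the discussion preceding Theorem~\ref{mainthmex+uniq}.
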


Under the assumptions of Theorem~\ref{mainex+unthm}, we have existence and uniqueness of a solution to \eqref{directscatteringbis} and, in turn, of a solution to \eqref{directscattering}, if the Maxwell system \eqref{eq:Maxwell2} admits the \emph{unique continuation property} (in short UCP). In fact, by part \textit{B}) of Lemma~\ref{connection} and the last part of Theorem~\ref{mainex+unthm}, we obtain that 
if $(\mathbf{E}_1,\mathbf{H}_1)$ solves \eqref{directscatteringbis} with $\mathbf{F}=\mathbf{G}=0$ in $G$, then $\mathbf{E}_1=\mathbf{H}_1=0$ outside a sufficiently large ball.

We summarise the existence and uniqueness result in the following theorem.

\begin{thm}\label{mainthmex+uniq}
Fix positive constants $k$, $R_0$, and $\lambda_0<\lambda_1$.

Let $G$ be an exterior domain satisfying \eqref{boundedness} and such that, for some $R>R_0$, $G_R$ satisfies the RCP and the MCP.

Let $\epsilon$, $\mu\in L^{\infty}(G,M^{3\times 3}_{sym}(\mathbb{R}))$ satisfy \eqref{ellipticity2} and
\eqref{infinity}. Moreover, let us assume that \eqref{eq:Maxwell2} satisfies the UCP in $G$.

Then, for any $(\mathbf{E}^i,\mathbf{H}^i)$ entire solution to \eqref{eq:Maxwell2bis},
the problem \eqref{directscattering} admits a unique solution $(\mathbf{E},\mathbf{H})$.
\end{thm}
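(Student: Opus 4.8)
The plan is to reduce the scattering problem \eqref{directscattering} to the inhomogeneous problem \eqref{directscatteringbis} and then to invoke Theorem~\ref{mainex+unthm}, whose hypotheses on $G$, $\epsilon$, and $\mu$ are exactly those assumed here. As already observed above, $(\mathbf{E},\mathbf{H})$ solves \eqref{directscattering} if and only if $(\mathbf{E}_1,\mathbf{H}_1)=(\mathbf{E},\mathbf{H})-(1-\chi)(\mathbf{E}^i,\mathbf{H}^i)$ solves \eqref{directscatteringbis} with the explicit data $\mathbf{F}$ and $\mathbf{G}$, which belong to $L^2(G,\mathbb{C}^3)$ and have bounded support. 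Hence it suffices to establish existence and uniqueness for \eqref{directscatteringbis}. By Theorem~\ref{mainex+unthm} this is equivalent to showing that the homogeneous problem, that is \eqref{directscatteringbis} with $\mathbf{F}=\mathbf{G}=0$, admits only the trivial solution, so the whole argument concentrates on this uniqueness step.

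Accordingly, I would let $(\mathbf{E}_1,\mathbf{H}_1)$ solve \eqref{directscatteringbis} with $\mathbf{F}=\mathbf{G}=0$; this means that $(\mathbf{E}_1,\mathbf{H}_1)$ solves the Maxwell system \eqref{eq:Maxwell2} in $G$, satisfies $\nu\wedge\mathbf{E}_1=0$ on $\partial G$ (equivalently $\mathbf{E}_1\in H_0(\mathrm{curl},G)$), and fulfils the Silver-M\"uller radiation condition. The last assertion of Theorem~\ref{mainex+unthm} then furnishes the crucial decay information $(\mathbf{E}_1,\mathbf{H}_1)\in\bigcap_{t\in\mathbb{R}}H(\mathrm{curl},\mathbb{R}^3\backslash\overline{B_{R+1}},t)^2$. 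Since the coefficients equal the identity outside $\overline{B_{R_0}}$ by \eqref{infinity}, there the pair solves the constant-coefficient system \eqref{eq:Maxwell2bis}; combining the weighted regularity just obtained with the Silver-M\"uller condition, part~\textit{\ref{partB}}) of Lemma~\ref{connection} forces $\mathbf{E}_1=\mathbf{H}_1=0$ in $\mathbb{R}^3\backslash\overline{B_R}$.

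It then remains to propagate this vanishing from the exterior region into all of $G$, and this is the only genuinely delicate point, being precisely where the unique continuation property enters. Because $G$ is connected and $(\mathbf{E}_1,\mathbf{H}_1)$ solves \eqref{eq:Maxwell2} throughout $G$ while vanishing on the nonempty open subset $\mathbb{R}^3\backslash\overline{B_R}\subset G$, the UCP yields $\mathbf{E}_1=\mathbf{H}_1=0$ on all of $G$. Thus the homogeneous problem has only the trivial solution, whence, by the equivalence in Theorem~\ref{mainex+unthm}, \eqref{directscatteringbis} is uniquely solvable, and therefore so is \eqref{directscattering} for every entire incident field $(\mathbf{E}^i,\mathbf{H}^i)$.

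I expect the reduction and the application of Theorem~\ref{mainex+unthm} to be entirely routine bookkeeping; the substantive content is concentrated in the uniqueness step, where one must first extract decay at infinity (via the concluding clause of Theorem~\ref{mainex+unthm} together with part~\textit{\ref{partB}}) of Lemma~\ref{connection}) and then invoke the UCP to globalise the vanishing. The hardest and most essential link in the chain is exactly this last globalisation: the hypothesis that \eqref{eq:Maxwell2} enjoys the UCP in $G$ is what converts vanishing outside a large ball into vanishing everywhere, and without it one could only conclude the former.
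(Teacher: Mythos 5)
Your proposal is correct and follows essentially the same route as the paper: reduce \eqref{directscattering} to \eqref{directscatteringbis}, apply the Fredholm-type equivalence of Theorem~\ref{mainex+unthm}, use its final clause together with part~\textit{\ref{partB}}) of Lemma~\ref{connection} to show any homogeneous solution vanishes outside a large ball, and then invoke the UCP to conclude it vanishes in all of $G$. This is precisely the argument sketched in the paragraph following Theorem~\ref{mainex+unthm} in the paper.
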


\subsection{Sufficient conditions for RCP, MCP and UCP}\label{subsection1:3}
A useful sufficient condition for RCP and MCP to hold is contained in the following proposition whose proof is postponed to the Appendix, since it requires some results from Subsection~\ref{prop-subsec}.

\begin{prop}\label{suffMCP}
Let $D$ be a bounded domain. Let us assume that for any $\mathbf{x}\in\partial D$ there exists an open neighbourhood $U_{\mathbf{x}}$ such that $U_{\mathbf{x}}\cap D$ has a finite number of connected components. Moreover, each connected component of $U_{\mathbf{x}}\cap D$ such that $\mathbf{x}$ belongs to its boundary
may be mapped onto a Lipschitz domain by a
bi-$W^{1,\infty}$ mapping.

Then $D$ satisfies both the RCP and MCP.
\end{prop}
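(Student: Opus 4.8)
The plan is a localisation argument that reduces everything to Lipschitz domains, where both the RCP and a (weighted) Maxwell compactness property are already available, and that transports these properties along the bi-$W^{1,\infty}$ maps using the change-of-variables results of Subsection~\ref{prop-subsec}. Since $D$ is bounded, $\partial D$ is compact; covering it by finitely many of the neighbourhoods $U_{\mathbf{x}}$ provided by the hypothesis and adjoining an open set $U_0$ with $\overline{U_0}\subset D$, I would fix a smooth partition of unity $\{\phi_j\}_{j=0}^m$ subordinate to $\{U_0,U_{\mathbf{x}_1},\dots,U_{\mathbf{x}_m}\}$ with $\overline{D}\subset\bigcup_j U_j$. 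Shrinking each $U_{\mathbf{x}_j}$ if necessary, I can arrange that $\mathrm{supp}\,\phi_j\cap D$ meets only those connected components of $U_{\mathbf{x}_j}\cap D$ whose closure contains $\mathbf{x}_j$, so that on each relevant component the bi-$W^{1,\infty}$ hypothesis is in force; this step uses that the number of components is finite.

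For the RCP, given $\{u_n\}$ bounded in $H^1(D)$, I would write $u_n=\sum_j\phi_j u_n$ and prove $L^2$-precompactness of each piece. The interior piece $\phi_0 u_n$ extends by zero to a sequence bounded in $H^1(\mathbb{R}^3)$ with supports in a fixed compact set, hence is precompact in $L^2$ by Rellich--Kondrachov. For a boundary piece, the restriction of $\phi_j u_n$ to each relevant component $V$ lies in $H^1(V)$; composing with the inverse of the bi-$W^{1,\infty}$ map $T\colon V\to W$ onto a Lipschitz domain $W$ yields, via the chain rule and the two-sided bounds on $JT$ and $J(T^{-1})$, a bounded sequence in $H^1(W)$, which is precompact in $L^2(W)$ since $W$ is Lipschitz. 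Pulling back (the composition is bounded on $L^2$) and taking successive subsequences over the finitely many pieces gives the claim.

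For the MCP I would argue analogously for a sequence $\{u_n\}$ bounded in $H_0(\mathrm{curl},D)\cap H(\mathrm{div},D)$, the case $H(\mathrm{curl},D)\cap H_0(\mathrm{div},D)$ being symmetric. Multiplication by the scalar cut-offs preserves both spaces: the product rules $\nabla\wedge(\phi u)=\phi\nabla\wedge u+\nabla\phi\wedge u$ and $\nabla\cdot(\phi u)=\phi\nabla\cdot u+\nabla\phi\cdot u$ only add terms bounded in $L^2$, while $\nu\wedge(\phi u)=\phi(\nu\wedge u)$ shows the homogeneous tangential condition is retained. The interior piece is handled as above, now using the identity $-\Delta u=\nabla\wedge\nabla\wedge u-\nabla(\nabla\cdot u)$ to bound the zero-extension in $H^1(\mathbb{R}^3)$. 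On a boundary component $V$, the function $\phi_j u_n$ vanishes near the portion of $\partial V$ interior to $D$, so its restriction lies in $H_0(\mathrm{curl},V)\cap H(\mathrm{div},V)$; the covariant change of variables of Subsection~\ref{prop-subsec} transports it to a bounded sequence $\hat u_n$ in $H_0(\mathrm{curl},W)$, and the divergence constraint transforms into a constraint of the form $\hat u_n\in H(\mathrm{div}_G,W)$ with $G\in L^\infty(W,M^{3\times 3}_{sym}(\mathbb{R}))$ uniformly elliptic and built from $JT$. I would then invoke the Maxwell compactness property on the Lipschitz domain $W$ for coefficients of this type (classical, see \cite{Mon,PWW}) to extract an $L^2(W)$-convergent subsequence, and pull back.

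The main obstacle is precisely this last transfer of the MCP through a bi-$W^{1,\infty}$ map. Unlike the RCP, where a single change of variables preserves $H^1$, the spaces $H(\mathrm{curl})$ and $H(\mathrm{div})$ transform by different, covariant versus contravariant, rules, so a field lying in both cannot be pushed into both target spaces by one transform. The device that saves the argument is to absorb the mismatch into the $L^\infty$ coefficient $G$, landing in $H_0(\mathrm{curl},W)\cap H(\mathrm{div}_G,W)$, and to have at one's disposal the Maxwell compactness property for Lipschitz domains with merely measurable, uniformly elliptic coefficients, which is exactly the setting in which the spaces $H(\mathrm{div}_a,D)$ of Subsection~\ref{subsect:1.0} were introduced. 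Verifying that the homogeneous tangential, respectively normal, trace is genuinely preserved both by the cut-offs and under the change of variables, and that the components whose closure does not contain $\mathbf{x}$ can be discarded after shrinking $U_{\mathbf{x}}$, are the remaining points that require care.
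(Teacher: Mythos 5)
Your proposal is correct and follows essentially the same route as the paper's proof in the Appendix: a finite cover of $\partial D$ plus an interior set, a partition of unity whose pieces see only the components containing the given boundary point in their closure, transport to Lipschitz domains via the bi-$W^{1,\infty}$ maps and Corollary~\ref{changecor}, and the Lipschitz-domain compactness with merely $L^{\infty}$ elliptic coefficients (which in this paper is exactly Proposition~\ref{Druprop}, rather than the classical references you cite) to absorb the covariant/contravariant mismatch into the coefficient of the $H(\mathrm{div}_a)$ space. The only cosmetic difference is that the paper excludes the irrelevant components by deleting their closures from $U_{\mathbf{x}}$ instead of shrinking the neighbourhood.
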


In the literature there are several sufficient conditions on the coefficients $\epsilon$ and $\mu$ in \eqref{eq:Maxwell2} for UCP to hold. We notice that if $\epsilon=\mu=I_3$ everywhere, that is, we consider \eqref{eq:Maxwell2bis}, then UCP trivially holds.

The first result on unique continuation that we wish to recall follows immediately from \cite[Theorem~1.1]{Ngu-Wan}.

\begin{thm}\label{LipUCP}
Let $D$ be a domain. We assume that $\epsilon$ and $\mu$ belong to $L^{\infty}(D,M^{3\times 3}_{sym}(\mathbb{R}))$ and satisfy \eqref{ellipticity2} in $D$ for some constants $0<\lambda_0<\lambda_1$.

If $\epsilon$ and $\mu$ are locally Lipschitz in $D$,
then \eqref{eq:Maxwell2} satisfies the UCP in $D$. 
\end{thm}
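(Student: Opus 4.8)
The plan is to eliminate the magnetic field from the first-order Maxwell system \eqref{eq:Maxwell2} so as to obtain a second-order curl-curl equation for $\mathbf{E}$ alone, and then to invoke the unique continuation result of \cite{Ngu-Wan}. Since $\mu$ satisfies the ellipticity bounds \eqref{ellipticity2}, it is pointwise invertible, and the first equation of \eqref{eq:Maxwell2} gives
$$\mathbf{H}=\frac{1}{\rmi k}\mu^{-1}(\nabla\wedge\mathbf{E}).$$
Substituting this expression into the second equation of \eqref{eq:Maxwell2} yields, in the sense of distributions,
$$\nabla\wedge\left(\mu^{-1}(\nabla\wedge\mathbf{E})\right)-k^2\epsilon\,\mathbf{E}=0\quad\text{in }D.$$

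First I would check that this equation falls within the scope of \cite[Theorem~1.1]{Ngu-Wan}. The zeroth-order coefficient $k^2\epsilon$ is bounded by \eqref{ellipticity2}, and the leading coefficient $\mu^{-1}$ is locally Lipschitz: the matrix-inversion map is smooth on the set of symmetric matrices with eigenvalues bounded away from zero, and by \eqref{ellipticity2} the values of $\mu$ stay in this set, so $\mu^{-1}$ is the composition of a locally Lipschitz map with a smooth one and is therefore locally Lipschitz in $D$. Consequently \cite[Theorem~1.1]{Ngu-Wan} applies and provides the UCP for $\mathbf{E}$: if $\mathbf{E}$ vanishes on a nonempty open subset of the connected set $D$, then $\mathbf{E}\equiv 0$ in $D$.

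It then remains to transfer this to the full system. If $(\mathbf{E},\mathbf{H})$ vanishes on an open subset, then in particular $\mathbf{E}$ does, so by the previous step $\mathbf{E}\equiv 0$ throughout $D$, and then the identity $\mathbf{H}=(\rmi k)^{-1}\mu^{-1}(\nabla\wedge\mathbf{E})$ forces $\mathbf{H}\equiv 0$ as well; this gives the UCP for \eqref{eq:Maxwell2}. The point requiring most care is the verification that the hypotheses of \cite[Theorem~1.1]{Ngu-Wan} are met, in particular the regularity of any weak solution $\mathbf{E}$ of the curl-curl equation (Lipschitz coefficients together with the Maxwell structure should yield enough local Sobolev regularity) and the precise sense in which unique continuation is formulated there. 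Should \cite[Theorem~1.1]{Ngu-Wan} already be phrased directly for the first-order Maxwell system under locally Lipschitz coefficients, the reduction above is superfluous and the statement is an immediate corollary; in either case the argument is short, which is consistent with the claim that the result follows immediately.
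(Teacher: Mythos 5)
Your proposal is correct and matches the paper, which offers no proof at all beyond the remark that the theorem ``follows immediately from \cite[Theorem~1.1]{Ngu-Wan}''; that reference is in fact stated for the first-order anisotropic Maxwell system with Lipschitz coefficients, so, as you anticipate in your final sentence, the reduction to the curl-curl equation is not needed and the statement is a direct corollary. Your verification that $\mu^{-1}$ is locally Lipschitz and your recovery of $\mathbf{H}$ from $\mathbf{E}$ via the first equation are both sound, so the elaborated route would work as well.
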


Let us notice that Lipschitz continuity is essentially optimal, as shown by an example in \cite{Dem}. Inspired by the results and constructions in \cite{Bal-et-al} for the piecewise Lipschitz case, we state the following.

\begin{prop}\label{piecewiseLipschitzprop}
Let $D$ be an open set. Assume that $\epsilon$ and $\mu$ belong to $L^{\infty}(D,M^{3\times 3}_{sym}(\mathbb{R}))$ and satisfy \eqref{ellipticity2} in $D$ for some constants $0<\lambda_0<\lambda_1$.

Assume that:
\begin{enumerate}[i\textnormal{)}]
\item there exists a family $\{D_i\}$ of domains, that are contained in $D$ and which are pairwise disjoint, such that
$$D\subset \bigcup_i\overline{D_i}.$$
\item We have $|\sigma|=0$ where
$$\sigma=D\cap\left(\bigcup_i\partial D_i\right).$$
\item\label{connectedC} We say that $\mathbf{x}\in\sigma$ \emph{separates exactly two partitions} if there exist $\delta>0$ and two different indexes $i$ and $j$ such that
$$\left|B_{\delta}(\mathbf{x})\backslash\left(D_i\cup D_j\right)\right|=0$$
and $B_{\delta}(\mathbf{x})\cap D_i$ and $B_{\delta}(\mathbf{x})\cap D_j$ are not empty.
We call
$$C=\{\mathbf{x}\in \sigma:\ \mathbf{x}\text{ does not separate exactly two partitions}\}.$$
We assume that $\tilde{D}=D\backslash C$ is connected.
\item $(\epsilon,\mu)=(\epsilon_i,\mu_i)$ in $D_i$ where $\{\epsilon_i\}$ and $\{\mu_i\}$
are families of locally Lipschitz $M^{3\times 3}_{sym}(\mathbb{R})$-valued functions in $D$.
\end{enumerate}

Then \eqref{eq:Maxwell2} satisfies the UCP in $D$.  

\end{prop}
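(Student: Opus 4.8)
The plan is to deduce the UCP in $D$ from the Lipschitz case, Theorem~\ref{LipUCP}, by a two–level propagation: first across each individual piece $D_i$, then across the ``good'' part of the interface, using the connectedness of $\tilde D=D\setminus C$ to glue everything. I read the UCP as the statement that a solution $(\mathbf E,\mathbf H)\in H_{loc}(\mathrm{curl},D)^2$ of \eqref{eq:Maxwell2} vanishing on a nonempty open subset of $D$ must vanish identically; this is exactly what Theorem~\ref{LipUCP} delivers on any domain carrying locally Lipschitz coefficients. First I would record that $\bigcup_i D_i$ has full measure in $D$: since $D\subset\bigcup_i\overline{D_i}$ one has $D\setminus\bigcup_i D_i\subset\sigma$, which is null by ii). Hence any nonempty open set on which $(\mathbf E,\mathbf H)$ vanishes meets some $D_{i_0}$ in a nonempty open set; as $(\epsilon,\mu)=(\epsilon_{i_0},\mu_{i_0})$ is locally Lipschitz on $D_{i_0}$, Theorem~\ref{LipUCP} applied on the domain $D_{i_0}$ forces $(\mathbf E,\mathbf H)\equiv0$ on all of $D_{i_0}$. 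Set $\mathcal I_0=\{i:(\mathbf E,\mathbf H)\equiv0\text{ on }D_i\}$, which is thus nonempty.

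The crux is the crossing step across a good interface point. Let $\mathbf x\in\sigma\setminus C$ separate exactly two partitions $D_i,D_j$ with $i\in\mathcal I_0$, and pick $\delta$ so small that $B_\delta(\mathbf x)\subset D$ and $|B_\delta(\mathbf x)\setminus(D_i\cup D_j)|=0$. The key point is that, even though $\epsilon,\mu$ jump across the interface, the field vanishes on the $D_i$ side, and the zero field solves \eqref{eq:Maxwell2} for \emph{any} coefficient. On $D_j\cap B_\delta(\mathbf x)$ the pair $(\mathbf E,\mathbf H)$ solves \eqref{eq:Maxwell2} with the coefficients $\epsilon_j,\mu_j$, which by iv) are locally Lipschitz on all of $D$, while on $D_i\cap B_\delta(\mathbf x)$ it is zero and hence solves the same system with $\epsilon_j,\mu_j$ there too. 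Because $(\mathbf E,\mathbf H)\in H_{loc}(\mathrm{curl},B_\delta(\mathbf x))^2$, the distributional curls $\nabla\wedge\mathbf E,\nabla\wedge\mathbf H$ are genuine $L^2$ functions on $B_\delta(\mathbf x)$ whose restrictions to the open sets $D_i$ and $D_j$ agree with the curls computed there. Since $D_i\cup D_j$ exhausts $B_\delta(\mathbf x)$ up to a null set and $\mathbf H=\mathbf E=0$ on $D_i$, this yields $\nabla\wedge\mathbf E=\rmi k\mu_j\mathbf H$ and $\nabla\wedge\mathbf H=-\rmi k\epsilon_j\mathbf E$ almost everywhere on all of $B_\delta(\mathbf x)$: no surface contribution on $\sigma$ can arise precisely because the field and its curl match through the global $H(\mathrm{curl})$ regularity. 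Thus $(\mathbf E,\mathbf H)$ solves a Maxwell system with locally Lipschitz coefficients on the ball and vanishes on the nonempty open set $D_i\cap B_\delta(\mathbf x)$; Theorem~\ref{LipUCP} gives $(\mathbf E,\mathbf H)\equiv0$ on $B_\delta(\mathbf x)$, hence on the nonempty open set $D_j\cap B_\delta(\mathbf x)$, hence, by the piece–level step, on all of $D_j$. Therefore $j\in\mathcal I_0$: membership in $\mathcal I_0$ is symmetric across good interfaces, so at every point of $\sigma\setminus C$ either both adjacent partitions lie in $\mathcal I_0$ or neither does.

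Finally I would close the argument topologically. Let $W$ be $\bigcup_{i\in\mathcal I_0}D_i$ together with all good points separating two $\mathcal I_0$–partitions, and let $W'$ be its analogue built from the indices not in $\mathcal I_0$. By the crossing step $\tilde D$ is the disjoint union of $W$ and $W'$. A short verification shows both are relatively open in $\tilde D$: near an interior point of a piece this is clear, while near a good point $\mathbf x$ separating $D_i,D_j$ the ball $B_\delta(\mathbf x)$ is covered up to a null set by exactly these two partitions, and any point of $B_\delta(\mathbf x)\cap\sigma\cap\tilde D$ is again good and can only separate $D_i$ from $D_j$ — a point at which only one partition were present would fail to separate two and hence would lie in $C$, so be excluded from $\tilde D$. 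Connectedness of $\tilde D$ then forces $W'=\emptyset$, so $\mathcal I_0$ contains every index, $(\mathbf E,\mathbf H)$ vanishes a.e.\ on $\bigcup_i D_i$, and by the full–measure observation it vanishes throughout $D$.

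I expect the main obstacle to be this last bookkeeping rather than the crossing itself: one must verify the relative openness of $W$ and $W'$ and, crucially, invoke the definition of $C$ to exclude spurious interface points where only a single partition is present. The crossing step, by contrast, is clean once one notices that replacing $(\epsilon_i,\mu_i)$ by $(\epsilon_j,\mu_j)$ on the region where the field is zero is free, so the whole ball carries a single locally Lipschitz coefficient to which Theorem~\ref{LipUCP} applies.
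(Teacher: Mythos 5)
Your proposal is correct, and its central mechanism --- the crossing step at a point of $\sigma\setminus C$ separating exactly two partitions, where one observes that the locally Lipschitz pair $(\epsilon_j,\mu_j)$ may be substituted for $(\epsilon,\mu)$ on the whole small ball at no cost because the field vanishes on the other partition (and the exceptional set is null), so that Theorem~\ref{LipUCP} applies on the ball --- is exactly the argument of the paper. Where you diverge is in how connectedness of $\tilde{D}=D\setminus C$ is exploited to globalize: the paper argues by contradiction with a smooth path $\gamma$ in $\tilde{D}$ joining a point where the field vanishes to a point of a hypothetical partition where it does not, and extracts a first-crossing parameter $\hat{t}=\inf\{t:\gamma(t)\in D_i$ for some $i$ with $(\mathbf{E},\mathbf{H})\not\equiv(0,0)$ in $D_i\}$, showing $\gamma(\hat t)$ must be a good interface point between a zero-partition and a nonzero-partition; you instead split $\tilde{D}$ into the two relatively open sets $W$ and $W'$ built from $\mathcal{I}_0$ and its complement and conclude $W'=\emptyset$ by connectedness. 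Both routes rest on the same dichotomy established by the crossing step. Your clopen decomposition avoids having to justify the existence of a smooth connecting curve and the analysis of the infimum time, at the price of the bookkeeping you yourself flag: verifying that every good point in a small ball about a good point can only separate the same two partitions (which holds because any other $D_l$ meeting that ball would be open of positive measure, contradicting $|B_\delta(\mathbf{x})\setminus(D_i\cup D_j)|=0$), and that the classification of a good point is independent of which separating pair one takes (which follows since any two such pairs must share an index, and the crossing dichotomy then transfers membership in $\mathcal{I}_0$). These checks go through, so your variant is a legitimate, slightly more self-contained alternative to the paper's path argument.
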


\begin{proof} We begin with a few preliminary remarks. First of all,
the family $\{D_i\}$ is countable. Second,
$\sigma=D\backslash\left(\bigcup_i D_i\right)$ is closed in $D$. Moreover,
$C$ is closed in $\sigma$, thus in $D$ as well, therefore $\tilde{D}$ is open and $D$ is connected.

Let us assume that there exists an open nonempty set $A\subset D$ such that $(\mathbf{E},\mathbf{H})=(0,0)$ everywhere in $A$. Without loss of generality, we can find $\hat{\mathbf{x}}\in D$, $r>0$, and an index $\hat{i}$ such that
$B_r(\hat{\mathbf{x}})\subset A\cap D_{\hat{i}}$. By Theorem~\ref{LipUCP},
we deduce that  
$(\mathbf{E},\mathbf{H})=(0,0)$ everywhere in $D_{\hat{i}}$.

Let us assume, by contradiction, that there exists an index $\hat{j}$ such that 
$(\mathbf{E},\mathbf{H})$ is not identically equal to 
$(0,0)$ in $D_{\hat{j}}$. Let us pick any $\hat{\mathbf{y}}\in D_{\hat{j}}$ and let $\gamma:[0,1]\to \tilde{D}$ be a smooth curve such that $\gamma(0)=\hat{\mathbf{x}}$ and $\gamma(1)=\hat{\mathbf{y}}$.

Let us define 
$$\hat{t}=\inf\{t\in[0,1]:\ \gamma(t)\in D_i\text{ for some }i\text{ s.t. }
(\mathbf{E},\mathbf{H})\not\equiv(0,0)
\text{ in }D_i\}.$$
We notice that $0<\hat{t}<1$. Furthermore, by the definition of $\tilde{D}$ and of $C$, we obtain that $\gamma(\hat{t})\in \partial D_i\cap\partial D_j$, for two different indexes $i$ and $j$, and separates exactly the two partitions $D_i$ and $D_j$. Moreover, for some positive $\delta_i$ and $\delta_j$, and up to swapping $i$ with $j$, we have $\gamma(\hat{t}+\delta_i)\in D_i$ and $(\mathbf{E},\mathbf{H})\not\equiv(0,0)$ in
$D_i$, and $\gamma(\hat{t}-\delta_j)\in D_j$. By the definition of $\hat{t}$, we deduce that $(\mathbf{E},\mathbf{H})\equiv(0,0)$ in
$D_j$.

Then the proof can be concluded by the arguments developed in \cite{Bal-et-al} that we briefly recall here. For simplicity, let us assume that $\gamma(\hat{t})=0$. For some $\delta>0$, we have $\left|B_{\delta}\cap\left(D_i\cup D_j\right)\right|=0$. We have that $(\epsilon_i,\mu_i)\in W^{1,\infty}(B_{\delta})^2$ and $(\mathbf{E},\mathbf{H})\equiv 0$ in $D_j$. Since $(\mathbf{E},\mathbf{H})$ solves \eqref{eq:Maxwell2} in $B_{\delta}$, it is immediate to notice that $(\mathbf{E},\mathbf{H})$ solves in $B_{\delta}$
$$\nabla\wedge \mathbf{E}-\rmi k\mu_i \mathbf{H}=0,\quad 
\nabla\wedge \mathbf{H}+\rmi k \epsilon_i \mathbf{E}=0$$
as well. Since $(\mathbf{E},\mathbf{H})\equiv 0$ in $B_{\delta}\cap D_j$, which is not empty, we conclude, again by Theorem~\ref{LipUCP}, that $(\mathbf{E},\mathbf{H})\equiv 0$ in $B_{\delta}$, thus in
$B_{\delta}\cap D_i\neq \emptyset$. Using again Theorem~\ref{LipUCP}, we conclude that $(\mathbf{E},\mathbf{H})\equiv 0$ in $D_i$ and we obtain a contradiction, which completes the proof. 
\end{proof}

We conclude this excursus on the UCP with the following lemma that provides a sufficient condition for assumption \textit{\ref{connectedC}}) in Proposition~\ref{piecewiseLipschitzprop} to hold.

\begin{lem}\label{disconnectionlemma}
Let $D$ be a connected open set contained in $\mathbb{R}^N$, $N\geq 2$. Let $C\subset D$ be closed in $D$.
If $D\backslash C$ is not connected, then $C$ has Hausdorff dimension greater than or equal to $N-1$.
\end{lem}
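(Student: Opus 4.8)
The plan is to prove the contrapositive: assuming $\dim_H C<N-1$, which forces $\mathcal{H}^{N-1}(C)=0$, I will show that $D\backslash C$ is connected. Since $C$ is closed in $D$ and $D$ is open, $D\backslash C$ is an open subset of $\mathbb{R}^N$; were it disconnected I could fix points $\mathbf{a}$, $\mathbf{b}$ in two distinct connected components and, using that the open connected set $D$ is polygonally connected, join them by a polygonal path $\Gamma\subset D$. The path $\Gamma$ is compact, so it can be covered by finitely many open balls $B_1,\dots,B_m$ with $\overline{B_j}\subset D$, arranged consecutively so that $\mathbf{a}\in B_1$, $\mathbf{b}\in B_m$, and $B_j\cap B_{j+1}\neq\emptyset$ for each $j$. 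The whole argument then reduces to two claims: that $B_j\backslash C$ is connected for each $j$, and that the overlaps $B_j\cap B_{j+1}$ meet $D\backslash C$.

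The second claim is immediate, since a set with $\mathcal{H}^{N-1}(C)=0$ has empty interior and hence cannot contain the open set $B_j\cap B_{j+1}$. For the first claim I fix a ball $B=B_r(\mathbf{x}_0)$ with $\overline{B}\subset D$ and two points $\mathbf{p},\mathbf{q}\in B\backslash C$, and I join them by a two-segment path $\mathbf{p}\to\mathbf{z}\to\mathbf{q}$. By convexity of $B$, both segments lie in $B$ for every $\mathbf{z}\in B$, so it suffices to choose $\mathbf{z}\in B$ for which neither segment meets $C$. The set of bad intermediate points for $\mathbf{p}$, namely $E_{\mathbf{p}}=\{\mathbf{z}\in B:\ [\mathbf{p},\mathbf{z}]\cap C\neq\emptyset\}$, is contained in the truncated cone $\{\mathbf{p}+s(\mathbf{c}-\mathbf{p}):\ \mathbf{c}\in C\cap\overline{B},\ 1\le s\le S\}$, where $S=\mathrm{diam}(B)/\mathrm{dist}(\mathbf{p},C)<\infty$ because $\mathbf{p}\notin C$ and $C$ is closed. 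This cone is the image of $(C\cap\overline{B})\times[1,S]$ under the map $(\mathbf{c},s)\mapsto \mathbf{p}+s(\mathbf{c}-\mathbf{p})$, which is Lipschitz on the bounded domain; since $\mathcal{H}^{N-1}(C\cap\overline{B})=0$ one has $\mathcal{H}^N\big((C\cap\overline{B})\times[1,S]\big)=0$, and Lipschitz maps do not increase $\mathcal{H}^N$-nullity, so $|E_{\mathbf{p}}|=0$. The same holds for $E_{\mathbf{q}}$. As $|B|>0$, I may pick $\mathbf{z}\in B\backslash(E_{\mathbf{p}}\cup E_{\mathbf{q}}\cup C)$, giving a path from $\mathbf{p}$ to $\mathbf{q}$ inside $B\backslash C$; thus $B\backslash C$ is path-connected.

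Chaining these facts along $B_1,\dots,B_m$ places $\mathbf{a}$ and $\mathbf{b}$ in the same connected component of $D\backslash C$, contradicting the choice of $\mathbf{a}$, $\mathbf{b}$. Hence $D\backslash C$ is connected, and the contrapositive yields $\dim_H C\ge N-1$ whenever $D\backslash C$ is disconnected. Equivalently, I will have shown that $\mathcal{H}^{N-1}(C)=0$ implies $D\backslash C$ connected, while $\mathcal{H}^{N-1}(C)>0$ already forces $\dim_H C\ge N-1$.

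The step I expect to be the main technical obstacle is the Lebesgue-nullity of the cone $E_{\mathbf{p}}$: the clean way to deduce $\mathcal{H}^N\big((C\cap\overline{B})\times[1,S]\big)=0$ from $\mathcal{H}^{N-1}(C\cap\overline{B})=0$ is an elementary covering estimate — cover $C\cap\overline{B}$ by sets $\{U_i\}$ of diameter at most $\delta$ with $\sum_i(\mathrm{diam}\,U_i)^{N-1}$ small, subdivide $[1,S]$ into $O(1/\mathrm{diam}\,U_i)$ intervals of length $\mathrm{diam}\,U_i$, and estimate $\sum(\mathrm{diam})^N$ for the resulting product cover — rather than invoking a general product inequality for Hausdorff measures, which is delicate precisely in this borderline codimension-one situation. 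Care is also needed to ensure the polygonal path and its covering balls stay within $D$, which is exactly where connectedness and openness of $D$ enter.
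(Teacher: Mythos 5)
Your proof is correct, but it takes a genuinely different route from the paper's. The paper argues by contradiction: it first records the classical fact that the boundary of a bounded open subset of $\mathbb{R}^N$ has Hausdorff dimension at least $N-1$ (proved by projecting onto a coordinate hyperplane), then finds a point of $C$ and a small ball $B_{\delta}(\mathbf{x})\subset D$ meeting two distinct components $D_1$, $D_2$ of $D\backslash C$, transplants $\overline{B_{\delta}(\mathbf{x})}$ bi-Lipschitzly onto a hemisphere in $\mathbb{R}^{N+1}$, reflects to obtain two disjoint nonempty open subsets of the full sphere separated by the image of $C$, and stereographically projects so that one of them becomes a bounded open set whose boundary is contained in a bi-Lipschitz image of $C$; the projection fact then forces the dimension bound. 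You instead prove the contrapositive directly: assuming $\mathcal{H}^{N-1}(C)=0$, a generic two-segment path inside any ball $\overline{B}\subset D$ avoids $C$, since the set of bad intermediate points is a Lipschitz image of $(C\cap\overline{B})\times[1,S]$ and hence Lebesgue-null, and chaining balls along a polygonal path shows $D\backslash C$ is connected. Your elementary covering estimate for $\mathcal{H}^N$ of the product (rather than a general Hausdorff product inequality) is exactly the right tool in this borderline codimension-one situation, and your chain-of-balls argument sidesteps the paper's slightly delicate step of producing one ball that meets two different components simultaneously. Your route is longer but fully self-contained and in fact delivers the marginally stronger conclusion that $\mathcal{H}^{N-1}(C)>0$ whenever $D\backslash C$ is disconnected; the paper's is shorter once the boundary-projection lemma is granted. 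Two cosmetic points: the bound $S$ should be taken with $\mathrm{dist}(\mathbf{p},C\cap\overline{B})$ (which is positive by compactness of $C\cap\overline{B}$, as $C$ is only closed in $D$), and excluding $C$ itself from the choice of $\mathbf{z}$ is redundant since $\mathbf{z}\in C$ already implies $\mathbf{z}\in E_{\mathbf{p}}$.
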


\begin{rem}\label{piecewiseLipschitzrem}
By Lemma~\textnormal{\ref{disconnectionlemma}}, we can replace
assumption \textit{\ref{connectedC}}\textnormal{)} in Proposition~\textnormal{\ref{piecewiseLipschitzprop}} with the following. We assume that $D$ is connected and that for some $s<2$ we have $\mathcal{H}^s(C)<+\infty$, where
$\mathcal{H}^s$ is the $s$-dimensional Hausdorff measure.
\end{rem}

\begin{proof}[Proof of Lemma~\textnormal{\ref{disconnectionlemma}}]
We begin with the following remark. Let $A\subset \mathbb{R}^N$, $N\geq 2$, be a bounded open set. Then the Hausdorff dimension of $\partial A$ is at least $N-1$.
This is a classical result that may be proved as follows. We call $\pi:\mathbb{R}^N\to\mathbb{R}^{N-1}$ the projection onto the first $N-1$ coordinates. We notice that $\pi(A)$ is a bounded open set in $\mathbb{R}^{N-1}$ and that $\pi(\partial A)=\overline{\pi(A)}$. Hence, $0<\mathcal{H}^{N-1}(\pi(A))\leq \mathcal{H}^{N-1}(\overline{\pi(A)})
=\mathcal{H}^{N-1}(\pi(\partial A))\leq C_1\mathcal{H}^{N-1}(\partial A)$ for some positive constant $C_1$, thus the property is proved.

Let us assume, by contradiction, that the Hausdorff dimension of $C$ is less than $N-1$ and that $D\backslash C$ is not connected; that is 
$D\backslash C=D_1\cup D_2$ where $D_1$, $D_2$ are open, nonempty sets which are disjoint. There exist $\mathbf{x}\in C$ and $\delta>0$ such that $\overline{B_{\delta}(\mathbf{x})}\subset D$ and $B_{\delta}(\mathbf{x})\cap D_i\neq\emptyset$ for $i=1,2$.
We also observe that there exist $\mathbf{x}_i\in \partial B_{\delta}(\mathbf{x})\cap D_i$ 
for $i=1,2$. In fact, otherwise, $\partial B_{\delta}(\mathbf{x})\cap D_i\subset C$, thus, by the property proved above, $C$ has at least dimension $N-1$ and we have a contradiction.

By a bi-Lipschitz map $T$ we transform $\overline{B_{\delta}(\mathbf{x})}$ onto
$\mathbb{S}^+_{\delta}$ where
$$\mathbb{S}^+_{\delta}=\{\mathbf{x}=(x_1,\ldots,x_N,x_{N+1})\in\mathbb{R}^{N+1}:\
\|\mathbf{x}\|=\delta\text{ and }x_{N+1}\geq 0\}.$$
Simply by a reflection in the plane $\Pi=\{\mathbf{x}=(x_1,\ldots,x_N,x_{N+1})\in\mathbb{R}^{N+1}:\
x_{N+1}= 0\}$, we may find two open subset of $\mathbb{S}_{\delta}=
\{\mathbf{x}\in\mathbb{R}^{N+1}:\
\|\mathbf{x}\|=\delta\}$, $\Omega_1$ and $\Omega_2$ and a closed set $\tilde{C}$ that are symmetric with respect to $\Pi$ and such that
$\Omega_i\cap \mathbb{S}^+_{\delta}=T(D_i\cap \overline{B_{\delta}(\mathbf{x})})$ for $i=2$ and $\tilde{C}\cap \mathbb{S}^+_{\delta}=T(C\cap \overline{B_{\delta}(\mathbf{x})})$. Clearly $\Omega_1$ and $\Omega_2$ are nonempty and disjoint. Fix $\mathbf{y}_1\in\Omega_1$ and consider a stereographic projection $T_1$ with pole $\mathbf{y}_1$. We have that $T_1(\Omega_2)$ is a bounded open set, contained in $\mathbb{R}^{N-1}$.
Since $\partial T_1(\Omega_2)\subset T_1(\tilde{C})$ and $T_1(\tilde{C})$
is a bi-Lipschitz image of $\tilde{C}$, we can easily conclude that
the Hausdorff dimension of $C$ is at least $N-1$.

The proof is complete. 
\end{proof}

\section{Properties of solutions to the Maxwell system and classes of admissible scatterers}\label{properties-section}

Before passing to the stability results, in this section we collect a few properties of solutions to the Maxwell system that will be needed in the sequel.
Then we define and discuss suitable classes of admissible scatterer for our study.

\subsection{Properties of solutions to the Maxwell system}\label{prop-subsec}

We begin with the following higher integrability and compact immersion results that are proved in \cite{Dru}.

\begin{prop}\label{Druprop}
Let $D\subset\mathbb{R}^3$ be a bounded Lipschitz domain.
Let $a\in L^{\infty}(D,M^{3\times 3}_{sym}(\mathbb{R}))$ satisfy \eqref{ellipticity} for some constants $0<a_0<a_1$.

We call
$$W^{2,r}_{a,\nu}(D)=\{u\in H(\mathrm{curl},D)\cap H(\mathrm{div}_a,D):\ \gamma_{\nu}(au)\in L^r(\partial D)\}$$
and 
$$W^{2,r}_{a,\tau}(D)=\{u\in H(\mathrm{curl},D)\cap H(\mathrm{div}_a,D):\ \gamma_{\tau}(u)\in L^r(\partial D,\mathbb{C}^3)\}.$$

Then there exists $q_1$, $2<q_1<6$ depending on $D$, $a_0$, and $a_1$ only, such that for any $r>4/3$ we have that
$W^{2,r}_{a,\nu}(D)$ and $W^{2,r}_{a,\tau}(D)$ are immersed continuously into $L^s(D,\mathbb{C}^3)$, with $s=\min\{3r/2,q_1\}>2$,
and are immersed compactly into $L^2(D,\mathbb{C}^3)$.
More precisely, there exists a constant $C$, depending on $D$, $a_0$, $a_1$, and $r$ only, such that
\begin{multline*}\|u\|_{L^s(D,\mathbb{C}^3)}\leq C\big[\|u\|_{L^2(D,\mathbb{C}^3)}+
\|\nabla\wedge u\|_{L^2(D,\mathbb{C}^3)}\\
+\|\nabla\cdot (au)\|_{L^2(D)}
+\|\gamma_{\nu}(au)\|_{L^r(\partial D)}
\big] \quad\text{for any }u\in W^{2,r}_{a,\nu}(D)
\end{multline*}
and
\begin{multline*}\|u\|_{L^s(D,\mathbb{C}^3)}\leq C\big[\|u\|_{L^2(D,\mathbb{C}^3)}+
\|\nabla\wedge u\|_{L^2(D,\mathbb{C}^3)}\\
+\|\nabla\cdot (au)\|_{L^2(D)}
+\|\gamma_{\tau}(u)\|_{L^r(\partial D,\mathbb{C}^3)}
\big] \quad\text{for any }u\in W^{2,r}_{a,\tau}(D).
\end{multline*}
\end{prop}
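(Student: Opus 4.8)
The plan is to treat both spaces by a Helmholtz-type splitting that isolates a scalar potential carrying the $a$-divergence and the prescribed boundary trace, to be controlled by a Meyers-type higher-integrability estimate, from a complementary field carrying the curl together with a \emph{homogeneous} trace, to be controlled by the Maxwell compactness property of the Lipschitz domain $D$ (Definition~\ref{regularityscatterer}). I describe the normal-trace space $W^{2,r}_{a,\nu}(D)$ in detail; the tangential space $W^{2,r}_{a,\tau}(D)$ is handled by the dual construction, interchanging the roles of $\gamma_\nu(a\,\cdot\,)$ and $\gamma_\tau(\cdot)$ via a vector potential matching the prescribed tangential data (equivalently, by the electric--magnetic duality of Remark~\ref{PEC-PMC}). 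So, fixing $u\in W^{2,r}_{a,\nu}(D)$, I would solve the scalar Neumann problem for $p\in H^1(D)$,
\[
\nabla\cdot(a\nabla p)=\nabla\cdot(au)\ \text{ in }D,\qquad \gamma_\nu(a\nabla p)=\gamma_\nu(au)\ \text{ on }\partial D,
\]
whose data are compatible by the Green identity of Proposition~\ref{Lipdomain} (test with $\varphi\equiv1$), so that $p$ exists, is unique up to a constant, and obeys $\|\nabla p\|_{L^2}\leq C(\|\nabla\cdot(au)\|_{L^2}+\|\gamma_\nu(au)\|_{H^{-1/2}})$. Then $v:=u-\nabla p$ satisfies $\nabla\wedge v=\nabla\wedge u$, $\nabla\cdot(av)=0$ and $\gamma_\nu(av)=0$, i.e.\ $v\in H(\mathrm{curl},D)\cap H_0(\mathrm{div}_a,D)$.

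For the higher integrability I would treat the two pieces separately. For $\nabla p$, Meyers' theorem for the $L^\infty$-elliptic operator $\nabla\cdot(a\nabla\,\cdot\,)$ on the Lipschitz domain $D$ furnishes an exponent $q_1\in(2,6)$, depending only on $D$, $a_0$, $a_1$, such that the $L^2$ divergence datum yields $\nabla p\in L^{q_1}$ (the ceiling $q_1<6$ being the residue of $H^1\hookrightarrow L^6$), while the sharp boundary-regularity theory for the same Neumann problem turns the $L^r(\partial D)$ flux into the interior bound $\nabla p\in L^{3r/2}$; hence $\nabla p\in L^{\min\{3r/2,q_1\}}$, and the threshold $r>4/3$ is precisely $3r/2>2$, which is why $s>2$. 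For $v$, I would use that $w:=av\in H_0(\mathrm{div},D)$ is divergence-free with $\nabla\wedge v\in L^2$: such $w$ enjoys the intrinsic fractional regularity $H^\sigma(D,\mathbb{C}^3)$, $\sigma=\sigma(D,a_0,a_1)>0$, of divergence-free fields with $L^2$ curl on a Lipschitz domain, so $w\in L^{q_1}$ and, multiplying by $a^{-1}\in L^\infty$, also $v=a^{-1}w\in L^{q_1}$ (integrability survives a bounded multiplier, even though Sobolev smoothness does not). Summing, $u=\nabla p+v\in L^{s}$ with $s=\min\{3r/2,q_1\}$, and the stated estimate follows by tracking the two bounds.

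For the compact immersion into $L^2$, take $(u_n)$ bounded in $W^{2,r}_{a,\nu}(D)$ and split $u_n=\nabla p_n+v_n$ as above. Because $L^2(D)$ and $L^r(\partial D)$ embed compactly into the dual $H^1(D)^\ast$ (the latter through the compact trace $H^1(D)\hookrightarrow\hookrightarrow L^{r'}(\partial D)$ on the $2$-dimensional boundary $\partial D$, valid for $r>1$), the Neumann data of the $p_n$, viewed as functionals on $H^1(D)$, range in a relatively compact subset of $H^1(D)^\ast$; continuous dependence then gives $\nabla p_n\to\nabla p$ strongly in $L^2$ along a subsequence. The complementary fields $v_n\in H(\mathrm{curl},D)\cap H_0(\mathrm{div}_a,D)$ are bounded, so the Maxwell compactness property of the Lipschitz domain $D$, in its $a$-weighted form, yields a further subsequence with $v_n\to v$ in $L^2$. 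Hence $u_n$ converges in $L^2$, proving compactness. The tangential case is identical after the dual decomposition.

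The genuine difficulty is the higher-integrability exponent, where the mere $L^\infty$-measurability of $a$ and the mere Lipschitz regularity of $\partial D$ conspire: neither classical elliptic regularity nor $H^1$-smoothness of the Maxwell fields is available, so the gain is only the fractional Meyers gain, capped at $q_1<6$, and the sharp propagation of the boundary flux from $L^r(\partial D)$ to $L^{3r/2}(D)$ must be proved by the layer-potential and boundary-regularity theory on Lipschitz surfaces. This is the technical heart of the argument, and it is carried out in \cite{Dru}; the splitting, the Meyers input, and the compactness bookkeeping above are what organise it into the statement of Proposition~\ref{Druprop}.
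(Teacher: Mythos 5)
The paper does not prove this proposition at all: its ``proof'' is a two-line citation, the continuous embedding being Corollary~1 and the compact embedding Corollary~2 of \cite{Dru}. Your closing paragraph correctly identifies \cite{Dru} as the source of the technical heart, and your overall architecture (Helmholtz splitting $u=\nabla p+v$, Meyers gain for the scalar potential, propagation of the $L^r(\partial D)$ flux to $L^{3r/2}(D)$, then a separate treatment of the solenoidal remainder) is broadly the right shape. However, two steps in your sketch do not work as written, and they are precisely the points where the real content of \cite{Dru} lives.

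First, the solenoidal part. You claim that $w:=av$, being divergence-free, ``enjoys the intrinsic fractional regularity $H^{\sigma}(D,\mathbb{C}^3)$ of divergence-free fields with $L^2$ curl on a Lipschitz domain.'' That regularity theory (Costabel's $H^{1/2}$ embedding, recalled in the remark after the proposition) applies to a field whose \emph{own} curl and divergence are in $L^2$. Here the hypotheses are split across two different fields: $w=av$ is divergence-free with vanishing normal trace, but it is $v=a^{-1}w$, not $w$, whose curl is in $L^2$; since $a$ is merely measurable, neither field satisfies both constraints. For $L^{\infty}$ coefficients no fractional Sobolev regularity of $v$ is available --- exactly as, for scalar equations in divergence form, Meyers gives $\nabla u\in L^{2+\delta}$ but not $\nabla u\in H^{\sigma}$. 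The correct route (and Druet's) is to represent $av=\nabla\wedge A$ via a vector potential and run a Meyers-type argument on the resulting second-order system for $A$; the gain is an integrability exponent $q_1>2$, not a differentiability exponent. Second, your compactness argument invokes ``the Maxwell compactness property of $D$ in its $a$-weighted form'' to handle the $v_n$. But the compact embedding of $H(\mathrm{curl},D)\cap H_0(\mathrm{div}_a,D)$ into $L^2$ for merely bounded measurable $a$ is itself a special case of the proposition you are proving (take $\gamma_{\nu}(au)=0$), and Definition~\ref{regularityscatterer} states the MCP only for $a=I_3$; so this step is circular unless the weighted compactness is established independently, which again is what the vector-potential/Meyers argument of \cite{Dru} delivers (higher integrability plus a div-curl compactness lemma). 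In short: defer to \cite{Dru} as the paper does, or else replace the $H^{\sigma}$ claim and the weighted-MCP appeal by the vector-potential Meyers estimate.
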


\begin{proof} The first part is a consequence of Corollary~1 in \cite{Dru}, whereas the second follows from Corollary~2 in \cite{Dru}.\end{proof}

We remark that, when $a\equiv I_3$ and $r=2$, these results would follow from the immersions into $H^{1/2}(D)$ proved in \cite{Cos}.

We next investigate how the Maxwell equations are transformed by change of variables.
The change of variables we consider is of the following type.

Let $D$ and $D'$ be two connected open sets and let $T:D\to D'$ be a bi-$W^{1,\infty}$ mapping, with constant $L$. We call $S=T^{-1}$ and
$J=JT$, the Jacobian matrix of $T$. Also, $J^T$ denotes the transpose of $J$ and $J^{-T}$ denotes the transpose of $J^{-1}$.
Recalling Remark~\ref{detsign}, we set $\mathrm{or}(T)=1$ if $\det J(\mathbf{x})>0$ for almost every $\mathbf{x}\in D$ and $\mathrm{or}(T)=-1$ if $\det J(\mathbf{x})<0$ for almost every $\mathbf{x}\in D$.

We begin by investigating how the suitable spaces we are dealing with are transformed.
It is classical that
for any $\varphi \in H^1(D')$ we call $\psi=\tilde{T}(\varphi)=\varphi\circ T$. We have that $\psi\in H^1(D)$ and, for almost every $\mathbf{y}\in D'$,
$$\nabla \varphi(\mathbf{y})=J^{-T}(T^{-1}(\mathbf{y}))\nabla \psi(T^{-1}(\mathbf{y})).$$
Then we have that $\tilde{T}:H^1(D')\to H^1(D)$ is a linear homeomorphism with inverse $\tilde{S}$.

The change of variables that is suited for $H(\mathrm{curl})$ spaces is the following.
For any $u\in L^2(D',\mathbb{C}^3)$ we define $v=\hat{T}(u)$ as follows
$$v(\mathbf{x})=\hat{T}(u)(\mathbf{x})=J^T(\mathbf{x})u(T(\mathbf{x}))\quad\text{ for a.e. }\mathbf{x}\in D$$
or, equivalently,
$$u(\mathbf{y})=\hat{S}(v)(\mathbf{y})=J^{-T}(T^{-1}(\mathbf{y}))v(T^{-1}(\mathbf{y}))\quad\text{ for a.e. }\mathbf{y}\in D'.$$

For $H(\mathrm{div})$ spaces the change of variables is given by the following.
Let $a'\in L^{\infty}(D',M^{3\times 3}_{sym}(\mathbb{R}))$ satisfy \eqref{ellipticity} in $D'$ for some constants $0<a'_0<a'_1$. For any $u\in H(\mathrm{div}_{a'},D')$, then $v=\hat{T}(u)\in H(\mathrm{div}_a,D)$ with the following formulas
\begin{multline}\label{divchange}
a(\mathbf{x})=T_{\ast}(a')(\mathbf{x})=\left(\frac{J^{-1}a'(T)J^{-T}}{|\det J^{-1}|}\right)(\mathbf{x})\quad\text{and}\\
\nabla\cdot(av)(\mathbf{x})=\left(\frac{(\nabla\cdot (a'u))(T)}{|\det J^{-1}|}\right)(\mathbf{x})
\quad\text{ for a.e. }\mathbf{x}\in D.
\end{multline}
We notice that $a\in L^{\infty}(D,M^{3\times 3}_{sym}(\mathbb{R}))$ and satisfies \eqref{ellipticity} in $D$ for some constants $0<a_0<a_1$ depending on
$a'_0$, $a'_1$, and $L$ only.

The following proposition can be proved as in \cite{Mon} with a little more care since here we are using a bi-$W^{1,\infty}$ mapping instead of a $C^1$-diffeomorphism.

\begin{prop}\label{changeofvariables}
Under the previous notations and assumptions, if $v\in H(\mathrm{curl},D)$ then
$u=\hat{S}(v)\in H(\mathrm{curl},D')$ and the following formula holds
\begin{equation}
(\nabla\wedge u)(\mathbf{y})=\left(\frac{J}{\det J}\nabla\wedge v\right)(T^{-1}(\mathbf{y}))\quad\text{ for a.e. }\mathbf{y}\in D'.
\end{equation}
\end{prop}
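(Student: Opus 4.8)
The plan is to verify the curl transformation formula by a duality (weak-formulation) argument, reducing to the corresponding identity for smooth functions via density, and handling the non-smoothness of $T$ (only bi-$W^{1,\infty}$, not $C^1$) with care at the level of the chain rule. First I would recall that $v\in H(\mathrm{curl},D)$ means $v\in L^2(D,\mathbb{C}^3)$ with $\nabla\wedge v\in L^2(D,\mathbb{C}^3)$, and that $u=\hat S(v)$ is defined pointwise a.e.\ by $u(\mathbf{y})=J^{-T}(T^{-1}(\mathbf{y}))v(T^{-1}(\mathbf{y}))$. Since $T$ is bi-$W^{1,\infty}$ with constant $L$, the matrices $J$, $J^{-1}$ and $\det J$ are bounded and bounded away from zero, so the change-of-variables formula for Lipschitz maps guarantees $u\in L^2(D',\mathbb{C}^3)$; the real content is to show that the distribution $\nabla\wedge u$ equals the claimed $L^2$ field.

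The core computation is to check, for every test field $\phi\in C^\infty_0(D',\mathbb{C}^3)$, the integration-by-parts identity
\begin{equation*}
\int_{D'}u\cdot(\nabla\wedge\phi)=\int_{D'}\left(\frac{J}{\det J}\nabla\wedge v\right)\!\big(T^{-1}(\mathbf{y})\big)\cdot\phi(\mathbf{y})\,\rmd\mathbf{y},
\end{equation*}
which is exactly the weak statement that $\nabla\wedge u$ is given by the right-hand side of the proposition. To do this I would pull the left-hand integral back to $D$ via $\mathbf{y}=T(\mathbf{x})$, using $\rmd\mathbf{y}=|\det J|\,\rmd\mathbf{x}$ and the identities $u(T(\mathbf{x}))=J^{-T}(\mathbf{x})v(\mathbf{x})$ and $(\nabla\wedge\phi)(T(\mathbf{x}))$. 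The decisive algebraic fact is the transformation rule for curl under a change of variables: for the pulled-back covector field $\hat T(\phi)=J^T(\phi\circ T)$ one has the pointwise identity $\nabla\wedge(\hat T\phi)=\mathrm{or}(T)\,(\det J)\,J^{-1}\big((\nabla\wedge\phi)\circ T\big)$, valid almost everywhere. Combining this with $u(T)\cdot(\nabla\wedge\phi)(T)=v\cdot\hat T^{-1}$-type cancellations reduces the integral on $D$ to $\int_D v\cdot(\nabla\wedge\psi)$ for a suitable $\psi=\hat T(\phi)$, at which point the definition of $\nabla\wedge v$ in $H(\mathrm{curl},D)$ can be applied and the boundary terms vanish because $\phi$, hence $\psi$, has compact support.

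The main obstacle is precisely the low regularity of $T$: the pointwise curl transformation identity is classical for $C^1$-diffeomorphisms but must be justified when $J$ is merely $L^\infty$ with $L^\infty$ inverse, so that $\hat T(\phi)$ is only $W^{1,\infty}$ rather than smooth and the chain rule must be invoked in its Lipschitz form. I would handle this by first proving the identity when $v$ is smooth — where every manipulation is legitimate and the classical $C^1$-type curl formula extends to bi-$W^{1,\infty}$ maps through the Lipschitz chain rule and the a.e.\ existence of $JT$ — and then passing to general $v\in H(\mathrm{curl},D)$ by density. Here I would invoke Proposition~\ref{Linftydensity} together with the standard approximation of $H(\mathrm{curl})$ functions, using that $\hat T$ and $\hat S$ are bounded linear maps on the relevant $L^2$ spaces (a consequence of the uniform bounds on $J$, $J^{-1}$, $\det J$) so that both $u=\hat S(v)$ and the candidate field $\frac{J}{\det J}(\nabla\wedge v)\circ T^{-1}$ depend continuously on $v$ in $L^2$; continuity then transfers the identity from the dense smooth class to all of $H(\mathrm{curl},D)$, completing the proof.
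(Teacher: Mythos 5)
The paper itself gives no detailed proof of this proposition (it only points to \cite{Mon} and notes that extra care is needed because $T$ is merely bi-$W^{1,\infty}$), and your overall strategy --- test against $\phi\in C^\infty_0(D',\mathbb{C}^3)$, pull back by $T$, use the transformation rule for the curl of the pulled-back field $\hat{T}(\phi)$, and invoke the distributional definition of $\nabla\wedge v$ --- is indeed the standard and correct route. However, your ``decisive algebraic fact'' is stated incorrectly: the pullback identity is
\[
\nabla\wedge\bigl(\hat{T}(\phi)\bigr)=(\det J)\,J^{-1}\bigl((\nabla\wedge\phi)\circ T\bigr),
\]
with $\det J$ and \emph{not} $\mathrm{or}(T)\,(\det J)=|\det J|$. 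This is not a harmless typo: with your version the two sides of the weak identity differ by the factor $\mathrm{or}(T)$ after the change of variables (the Jacobian $|\det J|\,\rmd\mathbf{x}$ contributes one factor of $\mathrm{or}(T)$ and the correct curl identity contributes the compensating $1/\det J$), so for orientation-reversing $T$ your computation either fails to close or ``proves'' the formula with an extra $\mathrm{or}(T)$. Since the paper applies this proposition precisely to reflections, where $\mathrm{or}(T)=-1$, the sign must be right.

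The second problem is the final density step. Proposition~\ref{Linftydensity} gives density of $L^\infty\cap H(\mathrm{curl})$, not of smooth fields; for the general (non-Lipschitz) open sets $D$, $D'$ allowed here, smooth functions are \emph{not} dense in $H(\mathrm{curl},D)$ (compare Proposition~\ref{Lipdomain}, which needs $D$ Lipschitz), so ``passing from smooth $v$ to general $v$ by density'' is not available in this generality. Fortunately it is also unnecessary: the duality argument works for every $v\in H(\mathrm{curl},D)$ at once, because the defining identity $\langle v,\nabla\wedge\psi\rangle=\langle\nabla\wedge v,\psi\rangle$ extends from $\psi\in C^\infty_0(D)$ to the compactly supported field $\psi=\hat{T}(\phi)$, which is only $L^\infty$ with $L^\infty$ curl, by interior mollification of $\psi$ --- valid in any open set. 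The approximation therefore belongs on the test-function side, not on $v$. Relatedly, the genuinely delicate point (the one the paper alludes to) is justifying the displayed pullback identity for $\hat{T}(\phi)$ when $J$ is only $L^\infty$: one cannot differentiate $J^T(\phi\circ T)$ term by term, since that produces distributional second derivatives of $T$; the usual fix is to smooth $T$ itself and pass to the limit using $JT_\varepsilon\to JT$ a.e.\ and boundedly. Your proposal gestures at ``the Lipschitz chain rule'' here but does not supply this mechanism.
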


By this formula, simple computations lead to the following.

\begin{cor}\label{changecor}
We have that $\hat{T}:H(\mathrm{curl},D')\to H(\mathrm{curl},D)$ is a linear homeomorphism with inverse $\hat{S}$. Its norm is bounded by a constant depending on $L$ only. Moreover
$\hat{T}:H_0(\mathrm{curl},D')\to H_0(\mathrm{curl},D)$
is bijective.
Finally, if  if $u$, $\varphi\in H(\mathrm{curl},D')$ and $v=\hat{T}(u)$, $\psi=\hat{T}(\varphi)\in H(\mathrm{curl},D)$, then
\begin{equation}\label{H0curlchange}
\langle\nabla\wedge u,\varphi\rangle_{D'}-\langle u,\nabla\wedge\varphi\rangle_{D'}=
\mathrm{or}(T)\left[\langle\nabla\wedge v,\psi\rangle_{D}-\langle v,\nabla\wedge\psi\rangle_{D}\right].
\end{equation}

We also have that  $\hat{T}:H(\mathrm{div}_{a'},D')\to H(\mathrm{div}_a,D)$ is a linear homeomorphism with inverse $\hat{S}$, where the tensor $a$ is given in \eqref{divchange}. Its norm is bounded by a constant depending on $L$ only. Moreover, $\hat{T}:H_0(\mathrm{div}_{a'},D')\to H_0(\mathrm{div}_a,D)$ is bijective.
Finally, if $u\in H(\mathrm{div}_{a'},D')$ and
$\varphi\in H^1(D')$, and $v=\hat{T}(u)\in H(\mathrm{div}_a,D)$ and $\psi=\tilde{T}(\varphi)\in H^1(D)$, then
\begin{equation}\label{H0divchange}
\langle\nabla\cdot (a'u),\varphi\rangle_{D'}+\langle a'u,\nabla\varphi\rangle_{D'}=\langle\nabla\cdot (av),\psi\rangle_{D}+\langle av,\nabla\psi\rangle_{D}.
\end{equation}
\end{cor}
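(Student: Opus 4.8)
The plan is to read off every assertion from Proposition~\ref{changeofvariables}, the divergence transformation rule \eqref{divchange}, and the elementary change-of-variables formula for integrals under the bi-$W^{1,\infty}$ map $T$; no new idea beyond careful bookkeeping is needed, which is why the text announces it as following by ``simple computations''. First I would record the pointwise rules I will use throughout: $v=\hat T(u)=J^T\,u(T)$, the inverse relation $u=\hat S(v)=J^{-T}(S)\,v(S)$ with $S=T^{-1}$, and, rewriting Proposition~\ref{changeofvariables} with $\mathbf y=T(\mathbf x)$, the curl rule $(\nabla\wedge v)(\mathbf x)=\det J(\mathbf x)\,J^{-1}(\mathbf x)\,(\nabla\wedge u)(T(\mathbf x))$. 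That $\hat S\circ\hat T=\mathrm{id}$ and $\hat T\circ\hat S=\mathrm{id}$ is immediate from $J^{-T}(S(\mathbf y))J^T(S(\mathbf y))=I_3$, so $\hat T$ and $\hat S$ are mutually inverse; applying Proposition~\ref{changeofvariables} to $T$ and to $S$ (which is itself bi-$W^{1,\infty}$ with the same constant) shows both land in the correct $H(\mathrm{curl})$ space. The norm bound then follows by estimating $\|v\|_{L^2(D)}$ and $\|\nabla\wedge v\|_{L^2(D)}$ with the substitution $\mathbf y=T(\mathbf x)$, using $\|J^{\pm T}\|_{L^\infty}\le L$ and the fact that $|\det J^{\pm1}|$ is bounded by a power of $L$; this yields an operator norm depending on $L$ only.

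Next I would prove the duality identity \eqref{H0curlchange}. With $v=\hat T(u)$ and $\psi=\hat T(\varphi)$, I substitute the two pointwise rules into $\langle\nabla\wedge v,\psi\rangle_D-\langle v,\nabla\wedge\psi\rangle_D$ and collapse the matrix factors using the algebraic identity $(J^{-1})^TJ^T=I_3$: the first term becomes $\int_D \det J\,\overline{(\nabla\wedge u)(T)}\cdot\varphi(T)$ and the second $\int_D\det J\,\overline{u(T)}\cdot(\nabla\wedge\varphi)(T)$. Changing variables $\mathbf y=T(\mathbf x)$ introduces the factor $|\det J|^{-1}$, and since the curl rule carries the signed determinant $\det J$, the surviving scalar is exactly $\det J/|\det J|=\mathrm{or}(T)$. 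This single sign is the one genuinely delicate point of the whole corollary, and I expect it to be the main thing to get right: the orientation factor $\mathrm{or}(T)$ appears precisely because the curl transform transports $\nabla\wedge$ with a signed Jacobian while the measure transforms with its absolute value.

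I would then deduce the bijectivity $\hat T:H_0(\mathrm{curl},D')\to H_0(\mathrm{curl},D)$ directly from \eqref{H0curlchange}. Since $\mathrm{or}(T)\in\{\pm1\}$, the left-hand side vanishes for every admissible test field $\varphi$ on $D'$ if and only if the right-hand side vanishes for every admissible $\psi$ on $D$; because $\hat T$ is a linear isomorphism $H(\mathrm{curl},D')\to H(\mathrm{curl},D)$ that carries fields of bounded support to fields of bounded support and back (the support of $\hat T(\varphi)$ sits inside $S(\{\varphi\neq0\})$, and $T,S$ map bounded sets to bounded sets in the situations at hand, in particular whenever $D,D'$ are bounded or $T$ is globally bi-Lipschitz as for the reflections and charts used later), the two conditions are equivalent and $\hat T$ restricts to a bijection of the $H_0(\mathrm{curl})$ spaces. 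This support-preservation remark is the only point where I would pause to say a word in the general, possibly unbounded, setting.

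Finally I would treat the divergence statements in complete parallel. The homeomorphism $\hat T:H(\mathrm{div}_{a'},D')\to H(\mathrm{div}_a,D)$ with inverse $\hat S$, and the tensor $a=T_\ast(a')$, are already furnished by \eqref{divchange}, and the norm bound again comes from the substitution $\mathbf y=T(\mathbf x)$ together with the ellipticity constants, all controlled by $L$. For \eqref{H0divchange} I substitute $av=|\det J|\,J^{-1}(a'u)(T)$ and the chain-rule identity $\nabla\psi=J^T(\nabla\varphi)(T)$ for the scalar transform $\psi=\tilde T(\varphi)$, again using $(J^{-1})^TJ^T=I_3$; this time \emph{both} the transform of $a'u$ and of $\nabla\cdot(a'u)$ already carry $|\det J|$, which cancels exactly against the $|\det J|^{-1}$ from the change of variables, so no orientation factor survives. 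This is the structural reason \eqref{H0divchange} has no $\mathrm{or}(T)$ whereas \eqref{H0curlchange} does. The bijection $\hat T:H_0(\mathrm{div}_{a'},D')\to H_0(\mathrm{div}_a,D)$ then follows from \eqref{H0divchange} exactly as in the curl case, with $H^1$ test functions and the scalar transform $\tilde T$ in place of $\hat T$.
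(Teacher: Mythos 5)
Your proposal is correct and follows exactly the route the paper intends: the paper offers no proof beyond the remark that the corollary follows from Proposition~\ref{changeofvariables} by ``simple computations'', and your computations are precisely those, with the two genuinely delicate points (the appearance of $\mathrm{or}(T)=\det J/|\det J|$ in \eqref{H0curlchange} versus its cancellation in \eqref{H0divchange}, and the need for $\hat{T}$, $\hat{S}$ to preserve bounded supports when testing the $H_0$ conditions) correctly identified and handled.
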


Let us now investigate the transformation of the Maxwell equations under these changes of variables.
Let $\epsilon'$, $\mu'\in L^{\infty}(D',M^{3\times 3}_{sym}(\mathbb{R}))$ satisfy \eqref{ellipticity2} in $D'$ for some constants $0<\lambda'_0<\lambda'_1$.
Let $(\mathbf{E}',\mathbf{H}')\in H(\mathrm{curl},D')^2$ solve the Maxwell equations for some $k>0$
$$\nabla\wedge \mathbf{E}'-\rmi k\mu'\mathbf{H}'=0\quad\text{and}\quad
\nabla\wedge \mathbf{H}'+\rmi k\epsilon'\mathbf{E}'=0\quad\text{in }D'.$$
Then straightforwad computations show that $\mathbf{E}=\mathrm{or}(T)\hat{T}(\mathbf{E}')$ and
$\mathbf{H}=
\hat{T}(\mathbf{H}')$ solve
$$\nabla\wedge \mathbf{E}-\rmi k\mu\mathbf{H}=0\quad\text{and}\quad
\nabla\wedge \mathbf{H}+\rmi k\epsilon\mathbf{E}=0\quad\text{in }D$$
where 
\begin{multline*}
\epsilon(\mathbf{x})=T_{\ast}(\epsilon')(\mathbf{x})=\left(\frac{J^{-1}\epsilon'(T)J^{-T}}{|\det J^{-1}|}\right)(\mathbf{x})\quad\text{and}\\
\mu(\mathbf{x})=T_{\ast}(\mu')(\mathbf{x})=\left(\frac{J^{-1}\mu'(T)J^{-T}}{|\det J^{-1}|}\right)(\mathbf{x})
\quad\text{ for a.e. }\mathbf{x}\in D.
\end{multline*}
Again we have that $\epsilon$, $\mu\in L^{\infty}(D,M^{3\times 3}_{sym}(\mathbb{R}))$ and satisfy \eqref{ellipticity2} in $D$ for some constants $0<\lambda_0<\lambda_1$ depending on
$\lambda'_0$, $\lambda'_1$, and $L$ only.

With these results at hand, we also investigate the following reflection principles.
Let $\Pi$ be a plane in $\mathbb{R}^3$ and let $T=T_{\Pi}$ be the reflection in $\Pi$. We notice that $T=T^{-1}$ and $JT$ is identically equal to an orthogonal matrix $J$ with $J=J^T=J^{-1}$ and
$\det J=-1$, therefore $\mathrm{or}(T)=-1$. We call $\nu$ one of the two unit vectors orthogonal to $\Pi$.
Let $D'$ be an open connected set such that $D'\subset \mathbb{R}^3_+=\{\mathbf{y}\in\mathbb{R}^3:\ \mathbf{y}\cdot \nu>0\}$. We suppose that there exists $\Gamma$ a nonempty open subset of $\Pi$ such that $\Gamma=\partial D'\backslash (\overline{\partial D'\cap \mathbb{R}^3_+})$. We call $D=T(D)$ and $\Omega=
D\cup D'\cup\Gamma$. We notice that $\Omega$ is a connected set such that
$\Omega\cap \Pi=\Gamma$.

\begin{prop}\label{reflprop}
Let $u\in H(\mathrm{curl},D')$. Then the function
$$w=\left\{\begin{array}{ll}
u&\text{in }D'\\
\hat{T}(u)&\text{in }D
\end{array}\right.$$
belongs to $H(\mathrm{curl},\Omega)$ 
and we have the following formula
$$\nabla\wedge w=\left\{\begin{array}{ll}
\nabla\wedge u&\text{in }D'\\
\nabla\wedge\hat{T}(u)&\text{in }D.
\end{array}\right.$$

If $\nu\wedge u=0$ in 
$H^{-1/2}(\Gamma_1,\mathbb{C}^3)$ for any open $\Gamma_1$ compactly contained in $\Gamma$, then we also have that
$$w_1=\left\{\begin{array}{ll}
u&\text{in }D'\\
-\hat{T}(u)&\text{in }D
\end{array}\right.$$
belongs to $H(\mathrm{curl},\Omega)$ 
and
$$\nabla\wedge w_1=\left\{\begin{array}{ll}
\nabla\wedge u&\text{in }D'\\
-\nabla\wedge\hat{T}(u)&\text{in }D.
\end{array}\right.$$

Given $a'\in L^{\infty}(D',M^{3\times 3}_{sym}(\mathbb{R}))$ satisfying \eqref{ellipticity} with constant $0<a'_0<a'_1$, we notice that if $u\in H(\mathrm{div}_{a'},D')$, then
the function $w_1$ defined above belongs to  $H(\mathrm{div}_a,\Omega)$,
where
$$a=\left\{\begin{array}{ll}
a'
&\text{in }D'\\
T_{\ast}(a')&\text{in }D,
\end{array}\right.$$
and we have the following formula
$$\nabla\cdot (aw_1)=\left\{\begin{array}{ll}
\nabla\cdot (a'u)&\text{in }D'\\
-\nabla\cdot(T_{\ast}(a')\hat{T}(u))&\text{in }D.
\end{array}\right.$$

If $\nu\cdot (a'u)=0$ in 
$H^{-1/2}(\Gamma_1)$ for any open $\Gamma_1$ compactly contained in $\Gamma$, then the function 
$w$ defined above belongs to  $H(\mathrm{div}_a,\Omega)$,
and we have the following formula
$$\nabla\cdot (aw)=\left\{\begin{array}{ll}
\nabla\cdot (a'u)&\text{in }D'\\
\nabla\cdot(T_{\ast}(a')\hat{T}(u))&\text{in }D.
\end{array}\right.$$

Let us finally notice that if $a'=I_3$, and $T$ is a reflection, then $a=T_{\ast}(a')=
T_{\ast}(I_3)=I_3$.
\end{prop}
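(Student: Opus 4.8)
The plan is to prove the reflection principle by reducing the $H(\mathrm{curl})$ and $H(\mathrm{div}_a)$ statements to the verification that the relevant tangential (resp. normal) traces match across the interface $\Gamma$, and then invoking the gluing characterization implicit in the weak definitions of these spaces. First I would fix notation: set $w$ (resp. $w_1$) as in the statement, and recall that to show $w\in H(\mathrm{curl},\Omega)$ it suffices to produce a candidate $L^2$ field, namely the piecewise-defined $\nabla\wedge w$ above, and verify the integration-by-parts identity
$$
\langle \nabla\wedge w,\phi\rangle_{\Omega}=\langle w,\nabla\wedge\phi\rangle_{\Omega}+\text{(nothing)}\qquad\text{for all }\phi\in C^\infty_0(\Omega,\mathbb{C}^3).
$$
Splitting $\Omega=D'\cup D\cup\Gamma$ and using that $w|_{D'}=u\in H(\mathrm{curl},D')$ and $w|_{D}=\hat T(u)\in H(\mathrm{curl},D)$ (the latter by Corollary~\ref{changecor}), each piece integrates by parts and produces a boundary contribution on $\Gamma$. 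The heart of the matter is that these two $\Gamma$-contributions cancel. For the pair $(w,\nabla\wedge w)$ with the \emph{unsigned} reflection of $u$, the cancellation is automatic from the geometry of the reflection $T=T_\Pi$, because $\hat T$ is designed precisely so that the tangential trace of $\hat T(u)$ on $\Gamma$ equals that of $u$ (the sign flip of the normal component of $J=J^T=J^{-1}$ with $\det J=-1$ is exactly compensated in the covariant transformation rule for $H(\mathrm{curl})$). For $w_1$, which uses the \emph{signed} reflection, the matching instead requires the hypothesis $\nu\wedge u=0$ on $\Gamma$, so that both one-sided tangential traces vanish and no interface term survives.

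The key technical steps, in order, are as follows. First I would record the trace cancellation as a lemma-style computation: for $\phi\in C^\infty_0(\Omega,\mathbb{C}^3)$, the jump term across $\Gamma$ equals $\langle\gamma_\tau(u)-\gamma_\tau(\hat T(u)\circ T^{-1}),\phi\rangle_\Gamma$ up to orientation, and this is shown to vanish using \eqref{H0curlchange} from Corollary~\ref{changecor} applied on a small Lipschitz neighbourhood straddling $\Gamma$, together with the fact that $\hat T$ maps $H_0(\mathrm{curl})$ to $H_0(\mathrm{curl})$ bijectively. Concretely, I would test the candidate curl against $\phi$, use $\mathrm{or}(T)=-1$ to track the sign produced by the change of variables in the $D$-integral, and observe that the orientation sign is exactly what is needed to align the two boundary integrals into either a cancellation (for $w$) or a matching pair of vanishing traces (for $w_1$). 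The $H(\mathrm{div}_a)$ statements are handled by the completely analogous argument, now using the duality identity \eqref{H0divchange} and the pushforward tensor $a=T_\ast(a')$ from \eqref{divchange}; here the roles of $w$ and $w_1$ are \emph{swapped} relative to the curl case, because the normal trace of a divergence-conforming field transforms contravariantly, so it is the signed field $w_1$ that glues unconditionally and the unsigned field $w$ that needs the hypothesis $\nu\cdot(a'u)=0$ on $\Gamma$.

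For the trace-matching itself, the cleanest route avoids pointwise boundary considerations (which are delicate since $\Gamma$ need only be open in $\Pi$ with no global regularity of $\partial D'$). Instead I would work locally: for any open $\Gamma_1\Subset\Gamma$, choose a small ball $B$ centred on a point of $\Gamma_1$ with $B\cap\Omega$ Lipschitz, apply Proposition~\ref{Lipdomain} to give meaning to $\gamma_\tau$ and $\gamma_\nu$ on $B\cap\Pi$, and verify the transformation of traces under $\hat T$ there using Corollary~\ref{changecor}. Because the identities \eqref{H0curlchange} and \eqref{H0divchange} are stated for global test functions, I would instead run the integration-by-parts directly with compactly supported $\phi$, localizing near $\Gamma$ via a partition of unity so that only the interface term matters and the behaviour of $\partial\Omega$ away from $\Gamma$ is irrelevant. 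The main obstacle I anticipate is precisely this low regularity of the interface: one cannot assume $\partial D'\setminus\Gamma$ is nice, so all trace statements must be phrased and verified on relatively compact Lipschitz pieces of $\Gamma$ and then patched, and one must be careful that the sign bookkeeping from $\mathrm{or}(T)=-1$ together with the orthogonal-matrix structure $J=J^T=J^{-1}$ is done consistently for the curl (covariant) versus the div (contravariant) transformation so that the correct one of $w,w_1$ glues in each of the four assertions. The final sentence, that $a=I_3$ is preserved when $a'=I_3$ and $T$ is a reflection, follows immediately by substituting $J^{-1}I_3J^{-T}=JJ^T=I_3$ and $|\det J^{-1}|=1$ into \eqref{divchange}.
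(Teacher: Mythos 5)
Your proposal is correct and follows essentially the same route as the paper: test the piecewise-defined curl (resp.\ divergence) against compactly supported test functions, localize to a Lipschitz region straddling $\Gamma$ so that Proposition~\ref{Lipdomain} gives meaning to $\gamma_\tau$ and $\gamma_\nu$ there, integrate by parts on each side, and observe that the interface contributions cancel because $\nu\wedge\hat{T}(u)=\nu\wedge u$ on $\Gamma$ (while the normal trace of $T_\ast(a')\hat{T}(u)$ flips sign, which is exactly why the roles of $w$ and $w_1$ swap between the curl and div statements). Your sign bookkeeping, the identification of which field glues unconditionally in each case, and the final computation $T_\ast(I_3)=JJ^T/|\det J^{-1}|=I_3$ all match the paper's argument.
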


\begin{proof}
Let $\varphi\in C^{\infty}_0(\Omega,\mathbb{C}^3)$. We can find a bounded Lipschitz domain $\Omega_1$ compactly contained in $\Omega$ and  containing the support of $\varphi$ satisfying the following properties.
We assume that $\Omega_1$ is symmetric with respect to $\Pi$ and $D'_1=\Omega_1\cap  \mathbb{R}^3_+$ is a bounded Lipschitz domain as well. We call $D_1=T(D_1)$ and $\Gamma_1=\Omega_1\cap\Pi$, which is compactly contained in $\Gamma$. 
Our aim is to prove that
\begin{multline*}\langle w,\nabla\wedge \varphi\rangle_{\Omega}=
\langle w,\nabla\wedge \varphi\rangle_{\Omega_1}=
\langle u,\nabla\wedge \varphi\rangle_{D'_1}
+\langle \hat{T}(u),\nabla\wedge \varphi\rangle_{D_1}
\\=
\langle \nabla\wedge u,\varphi\rangle_{D'_1}+
\langle \nabla\wedge \hat{T}(u),\varphi\rangle_{D_1}=
\langle \nabla\wedge u,\varphi\rangle_{D'}+
\langle \nabla\wedge \hat{T}(u),\varphi\rangle_{D}.
\end{multline*}
We know that
$$
\langle u,\nabla\wedge\varphi\rangle_{D'_1}-
\langle \nabla\wedge u,\varphi\rangle_{D'_1}=-\langle(-\nu)\wedge u,\varphi\rangle_{\Gamma_1}$$
and, analogously,
$$\langle \hat{T}(u),\nabla\wedge\varphi\rangle_{D_1}-
\langle \nabla\wedge \hat{T}(u),\varphi\rangle_{D_1}=-\langle \nu\wedge \hat{T}(u),\varphi\rangle_{\Gamma_1}.$$
By an approximation argument, it is not difficult to show that
$\nu\wedge u=\nu \wedge \hat{T}(u)$ in $H^{-1/2}(\Gamma_1,\mathbb{C}^3)$.
Hence the first result is proved. The others follow by analogous reasonings, 
just replacing $\nu\wedge u$ with $\nu\cdot (a'u)$ for the $H(\mathrm{div})$ spaces.\end{proof}

\bigskip

With the same notations as before, we have that if 
$(\mathbf{E}',\mathbf{H}')\in H(\mathrm{curl},D')^2$ solve the Maxwell equations for some $k>0$
$$\nabla\wedge \mathbf{E}'-\rmi k\mu'\mathbf{H}'=0\quad\text{and}\quad
\nabla\wedge \mathbf{H}'+\rmi k\epsilon'\mathbf{E}'=0\quad\text{in }D'$$
and $\nu\wedge \mathbf{E}=0$ in 
$H^{-1/2}(\Gamma_1,\mathbb{C}^3)$ for any open $\Gamma_1$ compactly contained in $\Gamma$, we have that the couple
\begin{equation}\label{reflectionrules}
(\mathbf{E},\mathbf{H})=\left\{\begin{array}{ll}
(\mathbf{E}',\mathbf{H}')
&\text{in }D'\\
(-\hat{T}(\mathbf{E}'),\hat{T}(\mathbf{H}'))&\text{in }D
\end{array}\right.
\end{equation}
belongs to $H(\mathrm{curl},\Omega)^2$ and solves
\begin{equation}\label{reflectioneq}
\nabla\wedge \mathbf{E}-\rmi k\mu\mathbf{H}=0\quad\text{and}\quad
\nabla\wedge \mathbf{H}+\rmi k\epsilon\mathbf{E}=0\quad\text{in }\Omega
\end{equation}
where
\begin{equation}\label{reflectioncoefficients}
(\epsilon,\mu)=\left\{\begin{array}{ll}
(\epsilon',\mu')
&\text{in }D'\\
(T_{\ast}(\epsilon'),T_{\ast}(\mu'))&\text{in }D.
\end{array}\right.
\end{equation}

We conclude our discussion on reflection properties by stating, without proofs, the following lemmas.

\begin{lem}\label{reflectionestimatelem}
Under the previous assumptions, let $(\mathbf{E},\mathbf{H})$
be as in \eqref{reflectionrules} and $(\epsilon,\mu)$ as in \eqref{reflectioncoefficients}.
Let $(\tilde{\mathbf{E}},\tilde{\mathbf{H}})$ solve \eqref{reflectioneq} in $\Omega$ and let us call
$(\tilde{\mathbf{E}}_1,\tilde{\mathbf{H}}_1)=
(-\hat{T}(\tilde{\mathbf{E}}),\hat{T}(\tilde{\mathbf{H}}))$ in 
$\Omega$. We have that $(\tilde{\mathbf{E}}_1,\tilde{\mathbf{H}}_1)$ solves \eqref{reflectioneq} in $\Omega$ and
\begin{multline}\label{reflectionestimate}
\|\tilde{\mathbf{E}}-\tilde{\mathbf{E}}_1\|_{L^2(\Omega,\mathbb{C}^3)}\leq 
\|\tilde{\mathbf{E}}-\mathbf{E}\|_{L^2(\Omega,\mathbb{C}^3)}
+\|\mathbf{E}-
\tilde{\mathbf{E}}_1\|_{L^2(\Omega,\mathbb{C}^3)}\\=
2\|\tilde{\mathbf{E}}-\mathbf{E}\|_{L^2(\Omega,\mathbb{C}^3)}
\end{multline}
and, analogously,
\begin{multline*}
\|\tilde{\mathbf{H}}-\tilde{\mathbf{H}}_1\|_{L^2(\Omega,\mathbb{C}^3)}\leq 
\|\tilde{\mathbf{H}}-\mathbf{H}\|_{L^2(\Omega,\mathbb{C}^3)}
+\|\mathbf{H}-
\tilde{\mathbf{H}}_1\|_{L^2(\Omega,\mathbb{C}^3)}\\=
2\|\tilde{\mathbf{H}}-\mathbf{H}\|_{L^2(\Omega,\mathbb{C}^3)}.
\end{multline*}
\end{lem}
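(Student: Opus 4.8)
The plan is to exploit two structural features of the reflection construction: that the coefficients $(\epsilon,\mu)$ produced by \eqref{reflectioncoefficients} are invariant under the pushforward $T_\ast$, and that the operator $\hat T$ attached to an orthogonal reflection acts as an $L^2(\Omega,\mathbb{C}^3)$-isometry. First I would record the elementary facts that $T$ maps $\Omega$ bijectively onto itself (since $\Omega$ is symmetric with respect to $\Pi$) and is bi-Lipschitz, being an affine isometry, with $J=J^T=J^{-1}$ orthogonal, $\det J=-1$, hence $\mathrm{or}(T)=-1$ and $|\det J^{-1}|=1$. A direct computation using $T^2=\mathrm{id}$ and $J^2=I_3$ then shows that the coefficients defined in \eqref{reflectioncoefficients} satisfy $T_\ast(\epsilon)=\epsilon$ and $T_\ast(\mu)=\mu$ throughout $\Omega$: for $\mathbf{x}\in D'$ one has $T_\ast(\epsilon)(\mathbf{x})=J\,\epsilon(T\mathbf{x})\,J=J\,T_\ast(\epsilon')(T\mathbf{x})\,J=J\,(J\epsilon'(\mathbf{x})J)\,J=\epsilon'(\mathbf{x})$, and the analogous identity holds on $D$, with the same reasoning for $\mu$.

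To prove the first assertion I would apply the change-of-variables rule for the Maxwell equations established right after Corollary~\ref{changecor}, now taking both the source and target domains equal to $\Omega$ and $T$ the reflection restricted to $\Omega$. Since $(\tilde{\mathbf{E}},\tilde{\mathbf{H}})$ solves \eqref{reflectioneq} in $\Omega$ with coefficients $(\epsilon,\mu)$, that rule yields that $(\mathrm{or}(T)\hat T(\tilde{\mathbf{E}}),\hat T(\tilde{\mathbf{H}}))=(-\hat T(\tilde{\mathbf{E}}),\hat T(\tilde{\mathbf{H}}))=(\tilde{\mathbf{E}}_1,\tilde{\mathbf{H}}_1)$ solves the Maxwell system with coefficients $(T_\ast(\epsilon),T_\ast(\mu))$; by the invariance just noted these coefficients coincide with $(\epsilon,\mu)$, so $(\tilde{\mathbf{E}}_1,\tilde{\mathbf{H}}_1)$ again solves \eqref{reflectioneq} in $\Omega$.

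For the estimate, the inequality in \eqref{reflectionestimate} is just the triangle inequality, so the real content is the final equality, namely $\|\mathbf{E}-\tilde{\mathbf{E}}_1\|_{L^2(\Omega,\mathbb{C}^3)}=\|\tilde{\mathbf{E}}-\mathbf{E}\|_{L^2(\Omega,\mathbb{C}^3)}$. I would first verify that $(\mathbf{E},\mathbf{H})$ is itself a fixed point of the reflection operation, that is $\mathbf{E}=-\hat T(\mathbf{E})$ and $\mathbf{H}=\hat T(\mathbf{H})$ on $\Omega$; this follows from \eqref{reflectionrules} together with $J^2=I_3$ and $T^2=\mathrm{id}$, checked separately on $D'$ and on $D$. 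Consequently $\mathbf{E}-\tilde{\mathbf{E}}_1=-\hat T(\mathbf{E})+\hat T(\tilde{\mathbf{E}})=\hat T(\tilde{\mathbf{E}}-\mathbf{E})$. It then remains to observe that $\hat T$ preserves the $L^2(\Omega,\mathbb{C}^3)$-norm: since $\hat T(\mathbf{v})(\mathbf{x})=J\,\mathbf{v}(T\mathbf{x})$ with $J$ orthogonal, the change of variables $\mathbf{y}=T\mathbf{x}$, using $|\det J^{-1}|=1$ and $T(\Omega)=\Omega$, gives $\|\hat T(\mathbf{v})\|_{L^2(\Omega,\mathbb{C}^3)}=\|\mathbf{v}\|_{L^2(\Omega,\mathbb{C}^3)}$.

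Applying the last identity to $\mathbf{v}=\tilde{\mathbf{E}}-\mathbf{E}$ yields the required equality for the electric field, and the estimate for $\tilde{\mathbf{H}}$ follows verbatim using $\mathbf{H}=\hat T(\mathbf{H})$ and $\tilde{\mathbf{H}}_1=\hat T(\tilde{\mathbf{H}})$. I do not expect a genuine obstacle here: the whole argument reduces to the coefficient invariance $T_\ast(\epsilon,\mu)=(\epsilon,\mu)$ and the isometry property of $\hat T$, both of which hinge only on $T$ being an \emph{orthogonal} reflection. The only delicate points are bookkeeping, namely carrying the sign $\mathrm{or}(T)=-1$ correctly through the electric field and confirming that $T$ restricts to a bi-Lipschitz bijection of the symmetric set $\Omega$.
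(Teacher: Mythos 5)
Your proof is correct. The paper actually states Lemma~\ref{reflectionestimatelem} explicitly without proof, and your argument supplies exactly the intended one: the coefficient invariance $T_{\ast}(\epsilon,\mu)=(\epsilon,\mu)$ combined with the change-of-variables rule gives that $(\tilde{\mathbf{E}}_1,\tilde{\mathbf{H}}_1)$ solves \eqref{reflectioneq}, and the identities $\mathbf{E}=-\hat T(\mathbf{E})$, $\mathbf{H}=\hat T(\mathbf{H})$ together with the $L^2$-isometry of $\hat T$ (from $J$ orthogonal, $|\det J|=1$, $T(\Omega)=\Omega$) reduce the claimed equality to $\mathbf{E}-\tilde{\mathbf{E}}_1=\hat T(\tilde{\mathbf{E}}-\mathbf{E})$, with the sign $\mathrm{or}(T)=-1$ handled correctly throughout.
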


\begin{lem}\label{reflectionlemma}
Under the assumptions and notations of Lemma~\textnormal{\ref{reflectionestimatelem}}, 
we have that, for any open $\Gamma_1$ compactly contained in $\Gamma$,
$$\nu\wedge (\tilde{\mathbf{E}}-\tilde{\mathbf{E}}_1)=2(\nu\wedge \tilde{\mathbf{E}})\quad\text{in }H^{-1/2}(\Gamma_1,\mathbb{C}^3).$$

Moreover, if \eqref{reflectioneq} satisfies the UCP in $\Omega$ and $\nu\wedge \tilde{\mathbf{E}}=0$ in 
$H^{-1/2}(\Gamma_1,\mathbb{C}^3)$ for any open $\Gamma_1$ compactly contained in $\Gamma$,
then we have that
$(\tilde{\mathbf{E}},\tilde{\mathbf{H}})=(\tilde{\mathbf{E}}_1,\tilde{\mathbf{H}}_1)$.
\end{lem}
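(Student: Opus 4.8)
The plan is to prove the two assertions separately: the first by a direct trace computation and the second by combining the reflection principle \eqref{reflectionrules} with the unique continuation property.

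For the trace identity, I would recall the tangential trace preservation established in the proof of Proposition~\ref{reflprop}, namely that $\nu\wedge u=\nu\wedge \hat{T}(u)$ in $H^{-1/2}(\Gamma_1,\mathbb{C}^3)$ for every open $\Gamma_1$ compactly contained in $\Gamma$, where the two sides are the tangential traces taken from $D'$ and from $D$, respectively. Since $\tilde{\mathbf{E}}_1=-\hat{T}(\tilde{\mathbf{E}})$ on $D$, and since $\tilde{\mathbf{E}}\in H(\mathrm{curl},\Omega)$ has a single tangential trace across $\Gamma$, this immediately yields $\nu\wedge\tilde{\mathbf{E}}_1=-\nu\wedge\tilde{\mathbf{E}}$ in $H^{-1/2}(\Gamma_1,\mathbb{C}^3)$, whence $\nu\wedge(\tilde{\mathbf{E}}-\tilde{\mathbf{E}}_1)=2(\nu\wedge\tilde{\mathbf{E}})$, as claimed.

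For the second assertion, suppose $\nu\wedge\tilde{\mathbf{E}}=0$ in $H^{-1/2}(\Gamma_1,\mathbb{C}^3)$ for every open $\Gamma_1$ compactly contained in $\Gamma$. I would restrict $(\tilde{\mathbf{E}},\tilde{\mathbf{H}})$ to $D'$: it solves the Maxwell system there with $\nu\wedge\tilde{\mathbf{E}}=0$ on $\Gamma$, so the reflection principle applies and the pair
$$(\hat{\mathbf{E}},\hat{\mathbf{H}})=\left\{\begin{array}{ll}(\tilde{\mathbf{E}},\tilde{\mathbf{H}})&\text{in }D'\\(-\hat{T}(\tilde{\mathbf{E}}),\hat{T}(\tilde{\mathbf{H}}))&\text{in }D\end{array}\right.$$
produced via \eqref{reflectionrules} belongs to $H(\mathrm{curl},\Omega)^2$ and solves \eqref{reflectioneq} in $\Omega$ with precisely the coefficients \eqref{reflectioncoefficients}. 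By construction $(\hat{\mathbf{E}},\hat{\mathbf{H}})=(\tilde{\mathbf{E}},\tilde{\mathbf{H}})$ in $D'$, while the explicit form of $\hat{T}$ shows $(\hat{\mathbf{E}},\hat{\mathbf{H}})=(\tilde{\mathbf{E}}_1,\tilde{\mathbf{H}}_1)$ in $D$. Now $(\hat{\mathbf{E}},\hat{\mathbf{H}})$ and $(\tilde{\mathbf{E}},\tilde{\mathbf{H}})$ are two solutions of \eqref{reflectioneq} in the connected open set $\Omega$ coinciding on the nonempty open subset $D'$; since \eqref{reflectioneq} satisfies the UCP in $\Omega$, their difference vanishes identically, so $(\hat{\mathbf{E}},\hat{\mathbf{H}})=(\tilde{\mathbf{E}},\tilde{\mathbf{H}})$ throughout $\Omega$. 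Restricting to $D$ then gives $(\tilde{\mathbf{E}},\tilde{\mathbf{H}})=(\tilde{\mathbf{E}}_1,\tilde{\mathbf{H}}_1)$ in $D$. Finally, $(\tilde{\mathbf{E}}-\tilde{\mathbf{E}}_1,\tilde{\mathbf{H}}-\tilde{\mathbf{H}}_1)$ solves \eqref{reflectioneq} in $\Omega$ and vanishes on the open set $D$, so a second application of the UCP forces it to vanish on all of $\Omega$, yielding $(\tilde{\mathbf{E}},\tilde{\mathbf{H}})=(\tilde{\mathbf{E}}_1,\tilde{\mathbf{H}}_1)$ in $\Omega$.

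The delicate point is the second assertion: one must verify that the pair produced by the reflection principle coincides with the \emph{already given} reflected pair $(\tilde{\mathbf{E}}_1,\tilde{\mathbf{H}}_1)$ on $D$, which is exactly where the explicit form of $\hat{T}$ and of the reflected coefficients \eqref{reflectioncoefficients} enter, and then to invoke the UCP twice, first to transport the coincidence from $D'$ to $\Omega$ and, after restricting to $D$, to propagate it back to all of $\Omega$. (Alternatively, the last step can be replaced by the observation that the reflection operation $\mathbf{F}\mapsto(-\hat{T}(\mathbf{F}),\hat{T}(\cdot))$ is an involution, so equality on $D$ already forces equality on $D'$.) By contrast, the sign bookkeeping in the first assertion is routine once the preservation $\nu\wedge u=\nu\wedge\hat{T}(u)$ from Proposition~\ref{reflprop} is in hand.
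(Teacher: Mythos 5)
Your proof is correct, and it uses precisely the machinery the paper intends: the trace identity $\nu\wedge u=\nu\wedge\hat{T}(u)$ from the proof of Proposition~\ref{reflprop} for the first assertion, and the reflection principle \eqref{reflectionrules}--\eqref{reflectioncoefficients} combined with a double application of the UCP (or the involutivity of $\mathbf{F}\mapsto-\hat{T}(\mathbf{F})$) for the second. The paper states this lemma without proof, so there is nothing to compare against; your argument fills the omission correctly, including the key verification that the pair produced by reflecting $(\tilde{\mathbf{E}},\tilde{\mathbf{H}})|_{D'}$ coincides on $D$ with the given $(\tilde{\mathbf{E}}_1,\tilde{\mathbf{H}}_1)$ and that the reflected coefficients agree with \eqref{reflectioncoefficients}.
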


\subsection{Classes of admissible scatterers}\label{classes-subsec}

We wish to define suitable classes of admissible scatterers. We begin with some definitions that are taken from \cite{LPRX}, where these classes are deeply discussed and are defined for any $N\geq 2$. We keep such a generality here as well.
Let us also notice that similar classes have been developed earlier in \cite{Gia} and in
\cite{MR}.

We fix a bounded open set $\Omega\subset\mathbb{R}^N$, $N\geq 2$.
Let $K$ be a compact subset of $\mathbb{R}^N$. We say that $K$ is a \emph{mildly} \emph{Lipschitz surface}, with or without boundary, with positive constants $r$ and $L$ if the following holds.

For any $\mathbf{x}\in K$ there exists a bi-Lipschitz function $\Phi_{\mathbf{x}}: B_{r}(\mathbf{x})\to\mathbb{R}^N$ such that
\begin{enumerate}[a)]
\item\label{conditiona} for any $\mathbf{z}_1$, $\mathbf{z}_2\in B_{r}(\mathbf{x})$ we have
$$L^{-1}\|\mathbf{z}_1-\mathbf{z}_2\|\leq\|\Phi_{\mathbf{x}}(\mathbf{z}_1)-\Phi_{\mathbf{x}}(\mathbf{z}_2)\|\leq L\|\mathbf{z}_1-\mathbf{z}_2\|;$$
\item\label{conditionb} $\Phi_{\mathbf{x}}(\mathbf{x})=0$ and
$\Phi_{\mathbf{x}}(K\cap B_{r}(\mathbf{x}))\subset \Pi=\{\mathbf{y}\in\mathbb{R}^N:\ y_N=0\}$.
\newcounter{enumi_saved}
\setcounter{enumi_saved}{\value{enumi}}
\end{enumerate}
We say that $\mathbf{x}\in K$ belongs to the interior of $K$ if there exists $\delta$, $0<\delta\leq r$, such that $B_{\delta}(0)\cap \Pi \subset \Phi_{\mathbf{x}}(K\cap B_{r}(\mathbf{x}))$.
Otherwise we say that $\mathbf{x}$ belongs to the boundary of $K$. We remark that the boundary of $K$ might be empty. Further we assume that

\begin{enumerate}[a)]\setcounter{enumi}{\value{enumi_saved}}
\item\label{conditionc} for any $\mathbf{x}$ belonging to the boundary of $K$, we have that
$$\Phi_{\mathbf{x}}(K\cap B_{r}(\mathbf{x}))=\Phi_{\mathbf{x}}(B_{r}(\mathbf{x}))\cap \Pi^+$$
where $\Pi^+=\{\mathbf{y}\in\mathbb{R}^N:\ y_N=0,\ y_{N-1}\geq 0\}$.
\end{enumerate}

%

We shall call $\mathcal{B}=\mathcal{B}(r,L,\Omega)$ the set of $K\subset\overline{\Omega}$ such that $K$ is a mildly Lipschitz hypersurface with constants $r$ and $L$.
We recall that $\mathcal{B}(r,L,\Omega)$  is closed, and actually compact, with respect to the Hausdorff distance, see \cite[Lemma~3.6]{MR}. For details about convergence in the Hausdorff distance, we refer, for instance, to \cite{DM}.

We call $\mathcal{D}=\mathcal{D}(r,L,\Omega)$ the class of sets $\partial D$ where $D\subset \Omega$ is an open set which is Lipschitz with constants $r$ and $L$. We have that
$\mathcal{D}(r,L,\Omega)\subset \mathcal{B}(\tilde{r},\tilde{L},\Omega)$, for some constants $\tilde{r}$, $\tilde{L}$ depending on $r$ and $L$ only. Moreover, also 
$\mathcal{D}(r,L,\Omega)$ is compact with respect to the Hausdorff distance. 

We further call $\hat{\mathcal{D}}=\hat{\mathcal{D}}(r,L,\Omega)$ the class of compact sets $\Sigma\subset\overline{\Omega}$ such that $\partial \Sigma\in \mathcal{D}(r,L,\Omega)$. Also this class is compact with respect to the Hausdorff distance.

Again following \cite{MR}, we combine different mildly Lipschitz hypersurfaces to obtain more complex structures.

\begin{defn}\label{classdefin}
Let us fix positive constants $r$, $L$, and a bounded open set $\Omega$.
Let us also fix $\omega:(0,+\infty)\to (0,+\infty)$ a nondecreasing left-continuous function.

We say that a compact set $K\subset \overline{\Omega}$ belongs to the class $\tilde{\mathcal{B}}=
\tilde{\mathcal{B}}(r,L,\Omega,\omega)$ if it satisfies the following conditions:

\begin{enumerate}[1)]
\item\label{1} $K=\bigcup_{i=1}^M K^i$ where $K^i\in\mathcal{B}(r,L,\Omega)$ for any $i=1,\ldots,M$;
\item\label{3} for any $i\in \{1,\ldots,M\}$, and any $\mathbf{x}\in K^i$, if its distance from the boundary of $K^i$ is $t>0$, then the distance of $\mathbf{x}$ from the union of $K^j$, with $j\neq i$, is greater than or equal to $\omega(t)$.
\end{enumerate}

We say that a compact set $\Sigma\subset \overline{\Omega}$ belongs to the class 
$\tilde{\mathcal{B}}_1=
\tilde{\mathcal{B}}_1(r,L,\Omega,\omega)$ if 
$\partial\Sigma\in \tilde{\mathcal{B}}(r,L,\Omega,\omega)$.
\end{defn}

Let us notice that in the previous definition the number $M$ may depend on $K$. However, there exists an integer $M_0$, depending on $r$, $L$, the diameter of $\Omega$, and $\omega$ only, such that $M\leq M_0$ for any $K\in \tilde{\mathcal{B}}$. We have that $\mathcal{H}^{N-1}(K)$ is bounded by a constant depending on $r$, $L$, the diameter of $\Omega$, and $M_0$ only, hence $|K|=0$. Furthermore, if we set as the boundary of $K$ the union of the boundaries of $K^i$, $i=1,\ldots,M$, then
the boundary of $K$ has $\mathcal{H}^{N-2}$ measure bounded by a constant again depending on $r$, $L$, the diameter of $\Omega$, and $M_0$ only. Finally, the number of connected components of $\mathbb{R}^N\backslash K$ is bounded by a constant $M_1$ depending on $r$, $L$, the diameter of $\Omega$, and $\omega$ only.

Without loss of generality, in the sequel we shall always assume that $\omega(t)\leq t$ for any $t>0$, and that
$\lim_{t\to+\infty}\omega(t)$ is equal to a finite real number which we call $\omega(+\infty)$.

The following compactness results hold.

\begin{lem}\label{compactHausdorff0}
The classes $\tilde{\mathcal{B}}$ and $\tilde{\mathcal{B}}_1$ introduced in Definition~\textnormal{\ref{classdefin}}
are compact under convergence in the Hausdorff distance.

Moreover, let $\Sigma\in \tilde{\mathcal{B}}_1$ and $\mathbf{x}\in\partial\Sigma$. We call $G=\mathbb{R}^N\backslash \Sigma$.
For any $r_1>0$, the number of connected components $U$ of $B_{r_1}(\mathbf{x})\cap G$ such that $\mathbf{x}\in\partial U$ is bounded by a constant $M_2$ depending on $r_1$, $r$, $L$, and $\omega$ only. Finally, the number of  connected components of $B_{r_1}(\mathbf{x})\cap G$ intersecting $B_{r_1/2}(\mathbf{x})$ 
is bounded by a constant $M_3$ depending on $r_1$, $r$, $L$, and $\omega$ only.
\end{lem}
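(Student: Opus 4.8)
The plan is to prove the three assertions in turn, reducing the two counting statements to a single uniform local estimate.

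\emph{Compactness (first assertion).} I would start from a sequence $K_n\in\tilde{\mathcal{B}}$, written as $K_n=\bigcup_{i=1}^{M_n}K_n^i$ with $K_n^i\in\mathcal{B}(r,L,\Omega)$. Since $M_n\leq M_0$ with $M_0=M_0(r,L,\mathrm{diam}\,\Omega,\omega)$, after passing to a subsequence I may assume $M_n\equiv M$ is constant. Using that $\mathcal{B}(r,L,\Omega)$ is compact in the Hausdorff distance (\cite[Lemma~3.6]{MR}) and a diagonal extraction over the $M$ indices, I may further assume $K_n^i\to K^i\in\mathcal{B}(r,L,\Omega)$ for each $i$; as finite unions are continuous for the Hausdorff distance, $K_n\to K:=\bigcup_{i=1}^M K^i$. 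It remains to check that the separation condition~\textit{\ref{3}}) passes to the limit. Fixing $\mathbf{x}\in K^i$ with $\mathrm{dist}(\mathbf{x},\partial K^i)=t>0$ and $0<s<t$, and choosing $\mathbf{x}_n\in K_n^i$ with $\mathbf{x}_n\to\mathbf{x}$, the key point is the \emph{stability of the surface boundary}: a limit of points of $\partial K_n^i$ lies in $\partial K^i$. Granting this, $\liminf_n\mathrm{dist}(\mathbf{x}_n,\partial K_n^i)\geq t>s$, so $\mathrm{dist}(\mathbf{x}_n,\bigcup_{j\neq i}K_n^j)\geq\omega(s)$ for $n$ large; letting $n\to\infty$ and then $s\to t^-$, the monotonicity and left-continuity of $\omega$ give $\mathrm{dist}(\mathbf{x},\bigcup_{j\neq i}K^j)\geq\omega(t)$, hence $K\in\tilde{\mathcal{B}}$. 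The boundary-stability I would obtain by extracting, via Arzel\`a--Ascoli, a uniform limit of the flattening charts $\Phi_{\mathbf{b}_n}$ (which share the bi-Lipschitz constant $L$) and reading off conditions~\textit{\ref{conditionb}}) and~\textit{\ref{conditionc}}) in the limit; this is the delicate part. The compactness of $\tilde{\mathcal{B}}_1$ then follows since $\Sigma$ is recovered from $K=\partial\Sigma$ together with the uniformly controlled inner components, exactly as for $\hat{\mathcal{D}}$.

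\emph{Reduction of the two counting statements.} I claim both follow from a single \emph{uniform local bound}: there are $\delta_0=\delta_0(r,L,\omega)>0$ and $c_0=c_0(r,L,\omega)$ such that for every $\Sigma\in\tilde{\mathcal{B}}_1$, every $\mathbf{z}\in\mathbb{R}^N$, and every $0<\delta\leq\delta_0$, the set $B_\delta(\mathbf{z})\cap G$ has at most $c_0$ connected components. For the second assertion I would use that, for $\rho\leq r_1$, distinct components of $B_{r_1}(\mathbf{x})\cap G$ touching $\mathbf{x}$ contain distinct components of $B_\rho(\mathbf{x})\cap G$ touching $\mathbf{x}$ (shrinking the ball cannot merge them); taking $\rho=\min\{r_1,\delta_0\}$ bounds their number by $c_0$, so $M_2=c_0$. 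For the third assertion I would cover $\overline{B_{r_1/2}(\mathbf{x})}$ by at most $C(r_1/\delta_1)^N$ balls $B_{\delta_1}(\mathbf{z}_\ell)$ with $\delta_1=\min\{\delta_0,r_1/2\}$ and $\mathbf{z}_\ell\in\overline{B_{r_1/2}(\mathbf{x})}$; any component of $B_{r_1}(\mathbf{x})\cap G$ meeting $B_{r_1/2}(\mathbf{x})$ contains a connected component of some $B_{\delta_1}(\mathbf{z}_\ell)\cap G$, and distinct global components, being disjoint, capture distinct local components. Hence their number is at most $C(r_1/\delta_1)^N\cdot c_0=:M_3(r_1,r,L,\omega)$.

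\emph{The uniform local bound, and the main obstacle.} Fix $\mathbf{z}$ and $0<\delta\leq\delta_0$, and let $\tau_0=\tau_0(\delta_0,\omega)$ satisfy $\omega(\tau)>2\delta_0$ for $\tau>\tau_0$ (via left-continuity). The separation condition~\textit{\ref{3}}) shows that if some piece $K^j$ has a point $\mathbf{y}\in B_\delta(\mathbf{z})$ with $\mathrm{dist}(\mathbf{y},\partial K^j)>\tau_0$, then no other piece meets $B_\delta(\mathbf{z})$: any competing point would lie within $2\delta\leq 2\delta_0<\omega(\mathrm{dist}(\mathbf{y},\partial K^j))$ of $\mathbf{y}$. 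In that case $K\cap B_\delta(\mathbf{z})=K^j\cap B_\delta(\mathbf{z})$ is flattened by a single chart and the complement has at most two components. Otherwise every piece meeting $B_\delta(\mathbf{z})$ does so within $\tau_0$ of its own edge; there are at most $M_0$ such pieces, each flattened by its chart~\textit{\ref{conditiona}})--\textit{\ref{conditionc}}), and the number of components of the complement of at most $M_0$ mildly Lipschitz surfaces in a ball is bounded by a constant depending only on $M_0$ and $N$, obtained by adding the surfaces one at a time and noting that each, being the bi-Lipschitz image of a flat piece, raises the count by a controlled amount. This yields $c_0=c_0(r,L,\omega)$. The genuinely hard step here — together with the boundary-stability of the first assertion — is this last arrangement estimate: because each piece comes with its \emph{own} flattening chart, one cannot flatten them simultaneously, and controlling the joint configuration so as to count complementary components is the most laborious part; it is handled through the equiboundedness of the chart family and the quantitative separation encoded by $\omega$.
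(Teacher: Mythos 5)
The paper does not actually prove this lemma: its ``proof'' is a citation of \cite[Lemma~3.8]{MR} for the compactness of $\tilde{\mathcal{B}}$ and of \cite[Lemma~2.6]{LPRX} for the compactness of $\tilde{\mathcal{B}}_1$ and for the two counting statements. Your proposal is therefore a from-scratch attempt. Its architecture (diagonal extraction and stability of the chart family for the compactness; a uniform local component bound plus a covering argument for the counting) is reasonable, and the reductions in your second paragraph are essentially sound. The problem is in the third paragraph.

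The ``arrangement estimate'' you rely on is false as stated. All that the chart $\Phi_{\mathbf{y}}$, centred at a point $\mathbf{y}\in K^j$ rather than at $\mathbf{z}$, gives you is that the components of $B_\delta(\mathbf{z})\setminus K^j$ inject into the components of $\Phi_{\mathbf{y}}(B_\delta(\mathbf{z}))\setminus\Pi$, where $\Phi_{\mathbf{y}}(B_\delta(\mathbf{z}))$ is merely an $L$-bi-Lipschitz image of a ball. Such an image need not meet a hyperplane in a controlled way: take the comb-shaped region $R=\{(x,y):\,0<x<2\delta,\ -\delta<y<f(x)\}$ with $f$ a sawtooth of amplitude $h\ll\delta$ and Lipschitz constant $O(1)$; the graph-flattening map shows $R$ is $O(1)$-bi-Lipschitz to a disc of radius $\delta$, yet $R\setminus\{y=h/2\}$ has about $\delta/h$ components, with $h$ arbitrarily small. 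So neither your Case~1 claim (``the complement has at most two components'') nor the inductive step (``each surface raises the count by an amount controlled by $M_0$ and $N$'') follows from bi-Lipschitz flattening alone, and this is precisely the step that carries the whole content of \cite[Lemma~2.6]{LPRX}. What does work, and what your argument should isolate, is that for the $M_2$ statement one only counts components $U$ with $\mathbf{x}\in\partial U$: each such $U$ contains points arbitrarily close to the chart centre $\mathbf{x}$, hence contains the preimage under $\Phi_{\mathbf{x}}$ of one of the two half-balls $B_s(0)^{\pm}$, which caps the count at two per piece, the interaction of the at most $M_0$ pieces then being controlled through $\omega$. For the $M_3$ statement, bounding \emph{all} components of $B_{\delta_1}(\mathbf{z}_\ell)\cap G$ for arbitrary centres $\mathbf{z}_\ell$ is exactly what the naive flattening cannot deliver; one must either anchor the covering balls at points of $\partial\Sigma$ and count only anchored components (handling the remaining ``fat'' components by a volume argument), or show that a portion of $K^j$ which genuinely separates the small ball must have diameter comparable to $r$ inside the chart, so that the disjointness of the flattened images inside $B_{Lr}(0)$ bounds their number. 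As written, this key quantitative step is asserted rather than proved.
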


\begin{proof}
The compactness of the class $\tilde{\mathcal{B}}$ is proved in \cite[Lemma~3.8]{MR}, whereas the compactness of the class
$\tilde{\mathcal{B}}_1$ and the second part of the lemma are proved in 
\cite[Lemma~2.6]{LPRX}.
\end{proof}

Finally, we consider the following definition.

\begin{defn}\label{scatdefin}
Let us fix a bounded open set $\Omega$ and
positive constants $r$, $L$, $0<r_1< r$ and $\tilde{C}>0$.
Let us also fix $\omega:(0,+\infty)\to (0,+\infty)$, 
a nondecreasing left-continuous functions.

We call $\hat{\mathcal{B}}=\hat{\mathcal{B}}(r,L,\Omega,r_1,\tilde{C},\omega)$
 the class of sets satisfying the following assumptions:

\begin{enumerate}[i)]
\item\label{uniformboundedness}
any $\Sigma\in \hat{\mathcal{B}}$ is a compact set contained in $\overline{\Omega}\subset\mathbb{R}^N$ such that $\Sigma\in\tilde{\mathcal{B}}_1(r,L,\Omega,\omega)$. We call $G=\mathbb{R}^N\backslash\Sigma$.
\item\label{pcondition} for any $\mathbf{x}\in \partial\Sigma$ and any $U$ connected
component of $G\cap B_{r_1}(\mathbf{x})$, with $\mathbf{x}\in\partial U$, we can find an open set $U'$
such that 
\begin{equation}\label{changecond1}
U\subset U'\subset G,
\end{equation}
and a bi-$W^{1,\infty}$ mapping $T:(-1,1)^{N-1}\times (0,1)\to U'$, with constant $\tilde{C}$, such that the following
properties hold. By the regularity of $Q=(-1,1)^{N-1}\times (0,1)$, $T$
can be actually extended up to the boundary and we have that $T:\overline{Q}\to\mathbb{R}^N$ is a Lipschitz map with Lipschitz
constant bounded by $\tilde{C}$. Furthermore, if we set
$\Gamma=[-1,1]^{N-1}\times\{0\}$, we require that 
\begin{equation}\label{changecond2}
T(0)=\mathbf{x}\quad \text{and}\quad\partial U\cap B_{r_1}(\mathbf{x})\subset T(\Gamma)\subset \partial G,
\end{equation}
and that,
for any $0<s<r_1$ and any $\mathbf{y}\in U\cap B_{r_1-s}(\mathbf{x})$, we have
\begin{equation}\label{changecond3}
\mathrm{dist}(T^{-1}(\mathbf{y}), \partial Q\backslash \Gamma)\geq \omega(s).
\end{equation}
\end{enumerate}
\end{defn}


\begin{rem}\label{propertiesrem}
We notice that clearly $T(\partial Q)=\partial U'$ and  
$\partial U\cap B_{r_1}(\mathbf{x})\subset\partial G$.

It is pointed out that, up to suitably changing the constants $r_1$ and $\tilde{C}$ involved, Condition~\textnormal{\ref{pcondition})} is satisfied provided it holds only for points $\mathbf{x}$ belonging to the boundaries of $K_i$, $i=1,\ldots,M$, where $\partial\Sigma=\bigcup_{i=1}^MK_i$ by Condition~\textnormal{\ref{uniformboundedness}}).

We also remark that,  for some constants and functions depending on $r$, $L$, and
the diameter of $\Omega$ only, we have
 $\hat{\mathcal{D}}(r,L,\Omega)\subset \hat{\mathcal{B}}(\tilde{r},\tilde{L},\Omega,r_1,\tilde{C},\omega)$.
\end{rem}

It is emphasised that Condition~\ref{pcondition}) is an extremely weak regularity condition and that it is satisfied by rather complex structures, see for instance the discussion on sets in $\mathbb{R}^3$ satisfying this assumption in Section 4 of \cite{Ron07}, where several examples are shown.


The following compactness result holds true, see \cite[Lemma~2.9]{LPRX}.

\begin{lem}\label{compactHausdorff}
The class $\hat{\mathcal{B}}$ introduced in Definition~\textnormal{\ref{scatdefin}}
is compact under convergence in the Hausdorff distance.
\end{lem}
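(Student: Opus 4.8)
The plan is to prove that $\hat{\mathcal{B}}$ is \emph{closed} under Hausdorff convergence; since the family of nonempty compact subsets of the fixed compact set $\overline{\Omega}$ is itself compact for the Hausdorff distance (Blaschke selection), closedness yields compactness. Thus I fix a sequence $\{\Sigma_n\}\subset\hat{\mathcal{B}}$ with $\Sigma_n\to\Sigma$ in the Hausdorff distance and I must check that the limit again belongs to $\hat{\mathcal{B}}=\hat{\mathcal{B}}(r,L,\Omega,r_1,\tilde{C},\omega)$, that is, that it satisfies both conditions of Definition~\ref{scatdefin} with the \emph{same} constants. Condition~\ref{uniformboundedness}) is immediate: by Lemma~\ref{compactHausdorff0} the class $\tilde{\mathcal{B}}_1(r,L,\Omega,\omega)$ is compact, hence closed, so $\Sigma\in\tilde{\mathcal{B}}_1$; moreover the same lemma furnishes the uniform bound $M_2$ on the number of connected components of $B_{r_1}(\cdot)\cap G_n$ touching a boundary point, which is decisive below. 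I shall also use that $\partial\Sigma_n\to\partial\Sigma$ in the Hausdorff distance, a consequence of the compactness of $\tilde{\mathcal{B}}$ together with the uniform mild-Lipschitz structure.

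The bulk of the proof is Condition~\ref{pcondition}). Fix $\mathbf{x}\in\partial\Sigma$ and a connected component $U$ of $G\cap B_{r_1}(\mathbf{x})$ with $\mathbf{x}\in\partial U$; by Remark~\ref{propertiesrem} it suffices to treat such points. I first choose $\mathbf{x}_n\in\partial\Sigma_n$ with $\mathbf{x}_n\to\mathbf{x}$, and then \emph{match} $U$ with components of the approximating sets: any compact $K\Subset U$ has positive distance from $\Sigma$, so for $n$ large $K\subset G_n\cap B_{r_1}(\mathbf{x}_n)$, and being connected it lies in a single component; exhausting $U$ and using the uniform bound $M_2$, I pass to a subsequence along which $U$ is captured by a single sequence of components $U_n$ of $G_n\cap B_{r_1}(\mathbf{x}_n)$ with $\mathbf{x}_n\in\partial U_n$. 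To each pair $(\mathbf{x}_n,U_n)$ Definition~\ref{scatdefin} associates a map $T_n:\overline{Q}\to\mathbb{R}^N$, $Q=(-1,1)^{N-1}\times(0,1)$, which is bi-Lipschitz with constant $\tilde{C}$ (the bounds $\|JT_n\|_\infty,\|J(T_n^{-1})\|_\infty\le\tilde{C}$ yield the two-sided Lipschitz estimate), with images in the bounded set $B_{\tilde{C}\sqrt{N}}(\mathbf{x}_n)$.

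Now I pass to the limit in the maps. The family $\{T_n\}$ is equibounded and uniformly Lipschitz, so by Arzel\`a--Ascoli a subsequence converges uniformly on $\overline{Q}$ to some $T$. Both estimates $\tilde{C}^{-1}\|z_1-z_2\|\le\|T_n(z_1)-T_n(z_2)\|\le\tilde{C}\|z_1-z_2\|$ pass to the limit, so $T$ is bi-Lipschitz with the same constant $\tilde{C}$; in particular $T$ is injective, $U':=T(Q)$ is open by invariance of domain, $T:\overline{Q}\to\overline{U'}$ is a homeomorphism, and $T_n^{-1}\to T^{-1}$ uniformly. Then $T(0)=\lim_n T_n(0)=\lim_n\mathbf{x}_n=\mathbf{x}$, giving the first part of \eqref{changecond2}. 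The inclusion $U'\subset G$ in \eqref{changecond1} follows from $U'_n=T_n(Q)\subset G_n$: an interior point $y=T(z)$ of $U'$ lies in $U'_n$ for $n$ large (uniform convergence together with the uniform bi-Lipschitz property forces $T_n(Q)$ to contain a fixed ball around $y$), hence $y\notin\Sigma_n$ with a fixed margin, and $\Sigma_n\to\Sigma$ then gives $y\notin\Sigma$. Finally the metric estimate \eqref{changecond3} survives: for $\mathbf{y}\in U\cap B_{r_1-s}(\mathbf{x})$ one has $\mathbf{y}\in U_n\cap B_{r_1-s}(\mathbf{x}_n)$ for $n$ large (up to an arbitrarily small adjustment of $s$ absorbed by the monotonicity and left-continuity of $\omega$), so $\mathrm{dist}(T_n^{-1}(\mathbf{y}),\partial Q\backslash\Gamma)\ge\omega(s)$, and letting $n\to\infty$ with $T_n^{-1}\to T^{-1}$ yields the same bound for $T$.

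The step I expect to be the genuine obstacle is the \emph{compatibility of the component matching with the limit geometry}, namely the remaining requirements $U\subset U'$ and $\partial U\cap B_{r_1}(\mathbf{x})\subset T(\Gamma)\subset\partial G$ in \eqref{changecond1}--\eqref{changecond2}. Components of the limit could in principle arise from merging or splitting of components of the $\Sigma_n$, and this must be ruled out using the uniform separation built into $\tilde{\mathcal{B}}$ through $\omega$ (Definition~\ref{classdefin}) and the uniform count $M_2$ of Lemma~\ref{compactHausdorff0}: together these force each $U_n$, along the chosen subsequence, to converge to a single limiting open set containing $U$, whose lateral boundary is parametrized by $T(\Gamma)$. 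Once $U\subset U'$ is secured, the boundary inclusion follows because $\partial U\cap B_{r_1}(\mathbf{x})$ is approximated by $\partial U_n\cap B_{r_1}(\mathbf{x}_n)\subset T_n(\Gamma)$, and $T_n(\Gamma)\to T(\Gamma)\subset\partial G$ by the uniform convergence of $T_n$ and the closedness of $\partial\Sigma$ under the limit already established.
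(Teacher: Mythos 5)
The paper does not actually prove this lemma: it is quoted verbatim from \cite[Lemma~2.9]{LPRX}, so there is no in-paper argument to compare against. Your overall architecture --- Blaschke selection plus closedness, Condition~\ref{uniformboundedness}) from Lemma~\ref{compactHausdorff0}, Condition~\ref{pcondition}) by extracting a uniform limit of the parametrizations $T_n$ --- is the natural one, but as written the argument has two genuine gaps.

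First, Definition~\ref{scatdefin} supplies only bi-$W^{1,\infty}$ maps, not bi-Lipschitz ones, and the paper is careful to distinguish the two notions: a bound $\|J(T_n^{-1})\|_{L^{\infty}(U_n')}\leq\tilde{C}$ gives a Lipschitz estimate for $T_n^{-1}$ only along paths inside $U_n'$, which is an arbitrary open set, so the global two-sided estimate $\tilde{C}^{-1}\|z_1-z_2\|\leq\|T_n(z_1)-T_n(z_2)\|$ that you invoke is simply not available. Only the upper bound survives globally (because $Q$ is convex), so Arzel\`a--Ascoli gives you a Lipschitz limit $T$, but injectivity of $T$, openness of $U'=T(Q)$, the convergence $T_n^{-1}\to T^{-1}$, and the bound $\|J(T^{-1})\|_{L^{\infty}(U')}\leq\tilde{C}$ all fail to follow from what you wrote: a uniform limit of bi-$W^{1,\infty}$ homeomorphisms can degenerate (images that nearly self-touch and close up in the limit). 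This step needs a separate argument, presumably exploiting \eqref{changecond2}--\eqref{changecond3} to prevent the collapse.

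Second, you yourself flag the component-matching step --- securing $U\subset U'$, $\partial U\cap B_{r_1}(\mathbf{x})\subset T(\Gamma)$, and the existence of matched components $U_n$ with $\mathbf{x}_n\in\partial U_n$ --- as ``the genuine obstacle'' and then assert it rather than prove it. Nothing in the proposal rules out that several components of $G_n\cap B_{r_1}(\mathbf{x}_n)$ merge into the single limit component $U$, in which case no single sequence $T_n$ parametrizes a set eventually containing all of $U$; nor is it shown that the component capturing a compact exhaustion of $U$ has $\mathbf{x}_n$ on its boundary, without which Definition~\ref{scatdefin} provides no map for it at all. These are precisely the points where the quantitative separation function $\omega$ of Definition~\ref{classdefin} and the counting constants of Lemma~\ref{compactHausdorff0} must do real work, and until they are carried out the proof is incomplete.
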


We are in a position to define a quite general class of scatterers $\Sigma\subset \mathbb{R}^N$, $N\geq 2$. We need a quantitative assumption on the connectedness of $G=\mathbb{R}^N\backslash \Sigma$ as follows.
Let $\delta:(0,+\infty)\to (0,+\infty)$ be a nondecreasing left-continuous function.
Let $\Sigma$ be a compact set contained in $\mathbb{R}^N$. We say that $\Sigma$ satisfies the \emph{uniform exterior connectedness} with function $\delta$ if
for any $t>0$,
for any two points $x_1$, $x_2\in\mathbb{R}^N$ so that
$B_t(x_1)$ and $B_t(x_2)$ are contained in $\mathbb{R}^N\backslash \Sigma$, and for any $s$, $0<s<\delta(t)$,
then we can find a smooth (for instance $C^1$) curve $\gamma$ connecting
$x_1$ to $x_2$ so that $B_{s}(\gamma)$ is contained in
$\mathbb{R}^N\backslash \Sigma$ as well. 

Let us notice that such an assumption is closed under convergence in the Hausdorff distance and that $\delta(t)\leq t$ for any $t>0$. A detailed investigation on sufficient conditions for such an assumption to hold may be found in Section~2, in particular in Proposition~2.1, in \cite{LPRX}.

\begin{defn}\label{scatdefin2}
Let us fix positive constants $r$, $L$, and $R$, $0<r_1< r$ and $\tilde{C}>0$.
Let us also fix $\omega:(0,+\infty)\to (0,+\infty)$ and $\delta:(0,+\infty)\to (0,+\infty)$ two  nondecreasing left-continuous functions.

We call $\hat{\mathcal{B}}_{scat}=\hat{\mathcal{B}}_{scat}(r,L,R,r_1,\tilde{C},\omega,\delta)$  the class of compact sets $\Sigma$ belonging to $\hat{\mathcal{B}}(r,L,B_R,r_1,\tilde{C},\omega)$ 
and satisfying
the uniform exterior connectedness with function $\delta$.

We also define $\tilde{\mathcal{B}}_{scat}=\tilde{\mathcal{B}}_{scat}(r,L,R,\omega,\delta)$ the class of compact sets $\Sigma$ belonging to $\tilde{\mathcal{B}}_1(r,L,B_R,\omega)$ and 
satisfying
the uniform exterior connectedness with function $\delta$.

We further call $\hat{\mathcal{D}}_{obst}=\hat{\mathcal{D}}_{obst}(r,L,R)$ the class of compact sets $\Sigma$ belonging to $\hat{\mathcal{D}}(r,L,B_R)$
and such that $G=\mathbb{R}^3\backslash \Sigma$ is connected.
\end{defn}

Obviously, we have $\hat{\mathcal{B}}_{scat}(r,L,R,r_1,\tilde{C},\omega,\delta)\subset
 \tilde{\mathcal{B}}_{scat}(r,L,R,\omega,\delta)$.
We notice that any scatterer $\Sigma\in \hat{\mathcal{D}}_{obst}$ is indeed an obstacle, that is,
$\Sigma$ is the closure of its interior which is a bounded open set with Lipschitz boundary, with constants $r$ and $L$.
By Corollary~2.3 and Proposition~2.1 in \cite{LPRX}, 
for some constants and functions depending on $r$, $L$, and $R$ only, we have
 $\hat{\mathcal{D}}_{obst}(r,L,R)\subset \hat{\mathcal{B}}_{scat}(\tilde{r},\tilde{L},R,r_1,\tilde{C},\omega,\delta)$. 

By our earlier discussion, in particular by Lemmas~\ref{compactHausdorff0} and \ref{compactHausdorff}, it is easy to note that these classes
$\tilde{\mathcal{B}}_{scat}$, $\hat{\mathcal{B}}_{scat}$, and $\hat{\mathcal{D}}_{obst}$ are compact with respect to the Hausdorff distance.
 
\section{Mosco convergence for $H(\mathrm{curl})$ spaces
and higher integrability for solutions to Maxwell equations}\label{Moscosec}

In this section we consider in detail the Mosco convergence for $H(\mathrm{curl})$ spaces. 
Finally we discuss higher integrability properties related to solutions to the Maxwell equations. These will be the main ingredients needed to study the stability issue for the electromagnetic scattering problem with respect to variations of scatterers.

The Mosco convergence, introduced in \cite{Mos}, has been widely investigated for $H^1$ spaces since it is essentially equivalent to the stability of solutions of elliptic Neumann problems with respect to variations of the domain. 
We recall that, in dimension $2$, the problem is completely solved, \cite{Buc-Var,Buc-Var2}. The breakthrough was the sufficient condition proved in \cite{ChD}, which is still a quite useful one for the applications. In dimension $2$ extensive use is made of complex analytic techniques, in particular duality arguments. 
In dimension $3$ and higher, such a problem has been considered first in \cite{Gia} and then in \cite{MR}. In both cases, conditions of Lipschitz type are used. 

In the current article, instead of $H^1$ spaces,
we are mainly concerned with the Mosco convergence of $H(\mathrm{curl})$ spaces, which shall be of fundamental importance to study the solutions of Maxwell equations.

The general abstract definition of Mosco convergence is the following.
For a sequence $\{A_n\}_{n\in\mathbb{N}}$ of closed subspaces of a reflexive Banach space $X$,
we call
$$A'=\{x\in X:\ x=w\text{-}\lim_{k\to\infty} x_{n_k},\ x_{n_k}\in A_{n_k}\}$$
and
$$A''=\{x\in X:\ x=s\text{-}\lim_{n\to\infty} x_n,\ x_n\in A_n\}.$$
The sets $A'$ and $A''$ are subspaces of $X$ and we have that $A''\subset A'$, and that $A''$ is closed. We say that
$A_n$ converges, as $n\to\infty$, to a closed subspace $A$ in the sense of Mosco if $A=A'=A''$. Equivalently, the following two conditions need to be satisfied:
\begin{enumerate}[i)]
\item for any $x\in X$, if there exists a subsequence $A_{n_k}$ and a sequence $x_k$, $k\in\mathbb{N}$,
such that $x_k$ converges weakly to $x$ as $k\to\infty$ and $x_k\in A_{n_k}$ for any $k\in\mathbb{N}$, then $x\in A$;\label{Mosco1}
\item for any $x\in A$, there exists a sequence $x_n\in A_n$, $n\in\mathbb{N}$, converging strongly to $x$
as $n\to\infty$.\label{Mosco2}
\end{enumerate}

For any bounded open set $D$ contained in $\mathbb{R}^3$, we call $\mathcal{K}$ the set of all compact subsets of $\overline{D}$. It is well-known that $\mathcal{K}$ is compact with respect to the Hausdorff distance. 
Moreover, if $K_n\in\mathcal{K}$, $n\in\mathbb{N}$, converges in the Hausdorff  distance, as $n\to\infty$, to $K\in\mathcal{K}$, then we also have $\lim_{n\to\infty}|K_n\backslash K|=0$. 

For any $K\in\mathcal{K}$, we consider the isometric immersion of $H(\mathrm{curl},D\backslash K)$ into $L^2(D,\mathbb{C}^6)$ as follows. To each $u\in H(\mathrm{curl},D\backslash K)$ we associate
$(u,\nabla\wedge u)\in L^2(D,\mathbb{C}^6)$ with the convention that 
$u$ and $\nabla\wedge u$ are extended to zero in $K$. With this convention, $H(\mathrm{curl},D\backslash K)$ may be considered as a closed subspace of $L^2(D,\mathbb{C}^6)$.

Given a sequence $\{K_n\}_{n\in\mathbb{N}}$ contained in $\mathcal{K}$ and $K\in\mathcal{K}$, we say that $H(\mathrm{curl},D\backslash K_n)$ converges, as $n\to\infty$, to $H(\mathrm{curl},D\backslash K)$ \emph{in the sense of Mosco}
if this holds by the previous abstract definition considering 
$H(\mathrm{curl},D\backslash K_n)$, $n\in\mathbb{N}$, and 
$H(\mathrm{curl},D\backslash K)$ as subspaces of 
$L^2(D,\mathbb{C}^6)$.

We wish to find general sufficient conditions on $K_n$, $n\in\mathbb{N}$, and $K$  in order to have such a Mosco convergence for the
corresponding $H(\mathrm{curl})$ spaces. The result is the following.

\begin{thm}\label{Moscoconv}
Let us fix positive constants $r$, $L$, and $\omega:(0,+\infty)\to (0,+\infty)$ a nondecreasing left-continuous function. 
Let $D$ be a bounded open set contained in $\mathbb{R}^3$. Let 
$\tilde{\mathcal{B}}=
\tilde{\mathcal{B}}(r,L,D,\omega)$.

Let, for any $n\in \mathbb{N}$, $K_n\subset \overline{D}$ be a compact set such that
$\partial (D\backslash K_n)\in \tilde{\mathcal{B}}$.

If, as $n\to\infty$, $K_n$ converges to a compact $K$ in the Hausdorff distance, then
$H(\mathrm{curl},D\backslash K_n)$ converges to $H(\mathrm{curl},D\backslash K)$
in the sense of Mosco.
\end{thm}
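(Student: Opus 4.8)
The plan is to verify the two defining conditions of Mosco convergence — the weak closure condition~\textit{\ref{Mosco1}}) and the strong approximation condition~\textit{\ref{Mosco2}}) — for the subspaces $H(\mathrm{curl},\Omega_n)\subset L^2(D,\mathbb{C}^6)$, where I write $\Omega_n=D\backslash K_n$, $\Omega=D\backslash K$, and where the embedding sends $u$ to the zero-extension $(\tilde u,\widetilde{\nabla\wedge u})$. Two geometric inputs will recur. First, any compact subset of $\Omega$ is contained in $\Omega_n$ for all sufficiently large $n$: this is immediate from $K_n\to K$ in the Hausdorff distance together with compactness of $K$. Second, measure-theoretic convergence of the obstacles, namely $|K_n\backslash K|\to 0$ (the general fact recorded for $\mathcal{K}$) and, crucially, $|K\backslash K_n|\to 0$, which is \emph{not} valid for arbitrary compacta but which I would derive from the uniform regularity $\partial\Omega_n\in\tilde{\mathcal{B}}$ (bounded number of mildly Lipschitz pieces and uniformly bounded $\mathcal{H}^{2}$ measure, via Lemma~\ref{compactHausdorff0}).

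For condition~\textit{\ref{Mosco1}}), suppose $(u_{n_k},\nabla\wedge u_{n_k})\rightharpoonup(u,w)$ weakly in $L^2(D,\mathbb{C}^6)$. Testing against any $\phi\in C^\infty_0(\Omega,\mathbb{C}^3)$, whose support lies in $\Omega_{n_k}$ for $k$ large by the first geometric input, the distributional identity $\langle\nabla\wedge u_{n_k},\phi\rangle=\langle u_{n_k},\nabla\wedge\phi\rangle$ passes to the weak limit and yields $w=\nabla\wedge u$ in $\Omega$, so $u\in H(\mathrm{curl},\Omega)$. It then remains to check that $u$ and $w$ vanish a.e.\ on $K$, so that $(u,w)$ is the correct zero-extension and hence lies in the embedded subspace. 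Here I would test $u$ against $\psi\mathbf{1}_K$ for arbitrary $\psi\in L^2$; since $u_{n_k}=0$ on $K_{n_k}$, the pairing reduces to an integral over $K\backslash K_{n_k}$, which is bounded by $C\,\|\psi\mathbf{1}_{K\backslash K_{n_k}}\|_{L^2}$ using the uniform $L^2$-bound of the weakly convergent sequence, and this tends to $0$ precisely because $|K\backslash K_{n_k}|\to 0$. The same argument applied to $w$ closes condition~\textit{\ref{Mosco1}}).

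For the recovery condition~\textit{\ref{Mosco2}}), since the strong-approximation set is closed it suffices, by Proposition~\ref{Linftydensity}, to treat $u$ in the dense subclass $L^\infty(\Omega,\mathbb{C}^3)\cap H(\mathrm{curl},\Omega)$, so that products with cutoff functions stay controlled in $H(\mathrm{curl})$. Away from $\partial\Omega$ the construction is trivial: on any compact subset of $\Omega$ one takes $u_n=u$, admissible for large $n$. The content is entirely near the moving boundary, where I would localise by a finite cover (of uniformly bounded cardinality, by Lemma~\ref{compactHausdorff0}) by the bi-Lipschitz charts of $\tilde{\mathcal{B}}$ that flatten $\partial\Omega$ onto a hyperplane patch. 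In each flattened chart I would use the reflection principle, Proposition~\ref{reflprop}, to extend the transported field (via Proposition~\ref{changeofvariables}) across the hyperplane as a genuine $H(\mathrm{curl})$ function — the point of the reflection being that it introduces \emph{no} spurious tangential jump and hence no surface curl — producing a field on a full two-sided neighborhood. Restricting these local extensions to $\Omega_n$ and gluing them to the interior datum $u$ by a partition of unity subordinate to the cover gives $u_n\in H(\mathrm{curl},\Omega_n)$; components of $\Omega_n$ buried in $\mathrm{int}(K)$ (necessarily of radius shrinking to $0$ by Hausdorff convergence) are handled separately by setting $u_n=0$, which agrees with the zero-extension of $u$ there. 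The strong convergence $(u_n,\nabla\wedge u_n)\to(u,\nabla\wedge u)$ then follows from $|K_n\triangle K|\to 0$ and the uniform bounds on the chart constants.

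I expect condition~\textit{\ref{Mosco2}}) to be the main obstacle. The delicate point is to manufacture, on the perturbed domain $\Omega_n$, honest $H(\mathrm{curl})$ fields approximating a field that carries a nonzero tangential trace on $\partial\Omega$, while the boundary itself moves and may be geometrically complicated (screens, or combinations of screens and obstacles); controlling the cutoff commutator terms $\nabla\chi_i\wedge u$ and the reflected extensions \emph{uniformly} in $n$ is exactly what forces the use of the uniform local bi-Lipschitz structure and the bounded combinatorial complexity built into $\tilde{\mathcal{B}}$. A secondary but still essential difficulty is the measure input $|K\backslash K_n|\to 0$ used in condition~\textit{\ref{Mosco1}}), whose proof genuinely requires the regularity of $\tilde{\mathcal{B}}$ rather than Hausdorff convergence alone.
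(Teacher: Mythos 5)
Your skeleton is the paper's: verify the two Mosco conditions, reduce the recovery condition to the dense subclass furnished by Proposition~\ref{Linftydensity}, localise by a partition of unity in the bi-Lipschitz charts of $\tilde{\mathcal{B}}$, reflect across the flattened screen, and treat sets with nonempty interior separately. But there are two genuine gaps, located exactly at the points you yourself flag as delicate. The first concerns the recovery sequence near the moving screen. After flattening $K$ by a chart $\Phi_{\mathbf{x}}$, the transported field $v$ has \emph{two} one-sided restrictions $v^{\pm}$ to the half-spaces $T^{\pm}$, with in general different tangential traces on $\Pi$ (otherwise $u$ would already extend across $K$ and there would be nothing to prove). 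Reflection produces two fields $\tilde v^{\pm}\in H_0(\mathrm{curl},B_{r_1})$ on the full ball, hence two candidates $\tilde u^{\pm}$ on a two-sided neighbourhood. ``Restricting these local extensions to $\Omega_n$'' is not a well-defined operation: restricting a \emph{single} extension discards the jump of $u$ across $K$ (whenever the one-sided traces differ, $\tilde u^{+}\neq\tilde u^{-}$ on a set of positive measure, so neither restriction can converge to $u$ in $L^2$), while using both requires a rule assigning $\tilde u^{+}$ or $\tilde u^{-}$ to each point of $\Omega_n$ so that the interface between the two regions lies \emph{inside} $K_n$. The paper's rule --- set $u_n=\tilde u^{+}$ where the chart $\Phi^{n}_{\mathbf{x}_n}$ flattening $K_n$ (not $K$) lands in $T^{+}$, and $u_n=\tilde u^{-}$ where it lands in $T^{-}$ --- is the crux of the whole proof: it is what makes $u_n\in H(\mathrm{curl},D\backslash K_n)$ and simultaneously forces $u_n\to u$ almost everywhere as $\Phi^{n}_{\mathbf{x}_n}\to\Phi_{\mathbf{x}}$. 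Your proposal names the difficulty but does not supply this mechanism.

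The second gap is the edges of the screens. The pieces $K^i$ are mildly Lipschitz surfaces \emph{with boundary}; near a boundary point the chart flattens $K$ only onto a half-plane of $\Pi$ (condition~\ref{conditionc}) of the definition), and the side-selection rule above would then create a tangential jump of $u_n$ across the portion of the flattened plane not contained in $K_n$, destroying membership in $H(\mathrm{curl},D\backslash K_n)$. The paper therefore first reduces to fields vanishing in a neighbourhood of the edge set $\hat K$ (the union of the boundaries of the $K^i$): since $\mathcal{H}^{1}(\hat K)<+\infty$, this set has zero capacity, and one cuts off with functions $\chi_\varepsilon$ having $\|\nabla\chi_\varepsilon\|_{L^2}\leq\varepsilon$. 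This is the one place where the $L^{\infty}$ bound from Proposition~\ref{Linftydensity} is genuinely needed, through $\|\nabla\chi_\varepsilon\wedge u\|_{L^2}\leq\|u\|_{L^{\infty}}\|\nabla\chi_\varepsilon\|_{L^2}$; for the smooth partition of unity it buys nothing. Your proposal never mentions the edges, so the chart construction as written fails for screen-type scatterers, which are precisely the case the theorem is designed to cover. A smaller remark: your use of $|K\backslash K_n|\to 0$ in the weak-closure condition is correct in spirit but asserted rather than proved; the paper sidesteps it by first treating the case $K_n=\partial K_n$, where $|K|=0$ makes the point vacuous, and then transferring the general case through Lemma~\ref{bdryversus}.
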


\begin{proof} 
We assume that $\overline{D}\subset B_R$, for some $R>0$.
We call $\tilde{K}_n=K_n\cup (\overline{B_{R+1}}\backslash D)$, $n\in\mathbb{N}$,
and $\tilde{K}=K\cup (\overline{B_{R+1}}\backslash D)$. We have that
$\partial \tilde{K}_n=\partial B_{R+1}\cup \partial (D\backslash K_n)$, $n\in\mathbb{N}$, and
$\partial \tilde{K}=\partial B_{R+1}\cup \partial (D\backslash K)$. Clearly,
as $n\to\infty$, $\tilde{K}_n$ converges to $\tilde{K}$ in the Hausdorff distance.
We set $D_1=B_{R+1}$ and we rename $D=B_{R+2}$.

Therefore, without loss of generality, we can assume that $D$ is a Lipschitz domain and there exists $D_1$, a Lipschitz domain compactly contained in $D$, such that
$K_n$, $n\in\mathbb{N}$, and $K$ are contained in $\overline{D_1}$. 

Since it is not restrictive to pass to subsequences, by the arguments used in the proof of Lemma~\ref{compactHausdorff0},
we can assume that not only $K_n\to K$ but also $\partial K_n\to \partial K$ in the Hausdorff distance, as $n\to\infty$. Moreover, $\partial K\in \tilde{B}$ as well.

We call $A_n=H(\mathrm{curl},D\backslash K_n)$, $n\in\mathbb{N}$, and $A=H(\mathrm{curl},D\backslash K)$.
For simplicity and without loss of generality we consider only real vectors valued functions.

We begin with the following restrictive assumption. It is assumed that $K_n$ has no interior point, that is $K_n=\partial K_n$, $n\in\mathbb{N}$, and consequently
$K=\partial K$ as well.

We first prove that if $|K\backslash K_n|\to 0$ as $n\to\infty$, then 
$A'\subset A$. By our restrictive hypothesis, we have that $|K|=0$, hence we conclude that in this case $A'\subset A$.

For proving the claim, let us consider a subsequence $\{A_{n_k}\}_{k\in\mathbb{N}}$ and let
$(u_k,\nabla\wedge u_k)\in A_{n_k}$ for any $k\in\mathbb{N}$. We
assume that $(u_k,\nabla\wedge u_k)$ converges to $(u,V)$ weakly in $L^2(D,\mathbb{R}^6)$. For any point $\mathbf{x}$ and any $r>0$ such that
$\overline{B_r(\mathbf{x})}\subset D\backslash K$, it is easy to show that $u|_{B_r(\mathbf{x})}\in
H(\mathrm{curl},B_r(\mathbf{x}))$ and $\nabla\wedge u=V$ in $B_r(\mathbf{x})$. Therefore,
$u\in
H(\mathrm{curl},D\backslash K)$ and $\nabla\wedge u=V$ in $D\backslash K$.
It remains to prove that $(u,V)$ are identically equal to $0$ in $K$. If $|K\backslash K_n|\to 0$ as $n\to\infty$, this property follows by the arguments of Lemma~2.3 in \cite{MR}.

The next step is to prove that $A\subset A''$, that is, for every $(u,\nabla\wedge u) \in A$ there exists $(u_n,\nabla\wedge u_n)\in A_n$ such that $(u_n,\nabla\wedge u_n)$ converges, as $n\to\infty$, to $(u,\nabla\wedge u)$ in $L^2(\Omega,\mathbb{R}^6)$.
Let us recall that it is enough to prove that for any subsequence
$A_{n_{k}}$ there exists a further subsequence $A_{n_{k_j}}$ with this property. Hence, without loss of generality, we can always pass to subsequences, which we usually do not relabel.

Since $A''$ is closed, it is enough to prove the result for any $(u,\nabla\wedge u)$ in a dense subset of $A$. Let us denote with $\hat{K}_n$, $n\in\mathbb{N}$, and
$\hat{K}$ the boundary of $K_n$ and $K$, respectively, as defined for the elements of
the class $\tilde{\mathcal{B}}$ after Definition~\ref{classdefin}. We recall that, up to a subsequence, $\hat{K}_n$ converges to $\hat{K}$ in the Hausdorff distance as $n\to\infty$, see \cite[Lemma~3.6]{MR}.

We consider the following subset of $A$
$$\tilde{A}=\{u\in A:\ u\in L^{\infty}(D,\mathbb{R}^3)\text{ and }u=0\text{ in a neighborhood of }\hat{K}\}.$$
We shall prove that $\tilde{A}$ is dense in $A$.

By the density result of Proposition~\ref{Linftydensity}, we need to prove that for any $u\in A\cap L^{\infty}(D,\mathbb{R}^3)$, we can find a sequence $\{u_n\}_{n\in\mathbb{N}}\in \tilde{A}$ converging to $u$ in $A$.
Since $\mathcal{H}^{N-2}(\hat{K})$ is finite, then $\hat{K}$ has zero capacity. Hence for any $U$, an open neighbourhood of $\hat{K}$ compactly contained in $D$, and for any $\varepsilon>0$,
there exists a function $\chi_{\varepsilon}$ such that $\chi_{\varepsilon}\in H^1(D)$, $0\leq\chi_{\varepsilon}\leq 1$ almost everywhere in $D$, $\chi_{\varepsilon}=1$ almost everywhere outside $U$, $\chi_{\varepsilon}=0$ almost everywhere in a neighbourhood of $\hat{K}$, and
$$\int_{D}\|\nabla \chi_{\varepsilon}\|^2\leq \varepsilon.$$
Take $u\in H(\mathrm{curl},D\backslash K)\cap L^{\infty}(D,\mathbb{R}^3)$.
Then $\chi_{\varepsilon}u\in \tilde{A}$ and
$$\|\chi_{\varepsilon}u-u\|_{L^2(D\backslash K,\mathbb{R}^3)}\leq \|u\|_{L^2(U,\mathbb{R}^3)}$$
and
\begin{multline*}
\|\nabla\wedge (\chi_{\varepsilon}u)-\nabla\wedge u\|_{L^2(D\backslash K,\mathbb{R}^3)}\\\leq  
\|(\chi_{\varepsilon}-1)\nabla\wedge u\|_{L^2(D\backslash K,\mathbb{R}^3)}+\|\nabla\chi_{\varepsilon}\wedge u\|_{L^2(D\backslash K,\mathbb{R}^3)}\\\leq 
\|\nabla\wedge u\|_{L^2(U,\mathbb{R}^3)}+\|u\|_{L^{\infty}(D,\mathbb{R}^3)}\|
\|\nabla \chi_{\varepsilon}\|_{L^2(U,\mathbb{R}^3)}.
\end{multline*}
Since $U$ and $\varepsilon$ are arbitrary, we conclude that $\tilde{A}$ is dense in $A$.

Take $u\in\tilde{A}$ and let $\tilde{U}$ be an open set compactly contained in $D$ such that $\hat{K}$ is contained in $\tilde{U}$ and $u$ is zero
on $\tilde{U}$. We can find an open subset $D_0$ compactly contained in $D\backslash K$, a finite number of points $\mathbf{x}_j\in \partial D\cup K$ and positive numbers $\delta_j<\delta_j'$, $j=1,\ldots,m$, such that
$B_{\delta_j'}(\mathbf{x}_j)\cap \hat{K}=\emptyset$ for any $j=1,\ldots,m$ and
$$\overline{D}\subset \tilde{U}\cup D_0\cup\left(\bigcup_{j=1}^m B_{\delta_j}(\mathbf{x}_j)\right).$$
Moreover,
we assume that for any $j=1,\ldots,m_0$, $\mathbf{x}_j\in K$ and 
$B_{\delta_j'}(\mathbf{x}_j)\cap \partial D$ is empty, whereas for any
$j=m_0+1,\ldots,m$, $\mathbf{x}_j\in \partial D$ and 
$B_{\delta_j'}(\mathbf{x}_j)\cap K$ is empty.

If $K=\bigcup_{i=1}^M K^i$ as in Definition~\ref{classdefin},
 for any $j=1,\ldots,m_0$, we have that $\mathbf{x}_j$ belongs to the interior of $K^{i(j)}$ for some $i=i(j)\in\{1,\ldots,M\}$ and we may assume that 
$2\delta_j'\leq r$ and that $B_{\delta_j'}(\mathbf{x}_j)\cap  K=B_{\delta_j'}(\mathbf{x}_j)\cap K^{i(j)}$.

By using a partition of unity, it is enough to consider a function $u\in\tilde{A}$ that is compactly supported either in $B_{\delta_j}(\mathbf{x}_j)$, for some $j\in\{1,\ldots,m\}$, or in $D_0$. In the latter case, we have that $D_0\subset (D
\backslash K_n)$, and hence $u\in A_n$, for any $n$ large enough, so the convergence is trivially proved.

We limit ourselves to consider $\mathbf{x}_j\in K$, that is $j\in\{1,\ldots,m_0\}$, the case in which $\mathbf{x}_j\in\partial D$ is completely analogous.

We show the required convergence for $u\in\tilde{A}$ that is compactly supported
in $B_{\delta}(\mathbf{x})$ for some $0<\delta<\delta'\leq r/2$ and $\mathbf{x}\in K^i$ such that
$B_{\delta'}(\mathbf{x})\cap (\partial D\cup \hat{K})=\emptyset$ and
$B_{\delta'}(\mathbf{x})\cap K=B_{\delta'}(\mathbf{x})\cap K^i$, for some $i\in\{1,\ldots,M\}$.

We adapt the reasoning developed in the proof of Theorem~4.2 in \cite{Gia}  to the $H(\mathrm{curl})$ case.
Possibly passing to a subsequence, let $\mathbf{x}_n\in K_n$ converge to $\mathbf{x}$ and
$\Phi^n_{\mathbf{x}_n}$ converge to a function $\Phi_{\mathbf{x}}:B_r(\mathbf{x})\to\mathbb{R}^3$ satisfying Conditions~\ref{conditiona}) and \ref{conditionb}) of the definition of a mildly Lipschitz hypersurface.
Without loss of generality, we may assume that $B_{\delta}(\mathbf{x})\subset \Phi_{\mathbf{x}}^{-1}(B_{r_1})$ for some positive $r_1$ such that
$B_{r_1}\subset \Phi_{\mathbf{x}}(B_{\delta'}(\mathbf{x}))$. Moreover, we may also assume that $B_{r_1}\cap \Pi\subset 
\Phi_{\mathbf{x}}(B_{\delta'}(\mathbf{x}))\cap \Pi=
\Phi_{\mathbf{x}}(B_{\delta'}(\mathbf{x})\cap K^i)$ and $B_{r_1}\cap \Pi\subset 
\Phi^n_{\mathbf{x}_n}(B_{\delta'}(\mathbf{x}_n))\cap \Pi=
\Phi^n_{\mathbf{x}_n}(B_{\delta'}(\mathbf{x}_n)\cap K_n)$ for any $n$
large enough.

Let $v=\widehat{\Phi_{\mathbf{x}}^{-1}}(u)=(J\Phi_{\mathbf{x}}^{-1})^T u(\Phi_{\mathbf{x}}^{-1})$. Then $v\in H(\mathrm{curl},B_{r_1}\backslash \Pi)$, with bounded support in $B_{r_1}$.
We denote by $v^{\pm}$ the function $v$ restricted to the the halfspaces $T^{\pm}=\{\mathbf{y}\in\mathbb{R}^3:\ \pm y_3>0\}$, respectively. Then, by a reflection as developed in the previous section, we may define two $H_0(\mathrm{curl},B_{r_1})$ functions, $\tilde{v}^{\pm}$ such that $\tilde{v}^{\pm}=v^{\pm}$ on $T^{\pm}$. Let $\tilde{u}^{\pm}=\widehat{\Phi_{\mathbf{x}}}(\tilde{v}^{\pm})\in H_0(\mathrm{curl},B_{\delta'}(\mathbf{x}))$.
We can assume that $B_r(\mathbf{x})$ is compactly contained in $D$, therefore by extending them to zero, we have that $\tilde{u}^{\pm}\in H_0(\mathrm{curl},D)$ and
have compact support contained in $B_{\delta'}(\mathbf{x})$ and, for $n$ large enough, in $B_{\delta'}(\mathbf{x}_n)$ as well.
We then define
$$u_n=\left\{\begin{array}{ll}
\tilde{u}^{+}(\mathbf{x}) &\text{if }\Phi^n_{\mathbf{x}_n}(\mathbf{x})\in T^+\\
\tilde{u}^{-}(\mathbf{x}) &\text{if }\Phi^n_{\mathbf{x}_n}(\mathbf{x})\in T^-.
\end{array}\right.
$$
By construction we have that $u_n\in H(\mathrm{curl},D\backslash K_n)$
and has compact support contained in $B_{\delta'}(\mathbf{x})$. Furthermore, $u_n$ and $\nabla\wedge u_n$ converge almost everywhere in $D\backslash K$ to $u$ and $\nabla\wedge u$, respectively.

Moreover, almost everywhere in $B_{\delta'}(\mathbf{x})\backslash K_n$, we have
$$\|u_n\|+\|\nabla\wedge u_n\|\leq \|\tilde{u}^{+}\|+\|\tilde{u}^{-}\|+
\|\nabla\wedge\tilde{u}^{+}\|+\|\nabla\wedge\tilde{u}^{-}\|.$$
We immediately conclude by Lebesgue theorem that 
$(u_n,\nabla\wedge u_n)$ converges to $(u,\nabla \wedge u)$, as $n\to\infty$, in $L^2(D,\mathbb{R}^6)$.

The general case, that is when we drop the assumption that 
$K_n= \partial K_n$ for any $n\in\mathbb{N}$, is a consequence of the next general lemma.
\end{proof}

\begin{lem}\label{bdryversus}
Let $D$ be a bounded open set and $D_1$ a bounded open set compactly contained in $D$. Let $\{K_n\}_{n\in\mathbb{}N}$ be a sequence of compact sets, and $K$ and $\tilde{K}$ be two compact sets, all of them contained in $\overline{D_1}$.

Let us assume that $K_n\to K$ and $\partial K_n\to \tilde{K}$ in the Hausdorff distance as $n\to\infty$.

If, as $n\to\infty$, $\tilde{A}_n=H(\mathrm{curl},D\backslash \partial K_n)$ converges to 
 $\tilde{A}=H(\mathrm{curl},D\backslash \tilde{K})$ in the sense of Mosco, then one also has that
$A_n=H(\mathrm{curl},D\backslash K_n)$ converges to 
$A=H(\mathrm{curl},D\backslash K)$ in the sense of Mosco.
\end{lem}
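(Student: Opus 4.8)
The plan is to realise both $A_n\subseteq\tilde{A}_n$ and $A\subseteq\tilde{A}$ as the closed subspaces cut out by an extra pointwise vanishing condition, and then transport the two Mosco conditions assumed for $\tilde{A}_n\to\tilde{A}$ across these inclusions, controlling the resulting errors by two measure-theoretic facts about the sets. First I would set up the geometric skeleton. From $\partial K_n\subseteq K_n$ and Hausdorff convergence one gets $\tilde{K}\subseteq K$; more delicately, $\partial K\subseteq\tilde{K}$. Indeed, for $\mathbf{x}\in\partial K$ choose $\mathbf{x}_n\in K_n$ with $\mathbf{x}_n\to\mathbf{x}$ and points $\mathbf{y}\notin K$ arbitrarily close to $\mathbf{x}$; since $\mathrm{dist}(\cdot,K_n)\to\mathrm{dist}(\cdot,K)$, such $\mathbf{y}$ lie outside $K_n$ for large $n$, so every small ball about $\mathbf{x}$ meets both $K_n$ and its complement. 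A ball disjoint from $\partial K_n$ lies entirely in $\mathring{K}_n$ or entirely outside $K_n$, so this forces $\mathrm{dist}(\mathbf{x},\partial K_n)\to 0$, i.e.\ $\mathbf{x}\in\tilde{K}$. Hence $\partial K\subseteq\tilde{K}\subseteq K$, so $K\setminus\tilde{K}\subseteq\mathring{K}$ is open and $D\setminus\tilde{K}=(D\setminus K)\sqcup(K\setminus\tilde{K})$ is a decomposition into clopen pieces; likewise $D\setminus\partial K_n=(D\setminus K_n)\sqcup\mathring{K}_n$. Since $H(\mathrm{curl})$ splits over clopen components and $|\partial K_n|=|\partial K|=0$, this yields
\[
A_n=\{(v,w)\in\tilde{A}_n:\ (v,w)=0\ \text{a.e. on }K_n\},\qquad A=\{(v,w)\in\tilde{A}:\ (v,w)=0\ \text{a.e. on }K\},
\]
and in particular $A_n\subseteq\tilde{A}_n$ and $A\subseteq\tilde{A}$.

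Next I would record the two volume facts. From $K_n\to K$ one has $|K_n\setminus K|\to 0$, as already noted in the excerpt. The crucial new fact is $|(K\setminus\tilde{K})\setminus K_n|\to 0$: fix $\mathbf{x}\in K\setminus\tilde{K}$; since $\tilde{K}$ is closed, $\mathrm{dist}(\mathbf{x},\tilde{K})>0$, and because $\partial K_n\to\tilde{K}$ we get $\mathrm{dist}(\mathbf{x},\partial K_n)\to\mathrm{dist}(\mathbf{x},\tilde{K})>0$, so a fixed ball about $\mathbf{x}$ eventually avoids $\partial K_n$. As $\mathbf{x}\in K$ forces this ball to meet $K_n$, connectedness places the whole ball in $\mathring{K}_n$, whence $\mathbf{x}\in K_n$ for large $n$. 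Thus $\mathbf{1}_{(K\setminus\tilde{K})\setminus K_n}\to 0$ pointwise and dominated convergence gives the claim.

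With these in hand, the two Mosco conditions for $A_n\to A$ follow. For the weak (lower-semicontinuity) condition, take $(v_{n_k},w_{n_k})\in A_{n_k}\subseteq\tilde{A}_{n_k}$ with $(v_{n_k},w_{n_k})\rightharpoonup(v,w)$; the assumed condition for $\tilde{A}$ gives $(v,w)\in\tilde{A}$. Since each $(v_{n_k},w_{n_k})$ vanishes on $K_{n_k}$, testing against $\psi\in L^2$ and splitting the integral over $K\setminus\tilde{K}$ into the part inside $K_{n_k}$ (which contributes $0$) and the part outside, the latter is bounded by $\|\psi\mathbf{1}_{(K\setminus\tilde{K})\setminus K_{n_k}}\|_{L^2}\to 0$ times the bounded norms; hence $(v,w)=0$ on $K\setminus\tilde{K}$ and so $(v,w)\in A$ (this is the $H(\mathrm{curl})$ analogue of the argument of Lemma~2.3 in \cite{MR}). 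For the strong (approximation) condition, take $(v,w)\in A\subseteq\tilde{A}$; the assumed condition for $\tilde{A}$ supplies $(\tilde{v}_n,\tilde{w}_n)\in\tilde{A}_n$ with $(\tilde{v}_n,\tilde{w}_n)\to(v,w)$ strongly. Setting $(v_n,w_n)=(\tilde{v}_n,\tilde{w}_n)\mathbf{1}_{D\setminus K_n}$ gives elements of $A_n$ (restrict to the clopen piece $D\setminus K_n$ and extend by zero), and since $(v,w)=0$ on $K$,
\[
\|(\tilde{v}_n,\tilde{w}_n)-(v_n,w_n)\|^2=\int_{K_n}\|(\tilde{v}_n,\tilde{w}_n)\|^2\leq 2\|(\tilde{v}_n,\tilde{w}_n)-(v,w)\|_{L^2(D)}^2+2\int_{K_n\setminus K}\|(v,w)\|^2\longrightarrow 0,
\]
using $|K_n\setminus K|\to 0$. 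Hence $(v_n,w_n)\to(v,w)$ strongly, which completes the proof.

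The only genuinely nontrivial point is the volume estimate $|(K\setminus\tilde{K})\setminus K_n|\to 0$ (together with its companion $\partial K\subseteq\tilde{K}$): it asserts that the part of $K$ lying away from the limiting boundary $\tilde{K}$ is eventually absorbed into the interiors of the $K_n$. This is precisely where the two separate hypotheses $K_n\to K$ and $\partial K_n\to\tilde{K}$ must be used jointly, through the connectedness argument; everything else is a routine transfer of the assumed Mosco convergence of the boundary spaces.
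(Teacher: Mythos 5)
Your proof is correct and follows essentially the same route as the paper: the inclusion $\partial K\subseteq\tilde K\subseteq K$, the observation $A_n\subseteq\tilde A_n$ for the weak condition, and the truncated recovery sequence $(\tilde v_n,\tilde w_n)\mathbf{1}_{D\setminus K_n}$ with the estimate over $K_n$ controlled by $|K_n\setminus K|\to 0$ for the strong condition. The only difference is that you spell out the weak-limit step directly via the absorption fact $|(K\setminus\tilde K)\setminus K_n|\to 0$, where the paper instead refers to the argument of Proposition~2.2 in \cite{MR}; your version is a valid, self-contained instantiation of that argument.
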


\begin{proof}
The proof that $A'\subset A$ follows from a similar argument of the first part of the proof of Proposition~2.2 in \cite{MR}, observing that, for any $n\in\mathbb{N}$, we trivially have
that $A_n\subset\tilde{A}_n$.

We need to show that $A\subset A''$. Let $u\in A$. We recall that $\partial K\subset \tilde{K}\subset K$, hence
$D\backslash \tilde{K}= (D\backslash K)\cup (\stackrel{\circ}{K}\backslash\tilde{K})$ and
$u\in \tilde{A}$ as well. Then there exists
a sequence $\{u_n\}_{n\in\mathbb{N}}$ such that $u_n\in \tilde{A}_n$ for any $n\in\mathbb{N}$ and $(u_n,\nabla\wedge u_n)$ converges to $(u,\nabla\wedge u)$ in $L^2(D,\mathbb{R}^6)$ and, up to a subsequence, also almost everywhere in $D$, as $n\to\infty$. Let us consider $v_n=u_n$ in $D\backslash K_n$ and zero everywhere else. It is easy to show that $v_n\in A_n$. For any $\mathbf{x}\in D\backslash K$, there exists $r>0$ and $\overline{n}\in\mathbb{N}$ such that $B_r(x)\cap K_n=\emptyset$ for any $n\geq \overline{n}$. Therefore, we have that $(v_n,\nabla\wedge v_n)$ converges to $(u,\nabla\wedge u)$ almost everywhere in $D\backslash K$, and hence by the Lebesgue theorem also in $L^2(D\backslash K,\mathbb{R}^6)$. On the other hand,
\begin{multline*}
\|v_n-u\|_{L^2(K,\mathbb{R}^3)}=
\|v_n\|_{L^2(K,\mathbb{R}^3)}\\\leq \|u_n\|_{L^2(K,\mathbb{R}^3)}=
\|u_n-u\|_{L^2(K,\mathbb{R}^3)}\to 0\quad\text{as }n\to\infty.\end{multline*}
The same reasoning holds for the curl and the proof is concluded.
\end{proof}

We conclude this section by discussing the following higher integrability property for solutions of the Maxwell equations.

\begin{defn}\label{highintdefin}
Let $\Sigma$ be a compact set contained in $\overline{B_{R_0}}$ for some $R_0>0$.
We say that $\Sigma$ satisfies the \emph{Maxwell higher integrability property} if
for any constants $0<a_0<a_1$,
there exist a constant $p>2$, depending on $\Sigma$, $R_0$, $a_0$, and $a_1$, and   
a constant $C$, depending on $\Sigma$, $R_0$, $p$, $a_0$, and $a_1$, such that for any $a\in 
L^{\infty}(B_{R_0+1},M^{3\times 3}_{sym}(\mathbb{R}))$ satisfying \eqref{ellipticity} in $B_{R_0+1}$, and
any $u$ belonging either to $H_0(\mathrm{curl},B_{R_0+1}\backslash \Sigma)\cap H(\mathrm{div}_a,B_{R_0+1}\backslash \Sigma)$, or to 
$H(\mathrm{curl},B_{R_0+1}\backslash \Sigma)\cap H_0(\mathrm{div}_a,B_{R_0+1}\backslash \Sigma)$,
we have
\begin{multline}\label{higherintestdef}
\|u\|_{L^p(B_{R_0+1}\backslash \Sigma,\mathbb{C}^3)}\\
\leq C\left[\|u\|_{H(\mathrm{curl},B_{R_0+1}\backslash \Sigma)}+
\|\nabla\cdot(au)\|_{L^2(B_{R_0+1}\backslash \Sigma)}\right].
\end{multline}

\end{defn}

By using Proposition~\ref{Druprop}, we can immediately deduce the following corollary.

\begin{cor}\label{highintcorollary}
Let $\Sigma$ be a compact set contained in $\overline{B_{R_0}}$, for some $R_0>0$, such that $\Sigma$ satisfies the Maxwell higher integrability property.

Then 
for any constants
$0<\overline{k}$, and $0<\lambda_0<\lambda_1$, there exists a constant $s_0>2$,
depending on $R_0$, $\lambda_0$, and $\lambda_1$ only, such that the following holds. If we call $s=\min\{s_0,p\}$, with $p$ as in the definition of the Maxwell higher integrability property with $a_0=\lambda_0$ and $a_1=\lambda_1$, then $s>2$ and we can 
find a constant $C_1$, depending on the same $p$, $s_0$, $R_0$, $\overline{k}$, $\lambda_0$, $\lambda_1$,
and the constant $C$ in \eqref{higherintestdef} only,
such that for any $\epsilon$, $\mu\in 
L^{\infty}(B_{R_0+1},M^{3\times 3}_{sym}(\mathbb{R}))$ satisfying \eqref{ellipticity2} in $B_{R_0+1}$, and
any $(\mathbf{E},\mathbf{H})$ solving
$$\left\{\begin{array}{ll}
\nabla \wedge\mathbf{E}-\rmi k \mu \mathbf{H}=0,\ \nabla \wedge\mathbf{H}+\rmi k\epsilon \mathbf{E}=0&\text{in }B_{R_0+1}\backslash \Sigma\\
\nu\wedge \mathbf{E}=0 &\text{on }\partial \Sigma
\end{array}\right.
$$
for some $0<k\leq\overline{k}$,
we have
\begin{multline}\label{higherintest}
\|\mathbf{E}\|_{L^s(B_{R_0+1}\backslash \Sigma,\mathbb{C}^3)}+
\|\mathbf{H}\|_{L^s(B_{R_0+1}\backslash \Sigma,\mathbb{C}^3)}\\
\leq C_1\big[\|\mathbf{E}\|_{L^2(B_{R_0+1}\backslash \Sigma,\mathbb{C}^3)}+
\|\mathbf{H}\|_{L^2(B_{R_0+1}\backslash \Sigma,\mathbb{C}^3)}\\+
\|\nu\wedge \mathbf{E}\|_{L^2(\partial B_{R_0+1},\mathbb{C}^3)}+\|\nu\wedge \mathbf{H}\|_{L^2(\partial B_{R_0+1},\mathbb{C}^3)}\big].
\end{multline}
\end{cor}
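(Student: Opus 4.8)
The plan is to prove \eqref{higherintest} by splitting the solution into a piece concentrated near $\Sigma$, controlled by the Maxwell higher integrability property of Definition~\ref{highintdefin}, and a piece concentrated near the outer sphere $\partial B_{R_0+1}$, controlled by Proposition~\ref{Druprop} on a fixed Lipschitz annulus. First I would fix radii $R_0<\rho_1<\rho_2<R_0+1$ and a cutoff $\chi\in C^{\infty}_0(B_{\rho_2})$ with $\chi\equiv 1$ on $\overline{B_{\rho_1}}$ and $0\leq\chi\leq 1$, and set $\eta=1-\chi$. Since $\Sigma\subset\overline{B_{R_0}}\subset B_{\rho_1}$, the annulus $A=B_{R_0+1}\backslash\overline{B_{\rho_1}}$ is a fixed smooth domain, disjoint from $\Sigma$, containing the supports of $\eta\mathbf{E}$ and $\eta\mathbf{H}$; all geometric constants, in particular the exponent $s_0$ produced by Proposition~\ref{Druprop} on $A$ with $r=2$, depend only on $R_0$, $\lambda_0$, $\lambda_1$. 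I set $s=\min\{s_0,p\}>2$, with $p$ the exponent from Definition~\ref{highintdefin} taken at $a_0=\lambda_0$, $a_1=\lambda_1$.

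The starting observations come from the Maxwell system itself. From $\nabla\wedge\mathbf{E}=\rmi k\mu\mathbf{H}$ and $\nabla\wedge\mathbf{H}=-\rmi k\epsilon\mathbf{E}$, together with $0<k\leq\overline{k}$ and \eqref{ellipticity2}, one gets $\|\nabla\wedge\mathbf{E}\|_{L^2}+\|\nabla\wedge\mathbf{H}\|_{L^2}\leq C(\|\mathbf{E}\|_{L^2}+\|\mathbf{H}\|_{L^2})$, and, taking the distributional divergence of each equation (a divergence of a curl being zero), $\nabla\cdot(\epsilon\mathbf{E})=\nabla\cdot(\mu\mathbf{H})=0$ in $B_{R_0+1}\backslash\Sigma$. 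For the part of $\mathbf{E}$ near $\Sigma$, the function $\chi\mathbf{E}$ vanishes near $\partial B_{R_0+1}$ and satisfies $\nu\wedge(\chi\mathbf{E})=\chi(\nu\wedge\mathbf{E})=0$ on $\partial\Sigma$, so $\chi\mathbf{E}\in H_0(\mathrm{curl},B_{R_0+1}\backslash\Sigma)$; moreover $\nabla\cdot(\epsilon\chi\mathbf{E})=\nabla\chi\cdot(\epsilon\mathbf{E})\in L^2$, whence $\chi\mathbf{E}\in H(\mathrm{div}_{\epsilon},B_{R_0+1}\backslash\Sigma)$. Applying the first case of Definition~\ref{highintdefin} with $a=\epsilon$ and bounding the right-hand side through the identities above yields $\|\chi\mathbf{E}\|_{L^p}\leq C(\|\mathbf{E}\|_{L^2}+\|\mathbf{H}\|_{L^2})$.

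The crux is the companion bound for $\mathbf{H}$ near $\Sigma$, where the tangential trace of $\mathbf{H}$ is not prescribed; here one exploits the normal condition induced by the perfect-conductor condition. Exactly as in the remark following the definition of $H_0(\mathrm{div}_a,D)$, the relation $\mu\mathbf{H}=(\rmi k)^{-1}\nabla\wedge\mathbf{E}$ combined with $\nu\wedge\mathbf{E}=0$ on $\partial\Sigma$ gives $\langle\mu\mathbf{H},\nabla\varphi\rangle=0$ for every $\varphi\in H^1$ with bounded support vanishing near $\partial B_{R_0+1}$, because $\nabla\varphi\in H(\mathrm{curl})$ with $\nabla\wedge(\nabla\varphi)=0$ and the only surviving boundary term sits on $\partial\Sigma$. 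A short product-rule computation then shows $\chi\mathbf{H}\in H_0(\mathrm{div}_{\mu},B_{R_0+1}\backslash\Sigma)$, with $\nabla\cdot(\mu\chi\mathbf{H})=\nabla\chi\cdot(\mu\mathbf{H})\in L^2$ and $\nabla\wedge(\chi\mathbf{H})=-\rmi k\chi\epsilon\mathbf{E}+\nabla\chi\wedge\mathbf{H}\in L^2$, so that the second case of Definition~\ref{highintdefin} with $a=\mu$ gives $\|\chi\mathbf{H}\|_{L^p}\leq C(\|\mathbf{E}\|_{L^2}+\|\mathbf{H}\|_{L^2})$. I expect this verification of the weak normal condition $\nu\cdot(\mu\mathbf{H})=0$ on $\partial\Sigma$, and the careful handling of the free outer boundary by restricting the class of test functions, to be the main obstacle.

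It remains to treat the outer pieces $\eta\mathbf{E}$, $\eta\mathbf{H}$, supported in the fixed annulus $A$. There $\eta\mathbf{E}\in H(\mathrm{curl},A)\cap H(\mathrm{div}_{\epsilon},A)$ with $\gamma_{\tau}(\eta\mathbf{E})$ equal to $\nu\wedge\mathbf{E}$ on $\partial B_{R_0+1}$ and to $0$ on $\partial B_{\rho_1}$ (as $\eta$ vanishes there), so Proposition~\ref{Druprop} applied to $W^{2,2}_{\epsilon,\tau}(A)$ gives $\|\eta\mathbf{E}\|_{L^{s_0}(A)}\leq C(\|\mathbf{E}\|_{L^2}+\|\mathbf{H}\|_{L^2}+\|\nu\wedge\mathbf{E}\|_{L^2(\partial B_{R_0+1})})$, and symmetrically for $\eta\mathbf{H}$ through $W^{2,2}_{\mu,\tau}(A)$, producing the term $\|\nu\wedge\mathbf{H}\|_{L^2(\partial B_{R_0+1})}$. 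Finally I would write $\mathbf{E}=\chi\mathbf{E}+\eta\mathbf{E}$ and $\mathbf{H}=\chi\mathbf{H}+\eta\mathbf{H}$, use that on the bounded domain both $L^p$ and $L^{s_0}$ embed into $L^s$ with $s=\min\{s_0,p\}$, and add the four estimates to obtain \eqref{higherintest}, with $C_1$ depending only on the stated quantities.
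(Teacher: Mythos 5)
Your proof is correct and follows exactly the route the paper intends (the paper states only that the corollary follows ``immediately'' from Proposition~\ref{Druprop}, leaving the details implicit): a cutoff splitting, with the pieces near $\Sigma$ handled by Definition~\ref{highintdefin} --- including the key verification that $\chi\mathbf{H}\in H_0(\mathrm{div}_{\mu})$ via the weak normal condition inherited from $\nu\wedge\mathbf{E}=0$, in the spirit of the remark after the definition of $H_0(\mathrm{div}_a,D)$ --- and the pieces near $\partial B_{R_0+1}$ handled by Proposition~\ref{Druprop} on a fixed annulus, which is what produces the tangential trace terms and the exponent $s_0$ depending only on $R_0$, $\lambda_0$, $\lambda_1$. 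No gaps.
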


In the next proposition we give a sufficient condition for
the Maxwell higher integrability property of Definition~\ref{highintdefin} to hold.

\begin{prop}\label{highintprop}
Let us fix positive constants $r$, $L$, and $R_0$, $0<r_1< r$ and $\tilde{C}>0$, and $\omega:(0,+\infty)\to (0,+\infty)$ a nondecreasing left-continuous function. Let 
$\hat{\mathcal{B}}=
\hat{\mathcal{B}}(r,L,B_{R_0},r_1,\tilde{C},\omega)$.

Then any $\Sigma\in\hat{\mathcal{B}}$ satisfies the Maxwell higher integrability property, with constants in \eqref{higherintestdef} $p>2$, depending on $\hat{\mathcal{B}}$, $a_0$, and $a_1$ only, and   
$C$, depending on $\hat{\mathcal{B}}$, $p$, $a_0$, and $a_1$ only.

Moreover, there exist constants $p_1>2$ and $C_1$, depending on $\hat{\mathcal{B}}$ only, such that, for any $\Sigma\in\hat{\mathcal{B}}$, we have
\begin{equation}\label{Sobolevcondition}
\|v\|_{L^{p_1}(B_{R_0+1}\backslash \Sigma)}\leq C_1\|v\|_{H^1(B_{R_0+1}\backslash \Sigma)}\quad\text{for any }v\in H^1(B_{R_0+1}\backslash \Sigma).
\end{equation}

Finally, $B_{R_0+1}\backslash \Sigma$ satisfies the Maxwell and Rellich compactness properties.
\end{prop}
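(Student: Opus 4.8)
The plan is to prove the three assertions simultaneously by reducing everything, through the charts supplied by Condition~\ref{pcondition}) of Definition~\ref{scatdefin}, to the \emph{fixed} Lipschitz reference domain $Q=(-1,1)^{2}\times(0,1)$, where Proposition~\ref{Druprop} and the classical Sobolev embedding are at our disposal, and then patching the local estimates by a finite cover whose cardinality and constants are uniform over $\hat{\mathcal{B}}$. The point that makes the constants uniform is that $Q$ never changes: Druet's exponent $q_1$ and constant in Proposition~\ref{Druprop} then depend only on the ellipticity bounds of the pulled-back coefficient, which by Corollary~\ref{changecor} and \eqref{divchange} are controlled by $a_0$, $a_1$, and the chart constant $\tilde{C}$ alone.

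For the Maxwell higher integrability property I would cover $\overline{B_{R_0}}$ by a fixed number $N_0=N_0(R_0,r_1)$ of balls of radius $r_1/2$ with centres on a fixed lattice. A ball disjoint from $\partial\Sigma$ sits at positive distance from $\partial\Sigma$ and is treated as an interior piece: on a ball $B\subset\subset G$ one multiplies $u$ by a cutoff $\eta\in C^\infty_0(B)$ and applies Proposition~\ref{Druprop} to $\eta u$, whose tangential and normal traces on $\partial B$ vanish. For a ball meeting $\partial\Sigma$ pick $\mathbf{x}\in\partial\Sigma$ in it; by Lemma~\ref{compactHausdorff0} the number of components $U$ of $G\cap B_{r_1}(\mathbf{x})$ with $\mathbf{x}\in\partial U$ is at most $M_2$, and to each Condition~\ref{pcondition}) associates a bi-$W^{1,\infty}$ map $T\colon Q\to U'$ with $U\subset U'\subset G$ and constant $\tilde{C}$. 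Set $v=\hat{T}(u)\in H(\mathrm{curl},Q)\cap H(\mathrm{div}_b,Q)$ with $b=T_{\ast}(a)$; by \eqref{changecond2} the condition $\nu\wedge u=0$ (resp.\ $\nu\cdot(au)=0$) on $\partial\Sigma$ becomes $\gamma_\tau(v)=0$ (resp.\ $\gamma_\nu(bv)=0$) on $\Gamma$. The key local device is a cutoff $\eta\in C^\infty(\overline{Q})$, $0\le\eta\le1$, equal to $1$ on $\{q\in Q:\ \mathrm{dist}(q,\partial Q\setminus\Gamma)\ge\omega(s_0)\}$ for a fixed small $s_0>0$ and vanishing near $\partial Q\setminus\Gamma$, with $\|\nabla\eta\|_\infty$ universal. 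Then $\eta v\in H(\mathrm{curl},Q)\cap H(\mathrm{div}_b,Q)$ and, since $\gamma_\tau(\eta v)=\eta\,\gamma_\tau(v)=0$ on $\Gamma$ while $\eta v\equiv0$ near $\partial Q\setminus\Gamma$, one has $\gamma_\tau(\eta v)=0$ (resp.\ $\gamma_\nu(b\eta v)=0$) on \emph{all} of $\partial Q$. Hence $\eta v\in W^{2,r}_{b,\tau}(Q)$ (resp.\ $W^{2,r}_{b,\nu}(Q)$) with zero boundary datum for every $r$, and Proposition~\ref{Druprop} with $r=2$ gives $\|\eta v\|_{L^{s}(Q,\mathbb{C}^3)}\le C\big[\|v\|_{L^2(Q)}+\|\nabla\wedge v\|_{L^2(Q)}+\|\nabla\cdot(bv)\|_{L^2(Q)}\big]$, $s=\min\{3,q_1\}>2$, the $\nabla\eta$ commutators being absorbed on the right. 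By \eqref{changecond3} the region $\{\eta=1\}$ pushes forward to cover $U\cap B_{r_1-s_0}(\mathbf{x})$, so transferring back through $\hat{S}=\hat{T}^{-1}$ (which distorts $L^s$ norms and the divergence only by factors of $\tilde{C}$, cf.\ \eqref{divchange}) bounds $\|u\|_{L^s}$ on $U\cap B_{r_1-s_0}(\mathbf{x})$ by the right-hand side of \eqref{higherintestdef}. Summing the at most $N_0M_2$ boundary pieces and the interior pieces yields \eqref{higherintestdef} with $p=s$ and a uniform $C$.

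The Sobolev inequality \eqref{Sobolevcondition} follows from the same covering, now transporting $v\in H^1$ by $\psi=\tilde{T}(v)\in H^1(Q)$, applying the embedding $H^1(Q)\hookrightarrow L^{6}(Q)$ on the Lipschitz domain $Q$ (no cutoff and no boundary condition are needed here), and transferring back; the uniform chart constants give a uniform $C_1$ with $p_1\le6$. Finally, the compactness properties follow from Proposition~\ref{suffMCP}: at $\mathbf{x}\in\partial B_{R_0+1}$ the set $B_{R_0+1}\setminus\Sigma$ is locally a half-ball, hence Lipschitz; at $\mathbf{x}\in\partial\Sigma$ the finitely many components (Lemma~\ref{compactHausdorff0}) of $B_{r_1}(\mathbf{x})\cap(B_{R_0+1}\setminus\Sigma)$ touching $\mathbf{x}$ are, via $T$, bi-$W^{1,\infty}$ images of subdomains of $Q$ bounded by a flat portion of $\Gamma$ together with a bi-$W^{1,\infty}$ image of a spherical cap (since $\partial U\cap B_{r_1}(\mathbf{x})\subset T(\Gamma)$), hence Lipschitz. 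Thus the hypotheses of Proposition~\ref{suffMCP} are met and $B_{R_0+1}\setminus\Sigma$ satisfies both the RCP and the MCP.

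I expect the main obstacle to be the \emph{uniformity} of $p$ and $C$ over the whole class $\hat{\mathcal{B}}$ rather than the local estimate itself. The argument rests on three uniformity inputs that must be combined carefully: that the reference cube $Q$ is fixed (so Druet's constants depend only on $a_0,a_1,\tilde{C}$), that the number of charts is bounded by $N_0M_2$ through Lemma~\ref{compactHausdorff0} and the bounded diameter, and that the distance control \eqref{changecond3} forces the region $\{\eta=1\}$ to always cover a definite neighbourhood of each boundary point. The remaining delicate book-keeping is carrying the two boundary-condition cases, tangential ($W^{2,r}_{b,\tau}$) and normal ($W^{2,r}_{b,\nu}$), in parallel through the pullback, the cutoff, and the push-forward.
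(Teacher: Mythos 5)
Your route to the higher integrability estimate is essentially the paper's: pull back to the fixed cube $Q$ through the charts of Condition~\ref{pcondition}) of Definition~\ref{scatdefin}, apply Proposition~\ref{Druprop} there with constants controlled by $a_0$, $a_1$, $\tilde{C}$ alone, and patch with a cover of uniformly bounded cardinality. Your local step is a legitimate variant: where the paper reflects $v$ across the plane containing $\Gamma$ (Proposition~\ref{reflprop}) and then quotes a single \emph{interior} estimate on the doubled cube for both boundary conditions, you keep the boundary condition, cut off near $\partial Q\setminus\Gamma$, and invoke Druet's estimate with vanishing trace on all of $\partial Q$, carrying $W^{2,r}_{b,\tau}$ and $W^{2,r}_{b,\nu}$ in parallel; both work. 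Two covering issues need repair, though both are fixable with tools already in the paper: your balls cover only $\overline{B_{R_0}}$, so the region up to $\partial B_{R_0+1}$ (where the vanishing trace of $u$ must also be used) is never reached --- the paper extends $u$ by zero across $\partial B_{R_0+1}$ and adds a second family of balls; and a ball meeting $\partial\Sigma$ may contain components of $G$ that do \emph{not} have your chosen centre on their boundary, to which Condition~\ref{pcondition}) does not directly apply --- one must re-centre, picking for each such component a point on its own boundary, as in the covering of \cite[Proposition~2.11]{LPRX} that the paper imports.

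The genuine gap is your last paragraph. Proposition~\ref{suffMCP} requires each connected component $U$ of $U_{\mathbf{x}}\cap D$ with $\mathbf{x}\in\partial U$ to be mapped onto a \emph{Lipschitz} domain by a bi-$W^{1,\infty}$ map, and the only candidate is $T^{-1}$ restricted to $U$, whose image is $T^{-1}(U)$: the cube $Q$ truncated by $T^{-1}(\partial B_{r_1}(\mathbf{x}))$. Since $T^{-1}$ is only locally Lipschitz on the generally non-convex set $U'$, this truncating hypersurface --- and hence $T^{-1}(U)$ --- has no reason to be Lipschitz; indeed the whole point of the class $\hat{\mathcal{B}}$ is to admit components (spirals, screen-like configurations) that are not bi-Lipschitz images of balls. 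So ``hence Lipschitz'' is unjustified and the hypotheses of Proposition~\ref{suffMCP} are not verified; you cannot substitute the supersets $U'=T(Q)$ for the components either, because the $U'$ are not components of any neighbourhood intersected with $D$. The statement itself is safe: the paper obtains the MCP by running the same localization (cutoff, pullback, reflection) and using the compact embedding of Proposition~\ref{Druprop} on $Q$ directly, together with a diagonal extraction, and it cites \cite{LPRX} for the RCP and for \eqref{Sobolevcondition} (your direct derivation of the latter via $H^1(Q)\hookrightarrow L^6(Q)$ is fine). You should replace the appeal to Proposition~\ref{suffMCP} by that compactness argument, which your local construction already contains.
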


\begin{proof} We observe that \eqref{Sobolevcondition} and the Rellich compactness property are proved in \cite[Proposition~2.11]{LPRX}.

We begin with the following interior estimate. Given two bounded domains $D'$ and $D$, with $D'$ compactly contained in $D$ and $D$ Lipschitz, let us consider a function $u\in H(\mathrm{curl},D)\cap H(\mathrm{div}_a,D)$.
We fix a cutoff function $\chi\in C^{\infty}_0(D)$ such that $0\leq \chi\leq 1$ everywhere and $\chi$ is identically equal to $1$ in $D'$.
Then $\chi u\in H_0(\mathrm{curl},D)\cap H(\mathrm{div}_a,D)$ and
\begin{multline}\label{cutoffestimate}
\|\chi u\|_{L^2(D,\mathbb{C}^3)}+\|\nabla\wedge(\chi u)\|_{L^2(D,\mathbb{C}^3)}+
\|\nabla\cdot(a\chi u)\|_{L^2(D)}\\\leq
C_1\left[\| u\|_{L^2(D,\mathbb{C}^3)}+\|\nabla\wedge u\|_{L^2(D,\mathbb{C}^3)}+
\|\nabla\cdot(a u)\|_{L^2(D)}\right].
\end{multline}
Here $C_1$ depends only on $\|\nabla\chi\|_{L^{\infty}(D,\mathbb{R}^3)}$, thus on
the distance of $D'$ from the boundary of $D$.

By Proposition~\ref{Druprop}, there exist $s>2$ and a positive constant $C_2$, depending on $D$, $a_0$, and $a_1$ only, such that
\begin{multline}\label{useofDru}
\|\chi u\|_{L^s(D,\mathbb{C}^3)}\\\leq
C_2\left[\|\chi u\|_{L^2(D,\mathbb{C}^3)}
+\|\nabla\wedge(\chi u)\|_{L^2(D,\mathbb{C}^3)}+
\|\nabla\cdot(a\chi u)\|_{L^2(D)}\right].
\end{multline}

We conclude that there exist constants $s>2$, depending on $D$, $a_0$, and $a_1$ only, and $C_3$, depending on $D$, $a_0$, $a_1$, and the distance of $D'$ from the boundary of $D$, such that
\begin{multline}\label{cutoffestimatefinal}
\|u\|_{L^s(D',\mathbb{C}^3)}
\\\leq
C_3\left[\| u\|_{L^2(D,\mathbb{C}^3)}+\|\nabla\wedge u\|_{L^2(D,\mathbb{C}^3)}+
\|\nabla\cdot(a u)\|_{L^2(D)}\right].
\end{multline}

Then we consider the following local construction. We fix $\Sigma\in\hat{\mathcal{B}}$ and we call $G=\mathbb{R}^3\backslash \Sigma$.
We fix $\mathbf{x}\in \partial \Sigma$ and $U$ a connected component of
$B_{r_1}(\mathbf{x})\cap G$ such that $\mathbf{x}\in\partial U$. We consider $U'$
and $T:Q\to U'$ as in Condition~\ref{pcondition}) of Definition~\ref{scatdefin}.

First of all, by \eqref{changecond3}, we infer that $T^{-1}(U\cap B_{3r_1/4}(\mathbf{x}))$ has a positive distance from $\partial Q\backslash \Gamma$ which is bounded from below by a positive constant $r_2\leq 1/8$ depending on $r_1$ and $\omega$ only.

Let us take $u$ belonging to $H_0(\mathrm{curl},B_{R_0+1}\backslash \Sigma)\cap H(\mathrm{div}_a,B_{R_0+1}\backslash \Sigma)$, or to 
$H(\mathrm{curl},B_{R_0+1}\backslash \Sigma)\cap H_0(\mathrm{div}_a,B_{R_0+1}\backslash \Sigma)$. 
Without loss of generality, by an easy extension argument around $\partial B_{R_0+1}$, in either cases we can assume that $u$ and $a$ are defined everywhere outside $\overline{B_{R_0+1}}$ and that $u\in H(\mathrm{curl},\mathbb{R}^3\backslash \Sigma)\cap H(\mathrm{div}_a,\mathbb{R}^3\backslash \Sigma)$ with bounded support and with norms controlled by a constant $C$ times
the corresponding ones in $B_{R_0+1}\backslash \Sigma$.

Then $v=\hat{T}(u)\in H(\mathrm{curl},Q)\cap H(\mathrm{div}_{a_1},Q)$,
where $a_1=T_{\ast}(a)$. Furthermore,
$\nabla\wedge v=0$, or $\nu\cdot(a_1 v)=0$ respectively, on any compact subset of the interior of $\Gamma$, with respect to the induced topology.

Therefore, by a reflection argument around the plane $\Pi$ containing $\Gamma$, in both cases we can extend $v$ to a function $\tilde{w}\in H(\mathrm{curl},Q_1)\cap H(\mathrm{div}_{a_2},Q_1)$ where $Q_1=[-1,1]^3$ and $a_2$
satisfies, calling $T_{\Pi}$ the reflection in $\Pi$,
$$a_2=\left\{\begin{array}{ll}
a_1
&\text{in }Q\\
(T_{\Pi})_{\ast}(a_1)&\text{in }T_{\Pi}(Q).
\end{array}\right.$$

By using the previous interior estimates applied to $Q$ and $Q'=\{\mathbf{x}\in Q: \mathrm{dist}(\mathbf{x},\partial Q)> r_2/2\}$, we deduce that there exist constants $s>2$ and $C$,
depending on $a_0$, $a_1$, $\tilde{C}$, and $r_2$ only, such that
\begin{multline}\label{cutoffestimatefinalbis}
\|u\|_{L^s(U\cap B_{3r_1/4}(\mathbf{x}),\mathbb{C}^3)}
\\\leq
C\left[\| u\|_{L^2(U',\mathbb{C}^3)}+\|\nabla\wedge u\|_{L^2(U',\mathbb{C}^3)}+
\|\nabla\cdot(a u)\|_{L^2(U')}\right].
\end{multline}

Then we proceed with the following covering argument, which was developed in the proof of \cite[Proposition~2.11]{LPRX}.
For any $\mathbf{x}\in\partial\Sigma$, let $W_n$, $n=1,\ldots,n_0$, be the connected components of $B_{r_1/2}(\mathbf{x})\cap G$ such that $W_n\cap B_{r_1/4}(\mathbf{x})\neq\emptyset$. By Lemma~\ref{compactHausdorff0}, $n_0\leq M_3$, where $M_3$ is a constant depending on $r_1$, $r$, $L$, and $\omega$ only.
As in the proof of Proposition~2.11 in \cite{LPRX}, one can show that for any $\mathbf{x}\in\partial\Sigma$, there exist $n_0$ points $\mathbf{x}_1,\ldots,\mathbf{x}_{n_0}$, with $n_0\leq M_3$, with the following property.
For any $n=1,\ldots,n_0$, there exists
$U_n$, a connected component of
$B_{r_1}(\mathbf{x}_n)\cap G$, such that $\mathbf{x}_n\in\partial U_n$, and
moreover
$$B_{r_1/4}(\mathbf{x})\cap G\subset \bigcup_{n=1}^{n_0}(U_n\cap B_{3r_1/4}(\mathbf{x}_n)).$$

We fix $\delta=r_1/16$ and define the compact set $A_1=\overline{B_{\delta}(\partial\Sigma)\cap G}$.
We can find a finite number of points $\mathbf{z}_i\in \partial\Sigma$, $i=1,\ldots,m_1$, such that
$$A_1\subset \bigcup_{i=1}^{m_1}B_{r_1/4}(\mathbf{z}_i).$$
With a rather simple construction, it is possible to choose $m_1$ depending on $r_1$ and $R_0$ only, for instance by taking points such that $B_{r_1/16}(\mathbf{z}_i)\cap B_{r_1/16}(\mathbf{z}_j)$ is empty for $i\neq j$.

We further find a finite number of points $\mathbf{z}_i\in \partial B_{R_0+1}$, $i=m_1+1,\ldots,m_1+m_2$, such that
$$A_2=\overline{B_{1/16}(\partial B_{R_0+1})}\subset\bigcup_{i=m_1+1}^{m_1+m_2}B_{1/4}(\mathbf{z}_i),$$ with $m_2$ depending on $R_0$ only.
We call $r_3=\min\{1,r_1\}$ and $A_3=\{\mathbf{x}\in B_{R_0+1}\backslash \Sigma:\ \mathrm{dist}(\mathbf{x},\partial(B_{R_0+1}\backslash \Sigma)) \geq r_3/16\}$.
We can find points $\mathbf{z}_i\in A_3$, $i=m_1+m_2+1,\ldots,m_1+m_2+m_3$, such that
$$A_3\subset \bigcup_{i=m_1+m_2+1}^{m_1+m_2+m_3}B_{r_3/32}(\mathbf{z}_i).$$
Again $m_3$ may be bounded by a constant depending on $r_1$ and $R_0$ only.

We apply the local argument developed at the beginning of the proof, at most $M_3$ times for any $\mathbf{z}_i$, $i=1,\ldots,m_1$, and we can find $s_1>2$ and $C_1$ such that
\begin{multline*}
\|u\|_{L^{s_1}(A_1\cap G)}\leq C_1(M_3m_1)C\big[\| u\|_{L^2(B_{R_0+1}\backslash K,\mathbb{C}^3)}\\+\|\nabla\wedge u\|_{L^2(B_{R_0+1}\backslash K,\mathbb{C}^3)}+
\|\nabla\cdot(a u)\|_{L^2(B_{R_0+1}\backslash K)}\big].
\end{multline*}
Analogously, we can find $s_2>2$ and $C_2$
such that
\begin{multline*}
\|u\|_{L^{s_2}(A_2\cap B_{R_0+1})}\leq C_2m_2C\big[\| u\|_{L^2(B_{R_0+1}\backslash K,\mathbb{C}^3)}\\+\|\nabla\wedge u\|_{L^2(B_{R_0+1}\backslash K,\mathbb{C}^3)}+
\|\nabla\cdot(a u)\|_{L^2(B_{R_0+1}\backslash K)}\big].
\end{multline*}
Finally, we apply the interior estimate with $D=B_{r_3/16}(\mathbf{z}_i)$
and $D'=B_{r_3/32}(\mathbf{z}_i)$, for $i=m_1+m_2+1,\ldots,m_1+m_2+m_3$,
and we can find
$s_3>2$ and $C_3$
such that
\begin{multline*}
\|u\|_{L^{s_3}(A_3)}\leq C_3m_3\big[\| u\|_{L^2(B_{R_0+1}\backslash K,\mathbb{C}^3)}\\+\|\nabla\wedge u\|_{L^2(B_{R_0+1}\backslash K,\mathbb{C}^3)}+
\|\nabla\cdot(a u)\|_{L^2(B_{R_0+1}\backslash K)}\big].
\end{multline*}
Picking $p=\min\{s_1,s_2,s_3\}$ we obtain that
\begin{multline*}
\|u\|_{L^{p}(B_{R_0+1}\backslash K)}\leq C\big[\| u\|_{L^2(B_{R_0+1}\backslash K,\mathbb{C}^3)}\\+\|\nabla\wedge u\|_{L^2(B_{R_0+1}\backslash K,\mathbb{C}^3)}+
\|\nabla\cdot(a u)\|_{L^2(B_{R_0+1}\backslash K)}\big].
\end{multline*}
Our arguments clearly show that $p$ and $C$ have the dependence required.


Finally, similar reasonings easily show that
$B_{R_0+1}\backslash \Sigma$ satisfies the MCP, therefore
the proof is complete. 
\end{proof}

\section{Stability of solutions of electromagnetic scattering problems}\label{stabilitysec}

In this section we investigate the stability of solutions of Maxwell equations, 
in particular of solutions to electromagnetic scattering problems,
with respect to changes both in the exterior domain and in the coefficients.

In this section we shall keep fixed
positive constants $r$, $L$, and $R_0$, $0<r_1< r$, $\tilde{C}>0$, $0<\lambda_0<1<\lambda_1$, and $0<\underline{k}<\overline{k}$, and two  nondecreasing left-continuous functions
$\omega:(0,+\infty)\to (0,+\infty)$ and $\delta:(0,+\infty)\to (0,+\infty)$.

We begin by defining the following class of admissible coefficients.

\begin{defn}\label{coeffdefn}
We say that $(\epsilon,\mu)$ is a couple of coefficients belonging to the admissible class $\mathcal{N}=\mathcal{N}(r,L,R_0,\omega,\lambda_0,\lambda_1)$ if the following holds.

We assume that  $(\epsilon,\mu)\in L^{\infty}(\mathbb{R}^3,M^{3\times 3}_{sym}(\mathbb{R})^2)$ and satisfies \eqref{ellipticity2} with constants $\lambda_0<\lambda_1$ in $\mathbb{R}^3$.

Then we assume that there exists $K\in\tilde{\mathcal{B}}(r,L,B_{R_0},\omega)$, depending on $(\epsilon,\mu)$,
with the following properties. We call $D_0$ the unbounded connected component of $\mathbb{R}^3\backslash K$ and $D_i$, $i=1,\ldots,\tilde{M}$, the bounded connected components of $\mathbb{R}^3\backslash K$.
We finally assume that
$(\epsilon,\mu)=(I_3,I_3)$ in $D_0$ and that, for any $i=1,\ldots,\tilde{M}$,
$(\epsilon,\mu)=(\epsilon_i,\mu_i)$ where $(\epsilon_i,\mu_i)$ is a couple of Lipschitz functions from $\overline{B_{R_0+1}}$ to $M^{3\times 3}_{sym}(\mathbb{R})$ with Lipschitz constant bounded by $L$.
\end{defn}

We recall that there exists $M_1$, depending on $r$, $L$, $R_0$, and $\omega$ only, such that $\tilde{M}$ in the previous definition, that depend on $K$ thus on $(\epsilon,\mu)$, satisfies $\tilde{M}\leq M_1$.

The following lemmas justify the previous definition.

\begin{lem}\label{UCPforN}
Let $D$ be any connected open set contained in $\mathbb{R}^3$. Let $k>0$ and
$(\epsilon,\mu)\in \mathcal{N}=\mathcal{N}(r,L,R_0,\omega,\lambda_0,\lambda_1)$.

Then the Maxwell system
\begin{equation}\label{limiteq0}
\nabla\wedge \mathbf{E}-\rmi k \mu\mathbf{H}=0,\quad
\nabla\wedge \mathbf{H}+\rmi k \epsilon\mathbf{E}=0\quad\text{in }D
\end{equation}
satisfies the UCP in $D$.
\end{lem}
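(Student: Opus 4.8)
The plan is to deduce the statement from Proposition~\ref{piecewiseLipschitzprop} (in the form of Remark~\ref{piecewiseLipschitzrem}), since the coefficients in the class $\mathcal{N}$ are exactly of piecewise Lipschitz type. Fix $(\epsilon,\mu)\in\mathcal{N}$ and let $K\in\tilde{\mathcal{B}}(r,L,B_{R_0},\omega)$ be the associated set, with $O_0$ the unbounded and $O_1,\ldots,O_{\tilde M}$ the bounded connected components of $\mathbb{R}^3\setminus K$; on $O_0$ one has $(\epsilon,\mu)=(I_3,I_3)$, while on each $O_j$, $j\geq 1$, $(\epsilon,\mu)$ coincides with the Lipschitz pair $(\epsilon_j,\mu_j)$. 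Note that $\overline{O_j}\subset\overline{B_{R_0}}\subset B_{R_0+1}$ for $j\geq 1$ (since $\mathbb{R}^3\setminus\overline{B_{R_0}}\subset O_0$) and that $(\epsilon,\mu)=I_3$ outside $\overline{B_{R_0}}$, so each $(\epsilon_j,\mu_j)$ extends to a locally Lipschitz function on $D$. As the family $\{D_i\}$ required by Proposition~\ref{piecewiseLipschitzprop} I would take the connected components of $D\setminus K$: each is an open connected subset of $D$, they are pairwise disjoint, and each $D_i$ lies in a single $O_{j(i)}$, so that $(\epsilon,\mu)$ agrees on $D_i$ with the locally Lipschitz function $(\epsilon_{j(i)},\mu_{j(i)})$. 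This yields hypothesis~(iv) at once, and (ii) is immediate because $\sigma=D\cap\bigcup_i\partial D_i\subset D\cap K$ and $|K|=0$.

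Next I would check the covering condition~(i), namely $D\subset\bigcup_i\overline{D_i}$. Points of $D\setminus K$ lie in some $D_i$; for $\mathbf{x}\in D\cap K$ the structure of sets in $\tilde{\mathcal{B}}$ guarantees that, in a small enough ball around $\mathbf{x}$ contained in $D$, $D\setminus K$ has only finitely many connected components, each with $\mathbf{x}$ in its closure, so $\mathbf{x}\in\overline{D_i}$ for at least one $i$. Here the finiteness of the number of local components, as in the discussion following Definition~\ref{classdefin} and in Lemma~\ref{compactHausdorff0}, is precisely what makes the \emph{union of closures}, rather than the closure of the union, cover $\mathbf{x}$.

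The main obstacle is hypothesis~(iii), the connectedness of $\tilde D=D\setminus C$. The only points of $\sigma$ that fail to separate exactly two partitions, beyond the one-dimensional edge and junction set, are the interior points of a single piece $K^i$ at which the \emph{same} component $D_i$ of $D\setminus K$ lies on both sides; these form a two-dimensional portion of $C$ that would a priori obstruct the use of Remark~\ref{piecewiseLipschitzrem}. I would remove this difficulty by observing that across such a point the coefficient does not jump—both sides belong to the same $O_j$, so $(\epsilon,\mu)$ admits a Lipschitz representative on a whole neighbourhood—and hence these screen points may be absorbed into the surrounding component, enlarging $D_i$ to an open connected set $\hat D_i\subset D$ on which $(\epsilon,\mu)$ is still locally Lipschitz. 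After this absorption, every remaining interface point separates exactly two partitions, so $C$ is contained in $\bigcup_i\partial K^i$ together with the mutual intersections $K^i\cap K^{i'}$; by the separation condition~2) in Definition~\ref{classdefin} one has $K^i\cap K^{i'}\subset\partial K^i$, whence $C\subset\bigcup_i\partial K^i$, a set of finite $\mathcal{H}^{N-2}=\mathcal{H}^1$ measure. Thus $\mathcal{H}^s(C)<+\infty$ for $s=1<2$, and since $D$ is connected, Remark~\ref{piecewiseLipschitzrem} (via Lemma~\ref{disconnectionlemma}) gives that $\tilde D$ is connected.

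With (i)--(iv) in hand, Proposition~\ref{piecewiseLipschitzprop} applies and yields the UCP for \eqref{limiteq0} in $D$. The technical heart is the absorption step and the accompanying check that, after it, the residual bad set $C$ is at most one-dimensional and every surviving interface point separates exactly two partitions; everything else is bookkeeping that reduces the piecewise-Lipschitz situation to the Lipschitz unique continuation of Theorem~\ref{LipUCP}.
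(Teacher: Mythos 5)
Your proof is correct and follows the paper's route exactly: the paper's own proof is a single sentence invoking Proposition~\ref{piecewiseLipschitzprop} together with Remark~\ref{piecewiseLipschitzrem}. The one substantive detail you add---absorbing into the adjacent component the screen-type points of $K$ across which the coefficient does not jump, so that the residual set $C$ is at most one-dimensional---is left implicit in the paper, and it is genuinely needed for Remark~\ref{piecewiseLipschitzrem} to apply, since with the naive choice of partition those points lie in $C$ and can form a two-dimensional set.
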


\begin{proof} This is an easy consequence of Proposition~\ref{piecewiseLipschitzprop}
and Lemma~\ref{disconnectionlemma}, see Remark~\ref{piecewiseLipschitzrem}.
\end{proof}

\begin{lem}\label{coeffcompactlemma}
The class $\mathcal{N}=\mathcal{N}(r,L,R_0,\omega,\lambda_0,\lambda_1)$
is compact with respect to convergence almost everywhere in $\mathbb{R}^3$, as well with respect to
the $L^p(\mathbb{R}^3,M^{3\times 3}_{sym}(\mathbb{R})^2)$ convergence, for any $p$, $1\leq p<+\infty$.
\end{lem}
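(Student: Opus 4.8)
The plan is to prove sequential compactness directly. Let $\{(\epsilon^n,\mu^n)\}_{n\in\mathbb{N}}\subset\mathcal{N}$ and let $K^n\in\tilde{\mathcal{B}}(r,L,B_{R_0},\omega)$ be the sets associated to $(\epsilon^n,\mu^n)$ as in Definition~\ref{coeffdefn}. First I would extract subsequences in three stages. By the compactness of $\tilde{\mathcal{B}}$ under the Hausdorff distance (Lemma~\ref{compactHausdorff0}), I may assume $K^n\to K$ in the Hausdorff distance with $K\in\tilde{\mathcal{B}}(r,L,B_{R_0},\omega)$; recall that $|K|=0$ and that the number of connected components of $\mathbb{R}^3\setminus K^n$ is bounded by the constant $M_1$. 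Hence, passing to a further subsequence, the number of bounded components is a constant $\tilde M\le M_1$, with Lipschitz pieces $(\epsilon_i^n,\mu_i^n)$, $i=1,\dots,\tilde M$, defined on $\overline{B_{R_0+1}}$ with Lipschitz constant $\le L$. Since each $(\epsilon_i^n,\mu_i^n)$ agrees with $(\epsilon^n,\mu^n)$ on the nonempty bounded component $D_i^n$, where \eqref{ellipticity2} forces $|\epsilon_i^n|,|\mu_i^n|\le\lambda_1$ on $\overline{D_i^n}$, the uniform Lipschitz bound yields a uniform $L^\infty$ bound on all of $\overline{B_{R_0+1}}$. By the Arzelà--Ascoli theorem I may therefore assume, after a last subsequence, that $(\epsilon_i^n,\mu_i^n)$ converges uniformly on $\overline{B_{R_0+1}}$ to a Lipschitz couple $(\tilde\epsilon_i,\tilde\mu_i)$ with Lipschitz constant $\le L$, for each $i=1,\dots,\tilde M$.

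Next I would construct the candidate limit. Let $D_0$ be the unbounded component and $D_1,\dots,D_{\tilde N}$ the bounded components of $\mathbb{R}^3\setminus K$, with $\tilde N\le M_1$. On $D_0$ I set $(\epsilon,\mu)=(I_3,I_3)$; the convergence there is immediate, since for $\mathbf{x}\in D_0$ a path joining $\mathbf{x}$ to $\{\|\mathbf{y}\|>R_0\}$ inside $D_0$ has positive distance from $K$, hence from $K^n$ for large $n$, so $\mathbf{x}$ lies in the unbounded component of $\mathbb{R}^3\setminus K^n$ and $(\epsilon^n,\mu^n)(\mathbf{x})=(I_3,I_3)$. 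The definition of $(\epsilon,\mu)$ on each bounded component $D_j$ is where the argument must be done with care: for a connected compact set $\Gamma\Subset D_j$ one has $\Gamma\cap K^n=\emptyset$ for all large $n$ (as $\mathrm{dist}(\Gamma,K)>0$ and $K^n\to K$), so $\Gamma$ lies in a single bounded component $D_{i(n)}^n$ of $\mathbb{R}^3\setminus K^n$, on which $(\epsilon^n,\mu^n)=(\epsilon_{i(n)}^n,\mu_{i(n)}^n)$.

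The main obstacle is that the matching index $i(n)\in\{1,\dots,\tilde M\}$ may a priori oscillate, so that $(\epsilon^n,\mu^n)|_\Gamma$ need not converge without further care. To handle this I would use that $i(n)$ takes finitely many values and is stable under enlarging $\Gamma$: if $\Gamma\subset\Gamma'$ are connected and both avoid $K^n$, then $\Gamma'$ is connected and avoids $K^n$, hence both sit in the same $D_{i(n)}^n$. Exhausting $D_j$ by an increasing sequence of connected compact sets $\Gamma_1\subset\Gamma_2\subset\cdots$ with $\bigcup_\ell\Gamma_\ell=D_j$, the index $i(n)$ becomes independent of $\ell$ for $n$ large, and extracting a diagonal subsequence I may assume $i(n)\equiv i^\ast(j)$ is constant; combined with the uniform convergence $(\epsilon_{i^\ast}^n,\mu_{i^\ast}^n)\to(\tilde\epsilon_{i^\ast},\tilde\mu_{i^\ast})$ this gives $(\epsilon^n,\mu^n)\to(\tilde\epsilon_{i^\ast(j)},\tilde\mu_{i^\ast(j)})$ pointwise on $D_j$. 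Since there are only finitely many bounded components $D_1,\dots,D_{\tilde N}$, finitely many such extractions suffice. I then set $(\epsilon,\mu)=(\tilde\epsilon_{i^\ast(j)},\tilde\mu_{i^\ast(j)})$ on $D_j$, which is Lipschitz with constant $\le L$. As $|K|=0$, this defines $(\epsilon,\mu)$ almost everywhere, and the construction yields $(\epsilon^n,\mu^n)\to(\epsilon,\mu)$ a.e. in $\mathbb{R}^3$.

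It remains to check membership in $\mathcal{N}$ and to upgrade the convergence. The bounds \eqref{ellipticity2} pass to the a.e. limit, $(\epsilon,\mu)=(I_3,I_3)$ on $D_0$ by construction, $K\in\tilde{\mathcal{B}}(r,L,B_{R_0},\omega)$, and each bounded-component piece is Lipschitz with constant $\le L$; extending the finitely many limit pieces arbitrarily to Lipschitz functions on $\overline{B_{R_0+1}}$ with constant $\le L$ shows $(\epsilon,\mu)\in\mathcal{N}$ with associated set $K$. Finally, since every coefficient equals $I_3$ outside $\overline{B_{R_0}}$ by \eqref{infinity} and is bounded by $\lambda_1$, the differences $(\epsilon^n-\epsilon,\mu^n-\mu)$ are supported in $\overline{B_{R_0}}$ and dominated by a fixed $L^p$ function, so the dominated convergence theorem upgrades the a.e. convergence to convergence in $L^p(\mathbb{R}^3,M^{3\times3}_{sym}(\mathbb{R})^2)$ for every $1\le p<+\infty$, completing the proof.
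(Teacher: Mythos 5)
Your proof is correct and follows essentially the same route as the paper's: extract subsequences via the Hausdorff compactness of $\tilde{\mathcal{B}}$ and an Arzel\`a--Ascoli argument on the Lipschitz pieces, identify the a.e.\ limit component by component, verify membership in $\mathcal{N}$, and upgrade to $L^p$ by dominated convergence. Your exhaustion-by-compacts and diagonal extraction for stabilising the matching index $i(n)$ is a more explicit justification of the step the paper handles by asserting Hausdorff convergence of the closures $\overline{D_i^n}\to\overline{D_i}$ up to reordering; the only small slip is that a connected compact $\Gamma\Subset D_j$ could, for some $n$, lie in the \emph{unbounded} component $D_0^n$ rather than a bounded one, which you should allow by admitting $i(n)=0$ with $(\tilde\epsilon_0,\tilde\mu_0)=(I_3,I_3)$ --- harmless, since the resulting limit piece on $D_j$ is still Lipschitz.
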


\begin{proof}
Let us consider $\{(\epsilon^n,\mu^n)\}_{n\in\mathbb{N}}\subset \mathcal{N}$.
We call, for any $n\in \mathbb{N}$, $K_n$ the corresponding set belonging to $\tilde{\mathcal{B}}=\tilde{\mathcal{B}}(r,L,B_{R_0},\omega)$, $M_n$ the number of bounded connected components of $\mathbb{R}^3\backslash K_n$, and $D_0^n$ the unbounded connected component of $\mathbb{R}^3\backslash K_n$. Without loss of generality, up to passing to a subsequence that we do not relabel, we may assume that $M_n=M\leq M_1$ for
any $n\in \mathbb{N}$ and we call $D_i^n$, $i=1,\ldots,M$, the bounded connected components of $\mathbb{R}^3\backslash K_n$.
Moreover, again up to subsequences, we can assume that, as $n\to\infty$,
$K_n\to K\in \tilde{\mathcal{B}}$ in the Hausdorff distance, 
and, up to reordering, that 
$\overline{D_i^n}\to \overline{D_i}$, $i=0,1,\ldots,M$, in the Hausdorff distance. Here 
the sets $D_i$, $i=0,\ldots,M$, are open subsets of $\mathbb{R}^3\backslash K$, which are pairwise disjoint. Moreover, $D_0$ is the only unbounded one. 
Namely, each of the $D_i$ is the union of a finite number of connected components of $\mathbb{R}^3\backslash K$.

For any $i=1,\ldots,M$, $(\epsilon^n,\mu^n)|_{D_i^n}=(\epsilon_i^n,\mu_i^n)$ and 
we can also suppose that, as $n\to\infty$,  $(\epsilon_i^n,\mu_i^n)$ converges to 
$(\epsilon_i,\mu_i)$ uniformly in $\overline{B_{R_0+1}}$. Clearly, for any $i=1,\ldots,M$, 
$\epsilon_i$ and $\mu_i$ belong to $L^{\infty}(\overline{B_{R_0+1}},M^{3\times 3}_{sym}(\mathbb{R}))$ 
and are Lipschitz with Lipschitz constant bounded by $L$.

We then define $(\epsilon,\mu)\in L^{\infty}(\mathbb{R}^3,M^{3\times 3}_{sym}(\mathbb{R})^2)$ such that $(\epsilon,\mu)=(I_3,I_3)$ in $D_0$ and
$(\epsilon,\mu)=(\epsilon_i,\mu_i)$ in $D_i$ for any $i=1,\ldots,M$.

We recall that $|K|=0$. Now, let $\mathbf{x}\in \mathbb{R}^3\backslash K$. We have that
$\mathbf{x}\in D_i$ for some $i\in\{0,1,\ldots,M\}$. It is an easy remark that, for any $n$ large enough, $\mathbf{x}\in D_i^n$, therefore
$(\epsilon^n,\mu^n)(\mathbf{x})=(\epsilon^n_i,\mu^n_i)(\mathbf{x})$ converges, as $n\to\infty$, to
$(\epsilon_i,\mu_i)(\mathbf{x})=(\epsilon,\mu)(\mathbf{x})$. We conclude that
$(\epsilon^n,\mu^n)$ converges, as $n\to\infty$, to
$(\epsilon,\mu)$ almost everywhere in $\mathbb{R}^3$.
Then it is not difficult to observe that $(\epsilon,\mu)\in\mathcal{N}$, thus the compactness is proved.
By the uniform $L^{\infty}$ bound, and since any coefficient coincides with the identity matrix outside a given ball,
 we can immediately conclude the proof also for the convergence in $L^p$, $1\leq p<+\infty$.
\end{proof}

We shall also need the following strong convergence result for solutions to Maxwell systems.

\begin{lem}\label{localconvsollemma}
Let $D$ be any open set contained in $\mathbb{R}^3$.
Let, for any $n\in\mathbb{N}$, $\epsilon_n$, $\mu_n\in L^{\infty}(\mathbb{R}^3,M^{3\times 3}_{sym}(\mathbb{R}))$ satisfy \eqref{ellipticity2} with constants $\lambda_0$ and $\lambda_1$.

We assume that, for any $n\in\mathbb{N}$, $(\mathbf{E}_n,\mathbf{H}_n)\in H_{loc}(\mathrm{curl},D)$ solve
$$\nabla\wedge \mathbf{E}_n-\rmi k_n \mu_n\mathbf{H}_n=0,\quad
\nabla\wedge \mathbf{H}_n+\rmi k_n \epsilon_n\mathbf{E}_n=0\quad\text{in }D$$
for some $0<k_n\leq \overline{k}$.

Let us assume that, for some constant $C$, we have
$$\|\mathbf{E}_n\|_{L^2(D,\mathbb{C}^3)}+\|\mathbf{H}_n\|_{L^2(D,\mathbb{C}^3)}\leq C\quad\text{for any }n\in\mathbb{N}$$
and that $(\epsilon_n,\mu_n)\to (\epsilon,\mu)$ almost everywhere in $D$, as $n\to\infty$.

Then, up to a subsequence that we do not relabel, we have that, as $n\to\infty$,
$$(\mathbf{E}_n,\mathbf{H}_n)\to 
(\mathbf{E},\mathbf{H})\quad\text{strongly in }
H_{loc}(\mathrm{curl},D)$$
where $(\mathbf{E},\mathbf{H})$ solves, for some $0\leq k\leq\overline{k}$,
\begin{equation}\label{limiteq}
\nabla\wedge \mathbf{E}-\rmi k \mu\mathbf{H}=0,\quad
\nabla\wedge \mathbf{H}+\rmi k \epsilon\mathbf{E}=0\quad\text{in }D.
\end{equation}
\end{lem}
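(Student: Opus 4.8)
The plan is to first extract weak limits and identify the limiting equation, and then to upgrade weak convergence to strong convergence by a compensated compactness (div-curl) argument, exploiting the uniform ellipticity of the coefficients. First I would observe that the Maxwell system itself yields a uniform $H(\mathrm{curl})$ bound: since $\nabla\wedge\mathbf{E}_n=\rmi k_n\mu_n\mathbf{H}_n$ and $\nabla\wedge\mathbf{H}_n=-\rmi k_n\epsilon_n\mathbf{E}_n$, the bounds $k_n\le\overline{k}$ and $\|\epsilon_n\|_{L^\infty},\|\mu_n\|_{L^\infty}\le\lambda_1$ combined with the hypothesis give $\|\nabla\wedge\mathbf{E}_n\|_{L^2(D)}+\|\nabla\wedge\mathbf{H}_n\|_{L^2(D)}\le\overline{k}\lambda_1 C$. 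Hence $(\mathbf{E}_n,\mathbf{H}_n)$ is bounded in $H(\mathrm{curl},D)^2$, and, passing to a subsequence, $\mathbf{E}_n\rightharpoonup\mathbf{E}$, $\mathbf{H}_n\rightharpoonup\mathbf{H}$, $\nabla\wedge\mathbf{E}_n\rightharpoonup\nabla\wedge\mathbf{E}$ and $\nabla\wedge\mathbf{H}_n\rightharpoonup\nabla\wedge\mathbf{H}$ weakly in $L^2(D,\mathbb{C}^3)$ (the identification of the weak limit of the curls with the curl of the weak limit being immediate from the distributional definition), while $k_n\to k$ for some $0\le k\le\overline{k}$.

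Next I would pass to the limit in the equations. Since $\epsilon_n\to\epsilon$ almost everywhere with $\|\epsilon_n\|_{L^\infty}\le\lambda_1$, the elementary weak-strong convergence principle, namely $\langle\epsilon_n\mathbf{E}_n,\phi\rangle=\langle\mathbf{E}_n,\epsilon_n\phi\rangle\to\langle\mathbf{E},\epsilon\phi\rangle$ for any fixed $\phi\in L^2$ using $\epsilon_n\phi\to\epsilon\phi$ strongly together with $\mathbf{E}_n\rightharpoonup\mathbf{E}$, yields $\epsilon_n\mathbf{E}_n\rightharpoonup\epsilon\mathbf{E}$, and likewise $\mu_n\mathbf{H}_n\rightharpoonup\mu\mathbf{H}$, weakly in $L^2$. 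Combining this with $k_n\to k$ and taking limits in $\nabla\wedge\mathbf{E}_n=\rmi k_n\mu_n\mathbf{H}_n$ and $\nabla\wedge\mathbf{H}_n=-\rmi k_n\epsilon_n\mathbf{E}_n$ gives precisely the limit system \eqref{limiteq}. I would also record the divergence-free relations $\nabla\cdot(\epsilon_n\mathbf{E}_n)=0$ and $\nabla\cdot(\mu_n\mathbf{H}_n)=0$, obtained by taking the divergence of the two Maxwell identities.

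The main obstacle is the passage from weak to strong convergence, and this is where I would invoke the classical div-curl lemma. I would apply it to the pair $\overline{\mathbf{E}_n}$, whose curl is bounded in $L^2(D)$ and hence precompact in $H^{-1}_{loc}(D)$, and $\epsilon_n\mathbf{E}_n$, which is divergence free. Since $\overline{\mathbf{E}_n}\rightharpoonup\overline{\mathbf{E}}$ and $\epsilon_n\mathbf{E}_n\rightharpoonup\epsilon\mathbf{E}$, the lemma gives $\epsilon_n\mathbf{E}_n\cdot\overline{\mathbf{E}_n}\to\epsilon\mathbf{E}\cdot\overline{\mathbf{E}}$ in $\mathcal{D}'(D)$. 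For a fixed nonnegative $\phi\in C^\infty_0(D)$ I would then expand $\int_D\phi\,\epsilon_n(\mathbf{E}_n-\mathbf{E})\cdot\overline{(\mathbf{E}_n-\mathbf{E})}$ and treat its four terms: the quadratic one by the div-curl limit, the two cross terms by weak-strong convergence (using that $\epsilon_n$ is real symmetric), and the last by dominated convergence. All four tend to the common value $\int_D\phi\,\epsilon\mathbf{E}\cdot\overline{\mathbf{E}}$, so this nonnegative quantity tends to $0$, and the ellipticity bound $\epsilon_n(\mathbf{E}_n-\mathbf{E})\cdot\overline{(\mathbf{E}_n-\mathbf{E})}\ge\lambda_0|\mathbf{E}_n-\mathbf{E}|^2$ from \eqref{ellipticity2} forces $\mathbf{E}_n\to\mathbf{E}$ in $L^2_{loc}(D)$.

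The identical argument applied to the pair $(\overline{\mathbf{H}_n},\mu_n\mathbf{H}_n)$ gives $\mathbf{H}_n\to\mathbf{H}$ in $L^2_{loc}(D)$. Finally, with strong $L^2_{loc}$ convergence of both fields in hand, the relations $\nabla\wedge\mathbf{E}_n=\rmi k_n\mu_n\mathbf{H}_n$ and $\nabla\wedge\mathbf{H}_n=-\rmi k_n\epsilon_n\mathbf{E}_n$ upgrade the convergence of the curls to strong $L^2_{loc}$ convergence as well, since $k_n\to k$ and $\epsilon_n,\mu_n\to\epsilon,\mu$ almost everywhere and boundedly (so that, for instance, $\mu_n(\mathbf{H}_n-\mathbf{H})\to0$ in $L^2_{loc}$ and $(\mu_n-\mu)\mathbf{H}\to0$ by dominated convergence). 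This yields $(\mathbf{E}_n,\mathbf{H}_n)\to(\mathbf{E},\mathbf{H})$ strongly in $H_{loc}(\mathrm{curl},D)$, completing the proof. I expect the only genuinely delicate point to be the correct bookkeeping of the four terms in the quadratic expansion and the verification that the div-curl lemma applies in the complex, variable-coefficient setting; everything else is soft functional analysis.
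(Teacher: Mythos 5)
Your proof is correct, but it follows a genuinely different route from the paper's. The paper localizes with a cutoff $\chi$, observes that $\psi_n=\chi\mathbf{E}_n$ lies in $H_0(\mathrm{curl},D_2)\cap H(\mathrm{div}_{\epsilon_n},D_2)$, invokes Druet's higher integrability result (Proposition~\ref{Druprop}) to get a uniform $L^q$ bound with $q>2$, combines this with the strong $L^p$ convergence of $\epsilon_n$ to show that $\{\nabla\cdot(\epsilon\psi_n)\}_n$ is compact in $(H^1(D_2))^{\ast}$, and then applies the compactness lemma \cite[Lemma~2.11]{Dru} to conclude $L^2$-compactness. You bypass the higher integrability entirely and instead use the Murat--Tartar div-curl lemma on the pairs $(\mathbf{E}_n,\epsilon_n\mathbf{E}_n)$ and $(\mathbf{H}_n,\mu_n\mathbf{H}_n)$, expanding the weighted quadratic form and using the lower ellipticity bound $\lambda_0$ to squeeze out strong $L^2_{loc}$ convergence. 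Both arguments hinge on the same structural facts (curls bounded in $L^2$, hence compact in $H^{-1}_{loc}$, together with $\nabla\cdot(\epsilon_n\mathbf{E}_n)=\nabla\cdot(\mu_n\mathbf{H}_n)=0$, which requires $k_n>0$), and indeed Druet's lemma is itself a div-curl--type compactness statement; but your version is more elementary and self-contained, needing no $L^q$ estimates and no boundary or domain regularity beyond what Rellich for $H^1_0$ gives, while the paper's version yields local $L^q$ bounds, $q>2$, as a byproduct. Two small points you flag but should carry out explicitly: the classical div-curl lemma is stated for real fields, so split $\mathbf{E}_n$ into real and imaginary parts and use that $\epsilon_n$ is real and symmetric, which makes the cross terms cancel and makes $\epsilon_n(\mathbf{E}_n-\mathbf{E})\cdot\overline{(\mathbf{E}_n-\mathbf{E})}$ real and bounded below by $\lambda_0\|\mathbf{E}_n-\mathbf{E}\|^2$; and the identification $\epsilon_n\mathbf{E}_n\rightharpoonup\epsilon\mathbf{E}$ uses the dominated convergence of $\epsilon_n\phi$ for fixed $\phi\in L^2$, exactly as you indicate. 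With these details in place the argument is complete.
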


\begin{proof}
Obviously $\epsilon$, $\mu\in L^{\infty}(\mathbb{R}^3,M^{3\times 3}_{sym}(\mathbb{R}))$ and satisfy \eqref{ellipticity2} with constants $\lambda_0$ and $\lambda_1$.

By the Maxwell equations, for some constant $C_1$ we have that
$$\|\mathbf{E}_n\|_{H(\mathrm{curl},D)}+\|\mathbf{H}_n\|_{L^2(\mathrm{curl},D)}\leq C_1\quad\text{for any }n\in\mathbb{N}.$$

Therefore, we can assume, by 
passing to a subsequence that we do not relabel, that there exists $(\mathbf{E},\mathbf{H})\in H(\mathrm{curl},D)^2$ and $k$, $0\leq k\leq\overline{k}$,
such that, as $n\to\infty$, $k_n\to k$ and, in $D$, we have
$(\mathbf{E}_n,\mathbf{H}_n)\rightharpoonup
(\mathbf{E},\mathbf{H})$ weakly in $L^2$ and $(\nabla\wedge\mathbf{E}_n,\nabla\wedge\mathbf{H}_n)\rightharpoonup
(\nabla\wedge\mathbf{E},\nabla\wedge\mathbf{H})$ weakly in $L^2$. 
It easily follows that $(\mathbf{E},\mathbf{H})$ solves
\eqref{limiteq}.

The difficult part is to prove that, actually, the convergence is strong in $L^2$, at least locally. Let us the fix $D_1\subset \overline{D}_1\subset D_2\subset \overline{D}_2\subset D$, with $D_1$ and $D_2$ open sets and $D_2$ with Lipschitz boundary.
We consider an auxiliary function $\chi\in C_0^{\infty}(D_2)$ such that
$0\leq \chi\leq 1$ everywhere and $\chi\equiv 1$ in a neighbourhood of $\overline{D_1}$.

We obviously have
$$\mathbf{E}_n\in H(\mathrm{curl},D)\cap H(\mathrm{div}_{\epsilon_n},D)\quad\text{and}\quad
\mathbf{H}_n\in H(\mathrm{curl},D)\cap H(\mathrm{div}_{\mu_n},D)$$
with a corresponding norm bounded by $C$, $\lambda_0$, $\lambda_1$, and  $\overline{k}$ only.
It is not difficult to show that 
\begin{multline*}
\chi\mathbf{E}_n\in H_0(\mathrm{curl},D_2)\cap H(\mathrm{div}_{\epsilon_n},D_2)\quad\text{and}\\
\chi\mathbf{H}_n\in H_0(\mathrm{curl},D_2)\cap H(\mathrm{div}_{\mu_n},D_2)
\end{multline*}
with a corresponding norm bounded by $C$, $\lambda_0$, $\lambda_1$, $\overline{k}$, and the two sets $D_1$ and $D_2$ only.
We consider only the case of the electric fields, the one for the magnetic fields being completely analogous. We call $\psi_n=\chi\mathbf{E}_n$.
We need to investigate the properties of $\nabla\cdot (\epsilon\psi_n)$.
First of all, we notice that, by Proposition~\ref{Druprop}, there exists $q>2$ such that
$\psi_n$, $n\in\mathbb{N}$, is uniformly bounded in $L^q(D_2,\mathbb{C}^3)$.
We have that, for any $\varphi\in C^{\infty}_0(D_2)$
$$\langle \nabla\cdot (\epsilon\psi_n),\varphi\rangle_{D_2}=-\langle \epsilon\psi_n,\nabla\varphi
\rangle_{D_2}=\langle (\epsilon_n-\epsilon)\psi_n,\nabla\varphi
\rangle_{D_2}
-\langle \epsilon_n\psi_n,\nabla\varphi
\rangle_{D_2}.$$
We know that $\{\nabla\cdot(\epsilon_n\psi_n)\}_{n\in\mathbb{N}}$ is bounded in $L^2(D_2)$, therefore it is compact in
$(H^1(D_2))^{\ast}$. We also have that, in $D_2$, $(\epsilon_n-\epsilon)\to 0$ strongly in $L^p$ for
any $p$, $1\leq p<+\infty$, and $\psi_n$ is uniformly bounded in $L^q(D_2,\mathbb{C}^3)$, for some $q>2$. Hence we have that
$(\epsilon_n-\epsilon)\psi_n\to 0$ strongly in $L^2(D_2,\mathbb{C}^3)$. Therefore
$\nabla\cdot((\epsilon_n-\epsilon)\psi_n)$ converges to $0$, as $n\to\infty$, in $(H^1(D_2))^{\ast}$.
We conclude that $\{\nabla\cdot (\epsilon\psi_n)\}_{n\in\mathbb{N}}$ is compact in 
$(H^1(D_2))^{\ast}$. By \cite[Lemma~2.11]{Dru} we can conclude that
$\{\psi_n\}_{n\in\mathbb{N}}$ is compact in $L^2(D_2,\mathbb{C}^3)$.

We obtain that $\{\mathbf{E}_n\}_{n\in\mathbb{N}}$ and $\{\mathbf{H}_n\}_{n\in\mathbb{N}}$ are bounded in $L^q(D_1,\mathbb{C}^3)$, for some $q>2$, and compact in $L^2(D_1,\mathbb{C}^3)$. Since in $D_1$ they converge weakly in $L^2$ to 
$\mathbf{E}$ and $\mathbf{H}$, respectively, it is not difficult to conclude that they
actually converge strongly in $L^2$. 
Using the Maxwell equations, we also obtain that $\{\nabla\wedge\mathbf{E}_n\}_{n\in\mathbb{N}}$ and $\{\nabla\wedge\mathbf{H}_n\}_{n\in\mathbb{N}}$ are bounded in $L^q(D_1,\mathbb{C}^3)$, for some $q>2$, and compact in $L^2(D_1,\mathbb{C}^3)$. 

The proof is complete.
\end{proof}

We are in the position to prove the following general stability result and uniform bounds for the direct electromagnetic scattering problem. These are the main results of the paper. We begin with the uniform bounds.

\begin{thm}\label{mainstabthm}
Let $\hat{\mathcal{B}}_{scat}=\hat{\mathcal{B}}_{scat}(r,L,R_0,r_1,\tilde{C},\omega,\delta)$
as in Definition~\textnormal{\ref{scatdefin2}}.
Let $\mathcal{N}=\mathcal{N}(r,L,R_0,\omega,\lambda_0,\lambda_1)$ as in Definition~\textnormal{\ref{coeffdefn}}.

For any $\Sigma\in\hat{\mathcal{B}}_{scat}$, for any $(\epsilon,\mu)\in \mathcal{N}$, and for any $(\mathbf{E}^i,\mathbf{H}^i)$ as in \eqref{eq:planewave} with $\underline{k}\leq k\leq\overline{k}$,
$\|\mathbf{p}\|\leq 1$, and $\mathbf{d}\in \mathbb{S}^2$, let $(\mathbf{E},\mathbf{H})$ be the solution to \eqref{directscattering} and
$(\mathbf{E}^s,\mathbf{H}^s)$ be the corresponding scattering fields.

Then there exists a constant $C>0$, depending on $r$, $L$, $R_0$, $r_1$, $\tilde{C}$,
$\omega$, $\delta$, $\lambda_0$, $\lambda_1$, $\underline{k}$, and $\overline{k}$ only, such that
\begin{equation}\label{EMbound}
\|\mathbf{E}\|_{L^2(B_{R_0+1}\backslash \Sigma,\mathbb{C}^3)}+\|\mathbf{H}\|_{L^2(R_0+1\backslash \Sigma,\mathbb{C}^3)}\leq C.
\end{equation}

Then for any $R\geq R_0+1$ there exists a constant $E$, $E$ depending on
the constant $C$ in \eqref{EMbound}, $R_0$, $R$, and $\overline{k}$ only, such that
\begin{equation}\label{uniformbound}
\|\mathbf{E}\|_{L^2(B_R\backslash \Sigma,\mathbb{C}^3)}+\|\mathbf{H}\|_{L^2(B_R\backslash \Sigma,\mathbb{C}^3)}\leq E.
\end{equation}

Furthermore,
there exists a constant $E_1$, depending on the constant  $C$ in \eqref{EMbound}, $\overline{k}$, and $R_0$ only,
such that for any $\mathbf{x}\in\mathbb{R}^3$ we have
\begin{equation}\label{udecayestimate}
\|\mathbf{E}^s(\mathbf{x})\|+\|\mathbf{H}^s(\mathbf{x})\|\leq E_1\|\mathbf{x}\|^{-1}\quad\text{if }\|\mathbf{x}\|\geq R_0+1/2.
\end{equation}
\end{thm}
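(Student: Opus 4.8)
The plan is to prove the basic bound \eqref{EMbound} by a compactness–contradiction argument, and then to deduce \eqref{uniformbound} and \eqref{udecayestimate} from it by exterior estimates for radiating solutions. Note first that for every admissible $\Sigma$ and $(\epsilon,\mu)$ the solution $(\mathbf{E},\mathbf{H})$ exists and is unique: the Rellich and Maxwell compactness properties hold for $B_{R_0+1}\setminus\Sigma$ by Proposition~\ref{highintprop} (or Proposition~\ref{suffMCP}) and the unique continuation property holds by Lemma~\ref{UCPforN}, so Theorem~\ref{mainthmex+uniq} applies. Suppose \eqref{EMbound} were false. Then there would exist $\Sigma_n\in\hat{\mathcal{B}}_{scat}$, $(\epsilon_n,\mu_n)\in\mathcal{N}$, wavenumbers $k_n\in[\underline{k},\overline{k}]$ and incident plane waves as in \eqref{eq:planewave} with $\|\mathbf{p}_n\|\leq 1$, $\mathbf{d}_n\in\mathbb{S}^2$, whose solutions satisfy $t_n=\|\mathbf{E}_n\|_{L^2(B_{R_0+1}\setminus\Sigma_n,\mathbb{C}^3)}+\|\mathbf{H}_n\|_{L^2(B_{R_0+1}\setminus\Sigma_n,\mathbb{C}^3)}\to+\infty$. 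I would normalise, setting $(\tilde{\mathbf{E}}_n,\tilde{\mathbf{H}}_n)=(\mathbf{E}_n,\mathbf{H}_n)/t_n$, so that the left-hand side of \eqref{EMbound} equals $1$ for the rescaled fields, which solve the same Maxwell system with the same PEC boundary condition, while their scattered parts $\tilde{\mathbf{E}}^s_n=\tilde{\mathbf{E}}_n-\mathbf{E}^i_n/t_n$ remain radiating. Since the plane waves have $L^2(B_{R_0+1})$ norms bounded in terms of $R_0$ and $\overline{k}$ only, the rescaled incident fields $\mathbf{E}^i_n/t_n$, $\mathbf{H}^i_n/t_n$ tend to zero.

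Next I would extract limits. By Lemmas~\ref{compactHausdorff} and \ref{coeffcompactlemma}, after passing to a subsequence, $\Sigma_n\to\Sigma$ in the Hausdorff distance with $\Sigma\in\hat{\mathcal{B}}_{scat}$, $k_n\to k$, and $(\epsilon_n,\mu_n)\to(\epsilon,\mu)\in\mathcal{N}$ almost everywhere. Every ball compactly contained in $G=\mathbb{R}^3\setminus\Sigma$ lies in $G_n=\mathbb{R}^3\setminus\Sigma_n$ for $n$ large, so Lemma~\ref{localconvsollemma} yields, along a further subsequence, strong convergence $(\tilde{\mathbf{E}}_n,\tilde{\mathbf{H}}_n)\to(\tilde{\mathbf{E}},\tilde{\mathbf{H}})$ in $H_{loc}(\mathrm{curl},G)$, hence almost everywhere, to a solution of the Maxwell system with coefficients $(\epsilon,\mu)$ and wavenumber $k$ in $G$. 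The decisive quantitative ingredient is Proposition~\ref{highintprop}: since $\hat{\mathcal{B}}_{scat}\subset\hat{\mathcal{B}}$, each $\Sigma_n$ satisfies the Maxwell higher integrability property with constants depending on the class only, so Corollary~\ref{highintcorollary} gives $\|\tilde{\mathbf{E}}_n\|_{L^s}+\|\tilde{\mathbf{H}}_n\|_{L^s}\leq C$ on $B_{R_0+1}\setminus\Sigma_n$ with $s>2$, the boundary traces on $\partial B_{R_0+1}$ being controlled by interior estimates because outside $B_{R_0}$ the fields solve the constant-coefficient system. This uniform $L^s$ bound provides equi-integrability; together with the almost everywhere convergence and $|\Sigma_n\triangle\Sigma|\to 0$ it upgrades the convergence to strong convergence in $L^2(B_{R_0+1}\setminus\Sigma,\mathbb{C}^3)$, whence $\|\tilde{\mathbf{E}}\|_{L^2(B_{R_0+1}\setminus\Sigma)}+\|\tilde{\mathbf{H}}\|_{L^2(B_{R_0+1}\setminus\Sigma)}=1$.

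Then I would identify the limit as a solution of the homogeneous scattering problem and invoke uniqueness. The interior equations pass to the limit by the strong $H_{loc}(\mathrm{curl})$ convergence and $(\epsilon_n,\mu_n)\to(\epsilon,\mu)$ a.e. The condition $\tilde{\mathbf{E}}\in H_0(\mathrm{curl},G)$ follows from the Mosco convergence of Theorem~\ref{Moscoconv} with $K_n=\Sigma_n$, $K=\Sigma$: the zero extensions of $\tilde{\mathbf{E}}_n$ are global $H(\mathrm{curl},D)$ fields on a large ball $D$ whose weak limit is again in $H(\mathrm{curl},D)$ and which, by condition \ref{Mosco1}, vanishes on $\Sigma$, and a global $H(\mathrm{curl})$ field vanishing on $\Sigma$ is precisely the zero extension of an element of $H_0(\mathrm{curl},G)$. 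Finally, outside $B_{R_0}$ the radiating parts $\tilde{\mathbf{E}}^s_n$ solve the constant-coefficient system with $L^2$ norms bounded near $\partial B_{R_0}$, so a uniform exterior estimate for radiating solutions (the Stratton--Chu representation and the far-field expansion \eqref{eq:farfield}, see \cite{CK}) shows the limit is radiating with vanishing incident part. Hence $(\tilde{\mathbf{E}},\tilde{\mathbf{H}})$ solves \eqref{directscattering} with zero incident field; by the uniqueness in Theorem~\ref{mainthmex+uniq} it must vanish, contradicting the normalisation. The main obstacle is exactly the passage to the limit up to the varying, possibly screen-like, boundary $\partial\Sigma_n$, where the interplay between the Mosco convergence of $H(\mathrm{curl})$ spaces and the uniform higher integrability is indispensable in order to secure strong $L^2$ convergence and the correct boundary condition in the limit.

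It remains to deduce \eqref{uniformbound} and \eqref{udecayestimate} from \eqref{EMbound}. In $\mathbb{R}^3\setminus\overline{B_{R_0}}$ one has $\epsilon=\mu=I_3$, so $(\mathbf{E}^s,\mathbf{H}^s)$ is a radiating solution of \eqref{eq:Maxwell2bis} there, with $L^2$ norm on the annulus $B_{R_0+1}\setminus B_{R_0}$ controlled by the constant $C$ of \eqref{EMbound} and by the bounded incident field. The Stratton--Chu representation expresses $(\mathbf{E}^s,\mathbf{H}^s)$ outside $B_{R_0+1/2}$ through Cauchy data on an intermediate sphere, which interior elliptic estimates for the Helmholtz components (Lemma~\ref{lem:MaxwHelm}) bound by that annular norm; since the fundamental solution decays like $\|\mathbf{x}\|^{-1}$, this yields the pointwise estimate \eqref{udecayestimate} with $E_1$ depending only on $C$, $R_0$ and $\overline{k}$. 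Integrating this decay over $B_R\setminus B_{R_0+1}$ gives an $L^2$ bound growing at most linearly in $R$, while the plane wave contributes a term bounded in terms of $R$ and $\overline{k}$; adding the bound \eqref{EMbound} on $B_{R_0+1}\setminus\Sigma$ produces \eqref{uniformbound} with $E$ depending on $C$, $R_0$, $R$ and $\overline{k}$ only.
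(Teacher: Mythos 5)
Your overall strategy (normalisation by contradiction, compactness of the classes of scatterers and coefficients, local strong convergence via Lemma~\ref{localconvsollemma}, uniform higher integrability to upgrade to strong $L^2$ convergence on $B_{R_0+1}$, identification of the limit and uniqueness) is the same as the paper's, and most steps are carried out correctly. However, there is a genuine gap at the single most delicate point: the derivation of the boundary condition $\tilde{\mathbf{E}}\in H_0(\mathrm{curl},\mathbb{R}^3\setminus\Sigma)$ for the limit field. You argue that the zero extensions of $\tilde{\mathbf{E}}_n$ are global $H(\mathrm{curl})$ fields, that their weak limit vanishes on $\Sigma$ by the first Mosco condition, and that ``a global $H(\mathrm{curl})$ field vanishing on $\Sigma$ is precisely the zero extension of an element of $H_0(\mathrm{curl},G)$''. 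This last equivalence is false for screen-type scatterers, which the class $\hat{\mathcal{B}}_{scat}$ is specifically designed to include: if $|\Sigma|=0$, the condition ``vanishes on $\Sigma$'' is vacuous, and any smooth nonvanishing field on a large ball is a global $H(\mathrm{curl})$ field vanishing a.e.\ on $\Sigma$ without satisfying $\nu\wedge u=0$ on the screen. The first Mosco condition only yields membership in $H(\mathrm{curl},D\setminus\Sigma)$ (viewed as a subspace of $L^2(D,\mathbb{C}^6)$), never in $H_0(\mathrm{curl},D\setminus\Sigma)$.

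The correct argument, and the one the paper uses, requires the second Mosco condition (the recovery-sequence property), which is the hard half of Theorem~\ref{Moscoconv}: given a test function $\phi\in H(\mathrm{curl},\mathbb{R}^3\setminus\Sigma)$ with bounded support, one produces $\phi_n\in H(\mathrm{curl},\mathbb{R}^3\setminus\Sigma_n)$ converging strongly to $\phi$, writes $\langle\nabla\wedge\tilde{\mathbf{E}}_n,\phi_n\rangle-\langle\nabla\wedge\phi_n,\tilde{\mathbf{E}}_n\rangle=0$ (valid because $\tilde{\mathbf{E}}_n\in H_0(\mathrm{curl},\mathbb{R}^3\setminus\Sigma_n)$), and passes to the limit by weak-times-strong convergence to conclude that the same identity holds for $\tilde{\mathbf{E}}$ and $\phi$. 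Without this step the limit field need not satisfy the PEC condition on screen portions of $\partial\Sigma$, so it need not solve the homogeneous scattering problem and the uniqueness theorem cannot be invoked. The remainder of your proof --- the uniform $L^s$ bound via Proposition~\ref{highintprop} and Corollary~\ref{highintcorollary}, the splitting near and away from $\partial\Sigma$ to obtain strong $L^2$ convergence, and the deduction of \eqref{uniformbound} and \eqref{udecayestimate} from \eqref{EMbound} by Stratton--Chu representations in the homogeneous exterior --- is in line with the paper.
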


\begin{proof}
We argue by contradiction. Let us assume that there exist, for any $n\in\mathbb{N}$,
$\Sigma_n\in\hat{\mathcal{B}}_{scat}$, $(\epsilon_n,\mu_n)\in \mathcal{N}$, and $(\mathbf{E}^i_n,\mathbf{H}^i_n)$ as in \eqref{eq:planewave} with $\underline{k}\leq k_n\leq\overline{k}$,
$\|\mathbf{p}_n\|\leq 1$, and $\mathbf{d}_n\in \mathbb{S}^2$, such that $(\mathbf{E}_n,\mathbf{H}_n)$, the solution to \eqref{directscattering} with these data,
satisfies
$$
\|\mathbf{E}_n\|_{L^2(B_{R_0+1}\backslash \Sigma_n,\mathbb{C}^3)}+\|\mathbf{H}_n\|_{L^2(R_0+1\backslash \Sigma_n,\mathbb{C}^3)}=b_n\geq n.
$$
We call $(\tilde{\mathbf{E}}_n,\tilde{\mathbf{H}}_n)=(\mathbf{E}_n/b_n,\mathbf{H_n}/b_n)$, $n\in\mathbb{N}$, and, by extending them to $0$ in $\Sigma_n$, we have that
\begin{equation}\label{unbounded}
\|\tilde{\mathbf{E}}_n\|_{L^2(B_{R_0+1},\mathbb{C}^3)}+\|\tilde{\mathbf{H}}_n\|_{L^2(B_{R_0+1},\mathbb{C}^3)}=1\quad\text{for any }n\in\mathbb{N}.
\end{equation}

Up to a subsequence, we have that 
$\Sigma_n$ converges, in the Hausdorff distance, to a scatterer $\Sigma\in\hat{\mathcal{B}}_{scat}$ and such that 
$B_{R_0+1}\backslash \Sigma$ satisfies the RCP and MCP, see Lemma~\ref{compactHausdorff} and Proposition~\ref{highintprop}. Moreover, by Lemma~\ref{coeffcompactlemma}, we have that $(\epsilon_n,\mu_n)$ converges in the whole $\mathbb{R}^3$ to
$(\epsilon,\mu)\in\mathcal{N}$ in $L^p$ for any $1\leq p<+\infty$. We also assume that
$k_n\to k$, with $\underline{k}\leq k\leq \overline{k}$. In particular we notice that $k>0$. 

We also have, again up to subsequences, that
$(\tilde{\mathbf{E}}_n,\tilde{\mathbf{H}}_n)$ converges weakly in $L^2(B_{R_0+1},\mathbb{C}^6)$ to $(\tilde{\mathbf{E}},\tilde{\mathbf{H}})$.
By the first property of Mosco convergence and Theorem~\ref{Moscoconv}, we easily conclude that
$(\tilde{\mathbf{E}},\tilde{\mathbf{H}})\in H(\mathrm{curl},B_{R_0+1}\backslash \Sigma)^2$, where again these functions are extended to $0$ in $\Sigma$.

It is not difficult to show that $(\tilde{\mathbf{E}},\tilde{\mathbf{H}})$ solve \eqref{eq:Maxwell2} in $B_{R_0+1}\backslash \Sigma$. Moreover,
by standard regularity estimates in $B_{R_0+1}\backslash \overline{B_{R_0}}$,
the Stratton-Chu formulas, and the fact that $(\mathbf{E}^i_n,\mathbf{H}^i_n)$ are uniformly bounded,
we can also deduce that $(\tilde{\mathbf{E}}_n,\tilde{\mathbf{H}}_n)$ converges in $L^2_{loc}(\mathbb{R}^3\backslash \overline{B_{R_0}},\mathbb{C}^6)$ to $(\tilde{\mathbf{E}},\tilde{\mathbf{H}})$, with 
$(\tilde{\mathbf{E}},\tilde{\mathbf{H}})$ being an outgoing solution to the Maxwell equations \eqref{eq:Maxwell2bis} in $\mathbb{R}^3\backslash \overline{B_{R_0}}$.

We know that $\tilde{\mathbf{E}}_n\in H_0(\mathrm{curl}, \mathbb{R}^3\backslash \Sigma_n)$. Then, let $\phi\in H(\mathrm{curl}, \mathbb{R}^3\backslash \Sigma)$, with bounded support. By the second property of the Mosco convergence and Theorem~\ref{Moscoconv}, 
we can find $\phi_n\in H(\mathrm{curl}, \mathbb{R}^3\backslash \Sigma_n)$, with bounded support, such that, as $n\to\infty$,
$(\phi_n,\nabla\wedge\phi_n)\to (\phi,\nabla\wedge\phi)$ in $L^2(\mathbb{R}^3,\mathbb{C}^6)$, with the usual convention of extending the functions to $0$ in $\Sigma_n$ and $\Sigma$, respectively.
For any $n\in\mathbb{N}$, we have
$$\langle \nabla\wedge \tilde{\mathbf{E}}_n,\phi_n\rangle_{\mathbb{R}^3}-
\langle \nabla\wedge \phi_n,\tilde{\mathbf{E}}_n\rangle_{\mathbb{R}^3}=0.$$
On the other hand, since
$$\langle \nabla\wedge \tilde{\mathbf{E}},\phi\rangle_{\mathbb{R}^3}-
\langle \nabla\wedge \phi,\tilde{\mathbf{E}}\rangle_{\mathbb{R}^3}=\lim_n\left(\langle \nabla\wedge \tilde{\mathbf{E}}_n,\phi_n\rangle_{\mathbb{R}^3}-
\langle \nabla\wedge \phi_n,\tilde{\mathbf{E}}_n\rangle_{\mathbb{R}^3}\right),$$
we can conclude that
$\tilde{\mathbf{E}}\in H_0(\mathrm{curl},\mathbb{R}^3\backslash\Sigma)$.

Then we notice that $(\tilde{\mathbf{E}},\tilde{\mathbf{H}})$ solves the direct scattering problem \eqref{directscattering} with scatterer $\Sigma$, coefficients $\varepsilon$, $\mu$, and incident fields $(\tilde{\mathbf{E}}^i,\tilde{\mathbf{H}}^i)\equiv(0,0)$. 

By the properties of $\Sigma$ and Lemma~\ref{UCPforN},
as well as Theorem~\ref{mainthmex+uniq}, we conclude that $(\tilde{\mathbf{E}},\tilde{\mathbf{H}})\equiv (0,0)$ in $\mathbb{R}^3$.

We wish to prove that $(\tilde{\mathbf{E}}_n,\tilde{\mathbf{H}}_n)$ converges to
$(\tilde{\mathbf{E}},\tilde{\mathbf{H}})$ not only weakly in $L^2(B_{R_0+1},\mathbb{C}^6)$ but also strongly in $L^2(B_{R_0+1},\mathbb{C}^6)$. This would allow us to conclude the proof. In fact, since
$(\tilde{\mathbf{E}},\tilde{\mathbf{H}})\equiv 0$, it would follow that
$$\|\tilde{\mathbf{E}}_n\|_{L^2(B_{R_0+1},\mathbb{C}^3)}+\|\tilde{\mathbf{H}}_n\|_{L^2(B_{R_0+1},\mathbb{C}^3)}\to 0\quad\text{as }n\to\infty$$
and this contradicts \eqref{unbounded}.

To prove the strong convergence in $L^2$, we begin with the following argument.
For any $\delta>0$, there exists $\overline{n}\in\mathbb{N}$ large enough such that for any $n\geq \overline{n}$ we have
$\Sigma_n\subset B_{\delta}(\Sigma)$. Hence, by Lemma~\ref{localconvsollemma}, we deduce that $(\tilde{\mathbf{E}}_n,\tilde{\mathbf{H}}_n)$ converges to
$(\tilde{\mathbf{E}},\tilde{\mathbf{H}})$ strongly in $L^2(K,\mathbb{C}^6)$ for any compact $K\subset \mathbb{R}^3\backslash\Sigma$. We also notice that, by 
Proposition~\ref{highintprop} and
Corollary~\ref{highintcorollary},
we can find constants $p>2$ and $C_1>0$ such that
\begin{equation}\label{pint}
\|\tilde{\mathbf{E}}_n\|_{L^p(B_{R_0+1},\mathbb{C}^3)}+
\|\tilde{\mathbf{H}}_n\|_{L^p(B_{R_0+1},\mathbb{C}^3)}\leq C_1\quad\text{for any }n\in\mathbb{N}.
\end{equation}
Therefore, for any $\delta>0$,
\begin{multline*}
\|\tilde{\mathbf{E}}_n-\tilde{\mathbf{E}}\|_{L^2(B_{R_0+1},\mathbb{C}^3)}\leq
\|\tilde{\mathbf{E}}_n-\tilde{\mathbf{E}}\|_{L^2(B_{\delta}(\Sigma),\mathbb{C}^3)}+\|\tilde{\mathbf{E}}_n-\tilde{\mathbf{E}}\|_{L^2(B_{R_0+1}\backslash B_{\delta}(\Sigma),\mathbb{C}^3)}\\\leq
\|\tilde{\mathbf{E}}_n\|_{L^2(B_{\delta}(\Sigma)\backslash \Sigma_n,\mathbb{C}^3)}+
\|\tilde{\mathbf{E}}\|_{L^2(B_{\delta}(\Sigma)\backslash \Sigma,\mathbb{C}^3)}+
\|\tilde{\mathbf{E}}_n-\tilde{\mathbf{E}}\|_{L^2(B_{R_0+1}\backslash B_{\delta}(\Sigma),\mathbb{C}^3)}.
\end{multline*}
We also have that, for any $\delta>0$, there exists $\tilde{n}\geq\overline{n}$, such that
for any $n\geq \tilde{n}$ we have $B_{\delta}(\Sigma)\backslash \Sigma_n\subset
B_{\delta}(\partial\Sigma)$. Obviously, $B_{\delta}(\Sigma)\backslash \Sigma\subset
B_{\delta}(\partial\Sigma)$.
By H\"older inequality, we conclude that
\begin{multline*}
\|\tilde{\mathbf{E}}_n-\tilde{\mathbf{E}}\|_{L^2(B_{R_0+1},\mathbb{C}^3)}\\\leq
C_1|B_{\delta}(\partial\Sigma)|^{(p-2)/p}
+\|\tilde{\mathbf{E}}\|_{L^2(B_{\delta}(\partial\Sigma),\mathbb{C}^3)}+
\|\tilde{\mathbf{E}}_n-\tilde{\mathbf{E}}\|_{L^2(B_{R_0+1}\backslash B_{\delta}(\Sigma),\mathbb{C}^3)},
\end{multline*}
with $p$ and $C_1$ as in \eqref{pint}
Since $|\partial\Sigma|=0$, for any $\varepsilon>0$ there exists $\delta>0$ such that
$$C_1|B_{\delta}(\partial\Sigma)|^{(p-2)/p}+\|\tilde{\mathbf{E}}\|_{L^2(B_{\delta}(\partial\Sigma),\mathbb{C}^3)}<\varepsilon/2.$$
Fixed such a $\delta$, we can find
$\hat{n}\geq\tilde{n}$ such that, for any $n\geq\hat{n}$,
$$\|\tilde{\mathbf{E}}_n-\tilde{\mathbf{E}}\|_{L^2(B_{R_0+1}\backslash B_{\delta}(\Sigma),\mathbb{C}^3)}\leq \varepsilon/2.$$
Therefore, we obtain that $\tilde{\mathbf{E}}_n\to \tilde{\mathbf{E}}$ in $L^2(B_{R_0+1},\mathbb{C}^3)$ as $n\to\infty$. The same reasoning applies to $\tilde{\mathbf{H}}_n$, thus
the proof of \eqref{EMbound} is concluded.

From \eqref{EMbound},
the estimate \eqref{uniformbound} and
the uniform decay estimate \eqref{udecayestimate} easily follow.
\end{proof}

We conclude this section with the general stability result.

\begin{thm}\label{continuitycor}
Under the assumptions of Theorem~\textnormal{\ref{mainstabthm}},
let us consider, for any $n\in\mathbb{N}$,
$\Sigma_n\in\hat{\mathcal{B}}_{scat}$, $(\epsilon_n,\mu_n)\in \mathcal{N}$, and $(\mathbf{E}^i_n,\mathbf{H}^i_n)$ as in \eqref{eq:planewave} with $\underline{k}\leq k_n\leq\overline{k}$,
$\|\mathbf{p}_n\|\leq 1$, and $\mathbf{d}_n\in \mathbb{S}^2$.
We call $(\mathbf{E}_n,\mathbf{H}_n)$ the solution to \eqref{directscattering} with these data and we extend them, as well as their curls, to $0$ in $\Sigma_n$.

We assume that, as $n\to\infty$,
$\Sigma_n$ converges, in the Hausdorff distance, to a scatterer $\Sigma\in\hat{\mathcal{B}}_{scat}$, and $(\epsilon_n,\mu_n)$ converges in the whole $\mathbb{R}^3$ to
$(\epsilon,\mu)\in\mathcal{N}$ in $L^p$ for any $1\leq p<+\infty$, and 
$k_n\to k$, with $\underline{k}\leq k\leq \overline{k}$, and 
$\mathbf{p}_n\to\mathbf{p}$, with $\|\mathbf{p}\|\leq 1$, and
$\mathbf{d}_n\to\mathbf{d}\in\mathbb{S}^2$. We notice that, under our hypotheses, this is always true up to passing to subsequences.
We set $(\mathbf{E}^i,\mathbf{H}^i)$ as in \eqref{eq:planewave} with $k$,
$\mathbf{p}$, and $\mathbf{d}$, and we call $(\mathbf{E},\mathbf{H})$ the solution to \eqref{directscattering} and we extend them, as well as their curls, to $0$ in $\Sigma$.

Then we obtain that, as $n\to\infty$,
$(\mathbf{E}_n,\mathbf{H}_n)$ converges to $(\mathbf{E},\mathbf{H})$, and
also $(\nabla\wedge\mathbf{E}_n,\nabla\wedge\mathbf{H}_n)$ converges to $(\nabla\wedge\mathbf{E},\nabla\wedge\mathbf{H})$,
in $L^2(B_R,\mathbb{C}^6)$ for any positive $R$, thus in particular $(\mathbf{E}_n,\mathbf{H}_n)$ converges to $(\mathbf{E},\mathbf{H})$
in $H(\mathrm{curl},D)$ for any bounded open set $D$ compactly contained in $G=\mathbb{R}^3\backslash\Sigma$.
\end{thm}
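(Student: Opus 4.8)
The plan is to follow closely the compactness scheme already used in the proof of Theorem~\ref{mainstabthm}, but now tracking the actual limit instead of deriving a contradiction. Since the data are assumed to converge and the conclusion concerns the whole sequence, I would first reduce to a subsequence statement: it suffices to show that every subsequence of $\{(\mathbf{E}_n,\mathbf{H}_n)\}$ has a further subsequence converging to $(\mathbf{E},\mathbf{H})$ in the asserted topologies. Because, by Lemma~\ref{UCPforN} and Theorem~\ref{mainthmex+uniq}, the solution of the limiting problem \eqref{directscattering} with data $\Sigma$, $(\epsilon,\mu)$, $(\mathbf{E}^i,\mathbf{H}^i)$ is unique, the common limit $(\mathbf{E},\mathbf{H})$ does not depend on the subsequence, so the full sequence converges. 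Within a fixed subsequence, Theorem~\ref{mainstabthm} provides a uniform $L^2$ bound for $(\mathbf{E}_n,\mathbf{H}_n)$ on $B_{R_0+1}\backslash\Sigma_n$, hence on every $B_R$ by \eqref{uniformbound}, together with the uniform decay \eqref{udecayestimate}. By weak compactness and a diagonal argument I would extract a further subsequence (not relabelled) with $(\mathbf{E}_n,\mathbf{H}_n)\rightharpoonup(\tilde{\mathbf{E}},\tilde{\mathbf{H}})$ and their curls converging weakly, in $L^2_{loc}$.

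The next step is to identify $(\tilde{\mathbf{E}},\tilde{\mathbf{H}})$ as the solution of the limiting scattering problem. Using the first property of Mosco convergence together with Theorem~\ref{Moscoconv}, the weak limit belongs to $H(\mathrm{curl},\mathbb{R}^3\backslash\Sigma)^2$ (extended by zero in $\Sigma$). Lemma~\ref{localconvsollemma} then upgrades the convergence to strong convergence in $H_{loc}(\mathrm{curl},\mathbb{R}^3\backslash\Sigma)$, which is exactly what is needed to pass to the limit in the coefficient-dependent products: since $\epsilon_n\to\epsilon$ and $\mu_n\to\mu$ in $L^p$ with a uniform $L^\infty$ bound and $k_n\to k$, one obtains that $(\tilde{\mathbf{E}},\tilde{\mathbf{H}})$ solves \eqref{eq:Maxwell2} in $\mathbb{R}^3\backslash\Sigma$. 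The boundary condition $\tilde{\mathbf{E}}\in H_0(\mathrm{curl},\mathbb{R}^3\backslash\Sigma)$ follows from the second property of Mosco convergence exactly as in Theorem~\ref{mainstabthm}: for $\phi\in H(\mathrm{curl},\mathbb{R}^3\backslash\Sigma)$ with bounded support one approximates $\phi$ strongly by $\phi_n\in H(\mathrm{curl},\mathbb{R}^3\backslash\Sigma_n)$, uses $\mathbf{E}_n\in H_0(\mathrm{curl},\mathbb{R}^3\backslash\Sigma_n)$ to kill the duality pairing, and passes to the limit. Writing $(\mathbf{E}_n,\mathbf{H}_n)=(\mathbf{E}^i_n,\mathbf{H}^i_n)+(\mathbf{E}^s_n,\mathbf{H}^s_n)$, where the plane waves $(\mathbf{E}^i_n,\mathbf{H}^i_n)$ converge locally uniformly to $(\mathbf{E}^i,\mathbf{H}^i)$ because $k_n\to k$, $\mathbf{p}_n\to\mathbf{p}$, $\mathbf{d}_n\to\mathbf{d}$, and invoking the uniform decay \eqref{udecayestimate} with the Stratton–Chu representation and standard regularity in the annulus $B_{R_0+1}\backslash\overline{B_{R_0}}$ (as in Theorem~\ref{mainstabthm}), the scattered part of the limit is outgoing. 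Hence $(\tilde{\mathbf{E}},\tilde{\mathbf{H}})$ solves \eqref{directscattering} and, by uniqueness, equals $(\mathbf{E},\mathbf{H})$.

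Finally I would upgrade the weak convergence to strong $L^2$ convergence. The main obstacle is controlling the fields in a neighbourhood of $\partial\Sigma$, where only weak convergence is directly available; this is handled precisely as in Theorem~\ref{mainstabthm}. Proposition~\ref{highintprop} and Corollary~\ref{highintcorollary} yield a uniform $L^p$ bound with $p>2$ for $(\mathbf{E}_n,\mathbf{H}_n)$; splitting $B_{R_0+1}$ into $B_\delta(\partial\Sigma)$ and its complement, the contribution near $\partial\Sigma$ is bounded by $C\,|B_\delta(\partial\Sigma)|^{(p-2)/p}$ via H\"older, which is small since $|\partial\Sigma|=0$ for the class $\tilde{\mathcal{B}}$, while on the complement, a compact subset of $\mathbb{R}^3\backslash\Sigma$, Lemma~\ref{localconvsollemma} gives strong convergence. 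This proves strong $L^2$ convergence on $B_{R_0+1}$, and, combined with the strong local convergence from Lemma~\ref{localconvsollemma} on the annulus $B_R\backslash B_{R_0+1/2}$ (compactly contained in $\mathbb{R}^3\backslash\Sigma$), strong $L^2$ convergence on every $B_R$. Convergence of the curls is then automatic from the Maxwell equations: $\nabla\wedge\mathbf{E}_n=\rmi k_n\mu_n\mathbf{H}_n\to\rmi k\mu\mathbf{H}=\nabla\wedge\mathbf{E}$ strongly in $L^2(B_R)$, and symmetrically for $\nabla\wedge\mathbf{H}_n$, where the product passes to the limit thanks to the strong $L^2$ convergence of $\mathbf{H}_n$ together with the $L^\infty$-bounded, $L^p$-convergent coefficients. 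The passage near $\partial\Sigma$, resting on the uniform higher integrability and the negligibility $|\partial\Sigma|=0$, is the only genuinely delicate point.
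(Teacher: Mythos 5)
Your proposal is correct and follows essentially the same route as the paper: the paper's own proof of Theorem~\ref{continuitycor} is just a two-sentence remark instructing the reader to adapt the compactness/Mosco/higher-integrability argument from the proof of Theorem~\ref{mainstabthm} using the uniform bound \eqref{EMbound}, and your write-up is a faithful, detailed execution of exactly that adaptation (subsequence reduction plus uniqueness of the limit problem, identification of the weak limit via both Mosco properties and Lemma~\ref{localconvsollemma}, and the upgrade to strong $L^2$ convergence via the uniform $L^p$ bound and $|\partial\Sigma|=0$).
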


\begin{proof} 
First of all we notice that, as $n\to\infty$, $(\mathbf{E}^i_n,\mathbf{H}^i_n)$ converges to $(\mathbf{E}^i,\mathbf{H}^i)$ in $H(\mathrm{curl},B_R)$ for any $R>0$.
Using the uniform bound \eqref{EMbound} of Theorem~\ref{mainstabthm}, we can obtain this continuity result by easily adapting the arguments developed in the proof of 
Theorem~\ref{mainstabthm} to study the convergence properties of the sequence
 $\{(\tilde{\mathbf{E}}_n,\tilde{\mathbf{H}}_n)\}_{n\in\mathbb{N}}$.
 \end{proof}

\section{Application to the inverse scattering problem for polyhedral scatterers}\label{polystabsec}

In this section we fix positive constants $r$, $L$, and $R_0$, $0<r_1< r$ and $\tilde{C}>0$.
Let us also fix $\omega:(0,+\infty)\to (0,+\infty)$ and $\delta:(0,+\infty)\to (0,+\infty)$ two  nondecreasing left-continuous functions.
We recall that $\omega(t)\leq t$, that
$\lim_{t\to+\infty}\omega(t)$ is equal to a finite real number which we call $\omega(+\infty)$,
and that $\delta(t)\leq t$ for any $t>0$. We fix a wavenumber $k>0$.
Finally, we fix positive $R_1$ and $\tilde{\rho}$ such that
$R_0+1+\tilde{\rho}\leq R_1$.
We refer to these constants and functions as the \emph{a priori data}.

We introduce suitable classes of \emph{polyhedral scatterers} in $\mathbb{R}^3$.
We define a \emph{cell} as the closure of an open subset of a
plane.
A scatterer $\Sigma$ is \emph{polyhedral} if the boundary of $\Sigma$ is given by a finite union of cells
$\mathcal{C}_j$, $j=1,\ldots,M_1$.

Fixed positive constants $h$ and $L$,
we say that a scatterer $\Sigma$ is \emph{polyhedral with constants} $h$ \emph{and} $L$ if
the boundary of $\Sigma$ is given by a finite union of cells
$\mathcal{C}_j$, $j=1,\ldots,M_1$, where
 each $\mathcal{C}_j$ is the closure of a Lipschitz domain with constants $h$ and $L$ contained in a plane and the cells are pairwise internally disjoint, that is two different cells may intersect only at boundary points.

Let $\hat{\mathcal{B}}_{scat}=\hat{\mathcal{B}}_{scat}(r,L,R_0,r_1,\tilde{C},\omega,\delta)$ be the class of scatterers defined in Definition~\ref{scatdefin2}.
We call $\hat{\mathcal{B}}^h_{scat}=\hat{\mathcal{B}}^h_{scat}(r,L,R_0,r_1,\tilde{C},\omega,\delta)$, for a given size parameter $h>0$, the set of scatterers $\Sigma\in \hat{\mathcal{B}}_{scat}$ such that $\Sigma$ is polyhedral with constants $h$ and $L$.

Analogously, let
$\hat{\mathcal{D}}_{obst}=\hat{\mathcal{D}}_{obst}(r,L,R_0)$ be the class of obstacles also defined in Definition~\ref{scatdefin2}.
Fixed the size parameter $h>0$, we call
$\hat{\mathcal{D}}_{obst}^h=\hat{\mathcal{D}}_{obst}^h(r,L,R_0)$ the set of obstacles $\Sigma\in\hat{\mathcal{D}}_{obst}$ such that $\Sigma$ is polyhedral with constants $h$ and $L$. Notice that in this case any $\Sigma\in \hat{\mathcal{D}}^{h}_{obst}$ is formed by a finite number of polyhedra. In this case, we can drop from the set of a priori data $r_1$, $\tilde{C}$,
$\omega$, and $\delta$.

We call $\eta:(0,1/\rme)\to (0,+\infty)$ the following function
\begin{equation}\label{contmodul}
\eta(s)=\exp(-(\log(-\log s))^{1/2})\quad\text{for any }s,\ 0<s< 1/\rme.
\end{equation}

We consider two different incident
incident fields $(\mathbf{E}^i_j,\mathbf{H}^i_j)$ $j=1,2$, given by 
normalised electromagnetic plane waves with 
incident directions $\mathbf{d}_j\in\mathbb{S}^2$ and polarisation vectors $\mathbf{p}_j\in\mathbb R^3$, $j=1,2$, respectively. We assume that, for any
$j=1,2$, $\|\mathbf{p}_j\|\leq 1$ and that the two vectors
$(\mathbf{d}_j\wedge \mathbf{p}_j)\wedge \mathbf{d}_j$ are linear independent. For example, this is true if
$\mathbf{d}_1=\mathbf{d}_2$ and the three vectors 
$\mathbf{d}_1$, $\mathbf{p}_1$ and $\mathbf{p}_2$ are linearly independent. In order to have a quantitative version of these properties, we call 
\begin{equation}\label{pol+dirprop}
b_j=\|(\mathbf{d}_j\wedge \mathbf{p}_j)\wedge \mathbf{d}_j\|>0,\quad j=1,2
\end{equation}
and
\begin{equation}\label{independency}
b_0=\min_{ \nu \in \mathbb{S}^2} \left\{\max_{j \in \{1,2\}} \|
\nu\wedge [(\mathbf{d}_j\wedge \mathbf{p}_j)\wedge \mathbf{d}_j]\|
\right\}>0.
\end{equation}
We notice that $b_0>0$ since 
$\max_{j \in \{1,2\}} \|
\nu\wedge [(\mathbf{d}_j\wedge \mathbf{p}_j)\wedge \mathbf{d}_j]\|$
is a continuous function of $\nu\in \mathbb{S}^2$
which never vanishes.

We assume that the medium is homogeneous and isotropic outside the scatterer, that is we assume $\epsilon=\mu=I_3$ everywhere.
We also fix a point $\mathbf{x}_0\in\mathbb{R}^3$ such that
$R_0+1+\tilde{\rho}\leq \|\mathbf{x}_0\|\leq R_1$.

We recall that for any two scatterers $\Sigma$ and $\Sigma'$ belonging to $\hat{\mathcal{B}}_{scat}$ or to $\hat{\mathcal{D}}_{obst}$,
we measure their distance by one of the following quantities
\begin{equation}\label{ddefin}
d=\max\left\{\sup_{x\in\partial \Sigma\backslash\Sigma'}\mathrm{dist}(x,\partial\Sigma'),
\sup_{x\in\partial \Sigma'\backslash\Sigma}\mathrm{dist}(x,\partial\Sigma)\right\}
\end{equation}
and
\begin{equation}\label{dddefin}
\hat{d}=d_H(\partial\Sigma,\partial\Sigma')\quad\text{and}\quad\tilde{d}=d_H(\Sigma,\Sigma').
\end{equation}
Here $d_H$ denotes the Hausdorff distance. In \cite[Section~2]{LPRX} there is a detailed analysis of the relationships between these quantities. In particular, for a positive constant $C_1$ depending on $\hat{\mathcal{B}}_{scat}$ only, we have
\begin{equation}\label{distancesopposite}
C_1 d\leq C_1\hat{d}\leq \tilde{d}\leq \delta^{-1}(d)\leq \delta^{-1}(\hat{d}).
\end{equation}
where $\delta^{-1}:(0,+\infty)\to(0,+\infty)$ is a nondecreasing right-continuous function defined in the following way
\begin{equation}\label{delta-1def}
\delta^{-1}(t)=\min\{\sup\{s:\ \delta(s)\leq t\},2R_0\} \quad\text{for any }t>0.
\end{equation}
Let us also notice that in the case of the class $\hat{\mathcal{D}}_{obst}$, 
by \cite[Corollary~2.4]{LPRX}, we can replace
\eqref{distancesopposite} by
\begin{equation}\label{distancesoppositebis}
C_1 d\leq C_1\hat{d}\leq \tilde{d}\leq C_2d\leq C_2\hat{d},
\end{equation}
with $C_1$ and $C_2$ depending on the class $\hat{\mathcal{D}}_{obst}$ only.

The estimates obtained in Theorem~\ref{mainstabthm}, in particular the uniform bound 
\eqref{uniformbound} for $R=R_0+3$ and 
and the uniform decay \eqref{udecayestimate},
are the crucial preliminary results that are required to extend the stability results obtained in the acoustic case, in \cite{Ron2} for sound-soft scatterers and in \cite{LPRX} for sound-hard scatterers, to the electromagnetic case.

Let $\Sigma$, $\Sigma'\in \hat{\mathcal{B}}_{scat}$ be two scatterers and $k>0$. We recall that $\epsilon$ and $\mu$ are identically equal to $I_3$.
Given an incident field $(\mathbf{E}^i,\mathbf{H}^i)$, a normalised electromagnetic plane wave with incident direction $\mathbf{d}\in \mathbb{S}^2$ and polarisation $\mathbf{p}$, with $0<\|\mathbf{p}\|\leq 1$, we call $(\mathbf{E},\mathbf{H})$ the solution to the direct scattering problem \eqref{directscattering}, $(\mathbf{E}^s,\mathbf{H}^s)$ the corresponding scattering fields, and $(\mathbf{E}_{\infty},\mathbf{H}_{\infty})$ their  far-field patterns. We call $(\mathbf{E}',\mathbf{H}')$
the solution to \eqref{directscattering} with $\Sigma$ replaced by $\Sigma'$, and analogously we denote
$((\mathbf{E}^s)',(\mathbf{H}^s)')$ the corresponding scattering fields, and $(\mathbf{E}'_{\infty},\mathbf{H}'_{\infty})$ their  far-field patterns.

About the measurements to be performed for our inverse problem, there are several possibilities.
In our stability results, we use what we refer to as the \emph{near-field error with limited aperture} given by 
\begin{equation}\label{errornear}
\|\mathbf{E}-\mathbf{E}'\|_{L^2(B_{\tilde{\rho}}(\mathbf{x}_0),\mathbb{C}^3)}\leq \varepsilon.
\end{equation}
It is also possible to consider the so-called \emph{far-field error} which is the one usually employed in scattering applications and that is defined as
\begin{equation}\label{errorfar}
\|\mathbf{E}_{\infty}-\mathbf{E}'_{\infty}\|_{L^2(\mathbb{S}^{N-1},\mathbb{C}^3)}\leq \varepsilon_0.
\end{equation}

We recall that 
there exist positive constants $\tilde{\varepsilon}_0< 1/\rme$ and $C_1$, depending on 
$E$ as in \eqref{uniformbound} for $R=R_0+3$, $R_0$, $\tilde{\rho}$, $R_1$, and $k$ only,
such that if $0<\varepsilon_0\leq\tilde{\varepsilon}_0$ then
\begin{equation}\label{fartonear2}
\varepsilon\leq \eta_1(\varepsilon_0)=\exp\left(-C_1(-\log\varepsilon_0)^{1/2}\right).
\end{equation}

\begin{rem}\label{errormagnetic}
We wish to notice here that, without any loss of generality, we may also consider 
errors on the magnetic fields, that is define 
\begin{equation}\label{errornear3}
\|\mathbf{H}-\mathbf{H}'\|_{L^2(B_{\tilde{\rho}}(\mathbf{x}_0),\mathbb{C}^3)}\leq \varepsilon.
\end{equation}
and
\begin{equation}\label{errorfar3}
\|\mathbf{H}_{\infty}-\mathbf{H}'_{\infty}\|_{L^2(\mathbb{S}^{N-1},\mathbb{C}^3)}\leq \varepsilon_0.
\end{equation}
Clearly \eqref{fartonear2} holds true in this case as well. More importantly,  all the stability results stated in this section still hold if we replace the errors related to the electric fields with the ones related to the magnetic fields.
\end{rem}

\subsection{Statement of the stability results}\label{finalsub0}

The following stability result holds for the determination of polyhedral scatterers
by two suitable electromagnetic measurements.

\begin{thm}\label{mainteoN}
Fix $h>0$.
Let $\Sigma$, $\Sigma'$ belong to $\hat{\mathcal{B}}^h_{scat}$ and let $d$ be defined as in \eqref{ddefin}.
For any $j=1,2$, let $(\mathbf{E}^i_j,\mathbf{H}^i_j)$ be
normalised electromagnetic plane waves with 
incident directions $\mathbf{d}_j\in\mathbb{S}^2$ and polarisation vectors $\mathbf{p}_j\in\mathbb R^3$, with $0<\|\mathbf{p}_j\|\leq 1$ and such that the two vectors
$(\mathbf{d}_j\wedge \mathbf{p}_j)\wedge \mathbf{d}_j$, $j=1,2$, are linear independent. Let $b_0$ be defined as in \eqref{independency}.

For any $j=1,2$, let
$(\mathbf{E}_j,\mathbf{H}_j)$ be the solution to
\begin{equation}\label{uscateq}
\left\{\begin{array}{ll}
\nabla\wedge \mathbf{E}_j-\rmi k\ \mathbf{H}_j=0
&\text{in }G=\mathbb{R}^3\backslash \Sigma\\
\nabla\wedge \mathbf{H}_j+\rmi k \mathbf{E}_j=0
&\text{in }G\\
(\mathbf{E}_j,\mathbf{H}_j)=(\mathbf{E}^i_j,\mathbf{H}^i_j)+(\mathbf{E}^s_j,\mathbf{H}^s_j)
&\text{in }G\\
\nu\wedge \mathbf{E}_j=0&\text{on }\partial G\\
\lim_{r\to+\infty}r\left(\frac{\mathbf{x}}{\|\mathbf{x}\|}\wedge \mathbf{H}^s_j(\mathbf{x})+
\mathbf{E}^s_j(\mathbf{x})\right)=0 & r=\|\mathbf{x}\|.
\end{array}
\right.
\end{equation}
and $(\mathbf{E}'_j,\mathbf{H}'_j)$ be the solution to the same problem with $\Sigma$ replaced by $\Sigma'$.

If
\begin{equation}\label{error2}
\max_{j=1,2}\|\mathbf{E}_j-\mathbf{E}'_j\|_{L^2(B_{\tilde{\rho}}(\mathbf{x}_0),\mathbb{C}^3)}\leq\varepsilon
\end{equation}
for some $\varepsilon\leq 1/(2\rme)$,
then for some positive constant $C$ depending on the a priori data and on $b_0$ only, and not on $h$, we have
\begin{equation}\label{penultima}
\min\{d,h\}\leq 2\rme R_0(\eta(\varepsilon))^C.
\end{equation}
Therefore,
\begin{equation}\label{ultima1}
d\leq 2\rme R_0(\eta(\varepsilon))^C,
\end{equation}
provided $\varepsilon\leq \hat{\varepsilon}(h)$ where
\begin{equation}\label{epsilon0defin}
\hat{\varepsilon}(h)=
\min\bigg\{1/(2\rme),\eta^{-1}\bigg(\Big(\frac{h}{2\rme R_0}\Big)^{1/C}\bigg)\bigg\}.
\end{equation}
\end{thm}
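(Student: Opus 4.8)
The plan is to follow the quantitative reflection-and-path scheme employed in the acoustic case in \cite{LPRX}, replacing the Helmholtz reflection by the Maxwell reflection principle of Subsection~\ref{prop-subsec} and using the uniform bounds of Theorem~\ref{mainstabthm} as the key quantitative input. Since here $\epsilon=\mu=I_3$, each Cartesian component of $\mathbf{E}_j-\mathbf{E}'_j$ solves the Helmholtz equation in the unbounded connected component $G^{\ast}$ of $\mathbb{R}^3\setminus(\Sigma\cup\Sigma')$ by Lemma~\ref{lem:MaxwHelm}. The first step is to upgrade the near-field datum \eqref{error2} into an interior estimate: combining the smallness on $B_{\tilde\rho}(\mathbf{x}_0)$ with the uniform bound \eqref{uniformbound} on $B_{R_0+3}$ through a standard three-spheres propagation-of-smallness argument yields, on any fixed region $W$ compactly contained in $G^{\ast}$,
\begin{equation*}
\|\mathbf{E}_j-\mathbf{E}'_j\|_{L^2(W,\mathbb{C}^3)}\leq C(\eta(\varepsilon))^{c},
\end{equation*}
with $c$ and $C$ depending only on the a priori data. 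This is precisely the step that generates the logarithmic modulus $\eta$ of \eqref{contmodul}, and it is here that the uniform bound of Theorem~\ref{mainstabthm} is indispensable.

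Next I would set up the geometry. Writing $\beta=\min\{d,h\}$, assume for the estimate that $\beta$ exceeds the claimed bound. By the definition \eqref{ddefin} there is a point $\mathbf{x}^{\ast}$ on the boundary of one scatterer, say $\mathbf{x}^{\ast}\in\partial\Sigma\setminus\Sigma'$, with $\mathrm{dist}(\mathbf{x}^{\ast},\partial\Sigma')\geq c\beta$. Because $\Sigma$ is polyhedral with cell size $h\geq\beta$, a flat piece $F$ of $\partial\Sigma$, lying in a plane $\Pi$ and of size comparable to $\beta$, sits entirely in the exterior of $\Sigma'$. On $F$ the PEC condition gives $\nu\wedge\mathbf{E}_j=0$, so by the interior estimate $\nu\wedge\mathbf{E}'_j$ is correspondingly small on $F$, although $F$ lies in the \emph{interior} of the domain of $\mathbf{E}'_j$. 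I would then reflect $(\mathbf{E}'_j,\mathbf{H}'_j)$ across $\Pi$ via the rule \eqref{reflectionrules}; since $T_{\ast}(I_3)=I_3$ (last line of Proposition~\ref{reflprop}) and \eqref{reflectioncoefficients} preserves the coefficients, the reflected pair again solves the homogeneous Maxwell system, and Lemmas~\ref{reflectionestimatelem} and \ref{reflectionlemma} bound the reflection defect $\|\mathbf{E}'_j-\widetilde{\mathbf{E}}'_j\|$ by the tangential trace on $F$, hence by $C(\eta(\varepsilon))^{c}$.

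The path argument then propagates this defect along a chain of reflections: each reflected copy again meets a face of one scatterer lying in its exterior, permitting a further reflection, and the number of reflections needed to reach a region where the behaviour of the incident field is pinned down is controlled by the a priori data (finitely many cells, bounded geometry, Lemma~\ref{compactHausdorff0}). Composing the defect estimates along this bounded chain keeps the accumulated error of the form $C(\eta(\varepsilon))^{c}$ with an exponent depending only on the a priori data. The contradiction with exact invariance is that the incident fields are plane waves: under reflection across $\Pi$ a plane wave maps to a reflected plane wave, and its failure to be invariant is bounded below in terms of how non-parallel the pair (direction, polarisation) is to $\Pi$. The linear-independence hypothesis, quantified by $b_0>0$ in \eqref{independency}, guarantees that for \emph{every} possible face normal $\nu$ at least one of $\|\nu\wedge((\mathbf{d}_j\wedge\mathbf{p}_j)\wedge\mathbf{d}_j)\|$ is $\geq b_0$, so one of the two measurements always detects the exposed face. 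Matching this lower bound $\geq c\,k\,b_0$ against the upper bound $C(\eta(\varepsilon))^{c}$ on the reflection defect forces $\beta\leq 2\rme R_0(\eta(\varepsilon))^{C}$, which is \eqref{penultima}. The estimate \eqref{ultima1} then follows immediately: the choice \eqref{epsilon0defin} of $\hat{\varepsilon}(h)$ ensures $2\rme R_0(\eta(\varepsilon))^{C}\leq h$ whenever $\varepsilon\leq\hat{\varepsilon}(h)$, so the minimum in \eqref{penultima} must be realised by $d$.

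I expect the main obstacle to lie in the quantitative reflection-path step rather than in the propagation of smallness. Two points are delicate: keeping the number of reflections, and hence the final exponent $C$, bounded uniformly in terms of the a priori data even though the cells may be arbitrarily small compared with $d$ (this is exactly why the intermediate bound is on $\min\{d,h\}$ rather than on $d$ directly); and handling the vectorial reflection rule \eqref{reflectionrules} so that the defect is measured by the tangential trace of $\mathbf{E}$ alone, with the lower bound on the plane wave's non-invariance correctly tied to $b_0$ uniformly over all admissible face orientations.
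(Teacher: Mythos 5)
Your proposal follows essentially the same route as the paper's proof: both reduce the theorem to the quantitative reflection-and-path machinery of \cite[Theorem~3.1 and Lemma~4.4]{LPRX}, adapted to the Maxwell setting via the $L^2$ three-spheres inequality of Proposition~\ref{3spheresprop}, the reflection Lemmas~\ref{reflectionestimatelem} and \ref{reflectionlemma}, the $L^{\infty}$--$L^2$ estimate of Lemma~\ref{LinftyL2} (which is the source of the $h^{-3/2}$ factor and hence of the bound on $\min\{d,h\}$ rather than on $d$), and the uniform bounds of Theorem~\ref{mainstabthm}. The one place where your narrative is looser than the paper's is the endgame: the contradiction is not the bare non-invariance of the incident plane wave under reflection, but a comparison carried out at a point $\mathbf{z}$ with $\|\mathbf{z}\|\geq R_2+1$, where $R_2$ is chosen through the decay estimate \eqref{udecayestimate} so that $\|\nu\wedge \mathbf{E}^s_j(\mathbf{z})\|\leq kb_0/2$; only there does the lower bound $kb_0$ on $\|\nu\wedge\mathbf{E}^i_j\|$ coming from \eqref{independency} transfer to the total field, yielding $kb_0/2\leq\|\nu\wedge\mathbf{E}_j(\mathbf{z})\|\leq C_0h^{-3/2}\varepsilon_2$ and hence \eqref{penultima}.
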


If we limit ourselves to polyhedral obstacles, that is to polyhedra, we can reduce the number of electromagnetic measurements to one and have the following stability result.

\begin{thm}\label{mainteo1}
Fix $h>0$.
Let $\Sigma$, $\Sigma'$ belong to $\hat{\mathcal{D}}^h_{obst}$ and let $d$ be defined as in \eqref{ddefin}.
Let $(\mathbf{E}^i_1,\mathbf{H}^i_1)$ be
the normalised electromagnetic plane wave with 
incident direction $\mathbf{d}_1\in\mathbb{S}^2$ and polarisation vector $\mathbf{p}_1\in\mathbb R^3$, with $0<\|\mathbf{p}_1\|\leq 1$ and such that \eqref{pol+dirprop}
holds for some positive constant $b_1.$

Let $(\mathbf{E}_1,\mathbf{H}_1)$ be the solution to
\eqref{uscateq} with $j=1$ and $(\mathbf{E}'_1,\mathbf{H}'_1)$ be the solution to the same problem with $\Sigma$ replaced by $\Sigma'$.

There exists a constant $\hat{\varepsilon}_1(h)$,
$0<\hat{\varepsilon}_1(h)
\leq 1/(2\rme)$,
 depending on the a priori data, on $b_1$,
and on $h$ only,
such that if
\begin{equation}\label{errorN1}
\|\mathbf{E}_1-\mathbf{E}'_1\|_{L^2(B_{\tilde{\rho}}(\mathbf{x}_0),\mathbb{C}^3)}\leq\varepsilon
\end{equation}
for some $\varepsilon\leq\hat{\varepsilon}_1(h)$,
then for some positive constants $A_1$, depending on the a priori data only, and $C_1$, depending on the a priori data, on $b_1$, and on $h$ only, we have
\begin{equation}\label{penultima111}
d\leq A_1(\eta(\varepsilon))^{C_1}.
\end{equation}
\end{thm}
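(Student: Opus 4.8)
The plan is to adapt the reflection-and-path uniqueness argument of \cite{Liu} to a quantitative setting, using the uniform bounds of Theorem~\ref{mainstabthm} as the a priori control that makes every constant independent of the particular pair $\Sigma,\Sigma'$. Since here $\epsilon=\mu=I_3$ everywhere, by \eqref{distancesoppositebis} it suffices to bound $d$, and by Lemma~\ref{lem:MaxwHelm} every Cartesian component of $\mathbf{E}_1$, $\mathbf{H}_1$, $\mathbf{E}_1'$, $\mathbf{H}_1'$ solves the homogeneous Helmholtz equation $\Delta u+k^2u=0$ in the region where the corresponding field is defined. I would first record that, by \eqref{uniformbound} with $R=R_0+3$ together with interior elliptic regularity for the Helmholtz equation, the fields and their derivatives are bounded, uniformly over the admissible class, on any fixed annulus away from the scatterers, while the decay estimate \eqref{udecayestimate} controls their behaviour at infinity.

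Next I would propagate the measured smallness. Writing $\mathbf{w}=\mathbf{E}_1-\mathbf{E}_1'$, the hypothesis \eqref{errorN1} controls $\mathbf{w}$ on $B_{\tilde{\rho}}(\mathbf{x}_0)$. On the unbounded component $D_0$ of $\mathbb{R}^3\setminus(\Sigma\cup\Sigma')$ both fields are free outgoing solutions, so $\mathbf{w}$ solves the free Maxwell system there and each of its components solves the Helmholtz equation. Chaining the three-spheres inequality for the Helmholtz equation along a sequence of balls whose radii are controlled by the uniform exterior connectedness function $\delta$, I would obtain, on any neighbourhood $U$ of $\partial\Sigma\cup\partial\Sigma'$ reachable through $D_0$, a bound of the form $\|\mathbf{w}\|_{L^2(U\cap D_0)}\le C\,(\eta(\varepsilon))^{\alpha}$, with $\eta$ the logarithmic modulus \eqref{contmodul} and $\alpha$, $C$ depending only on the a priori data; by \eqref{fartonear2} the same conclusion follows if one starts instead from far-field data.

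The core step is the quantitative reflection--path argument. Assuming $\Sigma\neq\Sigma'$, I pick $\mathbf{x}^\ast\in\partial\Sigma$ with $\mathrm{dist}(\mathbf{x}^\ast,\partial\Sigma')\approx d$; since $\Sigma\in\hat{\mathcal{D}}^h_{obst}$ is polyhedral with cell size at least $h$, a whole planar face $\mathcal{C}\ni\mathbf{x}^\ast$ of diameter $\gtrsim\min\{d,h\}$ lies in the exterior of $\Sigma'$, and on $\mathcal{C}$ one has $\nu\wedge\mathbf{E}_1=0$ while $\mathbf{E}_1'$ is free. Hence $\nu\wedge\mathbf{E}_1'=-\nu\wedge\mathbf{w}$ is small on $\mathcal{C}$ by the previous step. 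I then apply the reflection principle for the Maxwell system across the plane $\Pi\supset\mathcal{C}$ (Proposition~\ref{reflprop}): the reflected field $(-\hat{T}(\mathbf{E}_1'),\hat{T}(\mathbf{H}_1'))$, built as in \eqref{reflectionrules} with $T=T_{\Pi}$, is again a free solution, and Lemmas~\ref{reflectionestimatelem} and \ref{reflectionlemma} bound its distance from $(\mathbf{E}_1',\mathbf{H}_1')$ by $2\|\nu\wedge\mathbf{E}_1'\|$ on $\mathcal{C}$, hence by $O((\eta(\varepsilon))^{\alpha})$. Following the path construction of \cite{Liu} for polyhedra, I reflect successively across the faces met along a path that, by the nontrapping polyhedral geometry, exits every bounded region after a number $N$ of reflections bounded in terms of the a priori data and $h$ only. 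Each reflection degrades the estimate by a fixed power, through renewed three-spheres propagation in the reflected copy, so the accumulated bound after $N$ steps is still of the form $C\,(\eta(\varepsilon))^{\alpha'}$ with $\alpha'$, $C$ depending only on the a priori data, $b_1$, and $h$.

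Finally I would close the argument by playing the propagated upper bound against a lower bound coming from the incident wave. The extended free solution produced by the reflection chain agrees, up to $O((\eta(\varepsilon))^{\alpha'})$, with $\mathbf{E}_1'$ on an unbounded region; however the transverse part of the incident plane wave has amplitude at least $b_1$ on $\mathcal{C}$ by \eqref{pol+dirprop}, and a plane wave cannot satisfy the radiation condition, so this amplitude must be absorbed into the scattered part, which decays by \eqref{udecayestimate}. Matching the two forces $\min\{d,h\}\le C\,(\eta(\varepsilon))^{\alpha'}$. Since $h$ is fixed in the class $\hat{\mathcal{D}}^h_{obst}$, choosing $\varepsilon\le\hat{\varepsilon}_1(h)$ small enough that the right-hand side drops below $h$ makes the minimum equal to $d$, yielding \eqref{penultima111} with $A_1$ depending on the a priori data only and $C_1$ on the a priori data, $b_1$, and $h$. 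I expect the main obstacle to be this quantitative reflection step: one must bound the number of reflections purely by the a priori data and $h$, control the compounding loss in the propagation-of-smallness constants across the \emph{vectorial} reflections---where $\mathbf{E}$ and $\mathbf{H}$ transform with opposite signs---and verify that the nondegeneracy $b_1>0$ of the plane wave survives the reflected fields so that the final lower bound does not collapse.
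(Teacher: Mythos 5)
Your overall strategy --- a quantitative reflection--path argument in the spirit of \cite{Liu}, with smallness propagated by the three-spheres inequality and made uniform by the bounds of Theorem~\ref{mainstabthm} --- is the same as the paper's, which routes everything through the proof of Theorem~3.4 of \cite{LPRX} with the modifications already described for Theorem~\ref{mainteoN}. There is, however, a genuine gap at the decisive step, the lower bound. You assert that ``the transverse part of the incident plane wave has amplitude at least $b_1$ on $\mathcal{C}$ by \eqref{pol+dirprop}''. But \eqref{pol+dirprop} only guarantees $\|(\mathbf{d}_1\wedge\mathbf{p}_1)\wedge\mathbf{p}_1\|=b_1>0$ as a vector norm; what the argument needs on a face with unit normal $\nu$ is a lower bound on $\|\nu\wedge[(\mathbf{d}_1\wedge\mathbf{p}_1)\wedge\mathbf{d}_1]\|$, and this vanishes whenever $\nu$ is parallel to $(\mathbf{d}_1\wedge\mathbf{p}_1)\wedge\mathbf{d}_1$. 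This is precisely why Theorem~\ref{mainteoN} requires two incident waves with linearly independent vectors $(\mathbf{d}_j\wedge\mathbf{p}_j)\wedge\mathbf{d}_j$ and the min--max constant $b_0$ of \eqref{independency}: with a single wave the incident field alone cannot supply the nondegeneracy, and your final matching step collapses exactly for the critical face orientations. You flag this risk in your closing sentence but do not resolve it.

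The paper's resolution of this point is the genuinely new content of the one-measurement case and is qualitatively different from what you propose. One works with the total field and shows that the quantity $\max_{\mathbf{z}\in(\overline{B}_{2R_2+3}\setminus B_{2R_2+2})\cap\Pi}\|\nu\wedge\mathbf{E}_1(\mathbf{z})\|$ can vanish only in the degenerate configuration where simultaneously $\nu\wedge[(\mathbf{d}_1\wedge\mathbf{p}_1)\wedge\mathbf{d}_1]=0$ and $\Sigma$ is symmetric with respect to $\Pi$ (by a reflection argument), and this configuration is excluded by the uniqueness theorem for polyhedra with a single electromagnetic measurement of \cite{Liu}. A strictly positive lower bound, uniform over the compact class $\hat{\mathcal{D}}^h_{obst}$ and over the relevant planes, is then extracted by compactness together with the continuous dependence of the solution on the scatterer given by Theorem~\ref{continuitycor}; this is also why $\hat{\varepsilon}_1(h)$ and $C_1$ depend on $h$ in a non-explicit way, in contrast with the two-measurement case. (The paper also proves, again by a compactness argument, that every solid polyhedral obstacle in the class has a face whose normal $\nu$ satisfies $\|\nu\wedge\omega\|\geq\tilde{b}_0$ for any prescribed direction $\omega$, which is needed to run the path argument.) Without a substitute for this compactness-plus-uniqueness step your proof cannot be completed.
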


\begin{rem}\label{diffmeasrem}
By the arguments recalled at the beginning of this section, it is an easy task to rephrase the stability estimates of Theorems~\textnormal{\ref{mainteoN}} and \textnormal{\ref{mainteo1}}
if we measure the distance between $\Sigma$ and $\Sigma'$ with 
$\tilde{d}=d_H(\Sigma,\Sigma')$ or $\hat{d}=d_H(\partial\Sigma,\partial\Sigma')$ instead of $d$ or if we replace the near-field error with limited aperture $\varepsilon$ with the far-field error $\varepsilon_0$ on the corresponding solutions. In the latter case, it is enough to replace $\varepsilon$ with $\eta_1(\varepsilon_0)$, $\eta_1$ as in
\eqref{fartonear2}, and observe that we may choose $\tilde{\rho}$ and $R_1$ as depending on the other a priori data.
\end{rem}

\subsection{Remarks  and comments on the proofs}\label{finalsub}

The following auxiliary propositions are needed. Since they hold in a general case, we state them for any $N\geq 2$. We recall that we always drop the dependence of constants on the dimension $N$.

The following three-spheres inequality holds true and is a consequence of results proved in \cite{Bru}.

\begin{prop}\label{3spheresprop}
There exist positive constants $\tilde{\rho}_0$, $C$, and $c_1$, $0<c_1<1$, depending on $k>0$ only, such that for every $0<\rho_1<\rho<\rho_2\leq\tilde{\rho}_0$
and any function $u$ such that 
$$\Delta u+k^2u=0\quad\text{in }B_{\rho_2}\subset\mathbb{R}^N,$$
we have
\begin{equation}\label{3spheres}
\|u\|_{L^2(B_{\rho})}\leq C\|u\|_{L^2(B_{\rho_2})}^{1-\beta}\|u\|_{L^2(B_{\rho_1})}^{\beta},
\end{equation}
for some $\beta$ such that
\begin{equation}\label{betadefin}
c_1\frac{\log(\rho_2/\rho)}{\log(\rho_2/\rho_1)}  \leq\beta\leq 1-c_1\frac{\log(\rho/\rho_1)}{\log(\rho_2/\rho_1)}.
\end{equation}
\end{prop}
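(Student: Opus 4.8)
The plan is to obtain \eqref{3spheres} as a direct specialisation of the quantitative three-spheres estimate for second-order elliptic equations with bounded lower-order coefficients proved in \cite{Bru}. The first step is to read the Helmholtz equation as $\Delta u=Vu$ with the constant potential $V\equiv-k^2$; thus $u$ solves a uniformly elliptic equation whose principal part is the Laplacian, with ellipticity constant equal to $1$ and with a zeroth-order coefficient bounded by $\|V\|_{L^\infty}=k^2$. This is exactly the class covered in \cite{Bru}, and the associated three-spheres inequality yields, for concentric balls $B_{\rho_1}\subset B_{\rho}\subset B_{\rho_2}$, an estimate of the form \eqref{3spheres} in which the multiplicative constant and the interpolation exponent depend only on the ellipticity and on the size of the lower-order term, hence here only on $k$.

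The next step is to handle the scaling and to isolate the threshold $\tilde{\rho}_0$. Since the statement requires constants depending on $k$ alone and uniform in the position of the radii, I would reduce to the unit scale by the substitution $v(y)=u(\rho_2 y)$, which solves $\Delta v=-(k\rho_2)^2 v$ on $B_1$; because the three terms in \eqref{3spheres} rescale by the same Jacobian factor and the exponents satisfy $(1-\beta)+\beta=1$, that factor cancels and one is reduced to a problem on fixed balls. The hypothesis $\rho_2\le\tilde{\rho}_0$ guarantees that the rescaled frequency $k\rho_2$ stays below a fixed bound (one may take, for instance, $\tilde{\rho}_0\sim 1/k$ so that $k\rho_2\le 1$), so the lower-order term remains a controlled perturbation of the Laplacian and the constants produced by \cite{Bru} can be chosen uniformly over the admissible configurations. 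The logarithmic bounds \eqref{betadefin} on $\beta$ are the quantitative form of the Hadamard three-circles, i.e.\ log-convexity, phenomenon: for the unperturbed operator $\Delta$ one has exactly $\beta=\log(\rho_2/\rho)/\log(\rho_2/\rho_1)$, and the bounded potential only shifts this value within a band of relative width controlled by a constant $c_1\in(0,1)$ depending on $k$, which is precisely what \eqref{betadefin} records.

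Should a fully self-contained argument be preferred, the underlying mechanism is a Carleman estimate, equivalently an Almgren-type frequency-function monotonicity, for the Laplacian with a radial, logarithmically convex weight $\exp(\tau\psi(|x|))$. One applies it to $\chi u$, with $\chi$ a cutoff adapted to the annulus between the spheres, absorbs the perturbation $k^2u$ into the principal Carleman term by taking $\tau$ large, and then optimises in $\tau$ to interpolate between the $L^2$ norms on the three balls. The main obstacle is exactly the uniform control of the constants: one must ensure that $C$, $c_1$, and the smallness threshold $\tilde{\rho}_0$ depend on $k$ only and not on the individual radii, which is what forces the condition $\rho_2\le\tilde{\rho}_0(k)$ so that $k^2$ never competes with the gain of the Carleman weight. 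Verifying this uniformity, together with extracting the two-sided bound \eqref{betadefin} from the optimisation in $\tau$, is where the bulk of the work lies.
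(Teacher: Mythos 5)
Your proposal is correct and takes essentially the same route as the paper, which gives no independent argument and simply derives the proposition from the three-spheres theorem of \cite{Bru} for second-order elliptic operators with bounded lower-order coefficients. Your rescaling to the unit ball, the cancellation of the Jacobian factor via $(1-\beta)+\beta=1$, and the choice $\tilde{\rho}_0\sim 1/k$ to keep the rescaled potential $(k\rho_2)^2$ uniformly controlled merely make explicit the reduction the paper leaves to the reader.
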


The following $L^{\infty}$-$L^2$ estimate is a consequence of classical regularity estimates and a simple dilation argument.

\begin{lem}\label{LinftyL2}
Let us fix a positive constant $\rho_1$. Let us consider $\rho$,
$0<\rho\leq\rho_1$ and a function $u$ such that
$$\Delta u+k^2 u=0\quad\text{in }B_{\rho}\subset \mathbb{R}^N.$$
Then, for any constant $s$, $0<s<1$, there exists a constant $C$, depending on $k$, $\rho_1$, and $s$ only, such that
\begin{equation}
\label{LinftyL2est}
\rho^{N/2}\|u\|_{L^{\infty}(B_{s\rho})}\leq C\|u\|_{L^2(B_{\rho})}.
\end{equation}
\end{lem}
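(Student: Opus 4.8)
The plan is to reduce the estimate to a scale-invariant interior bound on the fixed unit ball and then transfer it to $B_\rho$ by a dilation. Given $u$ with $\Delta u+k^2u=0$ in $B_\rho$, I would set $v(y)=u(\rho y)$ for $y\in B_1$; then $v$ solves $\Delta v+\kappa^2 v=0$ in $B_1$ with $\kappa=\rho k$, and since $0<\rho\leq\rho_1$ we have $\kappa\in[0,\rho_1 k]$. The change of variables gives the two identities $\|v\|_{L^\infty(B_s)}=\|u\|_{L^\infty(B_{s\rho})}$ and $\|v\|_{L^2(B_1)}=\rho^{-N/2}\|u\|_{L^2(B_\rho)}$, so it suffices to prove an interior bound of the form $\|v\|_{L^\infty(B_s)}\leq C\|v\|_{L^2(B_1)}$ with $C$ independent of $\kappa$ as $\kappa$ ranges over the compact interval $[0,\rho_1 k]$.

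The key step is this interior estimate for the Helmholtz operator $\Delta+\kappa^2$ on $B_1$. I would obtain it by classical interior elliptic regularity applied to $\Delta v=-\kappa^2 v$: fixing radii $s=s_0<s_1<\cdots<s_m=1$, applying the interior $H^2$ estimate on successive pairs of nested balls, and bootstrapping via the identity $\Delta v=-\kappa^2 v$ to transfer regularity from $v$ to $\Delta v$, one arrives at a bound $\|v\|_{H^m(B_s)}\leq P(\kappa)\|v\|_{L^2(B_1)}$ in which $P$ is a polynomial with coefficients depending only on $m$ and $s$. Choosing $m=\lfloor N/2\rfloor+1>N/2$ and invoking the Sobolev embedding $H^m(B_s)\hookrightarrow L^\infty(B_s)$ then gives $\|v\|_{L^\infty(B_s)}\leq P(\kappa)\|v\|_{L^2(B_1)}$. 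Since $P(\kappa)$ depends continuously on $\kappa$, its supremum over $\kappa\in[0,\rho_1 k]$ is finite and depends only on $k$, $\rho_1$, and $s$.

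Combining the interior bound with the two identities from the change of variables gives $\|u\|_{L^\infty(B_{s\rho})}\leq C\rho^{-N/2}\|u\|_{L^2(B_\rho)}$, which is exactly \eqref{LinftyL2est}. I expect the only delicate point to be the uniformity of the interior constant across the whole range of $\kappa$, in particular as $\kappa\to0$; this is precisely what the hypothesis $\rho\leq\rho_1$ secures, and it is handled by the explicit polynomial-in-$\kappa$ dependence of the bootstrap constants rather than by any compactness argument applied to the solutions themselves. (Note that $\|u\|_{L^2(B_\rho)}$ may be taken on the full ball $B_\rho$ while the conclusion concerns $B_{s\rho}\subset\subset B_\rho$, so no boundary regularity of $u$ is needed and the inequality is vacuously true whenever the right-hand side is infinite.)
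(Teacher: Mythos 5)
Your proof is correct and follows exactly the route the paper indicates for this lemma (which it does not prove in detail, stating only that it "is a consequence of classical regularity estimates and a simple dilation argument"): rescale to the unit ball, prove the interior $L^\infty$--$L^2$ bound for $\Delta v+\kappa^2v=0$ by bootstrapped interior $H^2$ estimates and Sobolev embedding, and track the uniformity of the constant over $\kappa=\rho k\in[0,\rho_1 k]$. Nothing further is needed.
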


The proofs of Theorems~\textnormal{\ref{mainteoN}} and \textnormal{\ref{mainteo1}}
follow the arguments developed for the proofs of Theorem~3.1 and Theorem~3.4 in \cite{LPRX}, respectively. We just point out the few differences and leave all the other details to the reader. One difference is that we have an $L^2$ a priori bound, 
a consequence of Theorem~\ref{mainstabthm},
instead of an $L^{\infty}$ a priori bound. However, we can exploit the $L^2$ three spheres inequality recalled in Proposition~\ref{3spheresprop}.
For arguments concerning reflections in a plane, we can 
use Lemma~\ref{reflectionlemma} and the estimate given in Lemma~\ref{reflectionestimatelem}. Finally, $L^2$ estimates provide $L^{\infty}$ estimates by using Lemma~\ref{LinftyL2}.

Then we can conclude the proofs of our stability theorems in the following way.

\begin{proof}[Proof of Theorem~\textnormal{\ref{mainteoN}}]
We fix a constant $R_2\geq \max\{2R_1,4R_0\}$, depending on the a priori data and on $b_0$ only, such that 
$$E_1R_2^{-1}\leq kb_0/2$$
where $E_1$ is as in \eqref{udecayestimate}.

By the arguments used to prove \cite[Theorem~3.1]{LPRX}, and in particular Lemma~4.4 in the same paper, with the slight modification pointed out above, we can find a point
$\mathbf{z}$, with $\|\mathbf{z}\|\geq R_2+1$, and a unit vector $\nu$ such that
\begin{equation}\label{verycrucial}
h^{3/2}\|\nu\wedge \mathbf{E}_j(\mathbf{z})\|\leq C_0\varepsilon_2,\quad j=1,2,
\end{equation}
with $C_0$ and $\varepsilon_2$ as in \cite[Lemma~4.4]{LPRX}. Here the term $h^{3/2}$ comes from the application of Lemma~\ref{LinftyL2}.

We recall that for any $\mathbf{x}\in\mathbb{R}^3$ and any $j=1,2$ we have
$$
\nu \wedge \mathbf{E}^i_j(\mathbf{x})=\rmi k\rme^{\rmi k\mathbf{x}\cdot \mathbf{d}_j}\left(\nu \wedge [(\mathbf{d}_j \wedge \mathbf{p}_j) \wedge \mathbf{d}_j]\right).
$$
Therefore there exists $j\in\{1,2\}$ such that for any $\mathbf{x}\in\mathbb{R}^3$
\begin{equation}\label{lowerbound}
\|\nu \wedge \mathbf{E}^i_j(\mathbf{x})\|\geq kb_0.
\end{equation}
Since for any $j=1,2$ we have
$$\|\nu\wedge \mathbf{E}^s_j(\mathbf{z})\|\leq kb_0/2,$$
by our definition of $R_2$, we conclude that
there exists $j\in\{1,2\}$ such that
\begin{equation}
kb_0/2\leq \|\nu\wedge \mathbf{E}_j(\mathbf{z})\|\leq C_0h^{-3/2}\varepsilon_2.
\end{equation}
We can then conclude as in the proof of \cite[Theorem~3.1]{LPRX}.
\end{proof}

We conclude by showing the final argument for the proof of the stability result for the determination of polyhedra by a single electromagnetic measurement. As in the sound-hard acoustic case, the single measurement for polyhedra requires a completely nontrivial analysis. We also point out that, contrary to the $2$ measurements case, the dependence on the size constant $h$ is no longer explicit.

\begin{proof}[Proof of Theorem~\textnormal{\ref{mainteo1}}]
First of all we notice that 
there exists a constant $\tilde{b}_0>0$, depending on the a priori data only, such that for any $\omega\in\mathbb{S}^2$ and any polyhedral obstacle $\Sigma\in\hat{\mathcal{D}}_{obst}$, we can find a cell $\mathcal{C}$ in $\partial \Sigma$, with unit normal $\nu$, such that
$\|\nu\wedge \omega\| \geq \tilde{b}_0$.

To prove such a claim we argue by contradiction. Assume that there exist 
polyhedral obstacles
$\Sigma_n\in\hat{\mathcal{D}}_{obst}$ and $\omega_n\in\mathbb{S}^2$, $n\in\mathbb{N}$, such that, for almost any $\mathbf{x}\in\partial \Sigma_n$, we have $\|\nu_{n}(\mathbf{x})\wedge \omega_n\|<1/n$, where $\nu_n$ denotes the unit normal of $\partial \Sigma_n$. Notice that, up to a subsequence, $\omega_n$ converges to $\omega\in\mathbb{S}^2$ and $\Sigma_n$ converges to $\Sigma\in\hat{\mathcal{D}}_{obst}$ in the Hausdorff distance. Then one obtains that $\|\nu(\mathbf{x})\wedge\omega\|=0$ for almost any $\mathbf{x}\in\partial \Sigma$, $\nu$ being the unit normal of $\partial \Sigma$. This contradicts the fact that $\Sigma$ is a solid obstacle.

Then Theorem~\ref{mainteo1} can be proved using the argument of the proof
of \cite[Theorem~3.4]{LPRX} with the
same kind of modification needed in the proof of
Theorem~\ref{mainteoN}, that is detailed above, and the following remark.

As in the acoustic case, the issue is the following.
Let $\Pi$ be a plane and $\nu$ be its normal.
Then we consider
\begin{equation}\label{eq:5}
\max_{\mathbf{z}\in(\overline{B}_{2R_2+3}\setminus B_{2R_2+2})\cap \Pi} \|\nu \wedge \mathbf{E}_1(\mathbf{z})\|.
\end{equation}
If for one of these $\mathbf{z}$ we have an estimate like in \eqref{verycrucial}, in order to conclude we need a corresponding lower bound as in \eqref{lowerbound}. However, it
might happen that the quantity in \eqref{eq:5} is actually $0$. On the other hand, this happens only on special symmetric cases. Namely, we need that
$\nu \wedge [(\mathbf{d}_1 \wedge \mathbf{p}_1) \wedge \mathbf{d}_1]=0$
and that $\Sigma$ is symmetric with respect to $\Pi$. This can be proved by a simple reflection argument and by using the unique determination of polyhedra with a single scattering electromagnetic measurement proved in \cite{Liu}.
Finally, also a continuous dependence of the quantity in \eqref{eq:5} from $\Sigma$ and $\Pi$ would be needed, but this follows quite easily by Theorem~\ref{continuitycor}.
\end{proof}

\appendix

\section*{Appendix}

We conclude by proving Propositions~\ref{Linftydensity}
and \ref{suffMCP}.

\begin{proof}[Proof of Proposition~\textnormal{\ref{Linftydensity}}]
For any $M>0$ we define a truncation operation $F_M:\mathbb{R}^3\to\mathbb{R}^3$ as follows
$$
F_M(\mathbf{x})=\left\{\begin{array}{ll}
M\mathbf{x}/\|\mathbf{x}\| &\text{if }\|\mathbf{x}\|\geq M\\
\mathbf{x}&\text{if }\|\mathbf{x}\|\leq M.
\end{array}\right.$$

By the general chain rule for vector valued functions proved in \cite{Amb-DM},
for any $M>0$ and any $v\in W^{1,1}(D,\mathbb{R}^3)$, such that
$|\{\mathbf{x}\in D:\ \|v(\mathbf{x})\|=M\}|=0$, we obtain that $F_M\circ v\in W^{1,1}(D,\mathbb{R}^3)$ and, for almost every $\mathbf{x}\in D$, we have
$$
\nabla (F_M\circ v)(\mathbf{x})=\left\{\begin{array}{ll}
0 &\text{if }\|v(\mathbf{x})\|> M\\
\nabla v(\mathbf{x})&\text{if }\|v(\mathbf{x})\|< M.
\end{array}\right.$$
In particular,
for almost every $\mathbf{x}\in D$, we have
\begin{equation}\label{curltrunc}\tag{A.1}
\nabla \wedge (F_M\circ v)(\mathbf{x})=\left\{\begin{array}{ll}
0 &\text{if }\|v(\mathbf{x})\|> M\\
(\nabla\wedge v)(\mathbf{x})&\text{if }\|v(\mathbf{x})\|< M.
\end{array}\right.
\end{equation}

For any $w\in L^2(D,\mathbb{R}^3)$, we define
$$A(w)=\left\{t\geq 1:|\{\mathbf{x}\in D:\ \|v(\mathbf{x})\|=t\}|>0\right\}$$
and notice that $A(w)$ is a countable subset of $\mathbb{R}$.

Let $D=\bigcup_{i\in\mathbb{N}} B_i$ where $B_i$ is an open ball compactly contained in $D$ for any $i\in\mathbb{N}$.

Let $u\in H(\mathrm{curl},D)$. We limit ourselves for simplicity, and without loss of generality, to prove the result for $u$ which has values in $\mathbb{R}^3$ instead of $\mathbb{C}^3$.

For any $i\in\mathbb{N}$, there exists $\{u^i_n\}_{n\in\mathbb{N}}\subset C^{\infty}_0(\mathbb{R}^3)$ such that the following properties are satisfied. First, as $n\to\infty$,
$u^i_n\to u$ and $(\nabla\wedge u^i_n)\to (\nabla\wedge u)$ in $L^2(B_i,\mathbb{R}^3)$ and almost everywhere in $B_i$. Second, there exists $h^i\in L^2(B_i)$ such that
$\|u_n^i\|+\|\nabla\wedge u_n^i\|\leq h^i$ almost everywhere in $B_i$ for any $n\in\mathbb{N}$.

We denote 
$$A=A(u)\cup\left(\bigcup_{i,n\in\mathbb{N}}A(u^i_n)\right),
$$
which is clearly a countable subset of $\mathbb{R}$. We pick a sequence
$\{t_m\}_{m\in\mathbb{N}}\subset \mathbb{R}$ such that, for any $m\in \mathbb{N}$, $1\leq t_m<t_{m+1}$, $t_m\not\in A$,
 and $\lim_{m\to\infty}t_m=+\infty$.

For any $m\in\mathbb{N}$, we call
$$
V_m(\mathbf{x})=\left\{\begin{array}{ll}
0 &\text{if }\|u(\mathbf{x})\|> t_m\\
(\nabla\wedge u)(\mathbf{x})&\text{if }\|u(\mathbf{x})\|< t_m.
\end{array}\right.$$

We fix $i\in\mathbb{N}$ and $m\in\mathbb{N}$. Then, for almost every $\mathbf{x}\in B_i$, we have
$$\|F_{t_m}\circ u^i_n-F_{t_m}\circ u\|(\mathbf{x})\leq h^i(\mathbf{x})+\|u\|(\mathbf{x})\quad\text{for any }n\in\mathbb{N}.$$
As $n\to\infty$, since $u_n^i(\mathbf{x})\to u(\mathbf{x})$, we also have
$(F_{t_m}\circ u^i_n)(\mathbf{x})\to(F_{t_m}\circ u)(\mathbf{x})$.
By Lebesgue theorem, we obtain that $(F_{t_m}\circ u^i_n)(\mathbf{x})\to(F_{t_m}\circ u)(\mathbf{x})$ in $L^2(B_i)$ as $n\to\infty$.

Since, as $n\to\infty$, $u^i_n$ converges to $u$ and $\nabla\wedge u^i_n$ converges to $\nabla\wedge u$ almost everywhere in $B_i$ and $|\{\mathbf{x}\in D:\ \|u(\mathbf{x})\|=t_m\}|=0$, by \eqref{curltrunc} we can immediately infer that
$\nabla\wedge (F_{t_m}\circ u^i_n)$ converges to $V_m$ almost everywhere in $B_i$ and, again by Lebesgue theorem, also in $L^2(B_i)$.

Therefore, $F_{t_m}\circ u\in H(\mathrm{curl},B_i)$, with $\nabla\wedge(F_{t_m}\circ u)=V_m|_{B_i} $ in the sense of distributions in $B_i$, for any $i\in\mathbb{N}$.

Now let $\varphi\in C^{\infty}_0(D,\mathbb{R}^3)$ and let $K\subset D$ be compact such that the support of $\varphi$ is contained in $K$. Up to changing the order, we can suppose that $K\subset \bigcup_{i=1}^lB_i$ and we let $\chi_i\in C^{\infty}_0(B_i)$, $i=1,\ldots,l$, be a partition of unity on $K$.
Then 
\begin{multline*}
\langle F_{t_m}\circ u,\nabla \wedge\varphi \rangle_{D}=
\left\langle F_{t_m}\circ u,\nabla \wedge \left(\sum_{i}^l\chi_i\varphi\right) \right\rangle_{D}\\=
\sum_{i=1}^l\langle F_{t_m}\circ u,\nabla \wedge (\chi_i\varphi) \rangle_{B_i}=
\sum_{i}^l\langle V_m,\chi_i\varphi\rangle_{B_i}=\langle V_m,\varphi\rangle_{D}.
\end{multline*}
From here it is easy to conclude that $F_{t_m}\circ u\in H(\mathrm{curl},D)$, with $\nabla\wedge(F_{t_m}\circ u)=V_m$ in the sense of distributions in $D$.
Finally, it is an obvious remark that $F_{t_m}\circ u\in H(\mathrm{curl},D)\cap
L^{\infty}(D,\mathbb{R}^3)$ and that $F_{t_m}\circ u$ converges to $u$, as $m\to \infty$, in the $H(\mathrm{curl})$ norm.
\end{proof}

\begin{proof}[Proof of Proposition~\textnormal{\ref{suffMCP}}]
For any $\mathbf{x}\in \partial D$, let $U_{\mathbf{x}}$ be as in the assumptions.
We call $\tilde{U}_{\mathbf{x}}$ the union of the connected components of $
U_{\mathbf{x}}\cap D$ such that $\mathbf{x}$ belongs to their boundaries.
We call $V_{\mathbf{x}}=(U_{\mathbf{x}}\cap D)\backslash \tilde{U}_{\mathbf{x}}$
and, finally, $\hat{U}_{\mathbf{x}}=U_{\mathbf{x}}\backslash \overline{V_{\mathbf{x}}}$.
We have that $\hat{U}_{\mathbf{x}}$ is an open set containing $\mathbf{x}$ and such that $\hat{U}_{\mathbf{x}}\cap D=\tilde{U}_{\mathbf{x}}$.
Therefore $\hat{U}_{\mathbf{x}}\cap D$ has a finite number of connected components, and each of them may be mapped onto a Lipschitz domain by a
bi-$W^{1,\infty}$ mapping.

By compactness,
we consider $\partial D\subset \bigcup_{i=1}^n\hat{U}_{\mathbf{x}_i}$. Then $\overline{D}\subset \bigcup_{i=0}^nU_i$ with $U_0$ a smooth open set compactly contained in $D$
and $U_i=\hat{U}_{\mathbf{x}_i}$ for any $i=1,\ldots,n$.
Let $\chi_i\in C^{\infty}_0(U_i)$, $i=0,\ldots,n$, be a partition of unity on $\overline{D}$.

By our previous reasoning, for any $i=1,\ldots,n$,
we have $U_i\cap D=\bigcup_{j=1}^{m_i}D_{i,j}$ where each $D_{i,j}$ is a domain and there exists $T_{i,j}:D_{i,j}\to \tilde{D}_{i,j}$ with
$T_{i,j}$ a bi-$W^{1,\infty}$ mapping and $\tilde{D}_{i,j}$ a Lipschitz domain.

By Proposition~\ref{Druprop}, through Corollary~\ref{changecor},
we infer that $D_0=U_0$ and $D_{i,j}$, for any $i=1,\ldots,n$ and $j=1,\ldots,m_i$, satisfy
the MCP (and, with an analogous reasoning, also the RCP). It is easy to prove that
$D_0$ and $D_i=U_i\cap D$,  for any $i=1,\ldots,n$, satisfy the MCP and RCP as well.

Now, let us consider $\{u^m\}_{m\in\mathbb{N}}$ bounded in $H_0(\mathrm{curl},D)\cap H(\mathrm{div},D)$ (or bounded in $H(\mathrm{curl},D)\cap H_0(\mathrm{div},D)$ respectively). We define, for any $m\in\mathbb{N}$ and any $i=0,\ldots,n$,
$u_i^m=\chi_iu^m$.
It is easy to show that, for any $i=0,\ldots,n$,
$\{u^m_i\}_{m\in\mathbb{N}}$ is bounded in $H_0(\mathrm{curl},D_i)\cap H(\mathrm{div},D_i)$ (or bounded in $H(\mathrm{curl},D_i)\cap H_0(\mathrm{div},D_i)$ respectively). Therefore the proof may be easily concluded.\end{proof}

\section*{Acknowledgement}

The work of Hongyu Liu was supported by Hong Kong Baptist University (FRG fund), by Hong Kong RGC grants (projects No. 12302415 and 405513), and by NSF of China (grant No. 11371115). Luca Rondi was partly supported by Universit\`a degli Studi di Trieste (FRA 2014 grants), and by GNAMPA, INdAM.


\end{document}